\documentclass[12pt]{amsart}
\usepackage{amscd,amssymb,amsthm,amsmath,amssymb,mathrsfs,enumerate,enumitem}
\usepackage[matrix,arrow,curve]{xy}
\usepackage[margin=2cm]{geometry}
\usepackage{hyperref}

\newtheorem{theorem}{Theorem}[section]
\newtheorem*{theorem*}{Theorem}

\newtheorem{proposition}[theorem]{Proposition}
\newtheorem*{proposition*}{Proposition}
\newtheorem{lemma}[theorem]{Lemma}
\newtheorem{corollary}[theorem]{Corollary}
\newtheorem*{corollary*}{Corollary}

\newtheorem{question}[theorem]{Question}
\newtheorem*{question*}{Question}
\newtheorem*{theoremA*}{Theorem A}
\newtheorem*{corollaryB*}{Corollary B}
\newtheorem*{theoremC*}{Theorem C}
\newtheorem*{theoremD*}{Theorem D}
\newtheorem*{theoremE*}{Theorem E}
\newtheorem*{theoremF*}{Theorem F}
\newtheorem*{theoremG*}{Theorem G}

\theoremstyle{definition}
\newtheorem{example}[theorem]{Example}
\newtheorem*{example*}{Example}
\newtheorem{definition}[theorem]{Definition}
\newtheorem*{definition*}{Definition}

\theoremstyle{remark}
\newtheorem{remark}[theorem]{Remark}
\newtheorem*{remark*}{Remark}

\makeatletter\@addtoreset{equation}{section} \makeatother

\newcommand{\mumu}{\boldsymbol{\mu}}
\def\PP {\mathbb{P}}
\def\ZZ {\mathbb{Z}}
\def\GG {\mathbb{G}}
\def\CC {\mathbb{C}}
\def\RR {\mathbb{R}}
\def\GL {\mathrm{GL}}

\def\mumu {\boldsymbol{\mu}}

\author{Ivan Cheltsov, Fr\'ed\'eric Mangolte, Constantin Shramov}

\thanks{All varieties are assumed to be projective, normal and defined over
the field of complex numbers $\mathbb{C}$ unless stated otherwise.}

\title{On $G$-birational rigidity of projective spaces}

\address{\emph{Ivan Cheltsov}\newline\textnormal{University of Edinburgh, Edinburgh, Scotland
\newline
\texttt{i.cheltsov@ed.ac.uk}}}

\address{\emph{Fr\'ed\'eric Mangolte}\newline
\textnormal{Aix Marseille University, CNRS, I2M, Marseille, France}
\newline
\textnormal{\texttt{frederic.mangolte@univ-amu.fr}}}

\address{\emph{Constantin Shramov}
\newline
\textnormal{Steklov Mathematical Institute, Moscow, Russia}
\newline
\textnormal{Laboratory of Algebraic Geometry, HSE University, Moscow, Russia}
\newline
\textnormal{\texttt{costya.shramov@gmail.com}}}

\begin{document}

\begin{abstract}
In this paper, we study finite subgroups $G\subset\mathrm{Aut}(\mathbb{P}^n)$ such that $\mathbb{P}^n$ is $G$-birationally rigid.  For each $n\geqslant 3$, we prove that $\mathrm{Aut}(\mathbb{P}^n)$ contains at most finitely many such subgroups up to conjugation.
For $n=4,5,7$, we prove that $\mathbb{P}^n$ is $G$-birationally superrigid if $G$ is a primitive subgroup isomorphic to $\mathrm{PSp}_{4}(\mathbf{F}_3)$,
$\mathrm{PSU}_4(\mathbf{F}_3)\rtimes\mumu_2$, $\mathrm{O}_8^+(\mathbf{F}_2)\rtimes \mumu_2$, respectively.
\end{abstract}

\maketitle
\tableofcontents

\section{Introduction}
\label{section:intro}

\subsection{Birational rigidity}
\label{subsection:intro-rigidity}
The notion of birational rigidity originated in the~work of Iskovskikh and Manin on the~irrationality of smooth quartic threefolds ---
implicitly they proved that every smooth complex hypersurface of degree $4$ in $\mathbb{P}^4$ is birationally superrigid and, in particular, it is not rational.
Recall that a~Fano variety $X$ is said to be birationally rigid if
\begin{enumerate}
\item $X$ is a~Mori fiber space over a~point ($X$ has terminal singularities and $\mathrm{Cl}(X)$ is of rank $1$),
\item and $X$ is the~only Mori fiber space that is birational to $X$.
\end{enumerate}
If $X$ is birationally rigid and $\mathrm{Bir}(X)=\mathrm{Aut}(X)$, we say that $X$ is birationally superrigid.
Being birationally rigid is an obstruction to rationality: this notion is an~extreme opposite of rationality.

At the~moment, birational rigidity is proved for many complex Fano varieties.
However, birationally rigid Fano varieties are rare among all Fanos.
For instance, there are no birationally rigid two-dimensional Fano varieties (del Pezzo surfaces),
and only $3$ out of $105$ deformation families of smooth three-dimensional Fano varieties (Fano threefolds) contain birationally rigid smooth members. The latter are smooth sextic hypersurfaces in $\mathbb{P}(1,1,1,1,3)$ (all of them are birationally superrigid), smooth complete intersections in $\mathbb{P}(1,1,1,1,1,2)$ of a~quadric and a~quartic hypersurfaces (all of them are birationally rigid), and complete intersections in $\mathbb{P}^5$ of a~quadric and a~cubic hypersurfaces (a general member of this deformation family is known to be birationally rigid). Smooth complex hypersurfaces in $\mathbb{P}^n$ of degree $d\leqslant n$ are birationally rigid only for $d=n\geqslant 4$.

Irrationality problems for Fano varieties defined over non-algebraically closed fields and study of finite subgroups of Cremona groups
lead to an equivariant generalization of the~notion of birational rigidity.
To state it, we fix a~Fano variety $X$ such that $X$ has terminal singularities,
and we fix a~finite subgroup $G\subset\mathrm{Aut}(X)$.

\begin{definition}[{\cite[Definition~3.1.1]{CheltsovShramov2015}}]
\label{definition:rigidity}
The Fano variety $X$ is $G$-birationally rigid if the following two conditions are satisfied:
\begin{enumerate}
\item $X$ is a~$G$-Mori fiber space over a~point ($X$ has terminal singularities and $\mathrm{rk}\,\mathrm{Cl}(X)^G=1$),
\item if $Y$ is a $G$-Mori fiber space that is $G$-birational to $X$, then $Y$ is $G$-equivariantly isomorphic to $X$.
\end{enumerate}
The Fano variety $X$ is said to be $G$-birationally superrigid if $X$ is $G$-birationally rigid and
$$
\mathrm{Bir}^G(X)=\mathrm{Aut}^G(X),
$$
where $\mathrm{Aut}^G(X)$ is the subgroup in $\mathrm{Aut}(X)$ consisting of all $G$-automorphisms (the normalizer of the group $G$ in $\mathrm{Aut}(X)$),
and $\mathrm{Bir}^G(X)$ is the subgroup in $\mathrm{Bir}(X)$ consisting of all $G$-birational selfmaps (the normalizer of the group $G$ in $\mathrm{Bir}(X)$).
\end{definition}

\begin{remark}
Definition~\ref{definition:rigidity}
goes back to Manin--Segre theorem: if $X$ is a~smooth del Pezzo~surface such that $(-K_X)^2\leqslant 3$ and $\mathrm{rk}\,\mathrm{Pic}(X)^G=1$, then $X$ is $G$-birationally rigid. For the complete classification of all $G$-birationally rigid del Pezzo surfaces, see \cite{CheltsovTschinkelZhang2026,Yasinsky}.
\end{remark}

The simplest example of a Fano variety of dimension~$n$ is the projective space $\mathbb{P}^n$. Moreover, finite subgroups of the group $\mathrm{Aut}(\mathbb{P}^n)\simeq\mathrm{PGL}_{n+1}(\mathbb{C})$ have been extensively studied for $n\leqslant 7$, see for instance \cite{Blichfeldt1917,Br67,DolgachevIskovskikh2009,Li68,Li70,Li71,Wa69,Wa70}.
Thus, it is natural to pose the~following question:

\begin{question}
\label{question:main}
For which finite subgroups $G\subset\mathrm{Aut}(\mathbb{P}^n)$,
the projective space~$\mathbb{P}^n$ is $G$-birationally rigid?
\end{question}

For $\mathbb{P}^2$ and $\mathbb{P}^3$, the~complete answer to this question has been obtained in \cite{CheltsovShramov2012,CheltsovShramov2014,CheltsovShramov2019,Sakovics2019}, and we know very simple geometric criteria for being $G$-birationally rigid in these two cases:

\begin{theorem}[{\cite{Sakovics2019}}]
\label{theorem:Sakovich}
Let $G$ be a~finite subgroup in $\mathrm{PGL}_{3}(\mathbb{C})$.
Then $\mathbb{P}^2$ is $G$-birationally rigid if and only if the~following conditions are satisfied:
$G$ does not fix points in $\mathbb{P}^2$, $G\not\simeq\mathfrak{A}_4$, and $G\not\simeq\mathfrak{S}_4$.
\end{theorem}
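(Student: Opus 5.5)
We prove the two directions separately. Throughout we use the classification of finite subgroups of $\mathrm{PGL}_3(\mathbb{C})$: intransitive groups (fixing a point or preserving a line), imprimitive groups (preserving a triangle, e.g.\ $\mathfrak{A}_4$, $\mathfrak{S}_4$ and the groups $(\mumu_n)^2\rtimes\mumu_3$, $(\mumu_n)^2\rtimes\mathfrak{S}_3$), and the primitive groups (Hessian groups of orders $36,72,216$, $\mathfrak{A}_5$, $\mathfrak{A}_6$, $\mathrm{PSL}_2(\mathbf{F}_7)$).

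\emph{Necessity.} We show that each excluded group admits an explicit non-trivial $G$-Sarkisov link.
\begin{itemize}
\item If $G$ fixes a point $P\in\mathbb{P}^2$, the $G$-equivariant blow-up of $P$ is the Hirzebruch surface $\mathbb{F}_1$. Its $\mathbb{P}^1$-bundle structure over $\mathbb{P}^1$ is a $G$-Mori fiber space not isomorphic to $\mathbb{P}^2$.
\item If $G\simeq\mathfrak{A}_4$ or $\mathfrak{S}_4$, then $G$ acts on $\mathbb{P}^2=\mathbb{P}(V)$ with $V$ the irreducible $3$-dimensional representation. Here $G$ has an orbit of length $4$ in general position, the points $[\pm1:\pm1:1]$. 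Blowing it up gives a del Pezzo surface of degree $5$ with a $G$-invariant conic bundle: the pencil of conics through the four points. This is a $G$-conic bundle over $\mathbb{P}^1$ that is $G$-birational to $\mathbb{P}^2$.
\end{itemize}
In both cases $\mathbb{P}^2$ is not $G$-birationally rigid.

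\emph{Sufficiency.} Assume $G$ fixes no point and $G\not\simeq\mathfrak{A}_4,\mathfrak{S}_4$. We run the Noether--Fano method. Suppose $\chi\colon\mathbb{P}^2\dashrightarrow Y$ is a $G$-birational map to a $G$-Mori fiber space that is not an isomorphism. Take the $G$-invariant mobile linear system $\mathcal{M}\subset|nH|$ that is the transform of a very ample system on $Y$ (or of the pullback of one from the base). Then the pair $(\mathbb{P}^2,\tfrac{3}{n}\mathcal{M})$ is not canonical. Hence there is a $G$-orbit $\Sigma$ of points with $\mathrm{mult}_P\mathcal{M}>n/3$ for all $P\in\Sigma$. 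For two general members $M_1,M_2\in\mathcal{M}$ we then get
$$
n^2=M_1\cdot M_2\geqslant\sum_{P\in\Sigma}\mathrm{mult}_P(M_1)\,\mathrm{mult}_P(M_2)>|\Sigma|\,\frac{n^2}{9},
$$
so $|\Sigma|\leqslant 8$. We then sharpen this using collinearity. If too many points of $\Sigma$ lie on a $G$-invariant line or conic $C$, intersecting $\mathcal{M}$ with $C$ gives $\deg(C)\,n\geqslant\sum_{P\in\Sigma\cap C}\mathrm{mult}_P\mathcal{M}$, a contradiction. The same argument rules out $\ge3$ collinear points and $\ge6$ points on a conic. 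So $\Sigma$ is a set of at most $8$ points in general position, namely the base locus of a link to a del Pezzo surface of degree $9-|\Sigma|$.

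\emph{Case analysis.} We go through the classification to list the $G$-orbits of length $\le 8$ in general position.
\begin{itemize}
\item Orbits of length $1$ are excluded by hypothesis.
\item Orbits of length $2$ always span a $G$-invariant line, and the third point of $\ell$ or the pole gives a fixed point when $G$ preserves $\ell$. So either a fixed point exists, or the orbit is contained in a larger orbit on $\ell$; in the latter case the bound $\ge3$ collinear applies.
\item Orbits of length $3$ in general position (triangles) occur for imprimitive groups. Here the link is the standard quadratic involution to $\mathbb{P}^2$ itself. This gives a $G$-birational self-map rather than a new Mori fiber space, consistent with rigidity but not superrigidity.
\item Orbits of length $4$ in general position force $G\subseteq\mathfrak{S}_4$ acting on the projective frame. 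The remaining subgroups $\mathfrak{S}_3$, $\mumu_2^2$, $\mathfrak{D}_8$, etc.\ fix a point, so only $\mathfrak{A}_4,\mathfrak{S}_4$ survive, and these are excluded.
\item Orbits of lengths $5,6,7,8$ would yield $G$-del Pezzo surfaces of degree $4,3,2,1$ with $\mathrm{rk}\,\mathrm{Pic}^G=2$. We check directly that the stabilizers required do not exist, or that the second extremal contraction leads back to $\mathbb{P}^2$. The latter happens via the Geiser and Bertini involutions for $7$ and $8$ points, and via a link $\mathbb{P}^2\dashrightarrow\mathbb{P}^2$ for $6$ points; these again give self-maps.
\end{itemize}

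\emph{Main obstacle.} The hardest step is this last one: showing that for primitive and imprimitive groups, orbits of length $5$ and $6$ in general position do not produce links to conic bundles or other del Pezzo surfaces. For $5$ points we would use that an orbit of $5$ points lies on a unique conic. For the other lengths we would apply Sarkisov's classification of rank-two $G$-surfaces to see that every link starting at $\mathbb{P}^2$ ends at $\mathbb{P}^2$.
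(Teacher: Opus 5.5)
The paper does not prove this statement; it quotes it from~\cite{Sakovics2019}. Your necessity direction is correct, and reducing sufficiency to $G$-orbits $\Sigma$ of at most eight points in general position is the standard route. The gap is that sufficiency is never actually proved: the step you call the ``main obstacle'' is exactly where rigidity is decided, and you only announce it.

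\emph{The case $|\Sigma|=5$.} This is \emph{not} a self-link. On the blow-up $S$, the strict transform of the conic through $\Sigma$ has class $2H-E$, with $E=\sum E_i$. It is a $G$-invariant $(-1)$-curve, and contracting it gives a $G$-del Pezzo surface of degree $5$ with invariant Picard rank one. So this case must be excluded, and ``the stabilizers required do not exist'' is not an argument. The argument is short. The conic $C$ through $\Sigma$ is unique, hence $G$-invariant, and it is smooth because no three points of $\Sigma$ are collinear. Therefore $G$ embeds in $\mathrm{Aut}(C)\simeq\mathrm{PGL}_2(\mathbb{C})$ with an orbit of length $5$. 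This forces $G\simeq\mumu_5$ or $G\simeq\mathfrak{D}_5$. Such a $G$ preserves the two fixed points on $C$ of its normal subgroup $\mumu_5$, so the intersection point of the tangent lines to $C$ at these two points is $G$-fixed, contradicting the hypothesis.

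\emph{The cases $|\Sigma|\in\{6,7,8\}$.} Here nothing has to be excluded, so your worry about orbits of length $6$ is misplaced. You still have to identify the second extremal ray of $\overline{NE}(S)^G$. Since $G$ is transitive on $\Sigma$, every orbit sum of $(-1)$-curves of a given type is a multiple of some $aH-bE$, and the second ray maximizes $b/a$ over the types. This gives the rays $12H-5E$, $21H-8E$ and $48H-17E$. They are spanned, respectively, by the orbits of the conics through five of the six points, of the curves $-K_S-E_i$, and of the curves $-2K_S-E_i$ (the images of the $E_i$ under the Geiser and Bertini involutions). Each is an orbit of pairwise disjoint $(-1)$-curves whose orbit sum has negative square. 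Hence the second contraction is always onto $\mathbb{P}^2$, never a conic bundle or another del Pezzo surface.

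\emph{Iteration.} The statement ``every link from $\mathbb{P}^2$ ends at $\mathbb{P}^2$'' yields rigidity only after iterating along a $G$-Sarkisov decomposition, or equivalently after untwisting maximal singularities with decreasing degree of $\mathcal{M}$. For this you need the induced $G$-action on the new plane to satisfy the hypotheses again. The isomorphism type of $G$ is unchanged, so only fixed points need checking, and you should state the following argument. A $G$-fixed point on the new plane either lies in the blown-up orbit $\Sigma'$, which forces $|\Sigma'|=1$, or it lifts to a $G$-fixed point of $S$ whose image on the original plane is fixed.

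Smaller issues:
\begin{enumerate}
\item For $|\Sigma|=8$, general position also requires that no cubic passes through all eight points with a double point at one of them. With $m>n/3$ the common multiplicity of $\mathcal{M}$ along $\Sigma$, your intersection estimate gives $3n\geqslant 2m+7m>3n$, a contradiction.
\item Your argument for orbits of length $2$ is garbled. The correct reason is that the line they span is $G$-invariant, so $G$ fixes a point by complete reducibility.
\item Among the transitive subgroups of $\mathfrak{S}_4$ acting on the frame, $\mathfrak{S}_3$ does not occur, while $\mumu_4$ does (it also fixes points).
\end{enumerate}
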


\begin{theorem}[{\cite{CheltsovShramov2012,CheltsovShramov2014,CheltsovShramov2019}}]
\label{theorem:CheltsovShramov}
Let $G$ be a~finite subgroup in $\mathrm{PGL}_{4}(\mathbb{C})$.
Then $\mathbb{P}^3$ is $G$-birationally rigid if and only if the~following conditions are satisfied:
\begin{itemize}
\item[$(\mathrm{1})$] $G$ does not fix points in $\mathbb{P}^3$,
\item[$(\mathrm{2})$] $\mathbb{P}^3$ does not contain $G$-invariant pair of skew lines,
\item[$(\mathrm{3})$] there exists no $G$-orbit in $\mathbb{P}^3$ of length $4$,
\item[$(\mathrm{4})$] $G\not\simeq\mathfrak{A}_5$ and $G\not\simeq\mathfrak{S}_5$.
\end{itemize}
\end{theorem}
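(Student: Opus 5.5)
The plan is to prove the two directions of Theorem~\ref{theorem:CheltsovShramov} separately. Necessity is the easier half: whenever one of the conditions fails we exhibit a non-trivial $G$-Mori fiber space $G$-birational to $\mathbb{P}^3$.

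\textbf{Necessity.} Each failing condition gives a construction.
\begin{itemize}
\item[$(\mathrm{1})$] If $G$ fixes a point $P$, projecting from $P$ gives a $G$-equivariant $\mathbb{P}^1$-bundle over $\mathbb{P}^2$. This is the blow up of $P$, a $G$-Mori fiber space over a surface.
\item[$(\mathrm{2})$] If there is an invariant pair of skew lines $L_1\cup L_2$, blowing up both lines gives $\mathbb{P}^1\times\mathbb{P}^1\times\mathbb{P}^1$, or rather a $G$-equivariant conic bundle / $\mathbb{P}^1$-fibration structure over $\mathbb{P}^1\times\mathbb{P}^1$. After running the $G$-MMP this yields a non-trivial $G$-Mori fiber space.
\item[$(\mathrm{3})$] If there is an orbit of length $4$, it is in general position because $G$ fixes no point and no invariant skew pair exists. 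The standard cubo-cubic Cremona involution centered at these four points then commutes with $G$ and is not biregular. This violates superrigidity; for rigidity one instead blows up the four points, and after flops one obtains a different $G$-Mori fiber space, for instance a $G$-del Pezzo fibration.
\item[$(\mathrm{4})$] For $G\simeq\mathfrak{A}_5$ or $\mathfrak{S}_5$ acting primitively, use the invariant Segre cubic or quadric, or the known link to $\mathbb{P}^1\times\mathbb{P}^2$-type models, to build an explicit $G$-Sarkisov link to another $G$-Fano threefold.
\end{itemize}

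\textbf{Sufficiency.} Use the Noether--Fano method. Suppose $\chi\colon\mathbb{P}^3\dashrightarrow Y$ is a $G$-birational map to a $G$-Mori fiber space that is not an isomorphism. Take the corresponding $G$-invariant mobile linear system $\mathcal{M}\subset|nH|$. Then $(\mathbb{P}^3,\tfrac{4}{n}\mathcal{M})$ is not canonical, so there is a $G$-irreducible center $Z$ of non-canonical singularities. Split into the following cases.
\begin{itemize}
\item $Z$ is a union of curves. Then $\mathrm{mult}_Z\mathcal{M}>n/4$. Intersecting two general members gives $\deg Z<16$-type bounds, so $Z$ has low degree. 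Lines and conics are controlled by the condition on skew lines and by the absence of fixed points and of small invariant configurations. In the remaining cases one uses the classification of finite subgroups of $\mathrm{PGL}_4(\mathbb{C})$ with no fixed points and no invariant skew pair, which are the primitive groups and certain imprimitive monomial-type groups. For these, the invariant curves have large degree.
\item $Z$ consists of points. Corti's inequality $\mathrm{mult}_P(M_1\cdot M_2)>4(n/4)^2\cdot\ldots$ forces the orbit to be short, since the total degree $n^2$ is bounded. Combined with orbit-length bounds this leaves orbits of length at most about $4$. These are excluded by hypotheses $(\mathrm{1})$ and $(\mathrm{3})$, and by a separate treatment of small orbits via the multiplicity-$8$ local inequality and an invariant quadric or cubic through the orbit.
\end{itemize}

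The main obstacle is the point case, together with enumerating the groups that survive. I would first reduce, via the classification of finite subgroups of $\mathrm{PGL}_4$, to the list of primitive groups and imprimitive groups with short orbits of lengths $5$–$8$. For orbits of length $5$ or $6$, which occur for $\mathfrak{A}_5$, $\mathfrak{S}_5$ and $\mathfrak{A}_6$-type groups, one must either produce the explicit link (the case of $\mathfrak{A}_5$ and $\mathfrak{S}_5$) or show the $G$-invariant cubic surfaces or quadrics through the orbit make Corti's inequality impossible. This case-by-case exclusion is where most of the work lies; the argument is essentially that of \cite{CheltsovShramov2012,CheltsovShramov2014,CheltsovShramov2019}.
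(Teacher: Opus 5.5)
The paper gives no proof of this theorem; it quotes it from \cite{CheltsovShramov2012,CheltsovShramov2014,CheltsovShramov2019}. Your sketch therefore has to stand on its own, and it has genuine errors in both directions.

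\textbf{Necessity.} For $(\mathrm{3})$, the two-ray game you describe closes up on $\mathbb{P}^3$. On the blow up of the four points, the six lines through pairs of points are $K$-trivial $(-1,-1)$-curves. After flopping them, the only remaining $G$-extremal contraction contracts the four planes to points and returns $\mathbb{P}^3$. This is exactly the factorization of the cubo-cubic involution, so no del Pezzo fibration or other new $G$-Mori fiber space appears. The involution itself, even when it normalizes $G$, only contradicts superrigidity. What works is Example~\ref{example:X24}: blow up the six lines as well, and reach the terminal Fano variety $\mathscr{X}=\mathbb{P}^1\times\mathbb{P}^1\times\mathbb{P}^1/\mumu_2\not\simeq\mathbb{P}^3$. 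You also need one more observation: under $(\mathrm{1})$ and $(\mathrm{2})$, the image of $G$ in $\mathfrak{S}_4$ is $\mathfrak{A}_4$ or $\mathfrak{S}_4$. Indeed, the transitive subgroups $\mumu_4$, $\mumu_2^2$, $\mathfrak{D}_4$ preserve a splitting of the four points into two pairs, hence a pair of skew lines. This is what gives $\mathrm{rk}\,\mathrm{Cl}(\mathscr{X})^G=1$. Item $(\mathrm{4})$ is only gestured at, and there is no invariant Segre cubic in $\mathbb{P}^3$. You must distinguish two kinds of primitive $\mathfrak{A}_5$, $\mathfrak{S}_5$:
\begin{itemize}
\item the standard ones, which are handled by the Bring curve link of Example~\ref{example:Bring};
\item those coming from four-dimensional representations of $2.\mathfrak{A}_5$, $2.\mathfrak{S}_5$. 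These have no invariant quadric and are governed instead by $\mathfrak{A}_5$-invariant twisted cubics; for $\mathfrak{A}_5$, blowing up the twisted cubic already gives a $\mathbb{P}^1$-bundle over $\mathbb{P}^2$.
\end{itemize}

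\textbf{Sufficiency.} Conditions $(\mathrm{1})$--$(\mathrm{3})$ together say exactly that $G$ is primitive, so no monomial groups survive. A primitive group has no orbits of length at most $4$, so your step ``excluded by hypotheses $(\mathrm{1})$ and $(\mathrm{3})$'' is vacuous.

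More seriously, Corti's inequality $\mathrm{mult}_P(M_1\cdot M_2)>n^2/4$ at each point of an orbit $\Sigma$ does not bound $|\Sigma|$. A $1$-cycle of degree $n^2$ can have large multiplicity at arbitrarily many points, and the sum of local multiplicities is not controlled by the degree. You need one of two things. Either take an auxiliary surface through $\Sigma$ containing no component of $M_1\cdot M_2$. Or, as in the proof of Theorem~C, pass to the non-log canonical pair $(\mathbb{P}^3,\frac{6}{n}\mathcal{M})$ and apply Nadel vanishing, which gives $|\Sigma|\leqslant h^0(\mathcal{O}_{\mathbb{P}^3}(2))=10$. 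The real work is then excluding orbits of length $5,\ldots,10$ for each primitive group in \cite{Blichfeldt1917}, not ``at most about $4$''.

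Your curve case is also false as stated, since surviving groups can have invariant curves of degree less than $16$. For instance, $G=\mathfrak{A}_4\times\mathfrak{A}_4$ is primitive and preserves a smooth quadric $Q\simeq\mathbb{P}^1\times\mathbb{P}^1$ together with its rulings. The four lines of one ruling lying over an $\mathfrak{A}_4$-orbit of length $4$ then form a $G$-irreducible curve of degree $4$. Such curves must be excluded by a separate argument. For example, $\mathcal{M}\vert_Q\sim(n,n)$ cannot contain four lines of one ruling with multiplicity greater than $n/4$, because the residual class has negative intersection with the other (nef) ruling. As written, the sufficiency direction defers all of its actual content to the cited papers.
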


In particular, we see that $\mathrm{PGL}_{3}(\mathbb{C})$ contains infinitely many finite subgroups $G$ up to conjugation such that $\mathbb{P}^2$ is $G$-birationally rigid \cite{DolgachevIskovskikh2009}. Indeed, if $G$ is the~subgroup in $\mathrm{PGL}_{3}(\mathbb{C})$ generated by
$$
\begin{pmatrix}
e^{\frac{2\pi \sqrt{-1}}{m}} & 0 & 0\\
0 & 1 & 0 \\
0 & 0 & 1
\end{pmatrix},
\begin{pmatrix}
1 & 0 & 0 \\
0 & e^{\frac{2\pi \sqrt{-1}}{m}} & 0\\
0 & 0 & 1
\end{pmatrix},
\begin{pmatrix}
0 & 0 & 1 \\
1 & 0 & 0 \\
0 & 1 & 0
\end{pmatrix},
\begin{pmatrix}
0 & 1 & 0 \\
1 & 0 & 0 \\
0 & 0 & 1
\end{pmatrix},
$$
then $G\simeq \mumu_m^2\rtimes\mathfrak{S}_3$, where $\mumu_m$ denotes the cyclic group
of order~$m$. We see that $\mathbb{P}^2$ is $G$-birationally rigid for $m\geqslant 3$ by Theorem~\ref{theorem:Sakovich}.
On~the~other hand, it follows from Theorem~\ref{theorem:CheltsovShramov} that $\mathrm{PGL}_{4}(\mathbb{C})$ contains finitely many subgroups $G$ (up to conjugation) such that $\mathbb{P}^3$ is $G$-birationally rigid. Using the classification of
finite subgroups of~\mbox{$\mathrm{PGL}_{4}(\mathbb{C})$} provided in~\cite{Blichfeldt1917}
(cf.~\mbox{\cite[Appendix A]{CheltsovShramov2019}}), it is not difficult to list all of them.

The geometric conditions $(\mathrm{1})$, $(\mathrm{2})$, $(\mathrm{3})$ in Theorem~\ref{theorem:CheltsovShramov} mean that the subgroup $G$ is \textit{primitive}.
Namely, we recall the following terminology.

\begin{definition}[{cf. Definition~\ref{definition:primitive-transitive-permutation-group}}]
\label{definition:primitive-linear-group}
A finite subgroup $\widehat{G}\subset\mathrm{GL}_{n+1}(\mathbb{C})$ is said to be primitive if there exists no non-trivial decomposition
$$
\mathbb{C}^{n+1}=\bigoplus_{i=1}^{r}V_{i}
$$
such that for any $g\in\widehat{G}$ and any $i$ there is some $j=j(g)$ such that $g(V_{i})=V_{j}$.
Similarly, a finite subgroup $G\subset \mathrm{PGL}_{n+1}(\mathbb{C})$ is said to be primitive
if the subgroup $G$ is an image of some primitive finite subgroup $\widehat{G}\subset\mathrm{GL}_{n+1}(\mathbb{C})$ via the natural projection.
\end{definition}

In Section~\ref{section:toric}, we will prove the following theorem.

\begin{theoremA*}
If $G$ is a~finite subgroup in $\mathrm{PGL}_{n+1}(\mathbb{C})$ for $n\geqslant 3$ such that $\mathbb{P}^n$ is $G$-birationally rigid, then the subgroup $G$ is primitive.
\end{theoremA*}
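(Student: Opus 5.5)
We prove the contrapositive: if $G$ is imprimitive, we exhibit a $G$-equivariant birational map from $\mathbb{P}^n$ to a $G$-Mori fiber space that is not $G$-isomorphic to $\mathbb{P}^n$ (or a non-biregular $G$-birational selfmap realizing a different Mori structure). Section~\ref{section:toric} suggests the tool is toric geometry. Let $\widehat{G}\subset\mathrm{GL}_{n+1}(\mathbb{C})$ be a lift of $G$, and suppose $\mathbb{C}^{n+1}=V_1\oplus\cdots\oplus V_r$ is a nontrivial decomposition permuted by $\widehat{G}$. We may refine it so that $\widehat{G}$ acts transitively on the summands: if the action on $\{V_i\}$ has several orbits, we group each orbit into one invariant subspace. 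So there are two cases.

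\textbf{Case 1: $r\geqslant 2$ and $\widehat{G}$ permutes the $V_i$ transitively.} Then all $V_i$ have the same dimension $k$, with $rk=n+1$. If $k=1$, then $G$ is contained in the normalizer of the diagonal torus, that is, in $T\rtimes\mathfrak{S}_{n+1}$, so $G$ acts on $\mathbb{P}^n$ by toric automorphisms. We then take the standard Cremona involution $x_i\mapsto 1/x_i$. It commutes with permutations and normalizes $T$, but it does not normalize a finite subgroup $G\cap T$ in general, so it is not itself a good choice. A cleaner route is to use an equivariant toric link. Blow up the $G$-orbit of coordinate points (the torus-fixed points, which form a $G$-invariant set of $n+1$ points) together with the orbits of coordinate strata, and run the toric $G$-MMP. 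For $n\geqslant 3$, blowing up all $n+1$ coordinate points, then flopping or flipping the strict transforms of coordinate linear subspaces, yields the standard Cremona transformation. That map is $G$-equivariant, since $G\subset N(T)$ and $\mathfrak{S}_{n+1}$ commutes with it. It is not biregular, which contradicts rigidity only if the target is $G$-isomorphic to $\mathbb{P}^n$ with a different $G$-action. To obtain a genuine contradiction with the definition, we instead use the Sarkisov link from the blowup of the $G$-invariant set of the $n+1$ coordinate points to get a birational selfmap in $\mathrm{Bir}^G$. More robustly, the toric $G$-MMP on the blowup of a suitable invariant stratum ends in a $G$-Mori fiber space over a positive-dimensional base. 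For example, blowing up the $G$-orbit of coordinate $(n-1)$-dimensional strata, or for $r=2$ the two complementary linear subspaces $\mathbb{P}(V_1)$ and $\mathbb{P}(V_2)$, gives a $\mathbb{P}^{k-1}\times\mathbb{P}^{k-1}$-type structure over a base. Indeed, for $r=2$ the blowup of $\mathbb{P}^n$ along $\mathbb{P}(V_1)\sqcup\mathbb{P}(V_2)$ is a $\mathbb{P}^1$-bundle over $\mathbb{P}(V_1)\times\mathbb{P}(V_2)$. Its relative Picard rank over the base is $1$, and $G$ acts on it; if $G$ swaps the two factors, the $G$-invariant Picard rank of the base is $1$, so we obtain a $G$-Mori fiber space with a $1$-dimensional fibration, which is not $\mathbb{P}^n$. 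For general $r$, we project from the joins: $\mathbb{P}^n\dashrightarrow\mathbb{P}(V_1)\times\cdots\times\mathbb{P}(V_r)$ is $G$-equivariant, with general fiber a torus of dimension $r-1$. A $G$-equivariant toric compactification over this base, after running the relative $G$-MMP, gives a $G$-Mori fiber space with positive-dimensional base (when $k\geqslant 2$) or a $G$-Mori fiber space of toric type not isomorphic to $\mathbb{P}^n$ (when $k=1$, e.g.\ $(\mathbb{P}^1)^{n}$-like or the Cremona-modified model). Terminality is automatic, since relative toric MMP outputs are $\mathbb{Q}$-factorial terminal if we start from a smooth toric model.

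\textbf{Case 2: $k=1$, so $G\subset T\rtimes\mathfrak{S}_{n+1}$.} In this case the positive-dimensional-base argument fails, since the base is a point. Here we use the fact that $n\geqslant 3$: the standard Cremona involution $\tau$ commutes with $\mathfrak{S}_{n+1}$ and sends $t\in T$ to $t^{-1}$, so it normalizes any subgroup of $T\rtimes\mathfrak{S}_{n+1}$. Hence $\tau\in\mathrm{Bir}^G(\mathbb{P}^n)\setminus\mathrm{Aut}^G(\mathbb{P}^n)$. This only contradicts superrigidity, so for rigidity we instead decompose $\tau$ into a $G$-Sarkisov link. Blowing up the $n+1$ points gives a $G$-equivariant composition of flips and flops ending at a blowup of $\mathbb{P}^n$ again. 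For $n\geqslant 3$, the intermediate toric variety, the blowup of $\mathbb{P}^n$ at the coordinate points, admits a $G$-contraction to a different $G$-Mori fiber space: contract instead the strict transforms of the coordinate hyperplanes after flipping the coordinate lines. Alternatively, we exhibit $\mathbb{P}^1$-fibrations over $G$-invariant toric bases, using a decomposition into $2$ blocks when $n+1$ is composite and the permutation image is imprimitive as a permutation group, which is the reason for the reference to Definition~\ref{definition:primitive-transitive-permutation-group}. When the image in $\mathfrak{S}_{n+1}$ is a primitive permutation group, we use the torus-orbit toric structure directly.

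The main obstacle is Case 2 with a primitive permutation image. There the natural equivariant maps are selfmaps of $\mathbb{P}^n$, and we must verify that the toric $G$-MMP starting from the $G$-invariant blowup really terminates at a non-isomorphic $G$-Mori fiber space, for example a $G$-del Pezzo fibration or a Fano variety with non-terminal-to-$\mathbb{P}^n$ structure. For this we would compute invariant cones of the fan combinatorially, using the $\mathfrak{S}_{n+1}$-symmetry of the fan of $\mathbb{P}^n$.
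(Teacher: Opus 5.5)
Your treatment of blocks of dimension $k\geqslant 2$ is sound. The product of linear projections $\mathbb{P}^n\dasharrow\prod_i\mathbb{P}(V_i)$ has linear fibres $\mathbb{P}^{r-1}$, and a relative $G$-MMP then gives a Mori fibre space over a positive-dimensional base. This is exactly Kollár's argument in Lemma~\ref{lemma:intransitive-2}. An imprimitive permutation image can be coarsened into this case, and the intransitive case that your ``grouping of orbits'' produces is Lemma~\ref{lemma:intransitive}.

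This only reduces Theorem~A to the case where $G\subset\mathbb{T}\rtimes\mathfrak{S}_{n+1}$ is monomial and transitive with primitive image in $\mathfrak{S}_{n+1}$. That case is the real content of the theorem, and your proposal contains no argument for it. The two tools you try do not work:
\begin{itemize}
\item The Cremona involution $\tau$ satisfies $\tau(ts)\tau^{-1}=t^{-1}s$ for $t\in\mathbb{T}$ and a permutation matrix $s$. So $\tau$ normalizes $G$ only if $t^2\in G$ whenever $ts\in G$, which is not automatic. Even when $\tau\in\mathrm{Bir}^G(\mathbb{P}^n)$, this only contradicts superrigidity.
\item The two-ray game from the blowup $X_1$ of the coordinate points is forced, since $G$ is transitive on these points and so $\mathrm{rk}\,\mathrm{Pic}(X_1)^G=2$. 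It is the link decomposing $\tau$. Contracting the strict transforms of the coordinate hyperplanes, as you suggest, lands on $\mathbb{P}^n$ again. For example, take the group of signed permutation matrices, which is transitive with permutation image $\mathfrak{S}_{n+1}$. There $\tau$ commutes with $G$, so this link is just an element of $\mathrm{Bir}^G(\mathbb{P}^n)$ and produces no new $G$-Mori fibre space.
\end{itemize}
Your final paragraph names this obstacle but does not resolve it.

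What is missing is a concrete terminal $G\mathbb{Q}$-Fano target not isomorphic to $\mathbb{P}^n$, together with a way to control $G$-invariant class groups. The paper gets this in several steps:
\begin{itemize}
\item It replaces $G$ by $\mathbb{G}=\mathbb{T}\rtimes G$. Every $\mathbb{G}$-MMP is then toric, and a $\mathbb{G}$-Mori fibre space is automatically a $G$-Mori fibre space because $\mathbb{T}$ acts trivially on $\mathrm{Cl}$.
\item If the image of $G$ in $\mathrm{GL}_n(\mathbb{Z})$ is reducible, Proposition~\ref{proposition:solid} gives a Mori fibre space over a positive-dimensional base.
\item Otherwise it blows up all coordinate strata of dimension at most $n-2$ (the permutohedron). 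It runs the MMP for $(\widetilde{X},\frac{n+1}{2n}\widetilde{\mathcal{M}})$ and then a two-ray game, reaching a terminal toric Fano $X$ whose boundary divisors are indexed by unordered pairs $\{i,j\}$.
\item After a $\mathbb{G}\mathbb{Q}$-factorialization and a further $\mathbb{G}$-MMP, the output still has at least $n+2$ boundary divisors, hence is not $\mathbb{P}^n$. This holds unless $G$ has an orbit of length at most $n+1$ on pairs.
\item By Corollary~\ref{corollary:Sr-orbit-r} and Remark~\ref{remark:prime}, that exception forces $G$ to be a regular $\mumu_{n+1}$ or $\mathfrak{D}_{n+1}$ with $n+1$ prime. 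These cases need separate targets, $\mathbb{P}^n/\mumu_{n+1}$ and $Y/\mumu_{n+1}$, which are terminal by the Reid--Tai criterion. They also need explicit $\mathrm{GL}_n(\mathbb{Z})$-conjugacies fed into Proposition~\ref{proposition:G-non-rigid-via-modules}.
\end{itemize}
None of these ingredients appears in your proposal.
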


The proof of Theorem~A uses geometric and combinatorial results about toric symmetry of~$\mathbb{P}^n$, which are presented in Section~\ref{section:toric}. It should be pointed out that, except for Theorem~A, there are other obstructions for
$\PP^n$ to be $G$-birationally rigid, see e.g. Lemmas~\ref{lemma:Pn-pencil}
and~\ref{lemma:Pn-ci}.

Recall that the group~\mbox{$\mathrm{PGL}_{n+1}(\mathbb{C})$} contains only finitely many primitive finite subgroups up to conjugation, see for
instance~\mbox{\cite[\S\S61,73,74]{Blichfeldt1917}}. Thus, Theorem~A gives

\begin{corollaryB*}
Fix  $n\geqslant 3$. Up to conjugation, the group $\mathrm{PGL}_{n+1}(\mathbb{C})$ contains finitely many finite subgroups $G$
such that $\mathbb{P}^n$ is $G$-birationally rigid.
\end{corollaryB*}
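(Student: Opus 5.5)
Corollary~B is a direct combination of Theorem~A with the classical finiteness of primitive subgroups. Fix $n\geqslant 3$ and let $G\subset\mathrm{PGL}_{n+1}(\mathbb{C})$ be a finite subgroup such that $\mathbb{P}^n$ is $G$-birationally rigid. By Theorem~A, $G$ is primitive in the sense of Definition~\ref{definition:primitive-linear-group}. Thus the set of conjugacy classes we must count embeds in the set of conjugacy classes of primitive finite subgroups of $\mathrm{PGL}_{n+1}(\mathbb{C})$. It remains to check that the latter set is finite.

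For this, I pass to the linear group. Let $\widehat{G}\subset\mathrm{GL}_{n+1}(\mathbb{C})$ be a primitive finite group mapping onto $G$. Replace $\widehat{G}$ by its intersection with $\mathrm{SL}_{n+1}(\mathbb{C})$ after rescaling the generators by suitable roots of determinants. Concretely, take the full preimage of $G$ in $\mathrm{SL}_{n+1}(\mathbb{C})$. This is a finite central extension of $G$ by $\mumu_{n+1}$, and it is still primitive, because primitivity depends only on how the group permutes subspaces, and scalars act trivially on subspaces.

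Conjugacy of subgroups of $\mathrm{PGL}_{n+1}(\mathbb{C})$ is implied by conjugacy of their preimages in $\mathrm{SL}_{n+1}(\mathbb{C})$. So it suffices to know that, up to conjugation, $\mathrm{SL}_{n+1}(\mathbb{C})$ has only finitely many primitive finite subgroups. This is the classical fact cited just before the corollary (\cite[\S\S61,73,74]{Blichfeldt1917}).

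The underlying reason is Jordan's theorem. A finite $H\subset\mathrm{GL}_{n+1}(\mathbb{C})$ has a normal abelian subgroup $A$ of index at most $J(n+1)$. If $H$ is primitive, then, by Clifford theory, $A$ acts isotypically, since otherwise its isotypic components would form a system of imprimitivity. Hence $A$ consists of scalars, and $A$ lies in $\mumu_{n+1}$ when $H\subset\mathrm{SL}_{n+1}(\mathbb{C})$. Therefore $|H|\leqslant (n+1)J(n+1)$. There are finitely many abstract groups of bounded order, and each has finitely many $(n+1)$-dimensional representations up to equivalence. It follows that there are finitely many conjugacy classes of such $H$.

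The only delicate point is the reduction from projective to linear primitivity and the bounded-order argument. Since the paper explicitly takes the finiteness of primitive subgroups as known, I expect no real obstacle: the corollary follows from Theorem~A combined with that fact.
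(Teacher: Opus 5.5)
Your proposal is correct and follows the paper's argument exactly: Theorem~A forces $G$ to be primitive, and $\mathrm{PGL}_{n+1}(\mathbb{C})$ has only finitely many primitive finite subgroups up to conjugation, by \cite[\S\S61,73,74]{Blichfeldt1917}. Your Jordan--Clifford sketch is a valid self-contained justification of that cited finiteness fact, which the paper simply quotes.
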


\begin{remark}
For higher-dimensional Fano varieties different from $\PP^n$, an analogue of Corollary~B does not always hold.
For instance, if $X\simeq \PP^1\times\PP^1\times\PP^1$, then it follows from \cite[Corollary 1.4]{CheltsovDuboulozKishimoto2023} that
$\mathrm{Aut}(X)$ contains infinitely many finite subgroups $G$ up to isomorphism
such that  $\mathrm{rk}\,\mathrm{Cl}(X)^G=1$ and $X$ is $G$-birationally superrigid.
\end{remark}

Using Theorem~A, we can list all possibilities for $G\subset\mathrm{PGL}_{n+1}(\mathbb{C})$
such that~$\mathbb{P}^n$ has a chance to be $G$-birationally rigid in the case when $n$ is small. For instance, in Proposition~\ref{proposition:8-groups},
we will see that up to conjugation there are at most $8$ finite
subgroups $G\subset\mathrm{PGL}_{5}(\mathbb{C})$ such that~$\PP^4$ can a priori be $G$-birationally rigid. One of them is the subgroup $G\simeq\mathrm{PSp}_{4}(\mathbf{F}_3)$ that
preserves the famous Burkhardt quartic hypersurface.
In Section~\ref{section:P4}, we will prove the following result.

\begin{theoremC*}
Let $G$ be a~finite primitive subgroup in $\mathrm{PGL}_{5}(\mathbb{C})$ such that $G\simeq \mathrm{PSp}_{4}(\mathbf{F}_3)$. Then $\mathbb{P}^4$ is $G$-birationally superrigid.
\end{theoremC*}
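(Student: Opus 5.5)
The proof will follow the Noether--Fano method in its equivariant form. Suppose $\mathbb{P}^4$ is not $G$-birationally superrigid. Then there exist a nonempty $G$-invariant mobile linear system $\mathcal{M}\subset|\mathcal{O}_{\mathbb{P}^4}(n)|$ and a rational number $\lambda<4/n$ such that the log pair $(\mathbb{P}^4,\lambda\mathcal{M})$ is not canonical. The same holds for $(\mathbb{P}^4,\frac{5}{n}\mathcal{M})$, and this pair has $K_{\mathbb{P}^4}+\frac{5}{n}\mathcal{M}\sim_{\mathbb{Q}}0$. Let $Z$ be a minimal center of non-canonical singularities, and let $\Sigma$ be the union of its $G$-translates. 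The task is to rule out every possible dimension of $Z$. For this I will use the explicit geometry of the $\mathrm{PSp}_4(\mathbf{F}_3)$-action on $\mathbb{P}^4$: the $45$ nodes of the Burkhardt quartic, the $40$ Steiner planes/primes, and the orbit of $27$ special lines. I will also use the low-degree invariants. The ring of $\widehat G$-invariants is generated in degrees $4,6,10,12,18$, so there are no invariant cubics or quadrics, and no invariant hypersurfaces of degree $<4$. I will also use the minimal orbit lengths of $G$ on points (at least $40$, with the first few orbits of lengths $40,45,\dots$) and on curves.

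\emph{Divisorial centers} are impossible because $\mathcal{M}$ is mobile. \emph{Curve and surface centers.} If $\dim Z\geqslant 1$, then $\mathrm{mult}_Z(\mathcal{M})>n/4$ (up to the standard $\lambda$ normalization). If $Z$ is a surface, I will intersect $\mathcal{M}$ with a general line meeting $\Sigma$, or take $M_1\cdot M_2\cdot H^2=n^2$ and compare it with $\deg\Sigma\cdot(n/4)^2$. The $G$-orbit of any surface has degree at least $16$, which I will check from the subgroup structure (no subgroup of index $<$ a few dozen fixing a plane unless the plane is a Steiner plane, and those come in $40$). This gives a contradiction. If $Z$ is a curve, the cycle $M_1\cdot M_2$ has multiplicity $>n^2/16\cdot 4$ along $\Sigma$. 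Intersecting with a general hyperplane yields $\deg\Sigma<16$, more precisely $\deg\Sigma\cdot n^2/4< n^2$, so $\deg\Sigma\leqslant 3$. Then I rule out $G$-invariant curves of degree at most $3$: they would span a $G$-invariant proper subspace, or give a small orbit of lines. Both are excluded by primitivity and the orbit table.

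\emph{Point centers} are the main obstacle. For a point $P$ I will use the $4n^2$-inequality generalization (Pukhlikov, or Corti's $\mathrm{mult}_P(M_1\cdot M_2)>4(n/5\cdot\ldots)^2$ form adapted to $K+\frac5n\mathcal{M}$). Since $\lambda$ is only $5/n$ and not $4/n$, I plan instead to use multiplier ideals. Take $D=\frac{5}{n}M$ with $\mu$ slightly smaller, so that $(\mathbb{P}^4,\mu\mathcal{M})$ is not klt with $\mu<5/n$. Consider the multiplier ideal sheaf $\mathcal{J}$. Nadel vanishing gives $H^1(\mathbb{P}^4,\mathcal{J}\otimes\mathcal{O}(k))=0$ for $k\geqslant 0$, so the length of the zero-dimensional part of the cosupport $\mathcal{L}$ satisfies $|\Sigma|\leqslant h^0(\mathcal{O}_{\mathbb{P}^4}(k))$ for small $k$, and in fact that $\Sigma$ imposes independent conditions on quadrics or cubics. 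With $|\Sigma|\geqslant 40$ and $h^0(\mathcal{O}(1))=5$, $h^0(\mathcal{O}(2))=15$, I need a bound with $k$ small. Taking $k=0$ with the pair $(\mathbb{P}^4,\frac{5}{n}\mathcal{M})$ gives too little room, so I first apply Kawamata's subadjunction/tie-breaking. Alternatively I will use the Corti-type local inequality $\mathrm{mult}_P(M_1\cdot M_2\cdot M_3)>$ something, together with the global bound $M_1M_2M_3H=n^3$ over an orbit of $\geqslant 40$ points. The latter is the approach I would try first: $40\cdot\mathrm{mult}>n^3$ requires a local inequality $\mathrm{mult}_P>n^3/40$, which follows from $\mathrm{mult}_P(M^3)\geqslant (4/\lambda)^3\cdot$const for a non-canonical center. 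I expect fine tuning here, especially at the $40$- and $45$-point orbits, possibly via a separate argument showing that the non-canonical locus at these special points forces a large multiplicity along the Steiner planes or lines through them, reducing to the curve/surface cases. Once all centers are excluded, $\mathbb{P}^4$ is $G$-birationally superrigid. Because the Noether--Fano argument shows every $G$-birational map is an automorphism, $\mathrm{Bir}^G=\mathrm{Aut}^G$.
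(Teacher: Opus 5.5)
\textbf{There is a genuine gap.} Your Noether--Fano input is wrong, and the error hides the one real obstruction in this problem. Since $-K_{\mathbb{P}^4}\sim 5H$, a non-biregular $G$-birational map only gives a mobile $\mathcal{M}\subset|\mathcal{O}_{\mathbb{P}^4}(n)|$ with $(\mathbb{P}^4,\frac{5}{n}\mathcal{M})$ not canonical. Nothing produces non-canonicity for a coefficient $\lambda<4/n$. With the correct coefficient, a surface center only satisfies $\mathrm{mult}_S(\mathcal{M})>n/5$, and $n^2=M_1\cdot M_2\cdot H^2$ then gives $\deg\Sigma<25$, not $<16$.

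The invariant degrees you list yourself include an invariant sextic $X_6$, so $X_4\cap X_6$ is a $G$-invariant surface of degree $24<25$. Hence ``every surface orbit has degree $\geqslant 16$'' yields no contradiction, and your surface step fails. Two ingredients are needed here:
\begin{enumerate}
\item Show that $X_4\cap X_6$ is the \emph{only} $G$-irreducible surface of degree at most $24$. For $S\subset X_4$ this follows from $\mathrm{Cl}^G(X_4)=\mathbb{Z}H$ together with the absence of invariant hypersurfaces of degrees $1,2,3,5$. For $S\not\subset X_4$ it needs a genuine argument. The paper uses the Jacobi planes and the $45$ nodes; alternatively, the weighted degree bound on $\mathbb{P}^4/G\simeq\mathbb{P}(2,3,5,6,9)$ gives $\deg S\geqslant 60$.
\item Exclude $S=X_4\cap X_6$ by restricting to $X_4$. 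The system $\mathcal{M}|_{X_4}\subset|nH|$ would have fixed part at least $\mu S\sim_{\mathbb{Q}}6\mu H$ with $\mu>n/5$. Then $6\mu>n$, which is absurd.
\end{enumerate}

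The curve and point steps also do not work as written:
\begin{enumerate}
\item Intersecting the $2$-cycle $M_1\cdot M_2$ with a general hyperplane gives no bound on the degree of a curve.
\item A general hyperplane misses any point orbit, so ``$M_1M_2M_3H=n^3$ over $40$ points'' says nothing.
\item The constants again come from the wrong coefficient.
\item The point case is explicitly left open.
\end{enumerate}

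The missing idea is to trade non-canonicity for non-log-canonicity. Once surface centers are excluded, every non-canonical center has codimension $\geqslant 3$, and then $(\mathbb{P}^4,\frac{15}{2n}\mathcal{M})$ is not log canonical. Take the log canonical threshold $\lambda<\frac{15}{2n}$ and a minimal log canonical center. Tie-break so that the components of its $G$-orbit $Z$ are the only log canonical centers; these components are disjoint, and smooth if $Z$ is a curve, by Kawamata.

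Since $3H-(K_{\mathbb{P}^4}+\lambda\mathcal{M})$ is ample, Nadel vanishing shows that $Z$ imposes independent conditions on cubics. A point orbit would then have at most $35<40$ points, which is impossible. For a curve orbit with $r$ components of degree $d$ and genus $g$, subadjunction gives $2g-2<\frac{5}{2}d$. Hence $35\geqslant r(3d-g+1)>\frac{7}{4}rd$. This forces $r=1$, because $G$ has no proper subgroups of index $<27$, and then $g<32$. That contradicts the Hurwitz bound for $|G|=25920$, and $G$ does not act faithfully on rational or elliptic curves.

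A smaller factual slip: there is no $G$-orbit of $27$ lines, since the representation stays irreducible on the index-$27$ subgroup $\mumu_2^4\rtimes\mathfrak{A}_5$.
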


It would be interesting to find out whether $\mathbb{P}^4$ is $G$-birationally rigid in each of the $8$ cases listed in Proposition~\ref{proposition:8-groups}.
Unfortunately, our proof of Theorem~C cannot be adapted to show that $\mathbb{P}^4$ is $G$-birationally rigid in the remaining~$7$ cases, cf. Remark~\ref{remark:indus}.

In Section~\ref{section:P5}, we will show that, up to conjugation, $\mathrm{PGL}_6(\mathbb{C})$ has at most $12$ finite
primitive subgroups $G$ such that~$\PP^5$ can a priori be $G$-birationally rigid (they are listed in Corollary~\ref{corollary:n-5-rigid}).
Moreover, we prove that $\PP^5$ is $G$-birationally superrigid for the largest such subgroup:

\begin{theoremD*}
Let $G$ be a~finite primitive subgroup in $\mathrm{PGL}_{6}(\mathbb{C})$ such that $G\simeq\mathrm{PSU}_4(\mathbf{F}_3)\rtimes\mumu_2$. Then $\mathbb{P}^5$ is $G$-birationally superrigid.
\end{theoremD*}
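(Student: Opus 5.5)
The plan is the standard Noether--Fano method with $G$-invariant mobile linear systems. Suppose $\mathbb{P}^5$ is not $G$-birationally superrigid. Then there is a $G$-invariant mobile linear system $\mathcal{M}\subset|\mathcal{O}_{\mathbb{P}^5}(k)|$ such that the pair $(\mathbb{P}^5,\frac{6}{k}\mathcal{M})$ is not canonical. Here the $6$ comes from $-K_{\mathbb{P}^5}=\mathcal{O}(6)$. Let $Z$ be a $G$-irreducible center of non-canonical singularities, i.e. the $G$-orbit of an irreducible center, chosen of maximal dimension. We rule out each possible dimension of $Z$ in turn, writing $\lambda=6/k$.

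\emph{Divisors and curves.} If $Z$ is a divisor, then $\mathrm{mult}_Z\mathcal{M}>k/6$, and the base locus would contain a surface of degree $\geqslant 1$. This is impossible for a mobile system: take two general members $M_1,M_2$ and intersect with a general plane to get the contradiction. So $\mathrm{codim}\,Z\geqslant 2$, and then $\mathrm{mult}_Z\mathcal{M}>k/6$ along $Z$. Consider the cycle $M_1\cdot M_2$ of degree $k^2$. Its multiplicity along $Z$ exceeds $k^2/36$ when $\dim Z\leqslant 3$ and $\mathrm{codim}\geqslant 2$; in codimension~$2$ we use $\mathrm{mult}_Z(M_1\cdot M_2)\geqslant \mathrm{mult}_Z^2\mathcal{M}$, and in general the $4n^2$-inequality. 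Hence $\deg Z<36$ whenever $Z$ has codimension $2$. The key input is then a group-theoretic statement: the group $G=\mathrm{PSU}_4(\mathbf{F}_3)\rtimes\mumu_2$, whose lift is the $6$-dimensional representation of $6.\mathrm{PSU}_4(3).2$, has no invariants in low degrees. The smallest invariant is of degree $6$ (and the next of degree $12$), there are no $G$-invariant subvarieties of small degree, and orbits are long. Using the character table, or the known Molien series, we show that there are no $G$-invariant curves of degree $<$ roughly $36$, no $G$-invariant surfaces or threefolds of small degree, and no orbits of length $\leqslant$ a few hundred. The minimal orbit is of length $126$ (the reflection hyperplanes correspond to $126$ points), or similar. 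For curves, the restriction $\mathcal{M}|_{C}$ and the bound $\mathrm{mult}_C(M_1\cdot M_2)>k^2/36$ force $\deg(G\cdot C)<36$. We rule this out via the genus bound and the fact that the smallest nontrivial $G$-orbits on any $G$-invariant curve are too large: use Hurwitz-type bounds, since $|G|=6531840$ exceeds $84(g-1)$ for small $g$ unless the curve is special.

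\emph{Points.} This is the main obstacle. For a zero-dimensional $Z$, Corti's $4n^2$-type inequality (or the Pukhlikov $\mathrm{mult}>4\cdot(k/6)^2$ for the self-intersection at a point, generalized to dimension $5$) gives a local intersection bound. This bound is not directly contradicted by degree counts, because $Z$ is an orbit of length $N\geqslant 126$ and $k^2\cdot(\text{hyperplane})^3$ only gives $N\cdot 4k^2/36<k^2$, which fails automatically for large $N$. So the contradiction must instead come from a multiplier ideal / Nadel vanishing argument. Take $D=\frac{6}{k}\cdot\frac{1}{2}(M_1+\dots)$, or a suitable convex combination, so that the pair becomes non-log-canonical at $Z$ after scaling by a factor $\leqslant 2$. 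The multiplier ideal sheaf then defines a $G$-invariant subscheme $\mathcal{L}$ whose support contains $Z$. Nadel vanishing gives $h^1(\mathcal{I}\otimes\mathcal{O}(t))=0$ for $t\geqslant 12-6=6$, roughly, so $\mathcal{L}$ imposes independent conditions: $|Z|\leqslant h^0(\mathcal{O}_{\mathbb{P}^5}(t))$. We choose the scaling so that $t$ is small, aiming for $t\leqslant 2$ or $3$. This forces $N\leqslant 21$ or $56$, contradicting the minimal orbit length. If the zero-dimensional approach leaves positive-dimensional components of $\mathcal{L}$, these are handled by the previous degree bounds.

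The delicate step is exactly this last calibration: obtaining non-log-canonicity with a small enough coefficient. I would try first the inversion-of-adjunction trick, restricting to a general $G$-invariant-free hypersurface through $Z$, combined with the $4n^2$ inequality to double the coefficient only. The necessary orbit-length data (the smallest $G$-orbits in $\mathbb{P}^5$) we would compute from stabilizers of maximal subgroups of $G$.
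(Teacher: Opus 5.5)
\textbf{There are two genuine gaps.} Your framework is the right one: Noether--Fano with a mobile $\mathcal{M}\subset|\mathcal{O}_{\mathbb{P}^5}(k)|$, the multiplicity argument in codimension $2$, and Nadel vanishing for smaller centers. Your codimension-$2$ step is correct. From $\mathrm{mult}_Z\mathcal{M}>k/6$ you get $\deg Z<36$, which matches the base case of Corollary~\ref{corollary:P5-lc-centers-away-from-Q}.

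\emph{First gap: the calibration you flag as ``delicate''.} This is the missing idea, and without it the argument does not close. Doubling the coefficient only makes $(\mathbb{P}^5,\frac{12}{k}\mathcal{M})$ non-log canonical. Nadel vanishing then needs $\mathcal{O}_{\mathbb{P}^5}(6)$ and gives $|Z|\leqslant h^0(\mathcal{O}_{\mathbb{P}^5}(6))=462$. That is no contradiction, since $G$ has an orbit of length $126$ (for instance, the points corresponding to the $126$ reflections of $G_{34}$). The paper instead uses the codimension-sensitive comparison \cite[Remark~3.6]{CheltsovSarikyanZhuang2024}: if $C$ is a non-canonical center of $(\mathbb{P}^5,\frac{6}{k}\mathcal{M})$ of maximal dimension $n$, then $C$ is a non-log canonical center of $(\mathbb{P}^5,\frac{6}{k}\cdot\frac{5-n}{4-n}\mathcal{M})$.
\begin{itemize}
\item For points the factor is $\frac54$, so Nadel works with $\mathcal{O}(2)$ and gives $|Z|\leqslant 21$.
\item For curves and surfaces you cut by $n$ general hyperplanes. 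This gives $\deg Z\leqslant h^0(\mathcal{O}_{\mathbb{P}^4}(3))=35$ and $\deg Z\leqslant h^0(\mathcal{O}_{\mathbb{P}^3}(5))=56$, respectively.
\item The points of $Z\cap Y$ must be isolated in the non-klt locus. This is guaranteed by a descending induction on $n$, using that the coefficients $\frac{12}{k},\frac{9}{k},\frac{8}{k},\frac{15}{2k}$ decrease.
\end{itemize}
Your separate curve argument is also invalid. $M_1\cdot M_2$ is a $3$-dimensional cycle, and the inequality $\mathrm{mult}_Z(M_1\cdot M_2)\deg Z\leqslant k^2$ only holds when $\dim Z=3$. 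For example, a $3$-dimensional linear subspace has degree $1$ but multiplicity $1$ along curves of arbitrary degree inside it. Further intersections $M_1\cdot M_2\cdot M_3\cdots$ need not be proper.

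\emph{Second gap: the source of the lower bounds.} Character tables and Molien series only control invariant hypersurfaces. They say nothing directly about $G$-irreducible curves, surfaces or threefolds. What you actually need is $\deg Z\geqslant 36,57,36$ in dimensions $1,2,3$, and orbit length at least $22$. The paper obtains these as follows.
\begin{itemize}
\item Choose the lift $\widehat{G}$ to be the reflection group $G_{34}$, with invariant degrees $6,12,18,24,30,42$.
\item Then $U=\mathbb{P}^5/G\simeq\mathbb{P}(1,2,3,4,5,7)$, with $\mathcal{O}_{\mathbb{P}^5}(6)\sim_{\mathbb{Q}}\pi^*\mathcal{O}_U(1)$.
\item Apply Lemma~\ref{lemma:weighted-degree} to $\pi(Z)$.
\end{itemize}
The only group theory needed is that $\mathrm{PSU}_4(\mathbf{F}_3)$ is the unique proper subgroup of index less than $112$. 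So either $Z$ has at least $57$ components, or a component's stabilizer acts faithfully and $6^n\deg Z\geqslant |G|/(w_{5-n}\cdots w_5)$. Hurwitz plus Castelnuovo could substitute for this in the case of curves, but you have nothing comparable for surfaces and threefolds.
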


At the moment, we do not know whether $\mathbb{P}^5$ is $G$-birationally rigid in the remaining~$11$ cases. We find the next question particularly interesting.

\begin{question}[{cf. \cite{CheltsovShramov2011-2}}]
\label{question:HaJ}
Let $G\subset\mathrm{PGL}_6(\mathbb{C})$ be the simple Hall--Janko group.
Is it true that $\PP^5$ is $G$-birationally rigid?
\end{question}

In Section~\ref{section:P6}, we will discuss possible candidates $G\subset \mathrm{PGL}_{7}(\mathbb{C})$ such that $\mathbb{P}^6$ is $G$-birationally rigid.
In Section~\ref{section:P7}, we will prove the following result.

\begin{theoremE*}
Let $G$ be a~finite primitive subgroup in $\mathrm{PGL}_{8}(\mathbb{C})$ such that
$G\simeq\mathrm{O}_8^+(\mathbf{F}_2)\rtimes \mumu_2$. Then $\mathbb{P}^7$ is $G$-birationally superrigid.
\end{theoremE*}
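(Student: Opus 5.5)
The plan is the standard Noether--Fano approach, as in the proofs of Theorems~C and~D. Suppose $\mathbb{P}^7$ is not $G$-birationally superrigid. Then there is a $G$-invariant mobile linear system $\mathcal{M}\subset|\mathcal{O}_{\mathbb{P}^7}(d)|$ such that the pair $(\mathbb{P}^7,\frac{8}{d}\mathcal{M})$ is not canonical. Put $\lambda=\frac{8}{d}$. Take a $G$-orbit $\mathcal{Z}$ of centers of non-canonical singularities, with minimal dimension among such orbits. The goal is a contradiction in every possible dimension of $\mathcal{Z}$. We will rely heavily on the structure of $\widehat{G}=2.\mathrm{O}_8^+(\mathbf{F}_2).2$, i.e. the Weyl group $W(E_8)$ acting on $\mathbb{C}^8$ by its reflection representation.

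The first step collects invariant-theoretic facts. The invariant ring of $W(E_8)$ is polynomial with generators in degrees $2,8,12,14,18,20,24,30$. Hence there is a unique invariant quadric $Q$, and there are no $G$-invariant hypersurfaces of degree $3,\dots,7$ besides multiples of $Q$. Equivalently, every semi-invariant has degree $\geqslant 8$ apart from $Q$ and its powers, the sign character contributing only the degree-$120$ product of the reflecting hyperplanes. Next we bound $G$-orbits. Stabilizers of points are parabolic subgroups, by Steinberg's theorem applied to the isotropy of vectors in the cone over a point, together with the $\mathrm{P}$-twist. This gives lower bounds on orbit lengths, and also tells us which subvarieties of small degree are $G$-invariant. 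In particular there are no invariant linear subspaces, by primitivity and irreducibility, and no $G$-invariant irreducible subvarieties of small degree other than $Q$ itself.

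Then we run the case analysis on $\dim\mathcal{Z}$.

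If $\mathcal{Z}$ has codimension~$2$, the multiplicity $\mathrm{mult}_{\mathcal{Z}}(\mathcal{M})>d/8$. Intersecting $M_1\cdot M_2$ with a general plane-section curve gives $\deg\mathcal{Z}\cdot d^2/64<d^2$, so $\deg\mathcal{Z}<64$. Such a $G$-invariant subvariety of codimension~$2$ and small degree must not exist, and this should follow from the absence of low-degree invariants via restricting to a general line or plane.

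For higher codimension we cut down by general hyperplane sections. We then use the Corti/Pukhlikov inequality $\mathrm{mult}_Z(M_1\cdot M_2)>4\cdot\frac{d^2}{64}$ at a non-canonical center $Z$ of codimension $\geqslant 3$. Combined with $\deg(M_1\cdot M_2)=d^2$, this bounds the number of components of $\mathcal{Z}$ of each dimension.

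For zero-dimensional or curve centers, we use multiplier ideals and Nadel vanishing. Take the log canonical threshold adjustment $(\mathbb{P}^7,\mu\mathcal{M})$ with $\mu<\lambda$ strictly log canonical, and with the minimal center $G$-isolated after the tie-breaking perturbation (Kawamata subadjunction). Nadel vanishing for $\mathcal{O}(1)$-twists gives $h^0(\mathcal{O}_{\mathbb{P}^7}(k)\otimes\mathcal{J})$ surjectivity for $k\geqslant 0$. Hence a zero-dimensional center orbit has length at most $h^0(\mathcal{O}(1))=8$ or lies on hypersurfaces of low degree. This contradicts the orbit-length bounds, since $G$ has no orbits of length $\leqslant 8$, and in fact none of small length. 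A positive-dimensional minimal center $Z$ is a normal variety. By subadjunction and the same vanishing, $h^0(\mathcal{O}_Z(1))\leqslant 8$ and the orbit spans constraints. This forces $Z$ to be rational normal-ish of small degree, and again leads to contradictions via the orbit/invariant data, using the $W(E_8)$ root system geometry (e.g.\ the $120$ reflection hyperplanes and the $135$ points/lines coming from the $240$ roots).

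The main obstacle is the case of positive-dimensional centers of intermediate dimension (curves and surfaces in $\mathbb{P}^7$). Here one must exclude low-degree $G$-invariant configurations, which requires detailed knowledge of subgroups of $\mathrm{O}_8^+(\mathbf{F}_2)$ of small index and their fixed loci. The first thing I would try is to show that any component of a minimal center has stabilizer of index at most some explicit bound, determined by degree and $h^0$ estimates. I would then check, using the list of maximal subgroups of $\mathrm{O}_8^+(\mathbf{F}_2)$ (smallest index $120$), that the corresponding stabilizer fixes no suitable subvariety.
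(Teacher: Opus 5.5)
Your proposal has the right overall frame (Noether--Fano, Corti's inequality in codimension~$2$, Nadel vanishing for smaller centers), but it has genuine gaps.

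\textbf{Centers on the invariant quadric.} The codimension-two step is false as stated. $R=Q\cap\{f_8=0\}$ is a $G$-invariant subvariety of codimension~$2$ and degree $16<64$. It is a genuine intersection: $f_2\nmid f_8$, since otherwise $f_8/f_2$ would be an invariant of degree~$6$, hence a multiple of $f_2^3$, contradicting the algebraic independence of the basic invariants. So degree counting in $\mathbb{P}^7$ cannot exclude non-canonical centers lying on $Q$, and your plan never treats them separately.

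The missing idea is the paper's dichotomy. Take a non-canonical center $C$ of \emph{maximal} dimension. If $C\subset Q$, inversion of adjunction makes $(Q,\frac{8}{d}\mathcal{M}\vert_Q)$ not log canonical. But $\frac{8}{d}\mathcal{M}\vert_Q\sim_{\mathbb{Q}}\mathcal{O}_Q(8)\sim_{\mathbb{Q}}\frac43(-K_Q)$, contradicting $\alpha_G(Q)=\frac43$ (Corollary~\ref{corollary:alpha}). That corollary is itself proved by Nadel vanishing on general linear sections of $Q$, played against the bounds of Lemma~\ref{lemma:Q-degree-bounds}.

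\textbf{Degree bounds and the Nadel step.} You have no mechanism giving lower bounds on the degree of $G$-irreducible subvarieties of codimension $\geqslant 2$. The absence of low-degree invariant hypersurfaces says nothing about invariant curves or surfaces, and the list of small-index subgroups only bounds the number of components. The paper's key input is $\mathbb{P}^7/G\simeq\mathbb{P}(1,4,6,7,9,10,12,15)$ with $\mathcal{O}_{\mathbb{P}^7}(2)\sim_{\mathbb{Q}}\pi^*\mathcal{O}(1)$. Combined with faithfulness of the action on components (Lemma~\ref{lemma:small-index}) and Lemma~\ref{lemma:weighted-degree}, this gives $\deg Z\geqslant b_n=37,85,127,211,166,64$ for $Z\not\subset Q$ of dimension $n=0,\ldots,5$. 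The refined form of Lemma~\ref{lemma:weighted-degree} applies because $Q=\pi^{-1}\{y_0=0\}$.

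Your Nadel step is also mis-set. A non-canonical pair need not be non-log-canonical, so there is no threshold $\mu<\frac8d$ to start from. The bound $h^0(\mathcal{O}(1))=8$ is not what vanishing gives. What is needed is \cite[Remark~3.6]{CheltsovSarikyanZhuang2024}. For a non-canonical center of maximal dimension $n$, the pair $(\mathbb{P}^7,\frac8d\cdot\frac{7-n}{6-n}\mathcal{M})$ is not log canonical along it. This requires canonicity in smaller codimension near the generic point of the center, which your choice of minimal dimension destroys.

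By descending induction on $n$, cutting by $n$ general hyperplanes and applying Nadel gives $\deg Z\leqslant h^0(\mathcal{O}_{\mathbb{P}^{7-n}}(m))$ with $m=2,3,4,6,8$ for $n=0,\ldots,4$. These values are $36,84,126,210,165$, each exactly one less than $b_n$. For $n=5$, Corti gives $\deg Z<64=b_5$. By contrast, Corti's inequality at a center of codimension $\geqslant 3$, as you use it, only bounds the multiplicity of the codimension-two cycle $M_1\cdot M_2$ at points of $\mathcal{Z}$. It does not by itself bound $\deg\mathcal{Z}$.

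Finally, projective point stabilizers are not parabolic. An eigenvector of a Coxeter element is fixed by an element of order $15$ in $G$. The orbit bound $\geqslant 120$ comes instead from the subgroup indices together with irreducibility of $V$ restricted to $\widehat{G}_0$.
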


We can naturally generalize Definition~\ref{definition:rigidity} for Fano varieties that are defined over an algebraically non-closed field~$\Bbbk$,
and we can pose Question~\ref{question:main} for~$\mathbb{P}^n_{\Bbbk}$.
The next theorem gives
an analogue  of Theorem~A over the field of real numbers
(but, somewhat surprisingly, its proof is much simpler).

\begin{theoremF*}
\label{proposition:real-primitive}
Let $G\subset \mathrm{PGL}_{n+1}(\RR)$ be a finite group for $n\geqslant 2$ such that
$\mathbb{P}^n_{\mathbb{R}}$ is \mbox{$G$-birationally} rigid.
Then $G$ is a primitive subgroup in $\mathrm{PGL}_{n+1}(\mathbb{C})$.
\end{theoremF*}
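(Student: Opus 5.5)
We prove the contrapositive: if $G\subset\mathrm{PGL}_{n+1}(\RR)$ is finite and not primitive as a subgroup of $\mathrm{PGL}_{n+1}(\CC)$, then $\PP^n_\RR$ is not $G$-birationally rigid. The plan is to produce an explicit $G$-equivariant birational map to a $G$-Mori fiber space that is not $G$-isomorphic to $\PP^n_\RR$. Two cases arise.

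\emph{Reducing to a lifted decomposition.} Since $G$ is finite, its preimage in $\mathrm{GL}_{n+1}(\CC)$ can be taken finite, with a central extension by roots of unity. Non-primitivity then gives a nontrivial decomposition $\CC^{n+1}=\bigoplus_{i=1}^r V_i$ whose summands are permuted by $\widehat G$. The projectivized subspaces $\PP(V_i)$ form a $G$-invariant configuration of disjoint linear subspaces of $\PP^n_\CC$ spanning everything. Complex conjugation commutes with $G$, because $G$ is defined over $\RR$. Hence the conjugate configuration $\{\overline{\PP(V_i)}\}$ is also $G$-invariant. We use this real structure instead of the complicated toric arguments required over $\CC$.

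\emph{Case $r=1$ is excluded.} A nontrivial decomposition has $r\geqslant 2$. If the configuration defines a $G$-invariant proper linear subspace $\Lambda$ defined over $\RR$, for instance a single orbit-union span that is proper, then projection from $\Lambda$ gives a $G$-equivariant birational map. Concretely, blowing up $\Lambda$ yields a $G$-Mori fiber space: a $\PP^{k}$-bundle over $\PP^{n-k-1}$ whose base has positive dimension. It is therefore not $G$-isomorphic to $\PP^n$ over a point, so rigidity fails.

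\emph{Main case.} The subspaces $\PP(V_i)$ need not be real individually. The key step is to find something nontrivial and real that is $G$-invariant.

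\begin{itemize}
\item If some $V_i$ is not conjugation-stable, consider $W=V_i\cap\overline{V_i}$ and the sums $V_i+\overline{V_i}$, and take their $G$-orbits. These give new $G$-invariant configurations that are defined over $\RR$.
\item The simplest route is to refine to a decomposition that is stable under both $G$ and conjugation. We take the collection $\{V_i\}\cup\{\overline{V_j}\}$ and pass to a coarsest common real refinement. Concretely, we group the $V_i$ into orbits of the group generated by $G$ and conjugation, and consider $U=\bigoplus V_i$ over an orbit class.
\item If $r\geqslant 2$ and the $V_i$ are pairwise conjugate in pairs, we get a real $G$-invariant structure of "Weil restriction" type. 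For example, $V_1\oplus\overline{V_1}$ with $\dim V_1=m$ gives a pair of conjugate skew subspaces. Joining corresponding points by real lines yields a $G$-equivariant real map. Its target is a $\PP^1$-bundle over the Weil restriction, or equivalently the real form of $\PP^{m-1}\times\PP^{m-1}$. Blowing up the two conjugate subspaces gives a $G$-Mori fiber space over a positive-dimensional base.
\item In general, we blow up the union of the $\PP(V_i)$, which is defined over $\RR$, and obtain a map to the real form of $\prod\PP(V_i)$. This is a toric-type degeneration that is simpler over $\RR$ because the base of the fibration is real.
\end{itemize}

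\emph{Main obstacle.} The hardest step is ensuring that the constructed variety, after running a $G$-MMP over $\RR$, truly ends at a Mori fiber space not isomorphic to $\PP^n$. A divisorial contraction back to $\PP^n$ must be avoided. We would first check this in the lowest-dimensional example, $r=2$ with conjugate $V_1,\overline{V_1}$. There the relative Picard rank over the product base is $1$, so the structure is already a $G$-Mori fiber space with positive-dimensional base. The hypothesis $n\geqslant 2$ rules out the degenerate $\PP^1$ case.
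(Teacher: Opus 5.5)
There is a genuine gap. Your ``main case'' assumes that complex conjugation $\sigma$ acts on the system of imprimitivity $\{V_i\}$, and this fails in general. Since $\sigma$ commutes with $\widehat{G}$, the family $\{\sigma(V_i)\}$ is again a system of imprimitivity, but it can be a \emph{different} one. In the paper's Example~\ref{example:funny}, take $G=\mathfrak{A}_5$ and $V=\mathrm{Ind}_{\mathfrak{A}_4}^{\mathfrak{A}_5}(I')$, which is defined over $\RR$. Then $\sigma$ carries the five lines $V_i$ onto the five lines of the system $\mathrm{Ind}_{\mathfrak{A}_4}^{\mathfrak{A}_5}(I'')$, and $V_i\cap\sigma(V_j)=0$ for all $i,j$. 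Consequently:
\begin{itemize}
\item the union of the $\PP(V_i)$ is not defined over $\RR$;
\item there is no common real refinement of $\{V_i\}$ and $\{\sigma(V_j)\}$;
\item ``orbits of the group generated by $G$ and conjugation'' on $\{V_i\}$ are not defined.
\end{itemize}
Second, even when the configuration is $\sigma$-stable, mapping to $\prod\PP(V_i)$ only gives a fibration when the blocks have dimension at least $2$. For $r=n+1$ (monomial groups) the product is a point and you get nothing. An example is the octahedral $\mathfrak{S}_4\subset\mathrm{PGL}_3(\RR)$ permuting three real points. This is exactly the case that over $\CC$ required Theorem~\ref{theorem:toric-2}. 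The complex analogue is even false for $n=2$ (take $\mumu_m^2\rtimes\mathfrak{S}_3$ with $m\geqslant 3$), so any argument here must genuinely use the real structure. Your intransitive step also silently assumes the invariant subspace is real. The paper needs Lemma~\ref{lemma:intransitive-R} for this, intersecting and spanning $\Lambda$ with $\sigma(\Lambda)$.

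The missing idea is the paper's positive definite invariant form adapted to the blocks (Proposition~\ref{proposition:D}). One builds a $\widehat{G}$-invariant Hermitian form $h$ making the $V_i$ orthogonal. Schur's lemma then gives $h(\sigma v,\sigma w)=\overline{h(v,w)}$. Setting $B(v,w)=h(v,\sigma w)$ yields a $\widehat{G}$-invariant real symmetric form that is positive definite on $\RR^{n+1}$. Write $q(v)=B(v,v)=\sum q_{ij}$ by blocks. There are no $G$-invariant pencils of quadrics or quartics, by Lemmas~\ref{lemma:R-pencil-from-hypersurfaces} and~\ref{lemma:Pn-pencil-R}; this is where $4\leqslant n$ is needed. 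This forces the nonzero $q_{ij}$ to form a single orbit, and then $|\Omega|=1$. Hence $s=2$ and $B|_{V_1}=B|_{V_2}=0$, which yields $\sigma(V_1)=V_2$. That is precisely the Weil restriction link you did analyse (Lemma~\ref{lemma:intransitive-auxiliary}). In the monomial case with real points, this argument is just the invariant pencil spanned by $\sum q_{ii}^2$ and $q^2$.

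For $n=2,3$ no such pencil is available, and the paper argues separately:
\begin{itemize}
\item for $n=2$, via the invariant pointless conic and the classification of finite subgroups of $\mathrm{SO}_3(\RR)$ (Lemma~\ref{lemma:R-dim-2});
\item for $n=3$, via the pointless quadric and Lemmas~\ref{lemma:R-dim-3-aux-2-2}--\ref{lemma:R-dim-3-short}.
\end{itemize}
Your proposal contains no substitute for either ingredient.
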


In Lemma~\ref{lemma:R-dim-2}, we will show that $\mathrm{PGL}_{3}(\mathbb{R})$ contains a unique subgroup $G$ up to conjugation such that $\mathbb{P}^2_{\mathbb{R}}$ is $G$-birationally rigid. This subgroup is isomorphic to $\mathfrak{A}_5$. Similarly, in Proposition~\ref{proposition:R-dim-3}, we will describe finite subgroups $G\subset\mathrm{PGL}_{4}(\mathbb{R})$ such that~$\mathbb{P}^3_{\mathbb{R}}$ is $G$-birationally rigid. Using the classification of primitive finite subgroups in $\mathrm{PGL}_{n+1}(\mathbb{C})$ for $n\in\{4,5,6\}$, we will prove the following surprising result.

\begin{theoremG*}
Let $G$ be a finite subgroup in  $\mathrm{PGL}_{n+1}(\mathbb{R})$. Suppose that $n\in\{4,5,6\}$. Then $\mathbb{P}^n_{\mathbb{R}}$ is not \mbox{$G$-birationally} rigid.
\end{theoremG*}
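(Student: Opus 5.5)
By Theorem~F, if $\mathbb{P}^n_{\mathbb{R}}$ is $G$-birationally rigid, then $G$ is a primitive subgroup of $\mathrm{PGL}_{n+1}(\mathbb{C})$. The proof therefore reduces to going through the finite list of primitive subgroups of $\mathrm{PGL}_5(\mathbb{C})$, $\mathrm{PGL}_6(\mathbb{C})$ and $\mathrm{PGL}_7(\mathbb{C})$, as classified by Brauer, Lindsey and Wales \cite{Br67,Li71,Wa69,Wa70}. For each group on the list I will either show that it cannot be realized inside $\mathrm{PGL}_{n+1}(\mathbb{R})$, or produce an explicit obstruction to rigidity that is defined over $\mathbb{R}$.

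\emph{Step 1: lift to a real linear group.} For $n\in\{4,6\}$ the dimension $n+1$ is odd, so $\mathrm{PGL}_{n+1}(\mathbb{R})\simeq\mathrm{SL}_{n+1}(\mathbb{R})$. Hence $G$ lifts isomorphically to a finite subgroup of $\mathrm{SL}_{n+1}(\mathbb{R})$. For $n=5$ I rescale each element to have determinant $\pm1$. This gives a finite lift $\widehat{G}\subset\mathrm{GL}_6(\mathbb{R})$ which is a central extension of $G$ by $\mumu_2$ (or by $\mumu_1$). In all cases $G$ is the image of an absolutely irreducible (because primitive) representation of a finite group that is realizable over $\mathbb{R}$. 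Such a representation has real character and Frobenius--Schur indicator $+1$. Checking characters then discards many groups on the list. For example, $\mathrm{PSp}_4(\mathbf{F}_3)$ in dimension $5$ is discarded, since its $5$-dimensional representation has non-real character (its invariant is the Burkhardt quartic). The Heisenberg-type groups $5^{1+2}$ and $7^{1+2}$ are discarded for the same reason. The survivors are groups like $\mathfrak{A}_5,\mathfrak{S}_5,\mathfrak{A}_6,\mathfrak{S}_6,\mathfrak{A}_7,\mathfrak{S}_7,\mathrm{PSL}_2(\mathbf{F}_q)$ in their real representations, and similar groups for $n=5,6$.

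\emph{Step 2: invariant quadric and a pencil.} A real representation of a finite group preserves a positive definite quadratic form. So every surviving $G$ leaves invariant a smooth quadric $Q=\{q=0\}$ defined over $\mathbb{R}$. I then look, using Molien series or known invariant rings, for a $G$-invariant form $f$ of small degree $2k$ that is not proportional to $q^k$, typically a quartic. The pencil $\langle f,q^k\rangle$ is then a $G$-invariant pencil defined over $\mathbb{R}$. I would apply Lemma~\ref{lemma:Pn-pencil}, or Lemma~\ref{lemma:Pn-ci} for the complete intersection $Q\cap\{f=0\}$, to obtain a $G$-equivariant birational map over $\mathbb{R}$ to a $G$-Mori fiber space that is not isomorphic to $\mathbb{P}^n_{\mathbb{R}}$. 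This contradicts rigidity.

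\emph{Expected obstacle.} The main difficulty is the case-by-case verification, in each of $n=4,5,6$ and over the whole list, that the degree and genericity hypotheses of Lemmas~\ref{lemma:Pn-pencil} and~\ref{lemma:Pn-ci} are met. The invariant of degree $2k$ must have small enough degree relative to $n$, and the resulting fibration must be a genuine Mori fiber space. For the large groups in dimension $7$ the lowest invariant beyond $q$ may have high degree. For those cases I would instead try to show, via the character table, that the $7$-dimensional representation is not real. Alternatively, I would use an invariant quadric $Q$ with a $G$-orbit of small length on $Q$ to build a link.
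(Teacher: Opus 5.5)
Your overall architecture is the paper's. You reduce to primitive groups by Theorem~F, discard groups that cannot be real (the paper does this by observing that they have no invariant quadric, which is your Frobenius--Schur test in another form), and then exhibit a real $G$-invariant pencil.

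\textbf{The gap.} Step~2 only looks for an \emph{even}-degree invariant $f$ of degree $2k\leqslant n$ that is not proportional to $q^k$. This fails for two groups that survive Step~1 when $n=5$. These are case (XIII), $G\simeq\mathrm{PSp}_4(\mathbf{F}_3)$ and $G\simeq\mathrm{PSp}_4(\mathbf{F}_3)\rtimes\mumu_2$, namely $W(\mathbb{E}_6)^+$ and $W(\mathbb{E}_6)$ acting on $\mathbb{P}^5$ via the reflection representation.
\begin{itemize}
\item This $6$-dimensional representation is defined over $\mathbb{Q}$, so no character test removes it. You only discarded the $5$-dimensional representation of $\mathrm{PSp}_4(\mathbf{F}_3)$.
\item The invariant ring is generated in degrees $2,5,6,8,9,12$. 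As the paper notes right after Corollary~\ref{corollary:n-5-PSp4}, $|\mathcal{O}_{\mathbb{P}^5}(d)|$ contains no $G$-invariant pencil for $d\leqslant 5$.
\item Hence the only even-degree invariants of degree $\leqslant 5$ are $q$ and $q^2$. The first usable even invariant $f_6$ has degree $6>n$.
\item So neither Lemma~\ref{lemma:Pn-pencil}, nor Lemma~\ref{lemma:Pn-ci} applied to $Q\cap\{f=0\}$ with $f$ of even degree, is available.
\end{itemize}
Your fallbacks do not rescue this case. The representation is real (indeed rational), and you give no construction of a link from a short orbit on $Q$.

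\textbf{The missing idea.} Use the \emph{odd}-degree invariant, the unique $G$-invariant quintic $X_5$.
\begin{itemize}
\item Since $Q$ is smooth, $Q\cap X_5$ has hypersurface singularities.
\item $Q\cap X_5$ is reduced and $G$-irreducible, because there are no other invariant hypersurfaces of degree $\leqslant 5$ (Lemma~\ref{lemma:n-5-PSp4}).
\item Lemma~\ref{lemma:Pn-ci-R} with $(d_1,d_2)=(2,5)$ then gives a $G$-birational map to a Fano variety $Y\not\simeq\mathbb{P}^5$ with terminal singularities and $\mathrm{rk}\,\mathrm{Cl}(Y)^G=1$. Concretely, blow up $Q\cap X_5$ and contract the strict transform of $Q$ to a point.
\end{itemize}
This is exactly how the paper treats these two groups.

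\textbf{Other omissions.} You never treat case (I) for $n=5$: groups preserving a possibly non-real Segre cubic scroll $Y$. The paper blows up $Y$ when it is real (Remark~\ref{remark:n-5-Segre}). Otherwise it combines the nets of quadrics through $Y$ and $\overline{Y}$ into a map to $R_{\mathbb{C}/\mathbb{R}}\mathbb{P}^2_{\mathbb{C}}$ whose general fibres are linear.

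Conversely, your worry about $n=6$ is misplaced. There the paper's lemmas already give invariant pencils of degree $4$, $5$ or $6\leqslant n$ for every group with an invariant quadric. The genuinely delicate case is the one at $n=5$ above.
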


On the other hand, it follows from Theorem~E that $\mathrm{PGL}_{8}(\mathbb{R})$ contains a finite subgroup $G$ that $\mathbb{P}^7_{\mathbb{R}}$ is $G$-birationally rigid.
Indeed, the Weyl group $W(\mathbb{E}_8)$ has a natural $8$-dimensional representation that is defined over $\mathbb{Z}$,
and its image in $\mathrm{PGL}_{8}(\mathbb{R})$ is a finite primitive subgroup $G\simeq\mathrm{O}_8^+(\mathbf{F}_2)\rtimes \mumu_2$,
so $\mathbb{P}^7_{\mathbb{R}}$ is $G$-birationally superrigid by Theorem~E.

\begin{remark}
\label{remark:Sonina}
If $X$ is a non-trivial $\Bbbk$-form of $\mathbb{P}^n$, where $n+1\geqslant 3$ is a prime number different from the characteristic of~$\Bbbk$,
then it follows from \cite[Theorem~A.6]{Sonina} that $X$ is not $G$-birationally rigid for any finite subgroup $G\subset\mathrm{Aut}(X)$.
\end{remark}

\medskip
\noindent
\textbf{Plan of the paper.}
In Section~\ref{section:preliminaries}, we recall some definitions, establish auxiliary facts concerning  primitive and imprimitive finite groups acting on projective spaces, and present a couple of simple obstructions for  $\PP^n$ to be $G$-birationally rigid. In Section~\ref{section:toric}, we discuss equivariant birational geometry of the projective space  $\mathbb{P}^n$ with respect to the action of an infinite group that contains a maximal torus in $\mathrm{PGL}_{n+1}(\mathbb{C})$ as a normal subgroup and, as an application, we prove Theorem~A modulo  one technical result (Theorem~\ref{theorem:toric-2}), which is proved later in Section~\ref{section:technical}. In Section~\ref{section:P4}, we discuss primitive subgroups of $\mathrm{PGL}_5(\mathbb{C})$  and prove Theorem~C.
In Section~\ref{section:P5}, we discuss primitive subgroups of $\mathrm{PGL}_6(\mathbb{C})$ and prove Theorem~D.
In Section~\ref{section:P6}, we discuss primitive subgroups of $\mathrm{PGL}_7(\mathbb{C})$, and show that $\PP^6$ is not $G$-birationally rigid for many such subgroups~$G$.
In Section~\ref{section:P7}, we prove Theorem~E. In Section~\ref{section:real}, we discuss finite groups acting on real projective spaces, and we prove Theorems~E and F. In Appendix~\ref{section:combinatorics}, we present combinatorial results that are used in the proof of Theorem~\ref{theorem:toric-2}. Finally, Appendix~\ref{app:toric-fano} contains a proof that the toric variety $\mathscr{X}$ constructed in Remark~\ref{remark:do-nor-understand-M} is a Fano variety with terminal singularities.

\medskip
\noindent
\textbf{Notation.}
Throughout the paper, we denote by $\mumu_n$ the cyclic group of order $n$, and by  $\mathfrak{D}_n$ the dihedral group of order~$2n\geqslant 6$.
Similarly, we denote by $\mathfrak{S}_n$ the symmetric group of degree $n$, and we denote by $\mathfrak{A}_n$ its alternating normal subgroup.
By~$\mathrm{SU}_n(\mathbf{F}_q)$ we denote the group of all unitary matrices with determinant~$1$
in~\mbox{$\mathrm{GL}_n(\mathbf{F}_{q^2})$}, and by~$\mathrm{PSU}_n(\mathbf{F}_q)$ we denote
its image in~\mbox{$\mathrm{PGL}_n(\mathbf{F}_{q^2})$}.
If $G_1,\ldots,G_r$ are subgroups of a group $\Gamma$, and $g_1,\ldots,g_s$ are elements of $\Gamma$, then we denote by~\mbox{$\langle G_1,\ldots,G_r,g_1,\ldots,g_s\rangle$} the subgroup of~$\Gamma$ generated by $G_1,\ldots,G_r$ together with~\mbox{$g_1,\ldots,g_s$}.

\medskip
\noindent
\textbf{Acknowledgements.}
We are very grateful to Maria Grechkoseeva, Andrea Petracci, Andrey Vasilyev, and Zhijia Zhang for many useful discussions. We also thank CIRM, Luminy, for the hospitality provided during a semester-long Morlet Chair and for creating a perfect work environment. Ivan Cheltsov has been supported by Simons Collaboration grant \emph{Moduli of varieties}. The work of Constantin Shramov was performed at the Steklov International Mathematical Center and supported by the Ministry of Science and Higher Education of the Russian Federation (agreement no.~075-15-2022-265) and by the HSE University Basic Research Program.

\medskip
\noindent
\subsection*{AI Disclosure}
In the first version of this paper (submitted to Arxiv and completely human-written), we stated an over-optimistic conjecture that $\mathbb{P}^n_{\mathbb{R}}$ is never $G$-birationally rigid for any finite subgroup $G\subset\mathrm{PGL}_{n+1}(\mathbb{R})$ with $n\geqslant 4$.
After this, we decided to test this conjecture using Chat GPT-5.6~Sol by OpenAI.
As a result, AI suggested a candidate counter-example to the conjecture, which resulted in Theorem E showing that $G$-birational rigidity is actually possible for~$\mathbb{P}^7_{\mathbb{R}}$. The proof of Theorem~E is based on Lemmas~\ref{lemma:Q-degree-bounds} and \ref{lemma:degree-bounds} whose proofs were suggested by~AI. Using the same approach, we proved Theorem~D. Moreover, AI found a gap in the original proof of Proposition~\ref{proposition:D} used in the proof of Theorem~F, and suggested a way to remove this gap. Finally, AI provided a proof that the toric variety $\mathscr{X}$ constructed in Remark~\ref{remark:do-nor-understand-M} is a terminal Fano variety, see Appendix~\ref{app:toric-fano}. However, we decided not to use this result in the proof of our theorems.

\section{Preliminaries}
\label{section:preliminaries}

In this section we recall some definitions and establish auxiliary facts concerning
primitive and imprimitive finite groups acting on projective spaces.

Let $G$ be a~finite subgroup in $\mathrm{PGL}_{n+1}(\CC)$. Then there exists
a finite subgroup $\widehat{G}\subset\mathrm{GL}_{n+1}(\CC)$ such that $\widehat{G}$ is mapped surjectively to $G$ via the~natural epimorphism $\mathrm{GL}_{n+1}(\CC)\to \mathrm{PGL}_{n+1}(\CC)$. Recall
the following terminology.

\begin{definition}[{cf. Definitions~\ref{definition:primitive-linear-group} and~\ref{definition:primitive-transitive-permutation-group}}]
\label{definition:transitive-imprimitive}
The group $G$ is said to be transitive if the embedding
$$
\widehat{G}\hookrightarrow\mathrm{GL}_{n+1}(\CC)
$$
is an irreducible representation. Otherwise, $G$ is intransitive. Similarly,  $G$ is said to be imprimitive if
there exists a~non-trivial decomposition
\begin{equation}\label{eq:imprimitive-decomposition}
\CC^{n+1}=\bigoplus_{i=1}^{r}V_{i}
\end{equation}
such that for any $g\in\widehat{G}$ and any $i$ we have $g(V_{i})=V_{j}$ for some $j=j(g)$.
If $G$ is transitive and not imprimitive, $G$ is said to be primitive.
\end{definition}

It is easy to see that the properties of the group $G$ described in Definition~\ref{definition:transitive-imprimitive} do not depend on the choice of the lifting $\widehat{G}$ of $G$ to $\mathrm{GL}_{n+1}(\CC)$. The following result is a baby version of Theorem~A.

\begin{lemma}
\label{lemma:intransitive}
Suppose that $\mathbb{P}^n$ is $G$-birationally rigid. Then the group~$G$ is transitive.
\end{lemma}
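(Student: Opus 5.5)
We argue by contraposition: assuming $G$ is intransitive, we produce a $G$-Mori fiber space that is $G$-birational to $\mathbb{P}^n$ but not $G$-isomorphic to it. If $\widehat{G}$ acts reducibly, then by Maschke's theorem there is a $\widehat{G}$-invariant decomposition $\CC^{n+1}=U\oplus W$ with $U\ne 0\ne W$. Set $k=\dim U-1$ and $m=\dim W-1$, so $k+m=n-1$. Projectivizing gives two disjoint $G$-invariant linear subspaces $\Pi_1=\mathbb{P}(U)\simeq\mathbb{P}^k$ and $\Pi_2=\mathbb{P}(W)\simeq\mathbb{P}^m$ in $\mathbb{P}^n$. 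We may assume $k\leqslant m$.

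The linear projection from $\Pi_1$ is the $G$-equivariant rational map $\mathbb{P}^n\dashrightarrow\Pi_2\simeq\mathbb{P}^m$. Let $X\to\mathbb{P}^n$ be the blow up of $\Pi_1$; it is $G$-equivariant since $\Pi_1$ is $G$-invariant. Then $X\simeq\mathbb{P}_{\mathbb{P}^m}(\mathcal{O}^{\oplus(k+1)}\oplus\mathcal{O}(1))$, and the projection resolves to a $\mathbb{P}^{k+1}$-bundle $\pi\colon X\to\mathbb{P}^m$. This $X$ is smooth, and $\mathrm{rk}\,\mathrm{Pic}(X)=2$ with $\mathrm{rk}\,\mathrm{Pic}(X)^G=2$, because both the exceptional divisor and the pullback of a hyperplane are $G$-invariant classes. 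Hence $\mathrm{rk}\,\mathrm{Pic}(X/\mathbb{P}^m)^G=1$. The map $\pi$ is also $-K_X$-negative, being a projective bundle. So $\pi\colon X\to\mathbb{P}^m$ is a $G$-Mori fiber space.

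Because $m\geqslant 0$ and $k+1\geqslant 1$, the fibers have positive dimension. Two cases remain.

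\textbf{Case $m\geqslant 1$.} The base of $\pi$ is not a point, so $X/\mathbb{P}^m$ cannot be $G$-equivariantly isomorphic to $\mathbb{P}^n$ viewed as a Mori fiber space over a point. This contradicts condition (2) of Definition~\ref{definition:rigidity}.

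\textbf{Case $m=0$.} This case is forced, together with $k=0$, only when $n=1$, since $k\leqslant m$ and $k+m=n-1$. Here $n\geqslant 2$ is implicit in the setting of rigidity; for $n=1$ the statement is vacuous or trivial. So whenever $n\geqslant 2$ and the representation is reducible, the larger piece gives $m\geqslant 1$ and we land in the first case.

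The only point needing care is that "Mori fiber space" is understood up to square birational equivalence. Definition~\ref{definition:rigidity} simply requires $Y$ to be $G$-isomorphic to $X$ as a $G$-Mori fiber space, so a fibration over a positive-dimensional base is never isomorphic to $\mathbb{P}^n\to\mathrm{pt}$. This gives the contradiction, and $G$ must be transitive. I do not expect a serious obstacle; the step to verify is the $G$-invariance of the relative Picard rank one condition, which is immediate from the explicit description of $X$.
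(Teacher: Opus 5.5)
Your argument is correct and is essentially the paper's proof. The paper also blows up a $G$-invariant linear subspace of dimension $k\leqslant n-2$ to get a $\mathbb{P}^{k+1}$-bundle over $\mathbb{P}^{n-k-1}$, a $G$-Mori fiber space with positive-dimensional base; your Maschke decomposition and choice of the smaller summand is just one way of guaranteeing $k\leqslant n-2$.

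One small correction: for $n=1$ the statement is not vacuous but false, since every $G$-Mori fiber space $G$-birational to $\mathbb{P}^1$ is $\mathbb{P}^1$ itself even when $G$ is intransitive. So $n\geqslant 2$ is a genuine implicit hypothesis, and the paper uses it too when it takes $\dim\Lambda\leqslant n-2$.
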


\begin{proof}
Suppose that $G$ is not transitive. Then there exists a $G$-invariant linear subspace $\Lambda\subset\mathbb{P}^n$ of dimension~\mbox{$k\leqslant n-2$}.
Let $\pi\colon Y\to \mathbb{P}^n$ be the blow up of $\Lambda$, and let $m=n-k-1$. Then there exists a $G$-Sarkisov link
$$
\xymatrix{
& Y\ar@{->}[ld]_\pi\ar@{->}[rd]^{\phi} & \\
\mathbb{P}^n && \mathbb{P}^m}
$$
where $\phi$ is a $\mathbb{P}^{k+1}$-bundle, which is a $G$-Mori fiber space with a positive-dimensional base.
This contradicts our assumption that $\mathbb{P}^n$ is $G$-birationally rigid.
\end{proof}

Thus, if $\mathbb{P}^n$ is $G$-birationally rigid, then $G$ must be transitive. Theorem~A says that $G$ must also be primitive. In order to prove this in Section~\ref{section:toric}, we need the following result that says that if $\mathbb{P}^n$ is $G$-birationally rigid and $G$ is not primitive, then $G$ must have an orbit of length $n+1$.

\begin{lemma}[{see \cite[(4.4.2)]{Kollar-Aut}}]
\label{lemma:intransitive-2}
Suppose that $G$ is transitive and imprimitive,
so that there exists a~non-trivial decomposition~\eqref{eq:imprimitive-decomposition} such that for any~\mbox{$g\in\widehat{G}$} and any $i$ we have $g(V_{i})=V_{j}$ for some~\mbox{$j=j(g)$}.
If  $\mathbb{P}^n$ is $G$-birationally rigid, then $r=n+1$.
\end{lemma}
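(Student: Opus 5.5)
Suppose $G$ is transitive and imprimitive with a nontrivial decomposition~\eqref{eq:imprimitive-decomposition} and $r<n+1$. The plan is to exhibit a $G$-Sarkisov link from $\mathbb{P}^n$ to a $G$-Mori fiber space with positive-dimensional base, exactly as in the proof of Lemma~\ref{lemma:intransitive}. That contradicts $G$-birational rigidity.

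First, we normalize the decomposition. Since $G$ is transitive, $\widehat{G}$ permutes the summands $V_i$ transitively; otherwise the sum over an orbit would be a proper invariant subspace. So all $V_i$ have the same dimension $d=(n+1)/r$. The assumption $r<n+1$ means $d\geqslant 2$, and nontriviality means $r\geqslant 2$. Let $\Pi_i=\mathbb{P}(V_i)\subset\mathbb{P}^n$, a linear subspace of dimension $d-1\geqslant 1$. Put $W_i=\bigoplus_{j\neq i}V_j$ and $\Lambda_i=\mathbb{P}(W_i)$, of dimension $n-d$. The collection $\{\Lambda_1,\ldots,\Lambda_r\}$ is $G$-invariant.

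The natural map to use is the $G$-equivariant rational map
$$
\psi\colon\mathbb{P}^n\dashrightarrow \mathbb{P}(V_1)\times\cdots\times\mathbb{P}(V_r),
$$
given by projecting to each summand. It is undefined exactly along $\bigcup\Lambda_i$, and $G$ acts on the target by permuting the factors. The target has dimension $r(d-1)=n+1-r$. Its fibers over a general point are the $(r-1)$-dimensional tori $\{[\lambda_1v_1:\cdots:\lambda_rv_r]\}$, whose closures are copies of $\mathbb{P}^{r-1}$ spanned by one point in each $\Pi_i$.

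We resolve $\psi$ by blowing up the $\Lambda_i$ in a $G$-invariant order, for instance by increasing dimension of the intersections $\Lambda_{i_1}\cap\cdots\cap\Lambda_{i_s}$, as in the standard resolution of the Cremona-type map. This gives a smooth $Y$ with a morphism $\phi\colon Y\to\prod\mathbb{P}(V_i)$ whose fibers are toric varieties: the permutohedral-type blow-up of $\mathbb{P}^{r-1}$ along its coordinate strata. The easiest clean case is $r=2$. Then $\Lambda_1=\Pi_2$ and $\Lambda_2=\Pi_1$ are disjoint, and blowing up both gives a $\mathbb{P}^1$-bundle over $\mathbb{P}^{d-1}\times\mathbb{P}^{d-1}$. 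The relative Picard rank over the base is one, and $G$ swaps the two factors, so the invariant Picard rank of the base is $1$ and we get a $G$-Mori fiber space. This is the classical link, and presumably what \cite[(4.4.2)]{Kollar-Aut} does.

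For general $r$, the fibers of $\phi$ have larger Picard rank, so we must run a $G$-equivariant relative MMP on $Y$ over $B=\prod\mathbb{P}(V_i)$, or over a quotient of $B$. We use that $Y$ is rationally connected over $B$, with $K_Y$ not pseudo-effective over $B$ and $-K_Y$ relatively big. The output is a $G$-Mori fiber space $Y'\to B'$ with $B'\to B$ (or $B'$ dominating the image of $B$), whose base has dimension at least $\dim B=n+1-r\geqslant 1$ when $d\geqslant 2$. Since $Y'$ is $G$-birational to $\mathbb{P}^n$ and has positive-dimensional base, it is not $G$-isomorphic to $\mathbb{P}^n$, which contradicts rigidity.

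The main obstacle is this last step. A relative MMP over $B$ could a priori contract in a way that makes the base fall to a point. I would argue that the relative MMP over $B$ keeps a morphism to $B$, because it is relative. So the final Mori fiber space $Y'\to B''$ factors as $B''\to B$ with $\dim B''\geqslant\dim B>0$, unless the MMP ends in a $G$-minimal model over $B$, which is impossible since the general fiber is rational. Hence $\dim B''\geqslant 1$. Here $d\geqslant2$ is used precisely to make $\dim B>0$: when $d=1$ the target is a point and no contradiction arises, matching $r=n+1$.
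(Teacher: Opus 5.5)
Your proposal is correct and takes the same route as the paper. The paper also uses the $G$-equivariant product $\psi=\psi_1\times\cdots\times\psi_r$ of the linear projections $\psi_i\colon\mathbb{P}^n\dasharrow\mathbb{P}(V_i)$, whose base has dimension $n+1-r>0$ and whose fibers are linear subspaces, and concludes that $\mathbb{P}^n$ is $G$-birational to a $G$-Mori fiber space with positive-dimensional base. Your explicit resolution and the relative $G$-MMP over $B$, including the check that the final base dominates $B$, spell out the step the paper leaves implicit.
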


\begin{proof}
Note that $r$ divides $n+1$. Set $m=\frac{n+1}{r}-1$.
Suppose that $r<n+1$. Then, by definition, there exists a collection of~$r$ linear subspaces $\Lambda_1,\ldots,\Lambda_r\subset \mathbb{P}^n$ of dimension $m$ such that their union
spans $\mathbb{P}^n$, each~$\Lambda_i$ is disjoint from the linear span of the union of $\Lambda_j$ with $j\neq i$, and the group $G$ permutes~\mbox{$\Lambda_1,\ldots,\Lambda_r$}.
For each $i\in\{1,\ldots,r\}$, consider the linear projection
$$
\psi_i\colon \mathbb{P}^n\dasharrow \Lambda_i
$$
from the span of the union of all $\Lambda_j$ with $j\neq i$. Set $\Lambda=\Lambda_1\times\ldots\times\Lambda_r$, and consider the rational map
$$
\psi=\psi_1\times\ldots\times \psi_r\colon \mathbb{P}^n\dasharrow \Lambda.
$$
Then $\psi$ is dominant and $G$-equivariant. Furthermore,
observe that a fiber of $\psi_i$ is a linear subspace in $\PP^n$ (recall that a fiber
of a rational map is defined as the closure of a fiber of the restriction of the map
to an open subset where it is regular). A fiber of $\psi$ is an intersection of fibers of
$\psi_i$, so that it is also a linear subspace in $\PP^n$. Now we see that
$\psi$ is a $G$-equivariant rational map over a positive-dimensional
base whose fibers are rationally connected.
This implies that $\PP^n$ is $G$-birational to a $G$-Mori fiber space with a positive-dimensional base. Hence, in particular, $\mathbb{P}^n$ is not $G$-birationally rigid.
\end{proof}

In the remaining part of this section, we present several obstructions for $\mathbb{P}^n$ to be $G$-birationally rigid, which will be used later in Sections~\ref{section:P4}, \ref{section:P5}, and \ref{section:P6}.

\begin{lemma}
\label{lemma:Pn-pencil}
Suppose that $|\mathcal{O}_{\mathbb{P}^n}(d)|$ contains a $G$-invariant pencil $\mathcal{P}$ for some $d\leqslant n$.
Then~$\mathbb{P}^n$ is not $G$-birationally rigid.
\end{lemma}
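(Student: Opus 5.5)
The plan is to use the pencil to build a $G$-equivariant fibration whose general fibers are rationally connected. Then we argue exactly as in the proof of Lemma~\ref{lemma:intransitive-2}.

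Write $\mathcal{P}=\{\lambda F+\mu H=0\}$ for two forms $F,H$ of degree $d$. Since $\mathcal{P}$ is $G$-invariant, the rational map
$$
\chi\colon \mathbb{P}^n\dasharrow\mathbb{P}^1,\qquad x\mapsto [F(x):H(x)]
$$
is $G$-equivariant, with $G$ acting on $\mathbb{P}^1$ through its induced action on the pencil. First I would remove the base curve fixed part: if $F$ and $H$ share a common factor, divide it out. This gives a $G$-invariant pencil of the same kind in degree $d'<d\leqslant n$; the induced action on the two-dimensional space of forms is unchanged, up to a character. So we may assume the base locus $\mathrm{Bs}(\mathcal{P})$ has codimension $2$. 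Then the closures of the fibers of $\chi$ are exactly the members of $\mathcal{P}$. If the general member is reducible, I would apply Stein factorization $\mathbb{P}^n\dasharrow C\to\mathbb{P}^1$, which is still $G$-equivariant. Here $C$ is a curve dominated by $\mathbb{P}^n$, hence $C\simeq\mathbb{P}^1$, and the fibers of $\mathbb{P}^n\dasharrow C$ are irreducible hypersurfaces of degree at most $d$ (indeed these are again the members of a pencil of lower degree, by Bertini/L\"uroth).

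The key step is then to check that the general fiber is rationally connected. A general member $S$ of the pencil is an irreducible hypersurface of degree $d\leqslant n$ in $\mathbb{P}^n$. By Bertini, its singularities lie only in $\mathrm{Bs}(\mathcal{P})$ (a general member is smooth away from the base locus). I would take a $G$-equivariant resolution of indeterminacy $Y\to\mathbb{P}^n$ with $\chi$ becoming a morphism $Y\to\mathbb{P}^1$. Its general fiber $\widetilde{S}$ is a resolution of $S$. I must show $\widetilde{S}$ is rationally connected; this is the main obstacle, since $S$ may be quite singular along the base locus. The first thing I would try is to note that rational connectedness is a birational property, and that a general member of a pencil is a degeneration-free member of a family of degree-$d$ hypersurfaces. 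I would then invoke that hypersurfaces of degree $d\leqslant n$ in $\mathbb{P}^n$ are covered by lines through a general point (the lines through a point form a variety of dimension $\geqslant n-1-d\geqslant -1$; more robustly, they are rationally chain connected by conics/lines). Rational chain connectedness plus a resolution would give rational connectedness only under mild singularities, so as an alternative I would use Koll\'ar--Miyaoka--Mori: a general fiber of the $G$-equivariant model has log terminal singularities after an equivariant MMP and is Fano-like since $-K$ restricted has positive degree $n+1-d>0$. Alternatively, I would apply Graber--Harris--Starr-type reasoning to the uniruled family. I would also handle irreducibility via the Stein factorization as above.

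Finally, once $\chi$ is a $G$-equivariant dominant rational map to $\mathbb{P}^1$ with rationally connected general fiber, we run a $G$-equivariant relative MMP over $\mathbb{P}^1$ on the resolution $Y$. Since the general fiber is rationally connected, $K_Y$ is not pseudo-effective over the base, so the MMP ends with a $G$-Mori fiber space over a positive-dimensional base (dominating or equal to $\mathbb{P}^1$). This is precisely the conclusion used at the end of the proof of Lemma~\ref{lemma:intransitive-2}, and it contradicts $G$-birational rigidity of $\mathbb{P}^n$.
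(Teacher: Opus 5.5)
Your overall structure matches the paper's: remove the fixed part, Stein-factorize, resolve equivariantly, and run a relative $G$-MMP over $\mathbb{P}^1$. The gap is that the step you single out as key aims at a statement that is false in general, since the general member of a mobile pencil of degree $d\leqslant n$ need not be rationally connected. For example, for $n\geqslant 3$ consider the pencil spanned by $x_1^3+x_2^3+x_3^3$ and $x_1x_2x_3$ in $|\mathcal{O}_{\mathbb{P}^n}(3)|$. It is mobile and is invariant under, say, the permutations of $x_1,x_2,x_3$. Its general member is a cone with vertex $\{x_1=x_2=x_3=0\}$ over a smooth plane cubic $E$, so it is birational to $E\times\mathbb{P}^{n-2}$. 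A pencil of cones over Calabi--Yau hypersurfaces in $\mathbb{P}^{n-1}$ gives a similar example with $d=n$. Consequently you cannot reduce to the situation of Lemma~\ref{lemma:intransitive-2}.

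The routes you sketch toward rational connectedness also break down. Lines through a general point only give something when $d\leqslant n-1$: for $d=n$ your count $n-1-d=-1$ yields nothing, and indeed a general point of a general degree-$n$ hypersurface lies on no line. The claim that the general fiber becomes log terminal and ``Fano-like'' fails in the examples above, where the vertex is log canonical but not klt. Graber--Harris--Starr does not upgrade uniruled fibers to rationally connected ones.

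What is true, and all your last paragraph actually uses, is that the general fiber is \emph{uniruled}. This already makes $K_Y$ not pseudo-effective over $\mathbb{P}^1$, so the relative $G$-MMP ends with a $G$-Mori fiber space over a base that surjects onto $\mathbb{P}^1$. The missing ingredient is therefore a proof that an arbitrary irreducible, possibly non-normal, hypersurface $S$ of degree $d\leqslant n$ is uniruled. The paper gets this from Miyaoka--Mori's criterion \cite[Theorem~1]{MiyaokaMori}. One way to apply it is to pass to the normalization $\nu\colon S^\nu\to S$. There $K_{S^\nu}=\nu^*\mathcal{O}_S(d-n-1)-\mathfrak{C}$ with $\mathfrak{C}\geqslant 0$ the conductor. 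A general complete-intersection curve through a general point then lies in the smooth locus of $S^\nu$ and has negative intersection with $K_{S^\nu}$. Bend-and-break then gives a rational curve through that point.
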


\begin{proof}
Replacing $\mathcal{P}$ by its mobile part, we may assume that $\mathcal{P}$ is mobile.
The pencil $\mathcal{P}$ gives a $G$-equivariant rational map $\psi\colon \mathbb{P}^n\dasharrow\mathbb{P}^1$.
Equivariantly resolving the~indeterminacy of this map and the singularities of the resulting variety, we obtain a commutative diagram
$$
\xymatrix{
& Y\ar@{->}[ld]_\pi\ar@{->}[rd]^{\phi} & \\
\mathbb{P}^n\ar@{-->}[rr]^\psi  && \PP^1}
$$
where $Y$ is smooth. Furthermore,
passing to the Stein factorization of $\phi$, we may assume that
a general fiber of $\phi$ is irreducible. Observe that the fibers of $\phi$ are
strict transforms of the elements of the pencil~$\mathcal{P}$, which are (possibly singular) hypersurfaces of degree $d$.
Such hypersurfaces are uniruled by \cite[Theorem~1]{MiyaokaMori}.
Therefore, applying equivariant relative
Minimal Model Program over~$\mathbb{P}^1$ to~$Y$,
we obtain a $G$-birational map from $\mathbb{P}^n$ to a $G$-Mori fiber space with positive-dimensional base, which implies that~$\mathbb{P}^n$ is not $G$-birationally rigid.
\end{proof}

For the simplest three-dimensional application of Lemma~\ref{lemma:Pn-pencil}, see \cite[Example~1.2]{CheltsovSarikyan2023}. For higher-dimensional applications, see Sections~\ref{section:P4}, \ref{section:P5}, and \ref{section:P6} below.

\begin{lemma}
\label{lemma:Pn-ci}
Suppose that~$\mathbb{P}^n$ contains a $G$-irreducible reduced complete intersection $X=F_{d_1}\cap F_{d_2}$ such that $X$ has hypersurface singularities
and $d_1<d_2\leqslant n$, where~$F_{d_1}$ and $F_{d_2}$ are hypersurfaces in $\mathbb{P}^n$ of degree $d_1$ and $d_2$, respectively. Then $\mathbb{P}^n$ is not $G$-birationally rigid.
\end{lemma}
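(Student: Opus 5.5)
We will construct a $G$-equivariant rational map from $\mathbb{P}^n$ onto a positive-dimensional base whose general fibers are rationally connected, as in the proofs of Lemmas~\ref{lemma:intransitive-2} and~\ref{lemma:Pn-pencil}. Since $X$ is $G$-invariant and reduced, the ideal of $X$ is $G$-invariant. The main tool is the linear system $\mathcal{M}\subset|\mathcal{O}_{\mathbb{P}^n}(d_2)|$ of hypersurfaces of degree $d_2$ containing $X$. Because $d_1<d_2$, the hypersurface $F_{d_1}$ is determined by $X$: it is the unique element, up to scaling, of the degree-$d_1$ part of the ideal. So $F_{d_1}$ is $G$-invariant, and
$$
\mathcal{M}=\bigl\{F_{d_1}\cdot H_{d_2-d_1}\bigr\}+\langle F_{d_2}\rangle ,
$$
which is $G$-invariant of dimension $\binom{n+d_2-d_1}{n}$.

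The map given by $\mathcal{M}$ is
$$
\psi\colon\mathbb{P}^n\dasharrow\mathbb{P}^N,\qquad x\mapsto\bigl[F_{d_1}x^{I}:F_{d_2}\bigr],
$$
where $x^I$ ranges over monomials of degree $d_2-d_1$. Its image is a cone over the Veronese image of $\mathbb{P}^n$, so it is birational to the weighted projective space $\mathbb{P}(1^{n+1},d_2-d_1)$. This gives nothing, so $\mathcal{M}$ is not the right system.

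Instead we will use the $G$-invariant pencil-like structure in which $F_{d_2}$ varies modulo $F_{d_1}$. Consider the rational map
$$
\mathbb{P}^n\dasharrow\mathbb{P}(d_1,d_2),\qquad x\mapsto\bigl(F_{d_1}(x):F_{d_2}(x)\bigr).
$$
The subtlety is that $F_{d_2}$ is only defined modulo $F_{d_1}\cdot(\text{forms of degree }d_2-d_1)$, so $G$ acts on it by a twisted, affine action rather than linearly. We will fix this by averaging. The space $W$ of degree-$d_2$ forms vanishing on $X$ is $G$-invariant, and it contains the $G$-invariant subspace $F_{d_1}\cdot S^{d_2-d_1}$ of codimension one. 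The semi-invariant lift of $\widehat{G}$ acts on $W$ completely reducibly, so by Maschke's theorem there is a $G$-invariant complement. This complement is spanned by some $F_{d_2}'$ with the same zero locus intersection $X$ with $F_{d_1}$, and it is semi-invariant. Replacing $F_{d_2}$ by $F_{d_2}'$, both $F_{d_1}$ and $F_{d_2}$ become semi-invariants, so the hypersurfaces $\lambda F_{d_1}^{d_2}+\mu F_{d_2}^{d_1}$ form a $G$-invariant pencil. However its members have degree $d_1d_2$, which may exceed $n$, so Lemma~\ref{lemma:Pn-pencil} does not apply directly.

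We therefore argue directly. The general fiber of $x\mapsto F_{d_2}^{d_1}/F_{d_1}^{d_2}$ is, up to a finite cover, the hypersurface $F_{d_2}-tF_{d_1}G_{d_2-d_1}$. A better route is to blow up $X$. Let $\pi\colon Y\to\mathbb{P}^n$ be the blow up of $X$. We expect the linear system $|d_2H-E|$ to be base point free, giving a $G$-morphism to the weighted projective space or cone above whose general fiber is a degree-$d_1$ hypersurface in a hyperplane-like subvariety, which is rationally connected since $d_1<n$. The hypersurface singularities hypothesis guarantees that $Y$ is normal with mild singularities and is used for the MMP. We then run a $G$-equivariant relative MMP over the base and conclude as in Lemma~\ref{lemma:Pn-pencil}. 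The main obstacle will be identifying a genuinely positive-dimensional base with rationally connected fibers. If the above cone approach fails, the fallback is a $G$-Sarkisov link: blow up $X$, show $Y$ is a weak Fano with $-K_Y=(n+1)H-E$ big and nef (using $d_1+d_2\leqslant 2n$), and run the two-ray game. The other contraction should be a fibration or a non-isomorphic Fano, and either outcome contradicts rigidity.
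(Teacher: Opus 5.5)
There is a genuine gap. The proposal never reaches a complete argument, and the construction that does work is discarded for a wrong reason.

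\textbf{The system $\mathcal{M}$ is the right one.} The image of $\psi$ is $n$-dimensional, so it cannot be the cone over the Veronese variety, which has dimension $n+1$. The image is the hypersurface $Y=\{yF_{d_1}(x)=F_{d_2}(x)\}$ of degree $d_2$ in $\PP(1^{n+1},d_2-d_1)$, lying inside that cone, and $\psi$ is birational onto it. This is exactly the paper's proof. Let $\pi\colon V\to\PP^n$ be the blow up of $X$. Then $|d_2H-E|$ is base point free on $V$. It is also trivial on $\widetilde F_{d_1}\simeq F_{d_1}$, because $E\vert_{\widetilde F_{d_1}}=X\sim d_2H\vert_{F_{d_1}}$. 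Hence it defines a birational morphism $\eta\colon V\to Y$ contracting $\widetilde F_{d_1}$ to a point. In particular, your expectation that $|d_2H-E|$ gives a fibration whose fibres are degree-$d_1$ hypersurfaces is wrong.

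\textbf{The map to $\PP(d_1,d_2)$ does not help either.} Its fibres need not be uniruled. For the Bring curve of Example~\ref{example:Bring}, the fibres are the sextics $F_3^2=tF_2^3$. Their strict transforms on the nodal cubic $Y\subset\PP^4$ are the $(2,3)$ complete intersections $Y\cap\{y^2=tF_2\}$, i.e.\ K3 surfaces.

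\textbf{The fallback two-ray game lacks its content.} It is the right framework, but the relevant inequality is $d_2\leqslant n$; the inequality $d_1+d_2\leqslant 2n$ alone would not even give nefness. With $d_2\leqslant n$, the class $-K_V=(n+1-d_2)H+(d_2H-E)$ is ample. You then still have to do the following.
\begin{enumerate}
\item[(i)] Use the hypersurface-singularity hypothesis to see that $V$ has only cA singularities in codimension at least $3$, hence is terminal.
\item[(ii)] Identify the second extremal contraction as $\eta$.
\item[(iii)] Check that $Y$ is terminal via $K_V\sim_{\mathbb{Q}}\eta^*K_Y+\frac{n+1-d_2}{d_2-d_1}\widetilde F_{d_1}$. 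The coefficient is positive exactly because $d_2\leqslant n$.
\item[(iv)] Check that $\mathrm{rk}\,\mathrm{Cl}(Y)^G=1$, which follows from the $G$-irreducibility of $X$, and that $Y\not\simeq\PP^n$. For the latter, $Y$ is singular at $\eta(\widetilde F_{d_1})$ unless $(d_1,d_2)=(1,2)$ and $X$ is smooth, in which case $Y$ is a smooth quadric.
\end{enumerate}

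Your claim that ``either outcome contradicts rigidity'' is not automatic. A two-ray game can end at a non-terminal variety, in which case there is no Sarkisov link. It can also return to $\PP^n$, which only gives a birational selfmap; that contradicts superrigidity but not rigidity. Steps (iii) and (iv) are precisely what rule these possibilities out, and they are absent from the proposal.
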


\begin{proof}
Let $\pi\colon V\to \mathbb{P}^n$ be the blow up of the complete intersection $X$, and let $\widetilde{F}_{d_1}$ be the strict transform on $V$ of the hypersurface $F_{d_1}$. Then $V$ has only $\mathrm{cA}$-type singularities of codimension at least $3$. By \cite[1.42]{Kollar-Sing}, this implies that $V$ has terminal singularities. On the other hand, we have the following $G$-equivariant diagram:
$$
\xymatrix{
&V\ar@{->}[dl]_{\pi}\ar@{->}[dr]^{\eta}&\\
\mathbb{P}^n && Y}
$$
where $Y$ is a (singular) Fano variety, and $\eta$ is a birational morphism that contracts $\widetilde{F}_{d_1}$ to a~point of the variety~$Y$, which is singular unless $(d_1,d_2)=(1,2)$ and $X$ is smooth. Observe that
$$
K_V\sim_{\mathbb{Q}}\eta^*K_Y+\frac{n+1-d_2}{d_2-d_1}\widetilde{F}_{d_1}.
$$
This implies that $Y$ has terminal singularities. Moreover, by construction we have $\mathrm{rk}\,\mathrm{Cl}^G(Y)=1$ and $Y\not\simeq \mathbb{P}^n$, so that $\mathbb{P}^n$ is not $G$-birationally rigid.
\end{proof}

Let us present the simplest application of Lemma~\ref{lemma:Pn-ci}
(for another application, see the proof of Corollary~\ref{corollary:n-5-PSp4}).

\begin{example}
\label{example:Bring}
Consider the action of the group $\mathfrak{S}_5$ on $\mathbb{P}^3$ given by its irreducible $4$-dimensional representation. Then $\mathbb{P}^3$ contains a $\mathfrak{S}_5$-invariant smooth complete intersection $X=F_{2}\cap F_{3}$ such that $F_{2}$ and $F_{3}$ are $\mathfrak{S}_5$-invariant quadric and cubic surfaces, respectively. Note that $X$ is a smooth curve of genus $4$ known as the Bring's curve \cite{Edge}. By Lemma~\ref{lemma:Pn-ci}, the projective space $\PP^3$ is not $\mathfrak{S}_5$-birationally rigid. Moreover, arguing as in the proof of Lemma~\ref{lemma:Pn-ci}, we obtain the following $\mathfrak{S}_5$-equivariant diagram:
$$
\xymatrix{
&V\ar@{->}[dl]_{\pi}\ar@{->}[dr]^{\eta}&\\
\mathbb{P}^3 && Y}
$$
where $\pi$ is the blow up of the curve $X$, the threefold $Y$ is a cubic hypersurface in $\mathbb{P}^4$ with one isolated ordinary double point, and $\eta$ is the blow up of this point. Note that $|\mathcal{O}_{\mathbb{P}^3}(d)|$ does not contain $\mathfrak{S}_5$-invariant pencils for $d\leqslant 3$, so Lemma~\ref{lemma:Pn-pencil} is not applicable here.
On the other hand, there are other ways to see that $\PP^3$  is not $\mathfrak{S}_5$-birationally rigid.
For instance, one can show that~$\PP^3$ is $\mathfrak{S}_5$-birationally equivalent to a $\mathfrak{S}_5$-conic bundle \cite[Example~2.7]{CheltsovShramov2019}.
\end{example}

The next lemma will be used in Section~\ref{section:P5}.

\begin{lemma}
\label{lemma:P5-net}
Suppose that $n=5$, the action of $G$ on $\PP^5$ is transitive, and the linear
system~\mbox{$|\mathcal{O}_{\mathbb{P}^5}(2)|$} contains a $G$-invariant two-dimensional
linear subsystem $\mathcal{M}$. Suppose also that
the action of~$G$ on $\mathcal{M}\simeq \PP^2$ is faithful and transitive,
and $G$ cannot act faithfully on a rational curve. Then~$\PP^5$ is not $G$-birationally rigid.
\end{lemma}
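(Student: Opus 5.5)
The net $\mathcal{M}$ gives a $G$-equivariant rational map $\psi\colon\PP^5\dasharrow\PP^2$, and I would show that it produces a $G$-Mori fiber space over a positive-dimensional base, exactly as in Lemma~\ref{lemma:Pn-pencil}. Let $q_0,q_1,q_2$ be a basis of quadrics spanning $\mathcal{M}$. First I discard a fixed component. If the quadrics in $\mathcal{M}$ had a common divisorial component, it would be a $G$-invariant hyperplane (the degree is at most $2$, and a degree-$2$ fixed part would make $\mathcal{M}$ zero-dimensional). A $G$-invariant hyperplane contradicts transitivity. So $\mathcal{M}$ is mobile, and $\psi$ is a $G$-equivariant rational map given by $x\mapsto[q_0:q_1:q_2]$.

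Next I check that $\psi$ is dominant. If it were not, its image would be a $G$-invariant curve or point in $\mathcal{M}^\vee\simeq\PP^2$. A point is excluded because $G$ acts transitively (irreducibly) on $\mathcal{M}$, hence has no fixed points on the dual plane. Consider instead an irreducible curve $C\subset\PP^2$. It is the image of $\PP^5$, so it is unirational and hence rational. $G$ acts on $C$, and faithfully, since $G$ acts faithfully on $\PP^2$ and $C$ spans $\PP^2$ (because $q_0,q_1,q_2$ are linearly independent). Then $G$ acts faithfully on the normalization $\PP^1$, which contradicts the hypothesis. If the image were reducible, it would not be the image of the irreducible $\PP^5$. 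So $\psi$ is dominant onto $\PP^2$.

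Now I resolve equivariantly: take a $G$-equivariant resolution $\pi\colon Y\to\PP^5$ with a morphism $\phi\colon Y\to\PP^2$ and $Y$ smooth, then pass to the Stein factorization $Y\to S\to\PP^2$, so that $\phi\colon Y\to S$ has connected fibers and $S$ is a normal surface with a $G$-action. A general fiber of $\phi$ is (a component of) the strict transform of a complete intersection of two quadrics in $\PP^5$, that is, a threefold $Q\cap Q'$ of degree $4\leqslant 5$. Such a variety is rationally connected; indeed it is a Fano complete intersection, or uniruled by \cite{MiyaokaMori}, as in the proof of Lemma~\ref{lemma:Pn-pencil}. Rational connectedness of the general fiber means $K_Y$ is not pseudo-effective over $S$. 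Running the $G$-equivariant relative MMP for $Y$ over $S$ therefore ends with a $G$-Mori fiber space $Y'\to Z$, where $Z\to S$ and $\dim Z\geqslant\dim S=2>0$. Thus $\PP^5$ is $G$-birational to a $G$-Mori fiber space with positive-dimensional base, and so it is not $G$-birationally rigid.

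The main obstacle is the dominance step, that is, ruling out a rational curve as the image. The key point there is to ensure that the induced $G$-action on that curve is faithful; this follows from the curve being nondegenerate in $\PP^2$ together with the faithfulness of $G$ on $\mathcal{M}$. One subtlety is that the quadrics could be linearly dependent modulo an equation of the image, which would make the image degenerate. But linear independence of $q_0,q_1,q_2$ means the image spans $\PP^2$, so that case does not occur.
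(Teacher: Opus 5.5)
Your proof is correct and follows the paper's argument: mobility of $\mathcal{M}$ from the absence of $G$-invariant hyperplanes, surjectivity of the map to $\PP^2$ because its image can be neither a $G$-fixed point nor a rational curve with a faithful $G$-action, and then a $G$-equivariant relative MMP over the base, using that the fibers are components of threefolds $M_1\cap M_2$ of degree at most $4$. Your Stein factorization and your explicit faithfulness check via nondegeneracy of the image curve are harmless refinements. Note, however, that only uniruledness of the general fiber is needed, and that is what the Miyaoka--Mori reference supplies. Rational connectedness is an overstatement, since a degenerate complete intersection of two quadrics, e.g.\ a cone over an elliptic quartic curve, is uniruled but not rationally connected.
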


\begin{proof}
Since there are no $G$-invariant hyperplanes in $\PP^5$, we see that the linear system~$\mathcal{M}$ is mobile.

Let $\chi\colon \mathbb{P}^5\dasharrow\mathbb{P}^2$ be the rational map given by the net $\mathcal{M}$. Then we obtain a $G$-equivariant commutative diagram
$$
\xymatrix{
&X\ar@{->}[dl]_{\pi}\ar@{->}[dr]^{\eta}\\%
\mathbb{P}^5\ar@{-->}[rr]^{\chi}&&\mathbb{P}^2}
$$
where $\pi$ is a $G$-equivariant resolution of indeterminacies of the rational map $\chi$ with smooth~$X$ (cf.~the~proof of Lemma~\ref{lemma:Pn-pencil}). Recall that $G$ acts faithfully on $\mathbb{P}^2$. Hence the morphism~$\eta$ is surjective, because otherwise
the image of $\eta$ would be either a point, or a rational curve. On the other hand,
by our assumptions there are no $G$-invariant points in $\PP^2$, and
$G$ cannot act faithfully on a rational curve.

Let $F$ be a general fiber of the morphism $\eta$. Then $\pi$ induces a birational morphism $F\to\pi(F)$, and
$$
\pi(F)\subset M_1\cap M_2
$$
for two distinct quadric hypersurfaces $M_1$ and $M_2$ in the net $\mathcal{M}$. Hence, every irreducible component of $\pi(F)$ is an irreducible component of the intersection $M_1\cap M_2$. This implies that all irreducible components
of $F$ are uniruled. Thus, applying $G$-equivariant Minimal Model Program to $X$ over $\mathbb{P}^2$, we obtain a $G$-birational map from $X$ to a $G$-Mori fibred space with a positive dimensional base. In particular, $\mathbb{P}^5$ is not $G$-birationally rigid.
\end{proof}

Let us conclude this section with the following lemma, which will be used in the proofs of Theorems~D and E in Sections~\ref{section:P5} and~\ref{section:P7}, respectively.
For the basic information concerning weighted projective spaces, the reader is referred
to~\cite{Dolgachev} or~\cite{CostyaVitya}.

\begin{lemma}
\label{lemma:weighted-degree}
Let $\mathbb{P}=\mathbb{P}(w_0,\ldots,w_d)=\mathrm{Proj}(\mathbb{C}[y_0,\ldots,y_d])$ be a well formed weighted projective space of dimension $d$ such that $w_0\leqslant\ldots\leqslant w_d$,
and let $Y\subset\mathbb{P}$ be an irreducible subvariety of dimension $n$. Then
$$
\big(\mathcal{O}_{\mathbb{P}}(1)\big)^n\cdot Y\geqslant \frac{1}{w_{d-n}w_{d-n+1}\ldots w_d}.
$$
Moreover, if $Y$ is not contained in the subset of $\PP$ defined by $y_0=0$, then
$$
\big(\mathcal{O}_{\mathbb{P}}(1)\big)^n\cdot Y\geqslant \frac{1}{w_0w_{d-n+1}w_{d-n+2}\ldots w_d}.
$$
\end{lemma}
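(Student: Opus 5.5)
The plan is to reduce to a statement about degrees of finite maps, using projections to coordinate subspaces. The degree $(\mathcal{O}_{\mathbb{P}}(1))^n\cdot Y$ is a positive rational number. Multiplying by $\mathrm{lcm}$-type factors should produce a positive integer, namely the degree of a finite map onto a weighted projective space of dimension $n$.

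We choose $n+1$ variables $y_{i_0},\ldots,y_{i_n}$ such that $Y$ does not meet the locus $\{y_{i_0}=\cdots=y_{i_n}=0\}$. Equivalently, the restriction to $Y$ of the projection
$$
p\colon \mathbb{P}\dasharrow \mathbb{P}(w_{i_0},\ldots,w_{i_n})
$$
is a morphism. Such a choice exists by a general-position argument done one step at a time. Let $Z_k$ be the locus in $Y$ where the chosen $k$ coordinates vanish. If $Z_k\neq\emptyset$, some remaining coordinate does not vanish identically on every component of $Z_k$. We want it to cut down the dimension of every component. Since a single monomial coordinate may vanish on some components and not others, we pass instead to general weighted-homogeneous polynomials.

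The cleaner route is as follows. Set $m=\mathrm{lcm}(w_0,\ldots,w_d)$ and take $n+1$ general sections of $\mathcal{O}_{\mathbb{P}}(m)$. They have no common zero on $Y$, since $\mathcal{O}(m)$ is basepoint free, so
$$
m^n\,(\mathcal{O}(1))^n\cdot Y\in\mathbb{Z}_{>0}.
$$
This gives only $1/m^n$, which is too weak.

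So we refine and use sections of degree exactly $w_j$ where possible. The key step is to choose indices $j_1,\ldots,j_n$ and sections $s_k$ of $\mathcal{O}(w_{j_k})$, each a general linear combination of monomials of degree $w_{j_k}$, with the following properties:
\begin{itemize}
\item each $s_k$ contains the monomial $y_{j_k}$;
\item $Y\cap\{s_1=\cdots=s_n=0\}$ is finite and nonempty.
\end{itemize}
The intersection $(\mathcal{O}(1))^n\cdot Y$ then equals
$$
\frac{1}{w_{j_1}\cdots w_{j_n}}\sum_{P}\mathrm{mult}_P,
$$
where each point $P$ contributes at least $1/|\mathrm{stab}|$ because of the orbifold structure. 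To get the cleanest bound, we instead compute on the orbifold chart at $P$. If $P$ has $y_l\neq0$, the local contribution is at least $1/w_l$ divided by the local index. Taking all $j_k$ among the $n$ largest weights, together with the possible point weight $w_l\le w_d$, yields the bound $1/(w_{d-n}\cdots w_d)$.

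We aim for the following sharper formulation. There are $n+1$ coordinates, consisting of the coordinate $y_l$ nonvanishing at a point of the finite intersection together with the $n$ cutting degrees. All of them have index at least $d-n$ after reordering, because a general point of $Y$ avoids only conditions on the largest coordinates. Alternatively, use the smallest available coordinate. For the second claim, the hypothesis $Y\not\subset\{y_0=0\}$ allows the first cut to be by a general linear combination, or the point coordinate to be $y_0$ itself. This replaces $w_{d-n}$ by $w_0$.

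The main obstacle is making the local multiplicity estimate rigorous on a singular weighted projective space. We must show that each isolated point $P$ of $Y\cap\{s_1=\cdots=s_n=0\}$, with $y_l(P)\neq0$, contributes at least
$$
\frac{1}{w_l\prod_k w_{j_k}}.
$$
We would try to do this on the affine chart $\mathbb{C}^d/\mumu_{w_l}$. Pulling back to $\mathbb{C}^d$, the preimage of $P$ has intersection multiplicity at least $1$, and the cover has degree $w_l$, which gives the bound. Careful index bookkeeping is needed to ensure that $l$ and the $j_k$ can be taken among the top $n+1$ weights, respectively with $l=0$ in the second case.
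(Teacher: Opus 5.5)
\textbf{There is a genuine gap.} The step you call the main obstacle is the easy one. If $P$ is an isolated point of $Y\cap\{s_1=\cdots=s_n=0\}$ with $y_l(P)\neq0$, pulling back to the chart $\mathbb{C}^d\to\mathbb{C}^d/\mumu_{w_l}$ and using the projection formula does show that $P$ contributes at least $1/(w_l\prod_k\deg s_k)$.

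The real difficulty is the ``index bookkeeping'' you postpone. It is the entire content of the lemma, and the recipe you sketch for it fails in three places.
\begin{itemize}
\item Sections of the top degrees need not cut $Y$ properly. In the well-formed $\mathbb{P}(3,4,5)$ with $Y=\{y_2=0\}$ (so $n=1$), there is no monomial of degree $5$ in $y_0,y_1$. Hence every section of $\mathcal{O}_{\mathbb{P}}(5)$ is a multiple of $y_2$ and contains $Y$.
\item Even when cuts of degrees $w_{d-n+1},\ldots,w_d$ exist, knowing only $w_l\leqslant w_d$ gives the product $w_lw_{d-n+1}\cdots w_d$. This exceeds $w_{d-n}\cdots w_d$ unless $w_l\leqslant w_{d-n}$. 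What you need is that $l,j_1,\ldots,j_n$ are $n+1$ \emph{distinct} indices, and nothing in the proposal produces this. The remark that a general point ``avoids only conditions on the largest coordinates'' is not an argument.
\item Your preliminary claim that some $n+1$ coordinates have no common zero on $Y$ is false in general. A twisted cubic through the four coordinate points of $\mathbb{P}^3$ meets all six coordinate lines.
\end{itemize}

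\textbf{How to close it.} The missing idea is to let $Y$ dictate the cuts and to follow a single component at a time.
\begin{itemize}
\item Cut $Y$ by the coordinate $y_{i_1}$ of largest index not vanishing on $Y$.
\item Keep one component of the resulting effective cycle. All coefficients are positive, so discarding the others only lowers the degree.
\item Cut that component by the coordinate of largest index not vanishing on it, and continue.
\end{itemize}
Each new component lies in $\{y_j=0\mid j\geqslant i_k\}$, so $i_1>\cdots>i_n$. The final point $P$ lies in $\{y_j=0\mid j\geqslant i_n\}$, so any $l$ with $y_l(P)\neq0$ satisfies $l<i_n$.

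The coefficients along this chain can be fractional on singular strata. However, your chart argument shows that their product telescopes to at least $1/w_l$; alternatively, count multiplicities on the affine cone in $\mathbb{C}^{d+1}$, where they are positive integers. This gives the bound $1/(w_lw_{i_n}\cdots w_{i_1})$ with $n+1$ distinct indices, which is at least $1/(w_{d-n}\cdots w_d)$.

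For the second inequality, make the first cut $y_0$ itself, which is allowed because $Y\not\subset\{y_0=0\}$. The $n+1$ indices are then $0$ together with $n$ distinct positive ones.

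\textbf{Comparison with the paper.} The paper does this bookkeeping in one stroke. It uses a Gr\"obner degeneration of $Y$ to $\operatorname{Proj}(S/\operatorname{in}(I_Y))$, whose cycle is a positive combination of coordinate $n$-planes $L_J$ with $|J|=n+1$. Since $(\mathcal{O}_{\mathbb{P}}(1))^n\cdot L_J=g_J/\prod_{j\in J}w_j$, each term is automatically bounded below by the reciprocal of a product of $n+1$ distinct weights.
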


\begin{proof}
Set $H=c_1(\mathcal O_{\mathbb P}(1))\in A^1(\mathbb P)_{\mathbb Q}$. For an index set $J=\{j_0,\ldots,j_n\}\subset\{0,\ldots,d\}$,
let
$$
L_J=\{y_i=0\mid i\notin J\}\subset\mathbb P
$$
and put $g_J=\gcd(w_{j_0},\ldots,w_{j_n})$. Then
$$
H^n\cdot L_J=\frac{g_J}{w_{j_0}w_{j_1}\ldots w_{j_n}}.
$$
Now, let $S=\mathbb C[y_0,\ldots,y_d]$ with $\deg(y_i)=w_i$, and let $I_Y\subset S$ be the weighted homogeneous prime ideal of the subvariety $Y$. Fix the lexicographic monomial order $y_d\succ y_{d-1}\succ\ldots\succ y_1\succ y_0$, and choose an integral weight vector in the Gr\"obner cone of this initial ideal. Then the Gr\"obner construction from  \cite[Section 15.8]{Eisenbud} gives a flat family $\mathcal Y\subset\mathbb P\times\mathbb A^1$ whose fibre over $1$ is $Y$ and whose fibre over $0$ is
$$
Y_0=\operatorname{Proj}\bigl(S/\operatorname{in}(I_Y)\bigr);
$$
Then $\mathcal Y$ is integral, and every irreducible component of its special fibre $Y_0$ has dimension $n$.

The ideal $\operatorname{in}(I_Y)$ is monomial. Its minimal prime ideals are therefore coordinate prime ideals, and the fundamental cycle of $Y_0$ has the form
\begin{equation}
\label{eq:initial-cycle}
[Y_0]=\sum_{J\in\mathcal J}m_J[L_J],
\qquad
m_J\in\mathbb Z_{>0},
\qquad
|J|=n+1.
\end{equation}
Since $[Y]=[Y_0]$ in $A_n(\mathbb P)$, it follows from  \eqref{eq:initial-cycle} that
$$
H^n\cdot Y=\sum_{J\in\mathcal J}m_J\frac{g_J}{\prod_{j\in J}w_j}.
$$
For every $J\in\mathcal J$, we have $g_J\geqslant1$ and
$$
\prod_{j\in J}w_j\leqslant w_{d-n}w_{d-n+1}\ldots w_d,
$$
because $J$ contains $n+1$ indices and the weights are non-decreasing. Since the sum above is non-empty and all its coefficients are positive, this gives us the first inequality
$$
H^n\cdot Y \geqslant \frac{1}{w_{d-n}w_{d-n+1}\ldots w_d}.
$$
The proof of the second inequality is similar, so we omit it.
\end{proof}

\section{Toric symmetry of projective spaces}
\label{section:toric}

In this section, we discuss equivariant birational geometry of the projective space  $\mathbb{P}^n$, $n\geqslant 2$, with respect to the action of a group that contains a maximal torus in $\mathrm{PGL}_{n+1}(\mathbb{C})$ as a normal subgroup.
After some preparation, we formulate the technical result
to be proved in Section~\ref{section:technical}, and use it to deduce Theorem~A.

The groups we are interested in can be described as follows. Let $\mathbb{T}$ be the~maximal torus in~\mbox{$\mathrm{PGL}_{n+1}(\mathbb{C})$} consisting of the~transformations
$$
(t_1,\ldots,t_n)\colon \big[x_1:\ldots:x_{n}:x_{n+1}\big]\mapsto\big[t_1x_1:\ldots:t_nx_{n}:x_{n+1}\big],
$$
let $\mathbb{W}$ be its~normalizer in $\mathrm{PGL}_{n+1}(\mathbb{C})$, and let $\GG$ be a subgroup in $\mathrm{PGL}_{n+1}(\mathbb{C})$ such that
$$
\mathbb{T}\subseteq \GG\subseteq\mathbb{W}.
$$
Then both $\GG$ and $\mathbb{W}$ permute the $\mathbb{T}$-invariant points:
$$
P_1=[1:0:\ldots:0], P_2=[0:1:0:\ldots:0], \ldots, P_{n+1}=[0:\ldots:0:1].
$$
The $\mathbb{W}$-action gives an~epimorphism $\Upsilon\colon\mathbb{W}\to\mathfrak{S}_{n+1}$. Set  $G=\Upsilon(\GG)$, and let $\upsilon\colon \GG\to G$ be the induced epimorphism. Then we have the~following exact sequences of groups:
$$
\xymatrix{1\ar@{->}[rr] &&  \mathbb{T}\ar@{=}[d]\ar@{->}[rr] && \GG\ar@{->}[rr]^{\upsilon}\ar@{_{(}->}[d] && G\ar@{_{(}->}[d]\ar@{->}[rr] && 1\\
1\ar@{->}[rr] && \mathbb{T}\ar@{->}[rr] && \mathbb{W}\ar@{->}[rr]^{\Upsilon} && \mathfrak{S}_{n+1}\ar@{->}[rr] && 1}
$$
Since $\Upsilon$ and $\upsilon$ have natural sections, we have $\mathbb{W}=\mathbb{T}\rtimes\mathfrak{S}_{n+1}$ and $\GG=\mathbb{T}\rtimes G$,
so we will occasionally consider $G$ and $\mathfrak{S}_{n+1}$ as subgroups in $\GG$ and $\mathbb{W}$, respectively, consisting of projective transformations given by standard permutation matrices. On the other hand, $\mathfrak{S}_{n+1}$ acts on $\mathbb{T}$ by conjugations, which gives a monomorphism
$$
\mathfrak{S}_{n+1}\hookrightarrow\mathrm{Aut}(\mathbb{T})\simeq\mathrm{GL}_{n}(\mathbb{Z}).
$$
Thus, occasionally, we may also consider $G$ and $\mathfrak{S}_{n+1}$ as subgroups in $\mathrm{GL}_{n}(\mathbb{Z})$. We will say that the
subgroup~\mbox{$G\subset \mathrm{GL}_{n}(\mathbb{Z})$}
is irreducible, if $\mathbb{Z}^{n}$ is an irreducible $G$-module.

\begin{proposition}[{\cite[Proposition~3.7]{CheltsovDuboulozKishimoto2023}}]
\label{proposition:solid}
The following two conditions are equivalent:
\begin{itemize}
\item the image of the group $G$ in $\mathrm{GL}_{n}(\mathbb{Z})$ is an irreducible subgroup;
\item $\mathbb{P}^n$ is not $\GG$-birational to a $\GG$-Mori fiber space with a positive-dimensional base.
\end{itemize}
\end{proposition}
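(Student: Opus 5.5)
Both directions can be proved by comparing $\GG$-Mori fiber spaces with toric $G$-equivariant structures. Write $N=\mathrm{Hom}(\mathbb{G}_m,\mathbb{T})\simeq\ZZ^n$ for the cocharacter lattice. The standard fan of $\PP^n$ has rays $e_1,\ldots,e_n,e_{n+1}=-\sum e_i$, and $G\subset\mathfrak{S}_{n+1}$ acts on $N$ by permuting these rays. The key observation is that, because $\mathbb{T}\subset\GG$, any $\GG$-equivariant birational map of normal varieties with $\mathbb{T}$ acting on both sides is $\mathbb{T}$-equivariant. It is therefore a toric birational map, that is, a change of fans in the same $N_{\mathbb{R}}$, and it is compatible with the $G$-action on $N$.

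\textbf{Not irreducible $\Rightarrow$ there is a $\GG$-Mori fiber space over a positive-dimensional base.} Suppose $N$ contains a $G$-invariant sublattice $N'$ with $0<\operatorname{rk}N'<n$; after saturating, we may assume $N/N'$ is torsion free. The quotient map $N\to N/N'$ corresponds to a $\GG$-equivariant toric rational map $\PP^n\dasharrow Z$, where $Z$ is a toric variety for the torus $\mathbb{T}/\mathbb{T}'$ of positive dimension $n-\operatorname{rk}N'$. I plan to choose a $G$-invariant complete fan for $N/N'$, for instance the normal fan of the image of the simplex. Then I take a $G$-invariant toric resolution of the indeterminacy, whose general fibres are toric, hence rational. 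Running the $\GG$-equivariant relative MMP over $Z$ ends in a $\GG$-Mori fiber space over a positive-dimensional base, exactly as in the proof of Lemma~\ref{lemma:Pn-pencil}. This step is routine.

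\textbf{Irreducible $\Rightarrow$ no such fiber space.} Suppose $\PP^n\dasharrow Y$ is a $\GG$-birational map to a $\GG$-Mori fiber space $Y\to S$ with $\dim S>0$. Since $\mathbb{T}$ is connected, it acts on $Y$ and on $S$. The map is $\mathbb{T}$-equivariant and birational, so $Y$ is toric. Because $\mathbb{T}$ is connected, it also preserves every torus-invariant divisor, so the $G$-invariant extremal contraction $Y\to S$ is a toric morphism. A toric morphism from $Y$ onto a toric $S$ corresponds to a surjective lattice map $N\to N_S$, and its kernel $N'$ is a saturated sublattice of rank $\dim Y-\dim S$, which lies strictly between $0$ and $n$. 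Equivariance under $\GG=\mathbb{T}\rtimes G$ forces $N'$ to be $G$-invariant, contradicting irreducibility.

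The main obstacle is the rigorous justification that $Y$, $S$, and the contraction are toric and $G$-compatible. Here I would use that $\mathbb{T}$ is normal in $\GG$, so it acts on $Y$ through $\GG$, and that $Y$ contains a dense $\mathbb{T}$-orbit, namely the image of the open torus in $\PP^n$. A second point needs care: $G$ acts on $\mathbb{T}$ by conjugation, and this must match the induced action on $N$ that preserves $N'$. This compatibility is where the semidirect product structure $\GG=\mathbb{T}\rtimes G$ is used.
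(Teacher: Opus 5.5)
Your proposal is correct and follows essentially the same route as the proof of \cite[Proposition~3.7]{CheltsovDuboulozKishimoto2023}, which the paper cites rather than reproves. There, as in your argument, a $\GG$-Mori fibre space $\GG$-birational to $\mathbb{P}^n$ is shown to be toric with a toric contraction, so it yields the $G$-invariant saturated sublattice $\ker(N\to N_S)$ of intermediate rank; conversely, such a sublattice (reading ``irreducible'' rationally, as you do) produces a Mori fibre space over a positive-dimensional base via a $G$-invariant toric resolution of $\mathbb{P}^n\dasharrow Z$ followed by the equivariant relative MMP over $Z$.
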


\begin{remark}
In the notation of \cite{CheltsovDuboulozKishimoto2023}, Proposition~\ref{proposition:solid} says that $\mathbb{P}^n$ is $\GG$-birationally solid if and only if the image of the group $G$ in $\mathrm{GL}_{n}(\mathbb{Z})$ is irreducible.
\end{remark}

In particular, it follows from Proposition~\ref{proposition:solid}
that if $\mathbb{P}^n$ is not $\GG$-birational to a $\GG$-Mori fiber space with a positive-dimensional base, then $G$ is a transitive subgroup of $\mathfrak{S}_{n+1}$ in the sense of Definition~\ref{definition:primitive-transitive-permutation-group}.
In fact, we can say more.

\begin{lemma}
\label{lemma:toric-primitive}
If the group $G$ is not a primitive subgroup of $\mathfrak{S}_{n+1}$
in the sense of Definition~\ref{definition:primitive-transitive-permutation-group},
then~$\mathbb{P}^n$ is $\GG$-birational to a $\GG$-Mori fiber space with a positive-dimensional base.
\end{lemma}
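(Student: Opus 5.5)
If $G$ is intransitive on $\{P_1,\ldots,P_{n+1}\}$, Proposition~\ref{proposition:solid} already gives the conclusion. So we may assume $G$ is transitive but imprimitive. Then there is a nontrivial block system
$$
\{1,\ldots,n+1\}=B_1\sqcup\ldots\sqcup B_r,\qquad 1<r<n+1,\qquad |B_i|=k=\frac{n+1}{r}\geqslant 2,
$$
whose blocks are permuted by $G$.

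The plan is to mimic the proof of Lemma~\ref{lemma:intransitive-2}, now for the infinite group $\GG$. For each block $B_i$, let $\Lambda_i\subset\mathbb{P}^n$ be the coordinate linear subspace spanned by the points $P_j$ with $j\in B_i$, so $\dim\Lambda_i=k-1\geqslant 1$. These subspaces are $\mathbb{T}$-invariant, and since $G$ permutes the blocks, $\GG=\mathbb{T}\rtimes G$ permutes $\Lambda_1,\ldots,\Lambda_r$. Let $\psi_i\colon\mathbb{P}^n\dasharrow\Lambda_i$ be the linear projection from the span of the other $\Lambda_j$; in coordinates this is simply $x\mapsto(x_j)_{j\in B_i}$. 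The product map
$$
\psi=\psi_1\times\ldots\times\psi_r\colon\mathbb{P}^n\dasharrow\Lambda_1\times\ldots\times\Lambda_r
$$
is dominant and $\GG$-equivariant. Here $\GG$ acts on the product by letting $\mathbb{T}$ act factorwise and $G$ permute the factors. The base has dimension $r(k-1)=n+1-r\geqslant 1$, and its general fibers are linear subspaces of dimension $r-1\geqslant 1$, in fact the closures of orbits of the subtorus $\{(t\text{ constant on each block})\}$. So the fibers are rationally connected.

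The next step is to pass from this rational fibration to a genuine $\GG$-Mori fiber space. Equivariant resolution works for connected algebraic groups and their finite extensions, and $\GG$ is a linear algebraic group with identity component $\mathbb{T}$. So I would take a $\GG$-equivariant resolution $Y\to\mathbb{P}^n$ of the indeterminacy of $\psi$. Then I would run the $\GG$-equivariant MMP on $Y$ relative to the base, which terminates because the general fiber is uniruled, and this produces a $\GG$-Mori fiber space over a positive-dimensional base.

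The main obstacle is justifying the MMP for a non-finite group. To get around it, I would make everything explicitly toric. Every variety in sight is a $\mathbb{T}$-toric variety, and $\psi$ is a toric morphism after a toric resolution. A toric MMP is compatible with the finite group $G$ acting on the fan, and can be run on $G$-invariant extremal faces, equivalently $\GG$-extremal contractions. It therefore terminates in a $\GG$-Mori fiber space over a toric base of positive dimension, because the relative MMP over $\Lambda_1\times\ldots\times\Lambda_r$ never contracts the base.

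A quicker alternative avoids the MMP entirely. Write $\Lambda_1\times\ldots\times\Lambda_r$ as a toric $\GG$-variety with $\mathrm{rk}\,\mathrm{Pic}^G=1$, and exhibit directly a smooth toric model: blow up $\mathbb{P}^n$ along the $\GG$-invariant union of the spans of all but one $\Lambda_i$, so that $\psi$ becomes a $\mathbb{P}^{r-1}$-bundle over $\prod\Lambda_i$. If invariant-rank issues arise, I would relativize the MMP over the base as above.
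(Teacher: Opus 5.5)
Your proof is correct, but it takes a different route from the paper. The paper builds no fibration. It observes that the block sums $\sum_{j\in B_i}e_j$, $1\leqslant i\leqslant r$, span a $G$-invariant submodule of $\mathbb{Z}^{n+1}$. The image of this submodule in $\mathbb{Z}^n=\mathbb{Z}^{n+1}/\mathbb{Z}(e_1+\ldots+e_{n+1})$ has rank $r-1$, strictly between $0$ and $n$, so the $G$-lattice is reducible and Proposition~\ref{proposition:solid} concludes. The intransitive case is handled the same way, using the span of the $e_i$ in a small orbit.

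Your map $\psi$ is the geometric incarnation of that sublattice. The block sums are the cocharacters of the subtorus $S\subset\mathbb{T}$ acting by scalars on each block, and the fibres of $\psi$ are the closures of $S$-orbits. So you are reproving, in this special case, the relevant direction of Proposition~\ref{proposition:solid}, in the spirit of Lemma~\ref{lemma:intransitive-2}. The paper's version is two lines and uniform in both cases, but it rests on that imported proposition; yours is self-contained.

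In fact your ``quicker alternative'' needs no MMP at all. The ideals $(x_j\mid j\in B_i)$ of the spans of $\bigcup_{l\ne i}\Lambda_l$ are generated by disjoint sets of variables, so the ideal of their union equals their product. Its blow up is therefore the graph closure of $\psi$, namely the bundle of lines in $\bigoplus_i\mathrm{pr}_i^*\mathcal{O}_{\Lambda_i}(-1)$ over $B=\Lambda_1\times\ldots\times\Lambda_r$. This is a smooth $\GG$-equivariant $\mathbb{P}^{r-1}$-bundle of relative Picard rank one, hence already a $\GG$-Mori fibre space over a base of dimension $n+1-r\geqslant 1$. Two remarks on this version:
\begin{itemize}
\item For $r\geqslant 3$ these centres intersect, so the blow up is not along a smooth centre; the product-of-ideals observation is what makes the result smooth.
\item The condition $\mathrm{rk}\,\mathrm{Pic}(B)^G=1$ that you mention plays no role; only the relative invariant Picard rank matters.
\end{itemize}

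Your main MMP route is also fine. The standard justification is that $\mathbb{T}$ is connected and acts trivially on curve and divisor classes, so every step of a $G$-equivariant toric MMP is automatically $\GG$-equivariant. This is in the same spirit as the paper's proof of Theorem~A.
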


\begin{proof}
The group $\mathfrak{S}_{n+1}$ and its subgroup $G$ act by permutations on the homogeneous coordinates~\mbox{$x_1,\ldots,x_{n+1}$}.
The $\mathfrak{S}_{n+1}$-module $\ZZ^n$ can be identified with the quotient of $\ZZ^{n+1}$ with the basis
vectors~\mbox{$e_1,\ldots,e_{n+1}$} which are in bijection with these coordinates, by the submodule
generated by the vector~\mbox{$e_1+\ldots+e_{n+1}$}.

First, suppose that $G$ is not transitive in the sense of Definition~\ref{definition:primitive-transitive-permutation-group}.
Then for some integer~\mbox{$1\leqslant r\leqslant\frac{n+1}{2}$} the group $G$ permutes the vectors
$e_1,\ldots,e_r$, so that their images in $\ZZ^n$ span a non-trivial submodule. Therefore, $\mathbb{P}^n$ is $\GG$-birational to a $\GG$-Mori fiber space with a positive-dimensional base by Proposition~\ref{proposition:solid}.

Now suppose that $G$ is transitive but not primitive.
Then there is a partition
$$
\{e_1,\ldots,e_{n+1}\}=\Sigma^1\sqcup\ldots\sqcup \Sigma^k
$$
such that $1<|\Sigma^i|<n+1$, and the sets $\Sigma^i$ are permuted by $G$.
We may assume that $n+1=mk$, and
$$
\Sigma^1=\{e_1,\ldots,e_m\}, \ldots,
\Sigma^k=\{e_{(k-1)m+1},\ldots,e_{n+1}\}.
$$
Thus, $G$ preserves the submodule of $\ZZ^{n+1}$ spanned by the vectors
$$
v_1=e_1+\ldots+e_m,  \ldots,  v_k=e_{(k-1)m+1}+\ldots+e_{n+1},
$$
whose images span a non-trivial submodule of $\ZZ^n$. Therefore, $\mathbb{P}^n$ is again $\GG$-birational to a $\GG$-Mori fiber space with a positive-dimensional base by Proposition~\ref{proposition:solid}.
\end{proof}

Another useful result concerning $\GG$-birational maps of toric varieties is as follows.

\begin{proposition}[{\cite[Lemma~3.1]{CheltsovDuboulozKishimoto2023}}]
\label{proposition:G-non-rigid-via-modules}
Let $V_1$ and $V_2$ be $n$-dimensional toric varieties with the action of the $n$-dimensional torus
$\mathbb{T}$. Let $G$ be a finite group acting on $V_i$, $i=1,2$, such that $G$
normalizes the torus~\mbox{$\mathbb{T}\subset\mathrm{Aut}(V_i)$} and acts faithfully on $\mathbb{T}$.
Denote by $G_i$ the image of $G$ in
$$
\mathrm{Aut}(\mathbb{T})\simeq\GL_n(\ZZ)
$$
corresponding to these
two actions on $\mathbb{T}$, and set
$\GG_i=\mathbb{T}\rtimes G_i$.
The following two conditions are equivalent:
\begin{itemize}
\item the groups $G_1$ and $G_2$ are conjugate in $\GL_n(\ZZ)$;
\item the groups $\GG_i$ are isomorphic to each other, and there exists a
birational map $V_1\dasharrow V_2$ which is equivariant with respect to $\GG_1\simeq \GG_2$.
\end{itemize}
\end{proposition}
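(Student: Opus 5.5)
The plan is to prove both implications by working on the open torus orbits. For $i=1,2$, let $U_i\subset V_i$ be the open $\mathbb{T}$-orbit. It is a $\mathbb{T}$-torsor, and it is preserved by every automorphism of $V_i$ that normalizes $\mathbb{T}$. In particular it is preserved by the group generated by $\mathbb{T}$ and $G$, which is the group $\GG_i$. Both directions then reduce to a statement about $\GG_i$-equivariant isomorphisms $U_1\simeq U_2$. This suffices because any birational map $V_1\dasharrow V_2$ restricts to an isomorphism between dense open subsets.

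\emph{Conjugacy implies an equivariant birational map.} Suppose $AG_1A^{-1}=G_2$ for some $A\in\GL_n(\ZZ)$. Let $\alpha\in\mathrm{Aut}(\mathbb{T})$ be the algebraic group automorphism corresponding to $A$. Then $\alpha$ extends to an isomorphism $\phi\colon\GG_1=\mathbb{T}\rtimes G_1\to\GG_2=\mathbb{T}\rtimes G_2$, acting as $\alpha$ on $\mathbb{T}$ and as conjugation by $A$ on the finite parts.
\begin{itemize}
\item Since $\mathbb{T}$ acts simply transitively on $U_i$, choosing a point $u_i\in U_i$ identifies $U_i$ with $\GG_i/H_i$, where $H_i$ is the stabilizer of $u_i$. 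The subgroup $H_i$ is a complement to $\mathbb{T}$ in $\GG_i$.
\item If necessary, change the splitting so that $H_i$ is the standard complement $G_i$. Here the point to verify is that the description $\GG_i=\mathbb{T}\rtimes G_i$ is realized by the stabilizer of a point of $U_i$. I would check this by noting that $\GG_i$ acts on $U_i$ by affine transformations $x\mapsto c_g\cdot g(x)$, and that the resulting class in $H^1(G_i,\mathbb{T})$ is exactly what fixes the isomorphism type of $\GG_i$ together with its action.
\item With these choices, $\phi(H_1)=H_2$. Hence $\phi$ induces a $\phi$-equivariant isomorphism $\GG_1/H_1\to\GG_2/H_2$, that is, $U_1\simeq U_2$, and this is the required equivariant birational map $V_1\dasharrow V_2$.
\end{itemize}
I expect this cohomological bookkeeping to be the main obstacle. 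It is the only place where one has to be careful about what "$\GG_i=\mathbb{T}\rtimes G_i$ acting on $V_i$" means.

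\emph{An equivariant birational map implies conjugacy.} Let $f\colon V_1\dasharrow V_2$ be birational and equivariant with respect to an isomorphism $\phi\colon\GG_1\to\GG_2$.
\begin{itemize}
\item Since $G$ is finite, $\mathbb{T}$ is the identity component of each $\GG_i$, so $\phi(\mathbb{T})=\mathbb{T}$.
\item The open orbit $U_1$ is the unique dense $\mathbb{T}$-orbit, and the $\mathbb{T}$-action is free there. It follows that $f$ maps a dense open subset of $U_1$ into $U_2$, and by equivariance $f$ restricts to an isomorphism $U_1\to U_2$ satisfying $f(t\cdot x)=\phi(t)\cdot f(x)$.
\item Fix $x\in U_1$. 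Then $\phi(t)$ is the unique element of $\mathbb{T}$ sending $f(x)$ to $f(t\cdot x)$. This expresses $\phi|_{\mathbb{T}}$ as a composition of algebraic maps, so $\phi|_{\mathbb{T}}$ is an algebraic automorphism of $\mathbb{T}$, given by some $A\in\GL_n(\ZZ)$.
\item Finally, $\phi$ induces an isomorphism $\GG_1/\mathbb{T}\simeq\GG_2/\mathbb{T}$ compatible with the conjugation actions on $\mathbb{T}$. Since the finite groups act faithfully on $\mathbb{T}$, passing to $\mathrm{Aut}(\mathbb{T})$ gives $AG_1A^{-1}=G_2$.
\end{itemize}
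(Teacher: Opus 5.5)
The paper gives no proof of this statement; it is quoted from \cite[Lemma~3.1]{CheltsovDuboulozKishimoto2023}, so there is no in-paper argument to compare with. Your reduction to the open orbits $U_i$ is the natural route, and both implications are essentially correct. Two points need tightening.

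\emph{The forward direction.} The ``cohomological bookkeeping'' you flag is not actually an obstacle, and your description of it is slightly off. Your idea of re-choosing the splitting is the right one, and it needs no verification. For any $u_i\in U_i$, the stabilizer $H_i\subset\GG_i$ is automatically a complement to $\mathbb{T}$:
\begin{itemize}
\item $H_i\cap\mathbb{T}=1$, because $\mathbb{T}$ acts freely on $U_i$;
\item $\mathbb{T}H_i=\GG_i$, because $\GG_i$ preserves $U_i$ and $\mathbb{T}$ is transitive on it;
\item since $\mathbb{T}$ is abelian, conjugation by $H_i$ on $\mathbb{T}$ factors through $\GG_i/\mathbb{T}$, so $H_i\to G_i$ is an isomorphism compatible with the actions.
\end{itemize}
So you may simply define $\phi\colon\mathbb{T}\rtimes H_1\to\mathbb{T}\rtimes H_2$ by $\alpha$ on $\mathbb{T}$ and by conjugation by $A$, transported to a map $H_1\to H_2$. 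The required map is then $t\cdot u_1\mapsto\alpha(t)\cdot u_2$, and its equivariance is just the identity $\alpha\circ c_h=c_{\phi(h)}\circ\alpha$.

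No class in $H^1(G_i,\mathbb{T})$ has to be checked. Relative to a stabilizer complement the class vanishes by construction. A nonzero class only records that the \emph{originally given} copy of $G$ has no fixed point in $U_i$. So, contrary to what you write, it is not an invariant of $\GG_i$ together with its action.

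It also matters that the statement asks only for \emph{some} isomorphism $\GG_1\simeq\GG_2$. If you required $\phi$ to send the given $G\subset\GG_1$ to the given $G\subset\GG_2$, the implication would fail. For example, let $\mumu_2$ act on $\mathbb{P}^1\times\mathbb{P}^1$ either by $(x,y)\mapsto(x,y^{-1})$ or by $(x,y)\mapsto(-x,y^{-1})$. These give the same $G_1=G_2$, and even the same subgroup $\langle\mathbb{T},G\rangle$. But only the first involution has fixed points in the open orbit.

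\emph{The converse.} The argument ``$\mathbb{T}$ is the identity component'' yields $\phi(\mathbb{T})=\mathbb{T}$ only if $\phi$ is an isomorphism of algebraic groups, and the statement does not say that. For an abstract isomorphism, argue instead that $\mathbb{T}$ is the unique abelian subgroup of finite index in $\GG_i$. A divisible group has no proper subgroups of finite index, so any such subgroup contains $\mathbb{T}$. It then centralizes $\mathbb{T}$, and this centralizer is $\mathbb{T}$ because $G_i$ acts faithfully. With this in place, your orbit-map argument correctly upgrades $\phi|_{\mathbb{T}}$ to an algebraic automorphism, which is exactly where $A\in\GL_n(\ZZ)$ comes from. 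The extension of $f$ to all of $U_1$ is also fine: the identity $f\circ t=\phi(t)\circ f$ of rational maps makes the domain of definition of $f$ stable under $\mathbb{T}$. Your final conjugacy step then goes through as stated.
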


In Definition~\ref{definition:rigidity}, we defined equivariant birational rigidity for a Fano variety with terminal singularities acted on by a finite group. The same definition applies for an algebraic group action, so we can define $\GG$-birational rigidity for $\mathbb{P}^n$ exactly as in Definition~\ref{definition:rigidity}.
Our next goal is to prove the following purely toric theorem.

\begin{theorem}
\label{theorem:toric}
Suppose that $n\geqslant 3$. Then $\mathbb{P}^n$ is not $\GG$-birationally rigid.
\end{theorem}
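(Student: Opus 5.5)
We will argue by contradiction. Suppose $\mathbb{P}^n$ is $\GG$-birationally rigid. Then, in particular, $\mathbb{P}^n$ is not $\GG$-birational to a $\GG$-Mori fiber space with positive-dimensional base. By Proposition~\ref{proposition:solid} and Lemma~\ref{lemma:toric-primitive}, the image of $G$ in $\mathrm{GL}_n(\mathbb{Z})$ is irreducible and $G$ is a primitive subgroup of $\mathfrak{S}_{n+1}$. The strategy is to exhibit a toric Fano variety $V$ with terminal singularities and a faithful action of $G$ normalizing the torus, with the following properties. First, $\mathrm{rk}\,\mathrm{Cl}(V)^{\GG}=1$. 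Second, the image of $G$ in $\mathrm{GL}_n(\mathbb{Z})$ coming from $V$ is conjugate to the image coming from $\mathbb{P}^n$. Third, $V\not\simeq\mathbb{P}^n$. Proposition~\ref{proposition:G-non-rigid-via-modules} then gives a $\GG$-birational map $\mathbb{P}^n\dasharrow V$, and $V$ is a $\GG$-Mori fiber space over a point that is not $\GG$-isomorphic to $\mathbb{P}^n$. This contradicts rigidity.

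To construct $V$, we work in the lattice $N=\mathbb{Z}^{n+1}/\mathbb{Z}(e_1+\ldots+e_{n+1})$. The fan of $\mathbb{P}^n$ has rays $\bar e_1,\ldots,\bar e_{n+1}$, and $\mathfrak{S}_{n+1}$ permutes them. We look for another $G$-invariant finite set of primitive vectors in $N$ forming a single $G$-orbit, or a union of orbits whose classes give $\mathrm{rk}\,\mathrm{Cl}^{G}=1$. These vectors should be the vertices of a terminal Fano polytope, meaning a lattice polytope with no lattice points other than the origin and its vertices; the face fan of such a polytope gives a $\mathbb{Q}$-factorial-or-not terminal toric Fano variety. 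Because the vertices form one $G$-orbit, the $G$-invariant part of $\mathrm{Cl}$, which is the invariant part of $\mathbb{Z}^{\text{rays}}$ modulo $M$, has rank one.

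A natural candidate is the orbit of $v=\bar e_1+\bar e_2$. Its negative, $-\bar e_1-\bar e_2=\bar e_3+\ldots+\bar e_{n+1}$, also lies in the orbit-type set. Alternatively, we can use the polytope with vertices $\pm\bar e_i$, which is $G$-invariant for any $G$. Its face fan gives the toric Fano variety associated with the root system $A_n$ (the "del Pezzo" type variety $V_n$). For $n\geqslant 3$ we expect this polytope to be terminal, since its only lattice points should be the vertices and the origin. It has two $\mathfrak{S}_{n+1}$-orbits of rays, $\{\bar e_i\}$ and $\{-\bar e_i\}$, so $\mathrm{rk}\,\mathrm{Cl}^G$ is at least $2$ when $G$ is transitive. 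We would fix this by adding central symmetry: $-1\notin G$ in general, so instead we use the set of vectors $\bar e_i-\bar e_j$, $i\neq j$, which is a single orbit under any 2-transitive $G$. In general it is a union of orbits indexed by the orbitals of $G$.

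The main obstacle is exactly this step. We need a choice of ray set that works uniformly for every primitive $G\subset\mathfrak{S}_{n+1}$, and we must verify both terminality and $\mathrm{rk}\,\mathrm{Cl}(V)^G=1$. The rank condition can fail when $G$ has several orbitals. I would first try the root polytope $\mathrm{conv}\{\bar e_i-\bar e_j\}$, whose lattice points are easily checked to be only the vertices and the origin. If several orbitals appear, I would pass to a $G$-equivariant MMP on $V$, or on a $\GG$-equivariant small modification. Running this over a point should end in a $\GG$-Mori fiber space. Since the image of $G$ is irreducible, Proposition~\ref{proposition:solid} forces the base to be a point, and we would need to show that the output is not $\mathbb{P}^n$. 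That last point could be argued by comparing ray sets: toric MMP steps only contract rays, and the rays $\bar e_i-\bar e_j$ are not conjugate in $\mathrm{GL}_n(\mathbb{Z})$ to a simplex configuration with compatible $G$-action when $n\geqslant3$. I expect this combinatorial case analysis to be the technical heart, which is presumably what is deferred to the later technical section and appendix.
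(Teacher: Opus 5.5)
Your overall framing is the right one: find a terminal toric $\GG$-Mori fibre space over a point that is $\GG$-birational to $\mathbb{P}^n$ (via Proposition~\ref{proposition:G-non-rigid-via-modules} or a $\GG$-MMP) but not isomorphic to it. However, both steps that carry the content fail as written.

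\textbf{The root polytope is not terminal in $N$.} Since $\bar e_1+\ldots+\bar e_{n+1}=0$ in $N$, one has $\bar e_1=\frac{1}{n+1}\sum_{j\ne 1}(\bar e_1-\bar e_j)$. The coefficients sum to $\frac{n}{n+1}<1$, so $\bar e_1$ is a non-zero interior lattice point of $\mathrm{conv}\{\bar e_i-\bar e_j\}$. The corresponding divisor has discrepancy $-\frac{1}{n+1}$, so the face fan is not even canonical. (The roots span only an index-$(n+1)$ sublattice of $N$. The polytope is terminal there, but then you would need a separate argument that this $G$-lattice is conjugate to $N$.) Replacing your variety by some terminal model only moves the problem into the MMP step, where nothing stops you from landing back on $\mathbb{P}^n$; the offending point $\bar e_1$ is itself a ray of $\mathbb{P}^n$. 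Moreover, as you note, the roots form a single orbit only for $2$-transitive $G$.

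The candidate you set aside, the orbit of $\bar e_1+\bar e_2$, is the one that works:
\begin{itemize}
\item $\mathrm{conv}\{\bar e_i+\bar e_j\}$ has no lattice points besides its vertices and the origin (Appendix~\ref{app:toric-fano});
\item its face fan is the terminal Fano variety $X$ that Section~\ref{section:technical} reaches from the permutohedron;
\item $\mathrm{rk}\,\mathrm{Cl}(X)^G=1$ as soon as $G$ is transitive on unordered pairs, which is weaker than $2$-transitivity.
\end{itemize}

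\textbf{The argument that the MMP output is not $\mathbb{P}^n$ is not valid.} A $\GG$-MMP contracts whole $G$-orbits of toric divisors, so the rays of the output form a $G$-invariant subset of the original rays. The real question is whether a $G$-orbit of at most $n+1$ rays can be all that survives. Comparing the full ray configuration with a simplex says nothing about this.

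For pair-indexed rays, Lemma~\ref{lemma:Sr-orbit-r} and Remark~\ref{remark:prime} show that a primitive $G$ has such a short orbit only if it is a regular $\mumu_{n+1}$ or a regular $\mathfrak{D}_{n+1}$ with $n+1$ prime. In exactly these cases Lemma~\ref{lemma:toric-big-orbits} gives nothing, and genuinely new constructions are needed, each checked to be terminal by Reid--Tai:
\begin{itemize}
\item in the cyclic case, $\mathbb{P}^n/\mumu_{n+1}$, together with an explicit matrix conjugating the two images of $G$ in $\mathrm{GL}_n(\mathbb{Z})$;
\item in the dihedral case, $Y/\mumu_{n+1}$, where $Y$ is the del Pezzo variety with rays $\pm\bar e_i$ and $G$ acts through the \emph{other} copy of $\mathfrak{D}_{n+1}$ inside $\langle G_U,\tau_U\rangle\simeq\mathfrak{D}_{2n+2}$. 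The naive action on $\mathbb{P}^n/\mumu_{n+1}$ does not work in general; see Remark~\ref{remark:toric-final}.
\end{itemize}
Your proposal has no mechanism for these cases, which are the heart of the proof. The same issue arises with your roots: for regular cyclic $G$, the orbit $\{\bar e_i-\bar e_{i+k}\}$ has exactly $n+1$ elements summing to zero, so the MMP may end on a fake projective space that you would still have to identify.
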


To prove Theorem~\ref{theorem:toric}, we have to prove that
\begin{enumerate}
\item either $\mathbb{P}^n$ is $\GG$-birational to a $\GG$-Mori fiber space with a positive-dimensional base,
\item or the projective space $\mathbb{P}^n$ is $\GG$-birational to a Fano variety $X$ with terminal singularities such that $\mathrm{rk}\,\mathrm{Cl}(X)^{\GG}=1$, but $X$ is not $\GG$-equivariantly isomorphic to $\mathbb{P}^n$.
\end{enumerate}
This immediately follows from the following slightly stronger technical result.

\begin{theorem}
\label{theorem:toric-2}
Suppose that $n\geqslant 3$. Then either $\mathbb{P}^n$ is $\GG$-birational to a $\GG$-Mori fiber space with positive-dimensional base, or $\mathbb{P}^n$ is $\GG$-birational to a Fano variety $X$ with terminal singularities such that $\mathrm{rk}\,\mathrm{Cl}(X)^{\GG}=1$ and $X\not\simeq \mathbb{P}^n$.
\end{theorem}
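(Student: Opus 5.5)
By Proposition~\ref{proposition:solid} and Lemma~\ref{lemma:toric-primitive}, we may assume that $G$ is a primitive subgroup of $\mathfrak{S}_{n+1}$ and that its image in $\mathrm{GL}_n(\ZZ)$ is irreducible. Otherwise the first alternative of Theorem~\ref{theorem:toric-2} already holds. The aim is then to construct a toric Fano variety $X$ with terminal singularities, carrying a $\GG$-action extending the torus action, such that $\mathrm{rk}\,\mathrm{Cl}(X)^{\GG}=1$ and $X\not\simeq\mathbb{P}^n$.

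Any two toric varieties on which $\mathbb{T}\rtimes G$ acts through the same subgroup $G\subset\mathrm{GL}_n(\ZZ)$ are $\GG$-birational by Proposition~\ref{proposition:G-non-rigid-via-modules}. So the task is purely combinatorial. Let $N=\ZZ^{n+1}/\ZZ(e_1+\ldots+e_{n+1})$ with its permutation action of $G$. We need a $G$-invariant complete simplicial (or at least $\mathbb{Q}$-factorial) fan $\Sigma$ in $N_{\RR}$ with the following properties:
\begin{itemize}
\item its rays are primitive vectors forming a single $G$-orbit, so that $\mathrm{rk}\,\mathrm{Cl}(X)^{\GG}=1$, since torus-invariant divisors generate $\mathrm{Cl}$ and invariant classes come from orbit sums;
\item the convex hull $P$ of these ray generators is a polytope with $0$ in its interior whose only lattice points are $0$ and the vertices, which is exactly the condition for $X_\Sigma$, with $\Sigma$ the face fan of $P$, to be a Fano variety with terminal singularities;
\item the ray set is different from $\{e_1,\ldots,e_{n+1}\}$, which gives $X\not\simeq\mathbb{P}^n$.
\end{itemize}

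The natural candidate takes the rays to be the $G$-orbit of the images of $e_i+e_j$ for a pair $\{i,j\}$. This is the $\mathfrak{S}_{n+1}$-orbit $\{e_i+e_j\}$ when $G$ is $2$-homogeneous, and a sub-orbit otherwise. Alternatively one can use $e_i-e_j$, a root-type polytope, or more generally the orbit of the images of $\sum_{i\in I}e_i$ for a subset $I$ with $|I|=k$, $2\leqslant k\leqslant n-1$. The polytope spanned by all $k$-subset sums is the hypersimplex, and its lattice points modulo $(1,\ldots,1)$ have to be checked.

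The plan is as follows.
\begin{enumerate}
\item Show that $0$ lies in the interior of the convex hull of the chosen orbit. Since $G$ is transitive, the barycenter of the orbit is $G$-invariant in $N_{\RR}$, and irreducibility forces $N_{\RR}^G=0$, so the barycenter is $0$. Interiority requires that the orbit span $N_{\RR}$, which again follows from irreducibility.
\item Show that the convex hull contains no lattice points other than $0$ and the vertices. This is the delicate part, and I expect it to require the combinatorial results on primitive permutation groups from Appendix~\ref{section:combinatorics}.
\item Treat small $n$, and groups for which the orbit of $e_i+e_j$ fails, by different choices of orbit. For $n=3$, the transitive groups $G\subseteq\mathfrak{S}_4$ with irreducible action can be checked by hand, for example with the orbit of $e_1+e_2$ giving a polytope related to the octahedron.
\end{enumerate}

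The main obstacle is step~(2): controlling extra lattice points in the convex hull of a sub-orbit when $G$ is a small primitive group. The orbit can be much smaller than $\{e_i+e_j\}$, so the polytope is thin. My first attempt would be to pick the orbit $\mathcal{O}$ so that there is a linear functional $\ell$ with $\ell\leqslant1$ on $\mathcal{O}$. Such a functional bounds every lattice point of the polytope, and one then shows that points with $\ell=1$ are vertices. Failing this, I would abandon the requirement that $X$ be a Fano variety. Instead I would take a terminal $\GG$-invariant blow-up of $\mathbb{P}^n$ along the orbit of a torus-invariant stratum and run the $\GG$-equivariant MMP. The outcome is either a Mori fiber space with positive-dimensional base (first alternative), or a Fano variety of invariant Picard rank one. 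In the second case one has to argue that it is not $\mathbb{P}^n$, by comparing the number of invariant rays, using Proposition~\ref{proposition:G-non-rigid-via-modules} only to identify the $G$-module.
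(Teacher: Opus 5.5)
Your proposal has a genuine gap, and it sits in your third bullet: a ray set different from $\{e_1,\ldots,e_{n+1}\}$ does not give $X\not\simeq\mathbb{P}^n$. A complete fan with $n+1$ rays whose generators span $N$ and sum to zero is the fan of $\mathbb{P}^n$. This is exactly what happens in the case the paper has to treat separately.

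Take $G$ a regular $\mumu_{n+1}$ or $\mathfrak{D}_{n+1}$ with $p=n+1$ prime, for instance $n=4$ and $G\simeq\mumu_5$ or $\mathfrak{D}_5$. These groups survive your reductions: they are primitive, and $N_{\mathbb{Q}}\simeq\mathbb{Q}(\zeta_p)$ is irreducible. Every $G$-orbit of the vectors $v_i+v_j$ has the form $\{v_i+v_{i+d}\}_{i\in\ZZ/p}$ and has length $n+1$. Identify $N\simeq\ZZ[\zeta_p]$ via $v_i\mapsto\zeta^i$. The orbit becomes $\{\zeta^i(1+\zeta^d)\}$, and $1+\zeta^d=(1-\zeta^{2d})/(1-\zeta^d)$ is a unit. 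So these $n+1$ vectors generate $N$ and sum to $0$. Your face fan is therefore the fan of $\mathbb{P}^n$, and $X\simeq\mathbb{P}^n$.

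This happens for every choice of pair orbit. Your fallback (blow up, run the $\GG$-MMP, count invariant rays) has the same blind spot. Counting rays only separates the output from $\mathbb{P}^n$ when a surviving orbit has at least $n+2$ rays, and that fails for precisely these groups.

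Two things are missing. First, you need the observation, via Lemma~\ref{lemma:Sr-orbit-r}, that regular cyclic and dihedral groups of prime degree are the only primitive groups with an orbit of length $\leqslant n+1$ on pairs. Second, you need a separate construction there whose rays generate a proper sublattice of $N$. One choice is the $G$-orbit of $v_1-v_2$. For $\mumu_p$ this orbit is $\{v_i-v_{i+1}\}$, which generates the index-$p$ ideal $(1-\zeta)$ and gives a quotient $\mathbb{P}^n/\mumu_{n+1}$; this is the paper's $V$. For $\mathfrak{D}_p$ the reflections introduce a sign, so the orbit is $\{\pm(v_i-v_{i+1})\}$ of size $2n+2$. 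This gives the paper's $U$, a $\mumu_{n+1}$-quotient of the centrally symmetric del Pezzo variety. In both cases terminality still has to be checked with Reid--Tai, using that $n+1$ is prime. You mention $e_i-e_j$ only in passing, without seeing that it is needed and without checking it.

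You have also misplaced the difficulty. Your step (2) is easy once one knows $\mathrm{conv}\{v_i+v_j\}\cap N=\{0\}\cup\{v_i+v_j\}$ (the hypersimplex computation in the appendix). Each $v_i+v_j$ is a vertex of that polytope, so the convex hull of any $G$-invariant subset of these vectors contains no lattice points besides $0$ and its own vertices. Combine this with your barycenter argument and Lemma~\ref{lemma:Sr-orbit-r}. Then your face-fan construction does give a complete proof for every primitive $G$ other than the regular $\mumu_{n+1}$ and $\mathfrak{D}_{n+1}$, since there every pair orbit has at least $n+2$ rays. This is more direct than the paper's route through $\GG\mathbb{Q}$-factorialization and the MMP. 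Working in the fixed lattice $N$ also makes $\GG$-birationality automatic, instead of requiring explicit conjugating matrices as in the paper. As written, though, the argument breaks down exactly in the case that needs new input.
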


We will prove Theorem~\ref{theorem:toric-2} later in Section~\ref{section:technical}.

\begin{remark}\label{remark:dim-3}
In dimension $3$, both Theorems~\ref{theorem:toric} and \ref{theorem:toric-2} have been proved in \cite{CheltsovShramov2019,CheltsovDuboulozKishimoto2023}.
\end{remark}

Let us repeat that Theorem~\ref{theorem:toric-2} implies Theorem~\ref{theorem:toric}. In fact, using Theorem~\ref{theorem:toric-2}, we can also deduce Theorem~A.

\begin{proof}[Proof of Theorem A]
Suppose that $G$ is not primitive.
By Lemma~\ref{lemma:intransitive}, we may assume that $G$ is transitive.
Next, by Lemma~\ref{lemma:intransitive-2}, we may assume that
$G$ has an orbit of length $n+1$.
Choosing appropriate coordinates on $\mathbb{P}^n$, we may assume that this orbit consists of the points
$$
P_1=[1:0:\ldots:0], P_2=[0:1:0:\ldots:0], \ldots, P_{n+1}=[0:\ldots:0:1].
$$
Let $\mathbb{T}$ be the~maximal torus in $\mathrm{PGL}_{n+1}(\mathbb{C})$ consisting of automorphisms
$$
\big[x_1:\ldots:x_{n}:x_{n+1}\big]\mapsto\big[t_1x_1:\ldots:t_nx_{n}:x_{n+1}\big],
$$
and let $\mathbb{G}$ be the subgroup in $\mathrm{PGL}_{n+1}(\mathbb{C})$ generated by $G$ and $\mathbb{T}$. Since $n\geqslant 3$, it follows from Theorem~\ref{theorem:toric-2}
that there exists a~$\mathbb{G}$-equivariant birational map
$\chi\colon\mathbb{P}^n\dasharrow X$ such that $X\not\simeq \mathbb{P}^n$ is a toric
$\mathbb{G}$-Mori fiber space over some base $Z$ (which may be a point or have positive dimension); in particular,
$X$ is $\mathbb{GQ}$-factorial, i.e.
$$
\mathrm{rk}\,\mathrm{Cl}(X)^{\mathbb{G}}=\mathrm{rk}\,\mathrm{Pic}(X)^{\mathbb{G}},
$$
and one has $\mathrm{rk}\,\mathrm{Pic}(X/Z)^{\mathbb{G}}=1$.
Note that $\chi$ is also $G$-equivariant, and the torus $\mathbb{T}$ acts trivially on the lattices~\mbox{$\mathrm{Cl}(X)$} and~\mbox{$\mathrm{Pic}(X)$}. Therefore, one has
$$
\mathrm{rk}\,\mathrm{Cl}(X)^{G}=\mathrm{rk}\,\mathrm{Cl}(X)^{\mathbb{G}}=\mathrm{rk}\,\mathrm{Pic}(X)^{\mathbb{G}}=\mathrm{rk}\,\mathrm{Pic}(X)^{G},
$$
so that $X$ is $G\mathbb{Q}$-factorial. Similarly, we see that
$$
\mathrm{rk}\,\mathrm{Pic}(X/Z)^{G}=\mathrm{rk}\,\mathrm{Pic}(X/Z)^{\mathbb{G}}=1.
$$
Thus, $X$ is a $G$-Mori fiber space over $Z$.
Since $X\not\simeq \mathbb{P}^n$, this means that $\mathbb{P}^n$ is not $G$-birationally rigid.
\end{proof}

\section{Proof of the main technical result}
\label{section:technical}

In this section we prove Theorem~\ref{theorem:toric-2}.
Let us use the notation of Section~\ref{section:toric}.
We may assume that the image of the~group $G$ in $\mathrm{GL}_{n}(\mathbb{Z})$ is an irreducible subgroup, since otherwise we are done by Proposition~\ref{proposition:solid}.
Moreover, by Lemma~\ref{lemma:toric-primitive} we may assume that
$G$ is a primitive subgroup of~$\mathfrak{S}_{n+1}$ in the sense of Definition~\ref{definition:primitive-transitive-permutation-group}.
To prove Theorem~\ref{theorem:toric-2}, we are going to construct a $\GG$-birational map
$\mathbb{P}^n\dasharrow X$
such that the following conditions hold:
\begin{itemize}
\item $X$ has terminal singularities,
\item $X$ is a toric Fano variety,
\item $\mathrm{rk}\,\mathrm{Cl}(X)^{\GG}=1$,
\item $X\not\simeq \mathbb{P}^n$.
\end{itemize}

\begin{remark}
Observe that $\mathbb{T}$ acts trivially on $\mathrm{Cl}(X)$, so that
$$
\mathrm{rk}\,\mathrm{Cl}(X)^{\GG}=\mathrm{rk}\,\mathrm{Cl}(X)^{G}.
$$
Thus, in fact we have to check
whether $\mathrm{rk}\,\mathrm{Cl}(X)^{G}=1$ or not.
\end{remark}

The plan of our construction is as follows. First, we blow up $\PP^n$ in a certain configuration of linear subspaces and run an $\mathbb{W}$-equivariant Minimal Model Program to construct a $\mathbb{W}$-equivariant birational map $\PP^n\dasharrow X$ to a particular terminal Fano variety $X$ which is not isomorphic to $\PP^n$. It turns out that if the group $G$ is large enough, then $\mathrm{rk}\,\mathrm{Cl}(X)^{\GG}=1$; this includes the cases when~$G$ is isomorphic to $\mathfrak{S}_{n+1}$ or $\mathfrak{A}_{n+1}$ and, more generally, when $G$ acts transitively on
the non-ordered pairs of indices from $\{1,\ldots,n+1\}$
(cf. Example~\ref{example:2-transitive}). This step of the construction has an alternative more explicit description, see Remark~\ref{remark:do-nor-understand-M}.
In a more general case, when $\mathrm{rk}\,\mathrm{Cl}(X)^{\GG}\neq 1$,
we start with the latter variety $X$ and proceed with a $\GG\mathbb{Q}$-factorialization followed by a further $\GG$-equivariant Minimal Model Program to obtain another $\GG\mathbb{Q}$-Fano
variety $X'$ which is $\GG$-birational to $\PP^n$. If the group $G$ is not
too small, then it turns out that the variety $X'$ is not isomorphic to $\PP^n$ (see Lemma~\ref{lemma:toric-big-orbits}). This works unless the group~$G$ is either a cyclic group $\mumu_{n+1}$ or a dihedral group $\mathfrak{D}_{n+1}$, with certain particular action on~$\PP^n$. In these two cases we construct two special $\GG\mathbb{Q}$-Fano varieties $V$ and $U$, and check that there exists a $\GG$-equivariant birational map from $\PP^n$ to $V$ or $U$, respectively (see Corollary~\ref{corollary:toric-final} and
Lemma~\ref{lemma:toric-final-final}).

\medskip
To start with, let $\mathcal{M}$ be the linear subsystem in $|\mathcal{O}_{\mathbb{P}^n}(2n)|$ that consists of all hypersurfaces
$$
x_1x_2\ldots x_{n+1}f(x_1,\ldots,x_{n+1})+\sum_{i=1}^{n+1}\lambda_i\frac{\big(x_1x_2\ldots x_{n+1}\big)^2}{x_i^2}=0,
$$
where $f$ is any polynomial of degree $n-1$, and $\lambda_1,\ldots,\lambda_{n+1}\in\mathbb{C}$. Set $N=\mathrm{dim}(\mathcal{M})$. Then
$$
N=n+\binom{2n-1}{n},
$$
the linear system $\mathcal{M}$ is mobile, its base locus is the union of all $(n-2)$-dimensional $\mathbb{T}$-invariant subvarieties, and a general member of $\mathcal{M}$ is singular along the base locus.

\begin{remark}\label{remark:do-nor-understand-M}
Since $\mathcal{M}$ is $\mathbb{W}$-invariant, it gives an~$\mathbb{W}$-equivariant birational map
$\psi\colon \mathbb{P}^n\dasharrow \mathscr{X}$ such that $\mathscr{X}$ is a toric $n$-fold in $\mathbb{P}^N$.
One can show that $\mathscr{X}$ is a Fano variety with terminal singularities, $\mathrm{rk}\,\mathrm{Cl}(X)^{\mathbb{W}}=1$, and $\mathscr{X}\not\simeq\mathbb{P}^n$; see Appendix~\ref{app:toric-fano}. So we can set $X=\mathscr{X}$, at least when~\mbox{$G\simeq \mathfrak{S}_{n+1}$}. 
However, we decided to provide an alternative construction of the toric Fano variety $X$, which does not rely on results of Appendix~\ref{app:toric-fano}. 
\end{remark}

The toric variety $\mathcal{X}$ introduced in Remark~\ref{remark:do-nor-understand-M} have been previously described for $n=3$ and $4$.

\begin{example}[{cf. \cite{Bayle,CheltsovDuboulozKishimoto2023,CheltsovSarikyan2023,CheltsovShramov2019,Martello,Sarikyan}}]
\label{example:X24}
Suppose that $n=3$.
Then $\mathcal{M}$ consists of all sextic surfaces that are singular along six $\mathbb{T}$-invariant lines forming a toric tetrahedron in $\mathbb{P}^3$, and
$$
\mathscr{X}\simeq\mathbb{P}^1\times\mathbb{P}^1\times\mathbb{P}^1/\mumu_2,
$$
where $\mumu_2$ acts on $\mathbb{P}^1\times\mathbb{P}^1\times\mathbb{P}^1$ as
$$
\big([x_1:y_1],[x_2:y_2],[x_3:y_3]\big)\mapsto\big([x_1:-y_1],[x_2:-y_2],[x_3:-y_3]\big).
$$
Therefore, the threefold $\mathscr{X}$ has $8$ cyclic quotient singularities of type $\frac{1}{2}(1,1,1)$. Moreover, we have the~following $\mathbb{W}$-equivariant commutative diagram:
\begin{equation*}
%\label{equation:toric}
\xymatrix{
&\widetilde{X}\ar@{->}[ld]_{\pi_1}\ar@{-->}[rr]^{\rho}&& \widetilde{X}^\prime\ar@{->}[dr]^{\varpi}&\\%
X_1\ar@{->}[d]_{\pi_0}&&&&\widehat{X}\ar@{->}[d]^{\phi}\ar@{->}[lllld]_{\varphi}\\%
\mathbb{P}^3\ar@{-->}[rrrr]^{\psi}&&&& \mathscr{X}}
\end{equation*}
where
\begin{itemize}
\item $\pi_0$ blows up four $\mathbb{T}$-invariant points;
\item $\pi_1$ blows up the strict transforms of the six $\mathbb{T}$-invariant lines;
\item $\rho$ flops strict transforms of $\mathbb{T}$-invariant curves contained in $\pi_0$-exceptional surfaces;
\item $\varpi$ contracts the strict transforms of $\pi_0$-exceptional surfaces;
\item $\phi$ contracts the strict transforms of four $\mathbb{T}$-invariant planes in $\mathbb{P}^3$;
\item $\varphi$ symbolically blows up the ideal sheaf of the union of six $\mathbb{T}$-invariant lines~\mbox{\cite[\S6.1]{ProkhorovReid}}.
\end{itemize}
Moreover, if $G\simeq\mathfrak{S}_4$
or $G\simeq\mathfrak{A}_4$, then one has
$\mathrm{rk}\,\mathrm{Cl}(\mathcal{X})^{\GG}=\mathrm{rk}\,\mathrm{Cl}(\mathcal{X})^{G}=1$.
\end{example}

\begin{example}
\label{example:Petracci}
For $n=4$, Andrea Petracci informed us that the toric Fano 4-fold $\mathscr{X}$ has terminal (non-isolated) singularities such that  $\mathrm{rk}\,\mathrm{Cl}(\mathscr{X})=6$, $\mathrm{rk}\,\mathrm{Pic}(\mathscr{X})=1$, and~\mbox{$(-K_\mathscr{X})^4=\frac{625}{6}$}. Moreover, if $G\simeq\mathfrak{S}_5$
or $G\simeq\mathfrak{A}_5$, then one has
$\mathrm{rk}\,\mathrm{Cl}(\mathcal{X})^{\GG}=\mathrm{rk}\,\mathrm{Cl}(\mathcal{X})^{G}=1$.
The singular locus of $\mathscr{X}$ consists of $5$ isolated quotient singularities of type $\frac{1}{3}(1,1,1,1)$, and $10$ curves such that~$\mathscr{X}$ has a quotient singularity of type $\frac{1}{2}(1,1,1)$ at their general points. In particular, $\mathscr{X}$ is not isomorphic to~$\mathbb{P}^4$. In this case, the linear system $\mathcal{M}$ has been studied in \cite{Godeaux,Stagnaro}.
\end{example}

Now we construct a $\mathbb{W}$-birational map $\mathbb{P}^n\dasharrow X$ using equivariant toric Minimal Model Program. Keeping in mind Remark~\ref{remark:dim-3}, we will further assume that~\mbox{$n\geqslant 4$}.
To start with, consider the following sequence of $n-1\geqslant 3$ toric $\mathbb{W}$-equivariant blowups
$$
\xymatrix{
\widetilde{X}\ar@{->}[rr]^{\pi_{n-2}} && X_{n-2}\ar@{->}[rr]^{\pi_{n-3}}&& \ldots\ar@{->}[rr]^{\pi_2} && X_{2}\ar@{->}[rr]^{\pi_1} && X_{1}\ar@{->}[rr]^{\pi_0} && \mathbb{P}^n,}
$$
where
\begin{itemize}
\item $\pi_0$ is the blow up of all $\mathbb{T}$-invariant points in $\PP^n$,
\item $\pi_1$ is the blow up of the strict transforms of all $\mathbb{T}$-invariant lines in $\mathbb{P}^n$,
\item $\pi_2$ is the blow up of the strict transforms of all $\mathbb{T}$-invariant planes in $\mathbb{P}^n$, etc.
\end{itemize}
Let $\pi\colon\widetilde{X}\to \mathbb{P}^n$ be the composition $\pi_0\circ\pi_1\circ\ldots\circ\pi_{n-2}$.
Then $\widetilde{X}$ is toric, and $\pi$ is $\mathbb{W}$-equivariant.

\begin{remark}
\label{remark:permutohedron}
The variety $\widetilde{X}$ is known as permutohedron \cite{Huh,Karp}.
\end{remark}

For every $k\in\{0,\ldots,n-2\}$, let $\widetilde{E}_k$ be the $\mathbb{W}$-irreducible $\pi$-exceptional divisor whose irreducible components are mapped to $\mathbb{T}$-invariant linear subspaces of $\mathbb{P}^n$ that have dimension~$k$. Similarly, we let $\widetilde{E}_{n-1}$ be the $\mathbb{W}$-irreducible divisor in $\widetilde{X}$ whose irreducible components are strict transforms of $\mathbb{T}$-invariant hyperplanes in $\mathbb{P}^n$. Denote by
$\Xi\subset\PP^n$ the union of the latter hyperplanes, i.e. the divisor in $\PP^n$
given by equation $x_1x_2\ldots x_{n+1}=0$.
Then $\pi$ induces an isomorphism
$$
\widetilde{X}\setminus\mathrm{Supp}\Bigg(\sum_{i=0}^{n-1}\widetilde{E}_i\Bigg)\simeq\mathbb{P}^{n}\setminus\Xi
\simeq\big(\mathbb{C}^*\big)^n,
$$
so the union $\widetilde{E}_0\cup \widetilde{E}_1\cup \ldots \cup\widetilde{E}_{n-1}$ forms the toric boundary in $\widetilde{X}$. Note also that
\begin{equation}
\label{equation:T}
\widetilde{E}_{n-1}\sim\pi^*\big(-K_{\mathbb{P}^n}\big)-\sum_{i=0}^{n-2}(n-i)\widetilde{E}_i.
\end{equation}

\begin{remark}
\label{remark:permutohedron-Cremona} Let $\tau_{\mathbb{P}^n}$ be the standard Cremona birational involution in $\mathrm{Bir}^{\mathbb{W}}(\mathbb{P}^n)$. Then we have the following $\mathbb{W}$-equivariant commutative diagram:
$$
\xymatrix{
\widetilde{X}\ar[d]_{\pi}\ar@{->}[rr]^{\tau_{\widetilde{X}}}&&\widetilde{X}\ar[d]^{\pi} \\
\mathbb{P}^n\ar@{-->}[rr]^{\tau_{\mathbb{P}^n}} && \mathbb{P}^n}
$$
where $\tau_{\widetilde{X}}$ is a biregular involution that swaps $\widetilde{E}_{k}$ and $\widetilde{E}_{n-1-k}$ for every $k\in\{0,\ldots,n-1\}$.
\end{remark}

Let $\widetilde{\mathcal{M}}$ be the strict transform on $\widetilde{X}$ of the linear system $\mathcal{M}$. Then
\begin{equation}
\label{equation:M}
\widetilde{\mathcal{M}}\sim \pi^*(\mathcal{M})-\sum_{i=0}^{n-2}(n-i)\widetilde{E}_i.
\end{equation}
This gives
$$
K_{\widetilde{X}}+\frac{n+1}{2n}\widetilde{\mathcal{M}}\sim_{\mathbb{Q}} \pi^*\Big(K_{\mathbb{P}^n}+\frac{n+1}{2n}\mathcal{M}\Big)+\sum_{i=0}^{n-2}a_i\widetilde{E}_i\sim_{\mathbb{Q}}\sum_{i=0}^{n-2}a_i\widetilde{E}_i,
$$
where
\begin{equation}
\label{equation:discrepancy}
\left\{\aligned
&a_0=\frac{n-3}2, \\
&a_1=\frac{n^2-4n+1}{2n},\\
&\vdots\\
&a_i=\frac{n^2-(i+3)n+i}{2n},\\
&\vdots\\
&a_{n-3}=\frac{n-3}{2n},\\
&a_{n-2}=-\frac1n.
\endaligned
\right.
\end{equation}
Observe that $a_i>0$ for all $0\leqslant i\leqslant n-3$, because we assume that $n\geqslant 4$. On the other hand, we have~\mbox{$a_{n-2}=-\frac{1}{n}<0$}. In particular, we see that the singularities of the log pair $(\mathbb{P}^n,\frac{n+1}{2n}\mathcal{M})$ are not canonical.

\begin{lemma}
\label{lemma:M-is-free}
The singularities of the log pair $(\widetilde{X},\frac{n+1}{2n}\widetilde{\mathcal{M}})$ are terminal.
\end{lemma}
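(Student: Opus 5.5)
The approach is to use the fact that everything in sight is toric. The variety $\widetilde{X}$ is a smooth projective toric variety (the permutohedral variety), and $\mathcal{M}$ is spanned by monomials: the monomials $x_1\cdots x_{n+1}\cdot x^{b}$ with $|b|=n-1$, together with the $n+1$ monomials $\prod_{j\neq i}x_j^2$. So $\widetilde{\mathcal{M}}$ is a $\mathbb{T}$-invariant mobile linear system. For a toric valuation $v$ (a primitive lattice vector in the fan of $\widetilde{X}$), the order of vanishing of a general member of $\widetilde{\mathcal{M}}$ along $v$ is the minimum over these monomials of their orders along $v$, minus the fixed linear contribution coming from~\eqref{equation:M}. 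Since any pair with a toric boundary has its log discrepancies minimized on toric valuations, it will suffice to check, for every primitive $v$ lying in some cone but not spanning a ray, that
$$
A_{\widetilde{X}}(v)-1-\frac{n+1}{2n}\,\mathrm{ord}_v\big(\widetilde{\mathcal{M}}\big)>0,
$$
where $A_{\widetilde{X}}$ is the log discrepancy. For the rays themselves (that is, the divisors on $\widetilde{X}$) there is nothing to check, since $\widetilde{\mathcal{M}}$ has no fixed components: the multiplicities $n-i$ in~\eqref{equation:M} are exactly the minimal orders of the monomials along the subspaces.

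Fix a maximal cone. It corresponds to an ordering $x_{\sigma(1)},\ldots,x_{\sigma(n+1)}$ of the coordinates, and its rays $u_1,\ldots,u_n$ are the boundary divisors lying over the subspaces where the last $r$ coordinates vanish, for $r=1,\ldots,n$. Here $u_1$ is the hyperplane $x_{\sigma(n+1)}=0$, and $u_r$ lies over a subspace of dimension $k=n-r$, so the coefficient in~\eqref{equation:M} is $n-k=r$. By smoothness, a lattice point of the cone is $v=\sum c_ru_r$ with $c_r\in\mathbb{Z}_{\geqslant 0}$, and $A_{\widetilde{X}}(v)=\sum c_r$. Both the order of a monomial along $v$ and the fixed-part correction are linear in $v$ on the cone. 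So for a monomial with exponent vector $a$, its excess along $u_r$ over the fixed part is $\big(\sum_{\text{last } r}a_j\big)-r$, and $\mathrm{ord}_v(\widetilde{\mathcal{M}})$ is the minimum over monomials of $\sum_r c_r\cdot(\text{excess at }u_r)$.

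I would bound this minimum using two explicit monomials. The monomial $x_1\cdots x_{n+1}\,x_{\sigma(1)}^{n-1}$ has excess $1$ at $u_1$ and $0$ at every $u_r$ with $r\geqslant 2$. The monomial $\prod_{j\neq\sigma(n+1)}x_j^2$ has excess $0$ at $u_1$ and $2(r-1)-r=r-2$ at $u_r$ for $r\geqslant2$. Hence
$$
\mathrm{ord}_v\big(\widetilde{\mathcal{M}}\big)\leqslant \min\Big(c_1,\ \sum_{r\geqslant 3}(r-2)c_r\Big)\leqslant c_1 .
$$
Now set $c=\frac{n+1}{2n}<1$ and suppose $\sum c_r-1\leqslant c\cdot\mathrm{ord}_v\leqslant c\,c_1\leqslant c\sum c_r$. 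Then $\sum c_r\leqslant \frac{1}{1-c}=\frac{2n}{n-1}<3$ for $n\geqslant4$, so $\sum c_r\leqslant 2$. Since $v$ is not a ray, $\sum c_r=2$. The case $v=2u_1$ is excluded because $v$ is primitive. If $c_1=1$ and some other $c_r=1$, then $c\cdot\mathrm{ord}_v\leqslant c<1=\sum c_r-1$, a contradiction. If $c_1=0$, then $\mathrm{ord}_v=0<1$, again a contradiction. So every non-divisorial toric valuation has positive discrepancy, and the pair is terminal.

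The main obstacle is the first reduction: making rigorous that it suffices to test toric valuations, and that the order of the general member along $v$ is the piecewise-linear minimum over monomials. I would justify this by taking a $\mathbb{T}$-equivariant toric log resolution of the monomial system $\widetilde{\mathcal{M}}$ (a toric subdivision of the fan). On such a resolution the mobile part becomes base point free, so its general member adds no further discrepancy conditions, and all relevant exceptional divisors are toric. This leaves exactly the inequalities checked above. If the pointwise bound by these two monomials turned out to be insufficient for some small $n$, I would add the monomials $x_1\cdots x_{n+1}\,x_{\sigma(s)}^{n-1}$ for other $s$ to refine the bound.
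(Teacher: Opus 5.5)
Your argument is correct, but it follows a different route from the paper.

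\textbf{The paper's route.} The paper never works in the fan. Using $\frac{n+1}{2n}<1$, it reduces terminality to the pointwise bound $\mathrm{mult}_P(\widetilde{\mathcal{M}})\leqslant 1$ at base points, which makes a general member smooth. It then places the base locus inside $\widetilde{E}_0\cup\ldots\cup\widetilde{E}_{n-3}$. Finally it proves the bound inductively: it restricts $\widetilde{\mathcal{M}}$ first to a component of $\widetilde{E}_0$, and then to fibres of the components of $\widetilde{E}_k$ over their centres. These are lower-dimensional permutohedra, and on them the restriction is the disjoint union of the strict transforms of the torus-invariant hyperplanes.

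\textbf{Your route.} You test every toric valuation directly on a maximal cone of the permutohedral fan. For this you only need an upper bound on the order of the base ideal, and explicit members provide it.

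\textbf{Comparison.} Your version is a self-contained, checkable computation, and it shows exactly where $n\geqslant 4$ enters, namely through $\frac{2n}{n-1}<3$. In fact the single bound $\mathrm{ord}_v(\widetilde{\mathcal{M}})\leqslant c_1$, coming from $x_1\cdots x_{n+1}x_{\sigma(1)}^{n-1}$, already disposes of both remaining cases $c_1=1$ and $c_1=0$. So the second monomial is superfluous. The cost of your route is the reduction to toric valuations through a toric log resolution of the monomial base ideal. Your justification of that step is the right one; the phrase about log discrepancies being ``minimized on toric valuations'' is not needed. The paper's route gives the stronger statement that a general member of $\widetilde{\mathcal{M}}$ is smooth, but it leaves the ``local computations'' and the induction only sketched.

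\textbf{A small slip.} The rays $u_1$, the strict transforms of the hyperplanes, have coefficient $0$ in \eqref{equation:M}, not $1$. So for $r=1$ your general excess formula should read $\sum a_j$ rather than $\sum a_j-r$. The explicit values $1$ and $0$ that you actually use at $u_1$ are the correct ones.
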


\begin{proof}
Since $\frac{n+1}{2n}<1$, it is enough to show that $\mathrm{mult}_P(\widetilde{\mathcal{M}})\leqslant 1$ for every point $P$ contained in the~base locus of the linear system $\widetilde{\mathcal{M}}$. On the other hand, since $\widetilde{\mathcal{M}}$ is $\mathbb{W}$-invariant and mobile, its base locus is contained in $\widetilde{E}_{0}\cup\ldots\cup\widetilde{E}_{n-2}$.  Moreover, it follows from the~local computations that the~base locus of the~linear system $\widetilde{\mathcal{M}}$ is contained in $\widetilde{E}_{0}\cup\ldots\cup\widetilde{E}_{n-3}$. So, let us fix a point~\mbox{$P\in \widetilde{E}_{0}\cup\ldots\cup\widetilde{E}_{n-3}$} and show that $\mathrm{mult}_P(\widetilde{\mathcal{M}})\leqslant 1$.

Suppose that $P\in\widetilde{E}_0$. Let $\widetilde{E}$ be the irreducible component of the divisor $\widetilde{E}_0$ that contains $P$, and let $E$ be its strict transform on $X_1$. Then $E\simeq\mathbb{P}^{n-1}$, the variety $\widetilde{E}$ is a permutohedron, and the~induced morphism $\widetilde{E}\to E$ is an analogue of the constructed birational morphism $\pi\colon\widetilde{X}\to \mathbb{P}^{n}$. Moreover, it follows from \eqref{equation:M} and \eqref{equation:T} that the restriction $\widetilde{\mathcal{M}}\vert_{\widetilde{E}}$ is the disjoint union of the~strict transforms on $\widetilde{E}$ of torus invariant hyperplanes in $E$. This gives $\mathrm{mult}_P(\widetilde{\mathcal{M}})\leqslant 1$.

Now, we suppose that $P\in\widetilde{E}_1\setminus\widetilde{E}_0$. Let $\widetilde{E}$ be the irreducible component of $\widetilde{E}_1$ containing $P$, let $E$ be its strict transform on $X_2$, let $Z$ be its image on $X_1$, let $\widetilde{F}$ be the fiber of the induced morphism $\widetilde{E}\to Z$ such that $P\in\widetilde{F}$, and let $F$ be its strict transform on $X_2$. Then $F\simeq\mathbb{P}^{n-2}$, and~$\widetilde{F}$ is a permutohedron that is obtained from $F$ via the induced birational morphism $\widetilde{F}\to F$. Now, using \eqref{equation:M} and \eqref{equation:T}, we conclude that the restriction $\widetilde{\mathcal{M}}\vert_{\widetilde{F}}$ is the disjoint union of the strict transforms on $\widetilde{F}$ of all torus invariant hyperplanes in $F$. As above, we see that $\mathrm{mult}_P(\widetilde{\mathcal{M}})\leqslant 1$.

Hence, if $P\in\widetilde{E}_0\cup \widetilde{E}_1$, we have $\mathrm{mult}_P(\widetilde{\mathcal{M}})\leqslant 1$. Thus, we may assume that  $n\geqslant 5$. Now, arguing exactly as in the previous case, we see that $\mathrm{mult}_P(\widetilde{\mathcal{M}})\leqslant 1$ if $P\in\widetilde{E}_2\setminus(\widetilde{E}_0\cup \widetilde{E}_1)$. Continuing in this way, we see that $\mathrm{mult}_P(\widetilde{\mathcal{M}})\leqslant 1$ for every $P\in \widetilde{E}_{0}\cup\ldots\cup\widetilde{E}_{n-3}$.
\end{proof}

Applying the relative $\mathbb{W}$-equivariant Minimal Model Program to the log pair $(\widetilde{X},\frac{n+1}{2n}\widetilde{\mathcal{M}})$ over~$\mathbb{P}^n$, we obtain a toric $\mathbb{W}$-equivariant commutative diagram
$$
\xymatrix{
& \widetilde{X}\ar[ld]_{\pi}\ar@{-->}[rd]^{\eta} \\
\mathbb{P}^n&&\widehat{X}\ar@{->}[ll]_{\varphi}}
$$
such that $\eta$ is a composition of divisorial contractions and log flips, $\varphi$ is a birational morphism, and one has
$$
\mathrm{rk}\,\mathrm{Cl}\big(\widehat{X}\big)^{\mathbb{W}}=\mathrm{rk}\,\mathrm{Pic}\big(\widehat{X}\big)^{\mathbb{W}}.
$$
Furthermore, by Lemma~\ref{lemma:M-is-free}
the singularities of the log pair $(\widetilde{X},\frac{n+1}{2n}\widetilde{\mathcal{M}})$
are terminal, which implies that the~pair $(\widehat{X},\frac{n+1}{2n}\widehat{\mathcal{M}})$ also has terminal singularities, and $K_{\widehat{X}}+\frac{n+1}{2n}\widehat{\mathcal{M}}$ is $\varphi$-nef, where~$\widehat{\mathcal{M}}$ is the~strict transform on $\widehat{X}$ of the~linear system~$\mathcal{M}$. Moreover, since $K_{\widehat{X}}+\frac{n+1}{2n}\widehat{\mathcal{M}}$ is $\varphi$-nef, the map~$\eta$ contracts every $\mathbb{W}$-irreducible $\pi$-exceptional divisor $\widetilde{E}_k$ with $a_k>0$. Therefore, using~\eqref{equation:discrepancy},
we see that the constructed $\mathbb{W}$-birational map $\eta\colon\widetilde{X}\dasharrow\widehat{X}$ contracts all $\mathbb{W}$-irreducible $\pi$-exceptional divisors except for $\widetilde{E}_{n-2}$. Hence, we see that $\mathrm{rk}\,\mathrm{Cl}(\widehat{X})^{\mathbb{W}}=2$ and
$$
K_{\widehat{X}}+\frac{n+1}{2n}\widehat{\mathcal{M}}\sim_{\mathbb{Q}} \pi^*\Big(K_{\mathbb{P}^n}+\frac{n+1}{2n}\mathcal{M}\Big)-\frac{1}{n}\widehat{E}_{n-2}\sim_{\mathbb{Q}}-\frac{1}{n}\widehat{E}_{n-2},
$$
where $\widehat{E}_{n-2}$ is the strict transform on $\widehat{X}$ of the divisor $\widetilde{E}_{n-2}$.

Now, applying the~absolute $\mathbb{W}$-equivariant Minimal Model Program to the log pair $(\widehat{X},\frac{n+1}{2n}\widehat{\mathcal{M}})$, and using Proposition~\ref{proposition:solid}, we obtain the following $\mathbb{W}$-equivariant Sarkisov link:
$$
\xymatrix{
\widehat{X}\ar@{->}[d]_{\varphi}\ar@{-->}[rr]^{\rho}&&\overline{X}\ar[d]^{\phi}\\
\mathbb{P}^n\ar@{-->}[rr]^{\chi}&& X}
$$
Here $\rho$ is a small $\mathbb{W}$-birational map, $\phi$ is an $\mathbb{W}$-equivariant divisorial contraction that contracts the strict transform of the divisor $\widetilde{E}_{n-1}$, and $X$ is a toric Fano variety such that
$$
\mathrm{rk}\mathrm{Cl}(X)^{\mathbb{W}}=\mathrm{rk}\mathrm{Cl}(X)^{\mathfrak{S}_{n+1}}=1.
$$
By construction, the toric boundary of the toric variety $X$ is the strict transform of the divisor~$\widetilde{E}_{n-2}$, and its irreducible components generate the class group $\mathrm{Cl}(X)$. Thus, since $\phi$ contracts the~strict transform of the divisor $\widetilde{E}_{n-1}$, we see that
$$
\mathrm{rk}\,\mathrm{Cl}(X)=\mathrm{rk}\,\mathrm{Cl}(\widehat{X})-n-1=\binom{n+1}{2}-n.
$$
In particular, one has $X\not\simeq\mathbb{P}^n$.

\begin{remark}
As pointed out in Appendix~\ref{app:toric-fano}, $X$ is isomorphic to the variety $\mathscr{X}$ introduced
in Remark~\ref{remark:do-nor-understand-M}, but we decided not to use this fact.
\end{remark}

The above construction proves Theorem~\ref{theorem:toric-2} in the case when
\begin{equation}\label{eq:G-rkCl-rkCl}
\mathrm{rk}\mathrm{Cl}(X)^{\GG}=\mathrm{rk}\mathrm{Cl}(X)^{G}=1.
\end{equation}
This often happens when $G$ is large enough,
although there are cases when the group does not satisfy these properties.

\begin{example}\label{example:2-transitive}
If the group $G$ acts transitively on the set of all non-ordered pairs~\mbox{$\{P_i,P_j\}$}  consisting of distinct points in
$$
\big\{P_1,\ldots,P_{n+1}\big\}\subset \mathbb{P}^{n},
$$
then it acts transitively on irreducible components of $\widetilde{E}_{n-2}$, and so  condition~\eqref{eq:G-rkCl-rkCl} holds.
For instance, this is always the case when $G\simeq\mathfrak{S}_{n+1}$ or $G\simeq\mathfrak{A}_{n+1}$.
Furthermore, if $n=4$, then $G$ is isomorphic to one of the following groups:
\begin{center}
$\mumu_5$, $\mathfrak{D}_{5}$, $\mumu_5\rtimes\mumu_4$, $\mathfrak{A}_5$, $\mathfrak{S}_5$,
\end{center}
because $G$ is a primitive subgroup in $\mathfrak{S}_5$ in the sense of Definition~\ref{definition:primitive-transitive-permutation-group}.
If $G$ is isomorphic to $\mumu_5\rtimes\mumu_4$, $\mathfrak{A}_5$, or $\mathfrak{S}_5$, then condition~\eqref{eq:G-rkCl-rkCl} holds.
However, if $G\simeq\mumu_5$ or~$G\simeq\mathfrak{D}_{5}$, then
explicit computations show that
$\mathrm{rk}\mathrm{Cl}(X)^{\GG}=\mathrm{rk}\,\mathrm{Cl}(X)^{G}=2$.
\end{example}

Thus, to complete the proof of Theorem~\ref{theorem:toric-2}, we may assume that
condition~\eqref{eq:G-rkCl-rkCl} fails, i.e. one has
$$
\mathrm{rk}\mathrm{Cl}(X)^{\GG}=\mathrm{rk}\mathrm{Cl}(X)^{G}\ne 1.
$$
Let $\gamma\colon X^{\sharp}\to X$ be a $\GG\mathbb{Q}$-factorialization, i.e. $\gamma$ is a small $\GG$-equivariant birational morphism (possibly an isomorphism) such that $U$ has terminal $\GG\mathbb{Q}$-factorial singularities. One has
$$
\mathrm{rk}\mathrm{Cl}(X^{\sharp})^{G}=\mathrm{rk}\mathrm{Cl}(X^{\sharp})^{\GG}=\mathrm{rk}\mathrm{Pic}(X^{\sharp})^{\GG}=\mathrm{rk}\mathrm{Pic}(X^{\sharp})^{G}.
$$
Then, applying $\GG$-equivariant Minimal Model Program, and using Proposition~\ref{proposition:solid} again, we obtain a $\GG$-equivariant birational map $\theta\colon X^{\sharp}\dasharrow X^\prime$ such that $\theta$ is a composition of divisorial contractions and flips, and $X^\prime$ is a toric Fano variety with terminal singularities such that
$$
\mathrm{rk}\,\mathrm{Cl}\big(X^\prime\big)^{\GG}=\mathrm{rk}\,\mathrm{Cl}\big(X^\prime\big)^{G}=1.
$$
If $X^\prime\not\simeq\mathbb{P}^{n}$, we are done. However, we do not know whether $X^\prime$
is isomorphic to $\PP^n$ or not. On the other hand, there exists a very simple combinatorial condition on $G$ which guarantees that~\mbox{$X^\prime\not\simeq\mathbb{P}^{n}$}.

\begin{lemma}
\label{lemma:toric-big-orbits}
Let $\Sigma$ be the set of non-ordered pairs $\{P_i,P_j\}$ consisting of distinct points in
$$
\big\{P_1,\ldots,P_{n+1}\big\}\subset \mathbb{P}^{n}.
$$
Then $G$ naturally acts on $\Sigma$. Suppose that $\Sigma$ contains no $G$-orbits of length at most $n+1$. Then~\mbox{$X^\prime\not\simeq\mathbb{P}^{n}$}.
\end{lemma}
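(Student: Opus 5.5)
We argue by contradiction and compare torus-invariant divisors. Suppose $X'\simeq\mathbb{P}^n$. Since $X'$ is a toric Fano variety obtained from $X^{\sharp}$ by a $\GG$-equivariant toric MMP, the torus $\mathbb{T}$ acting on $X'$ is the image of the original torus. The isomorphism $X'\simeq\mathbb{P}^n$ can therefore be taken toric: a projective toric variety isomorphic to $\mathbb{P}^n$ has fan equal to the fan of $\mathbb{P}^n$, because all maximal tori in $\mathrm{Aut}(\mathbb{P}^n)$ are conjugate. Hence $X'$ has exactly $n+1$ torus-invariant prime divisors, and the group $G$, which normalizes $\mathbb{T}$, permutes these $n+1$ boundary divisors. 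Every $G$-orbit on them therefore has length at most $n+1$.

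Next we trace the boundary divisors back through the construction. By construction, the toric boundary of $X$ is the strict transform of $\widetilde{E}_{n-2}$. The irreducible components of $\widetilde{E}_{n-2}$ are the exceptional divisors over the strict transforms of the $\mathbb{T}$-invariant linear subspaces of dimension $n-2$. Such a subspace is $\{x_i=x_j=0\}$, so these components are in natural $G$-equivariant bijection with the set of complementary pairs $\{P_i,P_j\}$ of coordinate points, that is, with $\Sigma$. The $\GG\mathbb{Q}$-factorialization $\gamma$ is small, so the torus-invariant prime divisors of $X^{\sharp}$ are again indexed by $\Sigma$. The maps in $\theta$ are flips and divisorial contractions, so they create no new divisors; they only contract $G$-orbits of torus-invariant prime divisors. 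Consequently, the boundary divisors of $X'$ form a nonempty $G$-invariant subset of $\Sigma$, namely a union of $G$-orbits on $\Sigma$. (It is nonempty because a complete toric variety has at least $n+1$ boundary divisors.)

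By hypothesis, each $G$-orbit in $\Sigma$ has length greater than $n+1$. So $X'$ has strictly more than $n+1$ torus-invariant prime divisors, which gives $\mathrm{rk}\,\mathrm{Cl}(X')\geqslant 1$ with strictly more than $n+1$ rays in its fan. This contradicts $X'\simeq\mathbb{P}^n$.

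The main obstacle is the bookkeeping in the second step: we must justify that the $G$-action on the boundary components of $X$ and $X^{\sharp}$ really is the permutation action on $\Sigma$. This means checking that, via Remark~\ref{remark:permutohedron-Cremona}-type identifications, the component of $\widetilde{E}_{n-2}$ over $\{x_i=x_j=0\}$ is moved by $g\in G$ to the one over $\{x_{g(i)}=x_{g(j)}=0\}$, and that $\gamma$ and $\theta$ are $G$-equivariant so that these labels are preserved. The first step also needs care: we have to make sure the identification $X'\simeq\mathbb{P}^n$ can be made toric. This follows from the conjugacy of maximal tori, since under any isomorphism the image of $\mathbb{T}$ is a maximal torus of $\mathrm{PGL}_{n+1}(\mathbb{C})$.
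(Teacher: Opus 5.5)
Your argument is correct and is essentially the paper's proof: the toric boundary of $X^\prime$ consists of the components of the strict transform of $\widetilde{E}_{n-2}$ that $\theta$ does not contract, and these form a non-empty union of $G$-orbits in $\Sigma$, each of length at least $n+2$, whereas $\mathbb{P}^n$ has only $n+1$ torus-invariant prime divisors. Your extra remark that any toric structure on $\mathbb{P}^n$ is the standard one, by conjugacy of maximal tori, makes explicit a point the paper leaves implicit; the only slip is cosmetic, since having more than $n+1$ rays actually gives $\mathrm{rk}\,\mathrm{Cl}(X^\prime)\geqslant 2$, not merely $\geqslant 1$.
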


\begin{proof}
The constructed $\GG$-equivariant birational map $\theta$ contracts some (but not all) irreducible components of the strict transform of the $\mathbb{T}$-invariant divisor $\widetilde{E}_{n-2}$. The remaining irreducible components are mapped to $\mathbb{T}$-invariant divisors in $X^\prime$, and form its toric boundary. By assumption, the number of these irreducible components is at least $n+2$, so $X^\prime$ cannot be isomorphic to $\mathbb{P}^{n}$, since $\mathbb{P}^{n}$ has exactly $n+1$ torus invariant irreducible divisors.
\end{proof}

If the subgroup $G\subset\mathfrak{S}_{n+1}$ satisfies the combinatorial condition of Lemma~\ref{lemma:toric-big-orbits}, we are done. Hence, to complete the proof of Theorem~\ref{theorem:toric-2}, we may assume that it does not satisfy this condition. Recall that $G$ is a primitive subgroup in $\mathfrak{S}_{n+1}$ in the sense of Definition~\ref{definition:primitive-transitive-permutation-group}.
Thus, it follows from Corollary~\ref{corollary:Sr-orbit-r}
that either $G\simeq\mumu_{n+1}$ or $G\simeq\mathfrak{D}_{n+1}$, and $G$ is conjugate in $\mathfrak{S}_{n+1}$ to one of the following two subgroups:
\begin{enumerate}
\item the cyclic group generated by the cycle $(1\ 2\ \ldots\ n+1)$;
\item the dihedral group generated by the cycle $(1\ 2\ \ldots\ n+1)$ and the permutation
$$
\big(1\ n+1\big)\big(2\ n\big)\ldots \left(\left\lfloor\frac{n+1}{2}\right\rfloor\ \left\lceil\frac{n+1}{2}\right\rceil\right).
$$
\end{enumerate}
Hence, either $\GG$ is conjugate in $\mathbb{W}$ to the subgroup generated by $\mathbb{T}$ and $\sigma$ such that
$$
\sigma\big([x_1:x_2:x_{3}:\ldots:x_{n-1}:x_{n}:x_{n+1}]\big)=[x_{n+1}:x_1:x_2:\ldots:x_{n-2}:x_{n-1}:x_{n}],
$$
or $\GG$ is conjugate in $\mathbb{W}$ to the subgroup generated by $\mathbb{T}$, $\sigma$, and $\iota$ such that
$$
\iota\big([x_1:x_2:x_3:\ldots:x_{n-1}:x_{n}:x_{n+1}]\big)=[x_{n+1}:x_{n}:x_{n-1}:\ldots:x_{3}:x_{2}:x_{1}].
$$
Hence, to complete the proof of Theorem~\ref{theorem:toric-2}, we may assume that
\begin{enumerate}
\item either $\GG\simeq\mathbb{T}\rtimes\mumu_{n+1}$ and $\GG=\langle \mathbb{T},\sigma\rangle$,
\item or $\GG\simeq\mathbb{T}\rtimes\mathfrak{D}_{n+1}$ and $\GG=\langle\mathbb{T},\sigma,\iota\rangle$.
\end{enumerate}

\begin{lemma}
\label{lemma:prime}
The number $n+1$ is prime.
 \end{lemma}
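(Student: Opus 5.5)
We will deduce the primality of $n+1$ from the standing reductions already made at this stage of the proof. Namely, the image of $G$ in $\mathrm{GL}_n(\mathbb{Z})$ is irreducible (otherwise Proposition~\ref{proposition:solid} finishes the proof), and $G$ is conjugate to the cyclic group $\langle(1\ 2\ \ldots\ n+1)\rangle$ or to the dihedral group $\langle(1\ 2\ \ldots\ n+1),\iota\rangle$. Alternatively, we may use that $G$ is a primitive permutation group, which the text above already assumes via Lemma~\ref{lemma:toric-primitive}.

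The shortest route is primitivity. Suppose that $n+1=ab$ with $1<a,b<n+1$. For the cyclic group generated by $c=(1\ 2\ \ldots\ n+1)$, the residue classes modulo $a$ give the partition
$$
\Sigma^j=\big\{i\in\{1,\ldots,n+1\}\ \big|\ i\equiv j \bmod a\big\},\qquad j=1,\ldots,a .
$$
Each block has size $b$, with $1<b<n+1$, and $c$ permutes these blocks, since it shifts residues by $1$. For the reflection $\iota\colon i\mapsto n+2-i$, we have $n+2-i\equiv 2-i \pmod a$ because $a$ divides $n+1$. Hence $\iota$ also maps residue classes to residue classes.

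It follows that the partition is a nontrivial system of imprimitivity for both the cyclic and the dihedral group. This contradicts primitivity of $G$ in the sense of Definition~\ref{definition:primitive-transitive-permutation-group}. Therefore $n+1$ is prime.

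If we preferred to avoid relying on primitivity, we would instead invoke Lemma~\ref{lemma:toric-primitive} directly. The block sums $v_j=\sum_{e_i\in\Sigma^j}e_i$ span a $G$-submodule of $\mathbb{Z}^{n+1}$ whose image in $\mathbb{Z}^n$ is a proper nonzero submodule. This contradicts irreducibility, and Proposition~\ref{proposition:solid} would then already give a $\GG$-Mori fiber space with positive-dimensional base.

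We expect no real obstacle here. The only point requiring care is checking that the reflection $\iota$ preserves the residue-class partition, which holds exactly because $a$ divides $n+1$.
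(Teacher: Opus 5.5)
Your proof is correct and follows the paper's route: the paper derives the contradiction with the standing primitivity assumption by citing Remark~\ref{remark:prime} (a regular cyclic or dihedral subgroup of $\mathfrak{S}_{n+1}$ is primitive only if $n+1$ is prime), and your partition into residue classes mod $a$ is exactly the verification of that remark, which the paper states without proof. One harmless slip: since $a\mid n+1$, one has $n+2-i\equiv 1-i \pmod a$, not $2-i$, but this still shows that $\iota$ permutes the residue classes.
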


\begin{proof}
If $n+1$ is not prime, then it follows from Remark~\ref{remark:prime} that $G$ is not a primitive subgroup in $\mathfrak{S}_{n+1}$ in the sense of Definition~\ref{definition:primitive-transitive-permutation-group}, which contradicts our assumptions.
\end{proof}

To deal with the case $\GG=\langle\mathbb{T},\sigma\rangle$, let $V=\mathbb{P}^n/\mumu_{n+1}$ such that $\mumu_{n+1}$ acts on $\mathbb{P}^n$ as
\begin{equation}\label{eq:cyclic-group-action}
[x_1:\ldots:x_{n}:x_{n+1}]\mapsto [\zeta x_1:\ldots:\zeta^{n}x_{n}:x_{n+1}],
\end{equation}
where $\zeta$ is a primitive $(n+1)$-th root of unity. Then $V$ is a toric Fano variety.

\begin{lemma}\label{lemma:V-terminal}
The variety $V$ has terminal singularities.
\end{lemma}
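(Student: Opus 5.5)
Since $V$ is the quotient of the smooth variety $\mathbb{P}^n$ by a finite group, it has only cyclic quotient singularities. So the plan is to find the points of $\mathbb{P}^n$ with non-trivial stabilizer, write down the local actions, and apply the Reid--Tai criterion. Because $n+1=p$ is prime (Lemma~\ref{lemma:prime}), every non-trivial element of $\mumu_{n+1}$ generates the whole group. Hence the points with non-trivial stabilizer are exactly the fixed points of the generator $g$ from \eqref{eq:cyclic-group-action}. The eigenvalues of $g$ on $\mathbb{C}^{n+1}$ are $\zeta,\zeta^2,\ldots,\zeta^n,1$, which are pairwise distinct. So the fixed points are exactly the $n+1$ coordinate points $P_1,\ldots,P_{n+1}$, and the action of $\mumu_p$ is free in codimension one.

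Next, I compute the local weights. At $P_j$, with $j\le n$, the natural affine coordinates are $x_i/x_j$ for $i\neq j$. On these, $g$ acts with weights $\zeta^{i-j}$, where we set $x_{n+1}$ to correspond to exponent $0$. As $i$ runs over all indices other than $j$, the differences $i-j \bmod p$ run over all non-zero residues $1,\ldots,p-1$, and the same holds at $P_{n+1}$. Thus each singular point of $V$ is of type $\frac{1}{p}(1,2,\ldots,p-1)$, where $p-1=n$ is the dimension.

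Finally, I apply the Reid--Shepherd-Barron--Tai criterion. A cyclic quotient $\frac1p(a_1,\ldots,a_n)$ with no quasi-reflections is terminal if and only if $\sum_i \{ka_i/p\}>1$ for every $k=1,\ldots,p-1$. For our weights, multiplication by $k$ permutes the non-zero residues mod $p$. The sum therefore always equals $\sum_{m=1}^{p-1} m/p=(p-1)/2=n/2$, and this exceeds $1$ exactly when $n\geqslant 3$. We indeed have $n\geqslant 3$; in fact the paper has reduced to $n\geqslant 4$ and $n+1$ prime, so $n\geqslant 4$. Hence $V$ has terminal singularities.

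The main point to get right is that the stabilizer structure and the weights at every fixed point are as claimed. This rests on primality, which ensures there are no intermediate stabilizers and that the weights form the full set of non-zero residues. Once that is in place, the age computation is immediate. As an alternative, one could phrase the same argument torically, checking that the simplices of the fan of $V$ contain no lattice points other than the origin and their vertices. I would follow the Reid--Tai route.
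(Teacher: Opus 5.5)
Your proof is correct and follows the paper's argument. The paper likewise identifies the singularities of $V$ as $n+1$ cyclic quotient singularities of type $\frac{1}{n+1}(1,\ldots,n)$ and applies the Reid--Tai criterion, using that $n+1$ is prime so multiplication by $r$ permutes the non-zero residues and the sum equals $\frac{n(n+1)}{2}>n+1$. Your explicit check of the fixed points, the local weights, and the absence of quasi-reflections fills in what the paper states only ``by construction''.
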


\begin{proof}
By construction, $V$ has $n+1$ cyclic quotient singularities of type
$$
\frac{1}{n+1}(1,\ldots,n).
$$
Hence, according to Reid--Tai criterion \cite[Theorem~4.11]{YPG}, the variety $V$ is terminal if and only if for each $r\in\{1,2,\ldots,n\}$, one has
\begin{equation}
\label{eq:Reid-Tai-V}
\sum\limits_{i=1}^n \overline{ri} > n+1,
\end{equation}
where $\overline{a}$ denotes the integer such that $0\leqslant \overline{a}\leqslant n$ and $a\equiv \overline{a}\ \mathrm{mod}\ {n+1}$.
Since $n+1$ is a prime number by Lemma~\ref{lemma:prime}, we have
$$
\{\overline{r},\overline{2r},\ldots,\overline{nr}\}=\{1,2,\ldots,n\}.
$$
Thus, we compute
$$
\sum\limits_{i=1}^n \overline{ri} =\sum\limits_{i=1}^n i=\frac{n(n+1)}{2}>n+1.
$$
Therefore, inequality~\eqref{eq:Reid-Tai-V} holds, so that $V$
has terminal singularities.
\end{proof}

Note that the quotient map $\mathbb{P}^n\to V$ is $\GG$-equivariant, and the~action of the~group $\GG$ on the~toric variety $V$ gives a~group homomorphism $\GG\to\mathrm{Aut}(V)$, whose kernel is isomorphic to $\mumu_{n+1}$. Let $\GG_V$ be its image. Then
$$
\GG_V=\mathbb{T}_V\rtimes G_V,
$$
where $\mathbb{T}_V$ is the maximal torus in $\mathrm{Aut}(V)$, and $G_V$ is the image of the group $G$. We have \mbox{$G_V\simeq G$}. Recall that we have a natural monomorphism $G\hookrightarrow\mathrm{GL}_{n}(\mathbb{Z})$. Similarly, the $\GG_V$-action on $V$ gives a~monomorphism
$$
G_V\hookrightarrow\mathrm{GL}_{n}(\mathbb{Z})\simeq\mathrm{Aut}(\mathbb{T}_V).
$$
By Proposition~\ref{proposition:G-non-rigid-via-modules}, the following are equivalent:
\begin{itemize}
\item there exists a $\GG$-birational map $\mathbb{P}^n\dasharrow V$, where the $\GG$-action on $V$ is given by $\GG_V$;
\item the images of the groups $G$ and $G_V$ in $\mathrm{GL}_{n}(\mathbb{Z})$ are conjugate.
\end{itemize}

\begin{lemma}
\label{lemma:toric-final}
If $\GG=\langle\mathbb{T},\sigma\rangle$, then the images of the groups $G$ and $G_V$ in $\mathrm{GL}_{n}(\mathbb{Z})$ are conjugate.
\end{lemma}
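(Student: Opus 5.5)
The plan is to compute both integral representations explicitly as $\ZZ[\mumu_{n+1}]$-lattices and show they are isomorphic, using that $p=n+1$ is prime by Lemma~\ref{lemma:prime}. Recall that the image of $G\subset\mathrm{GL}_n(\ZZ)$ is the action of $G$ on the cocharacter lattice (equivalently, by duality, on the character lattice) of $\mathbb{T}$. Two integral representations of a cyclic group generated by $\sigma$ are conjugate in $\mathrm{GL}_n(\ZZ)$ exactly when the corresponding $\ZZ[\langle\sigma\rangle]$-lattices are isomorphic, with $\sigma$ mapped to the corresponding generator.

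\textbf{Step 1: the lattice for $\PP^n$.} The cocharacter lattice of $\mathbb{T}$ is
$$
N=\ZZ^{n+1}/\ZZ(1,\ldots,1),
$$
and $\sigma$ acts on it by cyclically permuting coordinates. As a $\ZZ[\mumu_p]$-module, $N$ is generated by the class of $e_1$. The norm element $1+\sigma+\ldots+\sigma^{n}$ kills $N$, so
$$
N\simeq \ZZ[x]/(1+x+\ldots+x^{n})\simeq\ZZ[\zeta],
$$
with $\sigma$ acting as multiplication by $\zeta$.

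\textbf{Step 2: the lattice for $V$.} The subgroup $\mumu_{p}$ in \eqref{eq:cyclic-group-action} lies in $\mathbb{T}$. The quotient torus $\mathbb{T}_V=\mathbb{T}/\mumu_p$ therefore has cocharacter lattice
$$
N_V=N+\ZZ w,\qquad w=\tfrac{1}{p}(1,2,\ldots,n,0)\in N\otimes\mathbb{Q},
$$
which is an overlattice of $N$ of index $p$. The fan of $V$ is the fan of $\PP^n$ viewed in $N_V$. I would check that the induced action of $\sigma$ (the generator of $G_V$) on $\mathbb{T}_V$ is just the same cyclic permutation extended to $N_V$. This is consistent because
$$
w-\sigma(w)\equiv\tfrac{1}{p}(1,\ldots,1,-n)\equiv -e_{n+1}\pmod{\ZZ(1,\ldots,1)\otimes\mathbb{Q}},
$$
which lies in $N$. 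Hence $N_V$ is $\sigma$-stable. One should also confirm that $G_V\simeq G$ acts faithfully, so that Proposition~\ref{proposition:G-non-rigid-via-modules} applies.

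\textbf{Step 3: identify $N_V$ with $N$.} Since $N_V\otimes\mathbb{Q}=N\otimes\mathbb{Q}$, the norm element also kills $N_V$. Thus $N_V$ is a torsion-free $\ZZ[\zeta]$-module of rank one, that is, a fractional ideal containing $N=\ZZ[\zeta]$ with index $p$. The quotient $N_V/N$ has order $p$, so it is a simple $\ZZ[\zeta]$-module. Since $p$ is totally ramified, $(p)=(1-\zeta)^{p-1}$ and $(1-\zeta)$ is the unique prime over $p$, so $N_V/N\simeq\ZZ[\zeta]/(1-\zeta)$. Hence $(1-\zeta)N_V\subseteq N$, that is, $N_V\subseteq (1-\zeta)^{-1}\ZZ[\zeta]$. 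The latter also contains $N$ with index $|\ZZ[\zeta]/(1-\zeta)|=p$, so
$$
N_V=(1-\zeta)^{-1}\ZZ[\zeta],
$$
a principal fractional ideal. Concretely, $1-\sigma$ maps $N_V$ isomorphically onto $N$ and commutes with $\sigma$. This gives the desired conjugation in $\mathrm{GL}_n(\ZZ)$ between the images of $G$ and $G_V$.

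The main obstacle is Step 2: correctly identifying the lattice of $V$ and checking that the $G_V$-action on it is the extension of the permutation action on $N$, rather than some twist. I would verify this directly on generators. After that, the number theory in Step 3 is routine. One could even avoid it by checking by hand that $1-\sigma$ sends a $\ZZ$-basis of $N_V$ onto a $\ZZ$-basis of $N$; this reduces to a determinant computation equal to $\pm p$ relative to $N$.
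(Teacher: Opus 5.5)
Your proof is correct, but it follows a different route from the paper.

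\textbf{What the paper does.} It works only with explicit matrices. It writes the matrix $A$ of $\sigma$ on $\mathbb{Z}^n$ in the basis $v_1,\ldots,v_n$. It writes the matrix $B$ of $\sigma$ on the overlattice $\Lambda=\mathbb{Z}^n+\mathbb{Z}v$ in the basis $v,v_2,\ldots,v_n$. It then exhibits a specific $C\in\mathrm{GL}_n(\mathbb{Z})$ and checks $CA=BC$ by direct multiplication.

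\textbf{What you do.} You identify $N\simeq\mathbb{Z}[\zeta]$ and show that $N_V$ is the fractional ideal $(1-\zeta)^{-1}\mathbb{Z}[\zeta]$. It follows that $1-\sigma$ is a $\sigma$-equivariant isomorphism $N_V\to N$.

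\textbf{The two intertwiners agree up to a unit.} The paper's $C$ is the matrix of the equivariant map $N\to\Lambda$ given by $v_j\mapsto v-v_{j+1}-\cdots-v_n$. One checks that $(1-\sigma)(v-v_2-\cdots-v_n)=-v_2=-\sigma(v_1)$. Hence the inverse of the paper's $C$ is the matrix of $1-\sigma^{-1}\colon N_V\to N$. This differs from your $1-\sigma$ only by the unit $-\sigma^{-1}$.

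\textbf{Comparison.} The paper's argument is mechanical and uses no number theory, but it gives no hint of where $C$ comes from. Your argument explains the choice of $C$. It also makes explicit a check that the paper leaves implicit: that $\sigma$ preserves $N_V$, i.e.\ normalizes the subgroup $\mumu_{n+1}$ of~\eqref{eq:cyclic-group-action}, which is what makes $G_V$ well defined.

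\textbf{Primality.} Your fractional-ideal step uses Lemma~\ref{lemma:prime}. Your closing determinant variant does not:
\[
[N:(1-\sigma)N_V]=\frac{\det(1-\sigma|_N)}{[N_V:N]}=\frac{n+1}{n+1}=1 .
\]
This holds for every $n$, consistent with the paper's explicit $C$ working for every $n$.
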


\begin{proof}
Recall that $\mathbb{P}^n$ is a toric variety whose fan in the standard lattice $\mathbb{Z}^{n}$ has $n+1$ one-dimensional rays with primitive vectors
\begin{align*}
v_1&=(1,0,\ldots,0,0),\\
&\ldots\\
v_{n}&=(0,0,\ldots,0,1),\\
v_{n+1}&=(-1,-1,\ldots,-1,-1).
\end{align*}
Then the action of $\sigma$ on $\mathbb{P}^n$ corresponds to the action of the linear operator
$$
A=\left(
\begin{array}{cccccc}
0 & 0 & 0 & \ldots & 0 & -1\\
1 & 0 & 0 & \ldots & 0 & -1\\
0 & 1 & 0 & \ldots & 0 & -1\\
\vdots&\vdots&\vdots& \ddots &\vdots&\vdots\\
0 & 0 & 0 & \ldots & 1 & -1
\end{array}
\right)
$$
on the lattice $\mathbb{Z}^{n}$. In other words, $A$ acts as
$$
v_1\mapsto v_2\mapsto\ldots\mapsto v_n\mapsto v_{n+1}\mapsto v_1.
$$
On the other hand, $V$ is a toric variety whose fan has $n+1$ one-dimensional rays with primitive vectors
$v_1,\ldots,v_{n+1}$ in the lattice $\Lambda$ generated by the standard lattice $\mathbb{Z}^{n}$ and the vector
$$
v=\frac{1}{n+1}(1,2,\ldots,n)=\frac{1}{n+1}(v_1+2v_2+\ldots+nv_n).
$$
The vectors $v, v_2,\ldots, v_{n}$ form a basis in the lattice $\Lambda\simeq\mathbb{Z}^{n}$; one has
$$
v_1=(n+1)v-2v_2-3v_3-\ldots -nv_{n}
$$
so that
$$
A(v)=-nv+v_2+2v_3+\ldots+(n-1)v_n,
$$
and
$$
A(v_n)=v_{n+1}=-(n+1)v+v_2+2v_3+\ldots+(n-1)v_n.
$$
This means that the action of $\sigma$ on $V$ corresponds to the action of the linear operator
$$
B=\left(
\begin{array}{cccccc}
-n & 0 & 0 & \ldots & 0 & -n-1\\
1 & 0 & 0 & \ldots & 0 & 1\\
2 & 1 & 0 & \ldots & 0 & 2\\
3 & 0 & 1 & \ldots & 0 & 3\\
\vdots&\vdots&\vdots& \ddots &\vdots&\vdots\\
n-1 & 0 & 0 & \ldots & 1 & n-1
\end{array}
\right)
$$
on the lattice $\Lambda$. Consider the matrix
$$
C=\left(
\begin{array}{cccccc}
1 & 1 & 1 & \ldots & 1 & 1\\
-1 & 0 & 0 & \ldots & 0 & 0\\
-1 & -1 & 0 & \ldots & 0 & 0\\
\vdots&\vdots&\vdots& \ddots &\vdots&\vdots\\
-1 & -1 & -1 & \ldots & -1 &0
\end{array}
\right)\in\mathrm{GL}_n(\mathbb{Z}).
$$
It is straightforward to check that
$$
CA=\left(
\begin{array}{cccccc}
1 & 1 & 1 & \ldots & 1 & -n\\
0 & 0 & 0 & \ldots & 0 & 1\\
-1 & 0 & 0 & \ldots & 0 & 2\\
-1 & -1 & 0 & \ldots & 0 & 3\\
\vdots&\vdots&\vdots& \ddots &\vdots&\vdots\\
-1 & -1 & -1 & \ldots & 0 & n-1
\end{array}
\right)=BC.
$$
Thus, if $\GG=\langle\mathbb{T},\sigma\rangle$, then the images of the subgroups $G$ and $G_V$ in $\mathrm{GL}_n(\mathbb{Z})$ are conjugate.
\end{proof}

\begin{corollary}
\label{corollary:toric-final}
Suppose that $\GG=\langle \mathbb{T},\sigma\rangle$.
Then there exists a $\GG$-birational map~\mbox{$\mathbb{P}^n\dasharrow V$}, where the $\GG$-action on $V$ is given by $\GG_V$.
\end{corollary}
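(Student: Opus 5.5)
The corollary follows by combining Lemma~\ref{lemma:toric-final} with Proposition~\ref{proposition:G-non-rigid-via-modules}, applied to $V_1=\mathbb{P}^n$ and $V_2=V$ with the finite group $G\simeq\mumu_{n+1}=\langle\sigma\rangle$.

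First we check the hypotheses of Proposition~\ref{proposition:G-non-rigid-via-modules}. Both $\mathbb{P}^n$ and $V$ are $n$-dimensional toric varieties. The group $G$ acts on $\mathbb{P}^n$ via $\sigma$ and on $V$ via $G_V\simeq G$. In both cases $G$ normalizes the corresponding torus, namely $\mathbb{T}$ and $\mathbb{T}_V$, because $\sigma$ permutes the coordinates. We also need the two tori to be identified compatibly. The quotient map $\mathbb{P}^n\to V$ restricts to an isogeny $\mathbb{T}\to\mathbb{T}_V$ with kernel $\mumu_{n+1}$, and on cocharacter lattices this is the inclusion $\mathbb{Z}^n\subset\Lambda$. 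Each action of $G$ on its torus is faithful: on $\mathbb{Z}^n$ the element $\sigma$ acts by $A$, which has order exactly $n+1$, and on $\Lambda$ it acts by $B$, which is also of order $n+1$ since it is conjugate to $A$. The images of $G$ in $\mathrm{GL}_n(\mathbb{Z})$ are therefore $\langle A\rangle$ and $\langle B\rangle$.

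By Lemma~\ref{lemma:toric-final} we have $CAC^{-1}=B$ with $C\in\mathrm{GL}_n(\mathbb{Z})$, so the two images are conjugate in $\mathrm{GL}_n(\mathbb{Z})$. Proposition~\ref{proposition:G-non-rigid-via-modules} then gives that $\mathbb{T}\rtimes\langle A\rangle\simeq\mathbb{T}_V\rtimes\langle B\rangle$, and that there is a birational map $\mathbb{P}^n\dasharrow V$ equivariant with respect to this isomorphism. The first group is $\GG$ and the second is $\GG_V$, and the isomorphism $\GG\simeq\GG_V$ is the one induced by the lattice map $C$. So this map is a $\GG$-birational map $\mathbb{P}^n\dasharrow V$, where $\GG$ acts on $V$ through $\GG_V$.

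The one point that needs care is whether the $\GG$-action on $V$ obtained from the Proposition agrees with the given one, namely the action induced from $\mathbb{P}^n$ through the quotient map. Since $\GG\to\GG_V$ has kernel $\mumu_{n+1}$, "the $\GG$-action on $V$ given by $\GG_V$" is defined only up to the choice of identification of $\GG$ with its image. I would settle this by noting that the equivalence preceding Lemma~\ref{lemma:toric-final} already states precisely that conjugacy of $G$ and $G_V$ in $\mathrm{GL}_n(\mathbb{Z})$ is equivalent to the existence of the required map. The corollary is then exactly that statement combined with Lemma~\ref{lemma:toric-final}.
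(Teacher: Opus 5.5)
Your proposal is correct and matches the paper's proof, which simply feeds the conjugacy $CA=BC$ in $\mathrm{GL}_n(\mathbb{Z})$ from Lemma~\ref{lemma:toric-final} into Proposition~\ref{proposition:G-non-rigid-via-modules}. Your extra remark is also right: the identification $\GG\simeq\GG_V$ is the abstract isomorphism supplied by the proposition, not the quotient homomorphism with kernel $\mumu_{n+1}$, and this clarifies the statement without changing the argument.
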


\begin{proof}
Follows from Lemma~\ref{lemma:toric-final} and
Proposition~\ref{proposition:G-non-rigid-via-modules}.
\end{proof}

Recall that $V$ has terminal singularities, $\mathrm{rk}\,\mathrm{Cl}(V)=1$, and $V\not\simeq \mathbb{P}^n$, because $V$ is singular. Hence in the case when $\GG=\langle \mathbb{T},\sigma\rangle$ we see that Theorem~\ref{theorem:toric-2} follows from
Corollary~\ref{corollary:toric-final}.
Therefore, to complete the proof of Theorem~\ref{theorem:toric-2}, we may assume that  $\GG=\langle\mathbb{T},\sigma,\iota\rangle$, and thus $G\simeq\mathfrak{D}_{n+1}$.

\begin{remark}
\label{remark:toric-final}
Note that  $\mathbb{P}^n$ and the constructed toric Fano variety $V$ are not always $\GG$-birational in the case when $\GG=\langle\mathbb{T},\sigma,\iota\rangle$. Indeed, in the notation of the proof of Lemma~\ref{lemma:toric-final}, the action of the involution $\iota$ on the projective space $\mathbb{P}^n$ corresponds to the~action of the linear operator
$$
S=\left(
\begin{array}{cccccc}
-1 & 0 & 0 & \ldots & 0 & 0 \\
-1 & 0 & 0 & \ldots & 0 & 1 \\
-1 & 0 & 0 & \ldots & 1 & 0 \\
\vdots&\vdots&\vdots& \ddots &\vdots&\vdots\\
-1 & 0 & 1 & \ldots & 0 & 0\\
-1 & 1 & 0 & \ldots & 0 & 0
\end{array}
\right)
$$
on the lattice $\mathbb{Z}^{n}$, and the action of $\iota$ on $V$ corresponds to the action of the linear operator
$$
T=\left(
\begin{array}{cccccc}
-1 & 0 & 0 & \ldots & 0 & 0 \\
1 & 0 & 0 & \ldots & 0 & 1 \\
1 & 0 & 0 & \ldots & 1 & 0 \\
\vdots&\vdots&\vdots& \ddots &\vdots&\vdots\\
1 & 0 & 1 & \ldots & 0 & 0\\
1 & 1 & 0 & \ldots & 0 & 0
\end{array}
\right)
$$
on the lattice $\Lambda$. Using this, one can show that the images of the groups $G$ and $G_V$ in $\mathrm{GL}_{n}(\mathbb{Z})$ are not always conjugate (it is highly likely that they are never conjugate). For instance, if $n=4$, this can be verified by the following Magma code:
\begin{verbatim}
    Z := IntegerRing();
    G := GL(4,Z);
    A := G ! Matrix([[0,0,0,-1],[1,0,0,-1],[0,1,0,-1],[0,0,1,-1]]);
    B := G ! Matrix([[-4,0,0,-5],[1,0,0,1],[2,1,0,2],[3,0,1,3]]);
    S := G ! Matrix([[-1,0,0,0],[-1,0,0,1],[-1,0,1,0],[-1,1,0,0]]);
    T := G ! Matrix([[-1,0,0,0],[1,0,0,1],[1,0,1,0],[1,1,0,0]]);
    G1 := sub<G | {A,S}>;
    G2 := sub<G | {B,T}>;
    IsGLZConjugate(G1,G2);
\end{verbatim}
Now, using Proposition~\ref{proposition:G-non-rigid-via-modules}, we see that there exists no $\GG$-birational map $\mathbb{P}^n\dasharrow V$.
\end{remark}

By Remark~\ref{remark:permutohedron-Cremona}, the action of the standard Cremona involution $\tau_{\mathbb{P}^n}$ can be regularized on the permutohedron~$\widetilde{X}$, i.e. the involution $$
\tau_{\widetilde{X}}=\pi^{-1}\circ\tau_{\mathbb{P}^n}\circ\pi
$$
is biregular. Using this and the fact that $\pi$ is $\mathbb{W}$-equivariant, one can show that
$$
\mathrm{Aut}(\widetilde{X})\simeq\mathbb{T}\rtimes\big(\mathfrak{S}_{n+1}\times\mumu_2\big).
$$
Now, applying equivariant Minimal Model Program to $\widetilde{X}$, we get an $\mathrm{Aut}(\widetilde{X})$-equivariant birational map $\nu\colon\widetilde{X}\dasharrow Y$ such that $\nu$ contracts toric divisors $\widetilde{E}_1,\ldots,\widetilde{E}_{n-2}$ (but $\nu$ does not contract $\widetilde{E}_0$), and $Y$ is a smooth toric Fano variety, which is known as centrally symmetric $n$-dimensional toric del Pezzo variety \cite{Casagrande,Ewald,KlyachkoVoskresenskii}, cf. also \cite[\S4.4]{BDM}. The toric Fano variety $Y$ can be obtained from $X_1$ by antiflipping the strict transforms of the torus invariant subspaces in $\mathbb{P}^n$ of dimension $1,\ldots,\frac{n}{2}-1$ in this order (recall that $n+1\geqslant 5$ is prime by Lemma~\ref{lemma:prime}, so that in particular $n$ is even). To be precise, we have the following $\mathbb{W}$-equivariant commutative diagram
$$
\xymatrix{
X_1\ar[d]_{\pi_0}\ar@{-->}[rr]&&Y\ar@{->}[rr]^{\tau_{Y}}&&Y&&X_1\ar[d]^{\pi_0}\ar@{-->}[ll] \\
\mathbb{P}^n\ar@{-->}[rrrrrr]^{\tau_{\mathbb{P}^n}} &&&&&& \mathbb{P}^n}
$$
where $X_1\dasharrow Y$ is the small birational map described above, and $\tau_Y$ is a biregular involution. Thus, we may identify $\mathbb{W}$ with a subgroup in $\mathrm{Aut}(Y)$. Then
$$
\mathrm{Aut}(Y)=\langle\mathbb{W},\tau_Y\rangle\simeq\mathbb{T}\rtimes\big(\mathfrak{S}_{n+1}\times\mumu_2\big).
$$
By construction, the torus invariant divisors in $Y$ are strict transforms on $Y$ of the irreducible components of the divisors $\widetilde{E}_0$ and $\widetilde{E}_{n-1}$ described earlier, which form one $\mathrm{Aut}(Y)$-irreducible divisor. In particular, $Y$ is a toric $\mathrm{Aut}(Y)\mathbb{Q}$-Fano variety. Note that
$$
\langle \GG,\tau_Y\rangle=\mathbb{T}\rtimes\langle G,\tau_Y\rangle\simeq\mathbb{T}\rtimes\big(\mathfrak{D}_{n+1}\times\mumu_{2}\big)\simeq\mathbb{T}\rtimes\mathfrak{D}_{2n+2},
$$
and
$\langle G,\tau_Y\rangle\simeq \mathfrak{D}_{n+1}\times\mumu_{2}\simeq \mathfrak{D}_{2n+2}$.

\begin{remark}
\label{remark:Y}
In the standard lattice $\mathbb{Z}^{n}$, the fan of the toric variety $Y$ has $2n+2$ one-dimensional rays with primitive vectors
\begin{align*}
v_1&=(1,0,\ldots,0,0),\\
u_1&=(-1,0,\ldots,0,0),\\
&\ldots\\
v_{n}&=(0,0,\ldots,0,1),\\
u_{n}&=(0,0,\ldots,0,-1),\\
v_{n+1}&=(-1,-1,\ldots,-1,-1),\\
u_{n+1}&=(1,1,\ldots,1,1).
\end{align*}
If $n=4$, then $Y$ is the smooth toric Fano 4-fold \textnumero 118 in \cite{Batyrev}.
\end{remark}

The constructed birational map $\mathbb{P}^n\dasharrow Y$ is equivariant with respect to the action of the group~$\mumu_{n+1}$ on $\mathbb{P}^n$ given by~\eqref{eq:cyclic-group-action}, which we used earlier to construct the toric variety~\mbox{$V=\mathbb{P}^n/\mumu_{n+1}$}. Now, we let $U=Y/\mumu_{n+1}$.

\begin{lemma}
\label{lemma:U-terminal}
The variety $U$ has terminal singularities.
\end{lemma}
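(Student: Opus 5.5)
The plan is to check terminality locally at the fixed points of $\mumu_{n+1}$ on $Y$, using the Reid--Tai criterion \cite[Theorem~4.11]{YPG} exactly as in the proof of Lemma~\ref{lemma:V-terminal}. Since the group $\mumu_{n+1}$ from~\eqref{eq:cyclic-group-action} is a subgroup of the torus $\mathbb{T}$, every point of $Y$ with non-trivial stabilizer lies in a $\mathbb{T}$-invariant stratum. Because $n+1$ is prime by Lemma~\ref{lemma:prime}, every non-trivial element generates $\mumu_{n+1}$. So it suffices to do two things at each $\mathbb{T}$-fixed point of the smooth toric variety $Y$:
\begin{itemize}
\item compute the weights of $\mumu_{n+1}$ on the tangent space;
\item show that no weight is zero and that the Reid--Tai sum exceeds $n+1$ for every generator.
\end{itemize}
If no weight is zero, the fixed points are isolated, there are no quasi-reflections, and $U$ has only isolated cyclic quotient singularities; the Reid--Tai criterion then decides terminality.

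First, describe the maximal cones of the fan of $Y$ from Remark~\ref{remark:Y}. I expect each one to be spanned by $v_i$ for $i\in A$ and $u_j=-v_j$ for $j\in B$, where $A\sqcup B=\{1,\ldots,n+1\}\setminus\{k\}$ for some index $k$. There are also balancing restrictions on $A$ and $B$ coming from the centrally symmetric del Pezzo structure, but these will not matter below. Smoothness of $Y$ says these $n$ vectors form a basis of $\mathbb{Z}^n$. The weights of $\mumu_{n+1}$ at the corresponding fixed point are then the coordinates, modulo $\mathbb{Z}$, of the vector $v=\frac{1}{n+1}(v_1+2v_2+\ldots+nv_n)$ from the proof of Lemma~\ref{lemma:toric-final} in this basis.

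To compute these coordinates, use the relation $v_1+\ldots+v_{n+1}=0$. It lets us rewrite $v\equiv\sum_{i=1}^{n+1}\frac{i-k}{n+1}v_i$ modulo $\mathbb{Z}^n$, which kills the coefficient of $v_k$. Substituting $v_j=-u_j$ for $j\in B$, the weights become the residues $\varepsilon_i(i-k)$ modulo $n+1$, for $i\neq k$, where $\varepsilon_i=1$ if $i\in A$ and $\varepsilon_i=-1$ if $i\in B$. All of these residues are non-zero, so the fixed points are isolated and no element is a quasi-reflection.

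For a generator $r\in\{1,\ldots,n\}$, the residues $r(i-k)$ run over all non-zero classes modulo $n+1$, again because $n+1$ is prime. Write $\overline{a}$ for the representative of $a$ in $\{0,\ldots,n\}$. The Reid--Tai sum is then $\sum_{s\neq0}\overline{\pm s}$ with some choice of sign for each $s$. Each value $x\in\{1,\ldots,n\}$ occurs at most twice in this sum: once from $s=x$ with sign $+$, and once from $s=-x$ with sign $-$. Hence the sum is at least $2\left(1+2+\ldots+\frac{n}{2}\right)=\frac{n}{2}\left(\frac{n}{2}+1\right)$. Here $n$ is even, since $n+1\geqslant5$ is prime. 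This lower bound is greater than $n+1$ for $n\geqslant4$; for $n=4$ it gives $6>5$. Therefore $U$ is terminal.

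The main obstacle is justifying the description of the maximal cones of $Y$. I would try to extract it from the construction of $Y$ from $X_1$ by antiflips, or from the known fan of the del Pezzo variety \cite{Casagrande,Ewald,KlyachkoVoskresenskii}. The bound above does not depend on which sets $A$ and $B$ actually occur, only on the fact that each cone is spanned by $\pm v_i$ for $n$ distinct indices $i$. So it would suffice to show that no maximal cone contains both $v_i$ and $u_i$, which holds because the cones are strongly convex. It also needs that no maximal cone contains all of $v_1,\ldots,v_{n+1}$ or all of $u_1,\ldots,u_{n+1}$, which holds because each of these sets sums to zero.
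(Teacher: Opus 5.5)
Your proposal is correct and follows the paper's argument: check each maximal cone, which is spanned by $\pm v_i$ for $n$ distinct indices (never $v_i$ together with $u_i$), read off the cyclic quotient type, and apply Reid--Tai with the same lower bound $2\left(1+\ldots+\frac{n}{2}\right)>n+1$ coming from the primality of $n+1$. The differences are cosmetic. The paper works directly with the fan in the overlattice $\mathcal{N}=\mathbb{Z}^n+\mathbb{Z}w$ and uses the cyclic and central symmetries to move the missing index to $n+1$, whereas you view $U$ as $Y/\mumu_{n+1}$ and handle an arbitrary missing index $k$ by shifting $v$ with $\sum v_i=0$; also, your extra condition about a cone containing all of $v_1,\ldots,v_{n+1}$ is vacuous, since maximal cones of a smooth fan have exactly $n$ rays.
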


\begin{proof}
Let $v_1,\ldots,v_{n+1}, u_1,\ldots, u_{n+1}$ be the vectors listed in
Remark~\ref{remark:Y}, and let
$$
w=\frac{1}{n+1}(1,\ldots,n).
$$
Let $\mathcal{N}$ be the lattice in $\mathbb{R}^n$ spanned by~the vectors $v_1,\ldots,v_n$ and~$w$. Thus, $U$ is a $\mathbb{Q}$-factorial toric variety whose fan sits in the lattice
$\mathcal{N}$, and the one-dimensional cones of this fan are generated by the vectors~\mbox{$v_1,\ldots,v_{n+1}, u_1,\ldots, u_{n+1}$}.
We have to show that $U$ has terminal singularities.
In fact, we are going to show a stronger assertion: if $\breve{U}$ is a toric variety defined by a simplicial fan $\mathcal{F}$ in the lattice~$\mathcal{N}$, such that $\mathcal{F}$ has $2n+2$ one-dimensional cones generated by the vectors~\mbox{$v_1,\ldots,v_{n+1}, u_1,\ldots, u_{n+1}$}, then $\breve{U}$ has terminal singularities.

Consider an $n$-dimensional cone of the fan $\mathcal{F}$, and let $w_1,\ldots,w_n$
be the generators of its edges; thus, $w_1,\ldots,w_n$ are $n$ vectors among
$v_1,\ldots,v_{n+1}, u_1,\ldots, u_{n+1}$.
Note that the vectors $v_i$ and~\mbox{$u_i=-v_i$} cannot appear simultaneously among
$w_1,\ldots,w_n$. Next, observe that the lattice $\mathcal{N}$ and the collection of the vectors $v_1,\ldots,v_{n+1}, u_1,\ldots, u_{n+1}$ are invariant with respect to
the action~\eqref{eq:cyclic-group-action} of the group $\mumu_{n+1}$ (this group acts by cyclic permutation of the vectors $v_1,\ldots,v_{n+1}$ and the vectors $u_1,\ldots, u_{n+1}$).
Thus, replacing $w_1,\ldots,w_n$ by their images under a suitable element of~$\mumu_{n+1}$, we may assume that none of the vectors $v_{n+1}$ and $u_{n+1}$ appears
among~\mbox{$w_1,\ldots,w_n$}. Furthermore,
replacing $w_1,\ldots,w_n$ by $-w_1,\ldots,-w_n$ if necessary, we may assume that at least
one of the vectors $v_1,\ldots,v_n$ is in $\{w_1,\ldots,w_n\}$. We note also
that one cannot have
$$
\{w_1,\ldots,w_n\}=\{v_1,\ldots,v_n\},
$$
because the cone generated by $v_1,\ldots,v_n$ cannot be contained in any fan which has a one-dimensional cone generated by $u_{n+1}=v_1+\ldots+v_n$.
Therefore, it remains to consider the case when
$w_1,\ldots,w_n$ is a collection of $n$ vectors among $v_1,\ldots,v_n, u_1,\ldots,u_n$
where $v_i$ and $u_i$ do not appear simultaneously.
In other words, for some splitting of the set of indices
$$
\{1,\ldots,n\}=I_v\sqcup I_u
$$
one has $w_i=v_i$ when $i\in I_v$ and $w_i=u_i$ when $i\in I_u$.

Observe that the interior of the cone generated by $w_1,\ldots,w_n$
contains the vector
$$
w'=w+\sum\limits_{i\in I_u} u_i=\frac{1}{n+1}\left(\sum\limits_{i\in I_v} iv_i+\sum\limits_{i\in I_u} (n+1-i)u_i\right)\in\mathcal{N}.
$$
Moreover, the vectors $w_1,\ldots,w_n$ and $w'$ generate the lattice $\mathcal{N}$.
It follows that our cone describes an affine toric variety $U^\circ$
with a cyclic quotient singularity of type
$$
\frac{1}{n+1}\left(\overline{\varsigma_1 1}, \ldots, \overline{\varsigma_n n}\right),
$$
where $\varsigma_i=1$ for $i\in I_v$ and $\varsigma_i=-1$ for $i\in I_u$.
As in the proof of Lemma~\ref{lemma:V-terminal}, by $\overline{a}$ we denote the integer such that $0\leqslant \overline{a}\leqslant n$ and $a\equiv \overline{a}\ \mathrm{mod}\ {n+1}$.

Now, according to Reid--Tai criterion \cite[Theorem~4.11]{YPG}, we see that the variety $U^\circ$ is terminal if and only if for each $r\in\{1,2,\ldots,n\}$, one has
\begin{equation}
\label{eq:Reid-Tai}
\sum \limits_{i=1}^n \overline{\varsigma_i ri} > n+1.
\end{equation}
Since $n+1\geqslant 5$ is a prime number by Lemma~\ref{lemma:prime}, we have
$$
\{\overline{r},\overline{2r},\ldots,\overline{nr}\}=\{1,2,\ldots,n\}.
$$
Furthermore, $n=2k$ is an even number, and one has $k\geqslant 2$.
Thus, we compute
\begin{multline*}
\sum\limits_{i=1}^n \overline{\varsigma_i ri}\geqslant
\sum\limits_{i=1}^k i+\sum\limits_{i=k+1}^{2k} \overline{-i}=
2\sum\limits_{i=1}^k i=2\cdot\frac{k(k+1)}{2}=\\=
(k^2-2k+1)+(k-2)+(2k+1)=(k-1)^2+(k-2)+(2k+1)>2k+1=n+1.
\end{multline*}
Therefore, inequality~\eqref{eq:Reid-Tai} holds, so $U^\circ$ (and hence also $\breve{U}$,
and in particular our initial variety~$U$)
has terminal singularities.
\end{proof}

The quotient map $Y\to U$ is $\langle \GG,\tau_Y\rangle$-equivariant, and the action of the group $\langle \GG,\tau_Y\rangle$ on $U$ gives a~group homomorphism $\langle \GG,\tau_Y\rangle\to\mathrm{Aut}(U)$, whose kernel is $\mumu_{n+1}$. Let $\GG_U$ be its image. Then
$$
\GG_U=\mathbb{T}_U\rtimes\langle G_U,\tau_U\rangle,
$$
where $\mathbb{T}_U$ is the maximal torus in $\mathrm{Aut}(U)$, the group $G_U$ is the image of the group $G$, and $\tau_U$ is the image of the involution $\tau_Y$. We have
\begin{align*}
G_U&\simeq\mathfrak{D}_{n+1},\\
\langle G_U,\tau_U\rangle&\simeq\mathfrak{D}_{n+1}\times\mumu_2\simeq \mathfrak{D}_{2n+2}.
\end{align*}
By construction, $U$ is a toric $\langle \GG_U,\tau_U\rangle\mathbb{Q}$-Fano variety, i.e., we have
$$
\mathrm{rk}\,\mathrm{Cl}(U)^{\langle \GG_U,\tau_U\rangle}=\mathrm{rk}\,\mathrm{Cl}(U)^{\langle G_U,\tau_U\rangle}=1.
$$
However, $U$ is not a $\GG_U\mathbb{Q}$-Fano variety, since $\mathrm{rk}\,\mathrm{Cl}(U)^{\GG_U}=\mathrm{rk}\,\mathrm{Cl}(U)^{G_U}=2$.

\begin{remark}
\label{remark:UV}
Recall that $V=\mathbb{P}^n/\mumu_{n+1}$. Thus, there exists a $\GG_U$-birational map $U\dasharrow V$, where the $\GG_U$-action on $V$ is given by $\GG_V\simeq \GG_U$.
\end{remark}

Note that the group $\langle G_U,\tau_U\rangle\simeq \mathfrak{D}_{2n+2}$ contains two (normal) subgroups isomorphic to $\mathfrak{D}_{n+1}$. One of them is our group $G_U$. Let $G_U^\prime$ be the other one. Set $\GG_U^\prime=\langle\mathbb{T}_U, G_U^\prime\rangle$. Then
$$
\mathrm{rk}\,\mathrm{Cl}(U)^{\GG_U^\prime}=\mathrm{rk}\,\mathrm{Cl}(U)^{G_U^\prime}=1,
$$
because the group $G_U^\prime$ acts transitively on irreducible torus invariant divisors in $U$.

\begin{lemma}
\label{lemma:toric-final-final}
There exists a $\GG$-birational map $\mathbb{P}^n\dasharrow U$, where the $\GG$-action on $U$ is given by the action of the group $\GG_U^\prime\simeq \GG$.
\end{lemma}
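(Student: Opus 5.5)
The plan is to reduce the statement to a lattice computation via Proposition~\ref{proposition:G-non-rigid-via-modules}, exactly as in Corollary~\ref{corollary:toric-final}. Both $\PP^n$ and $U$ are $n$-dimensional toric varieties. The group $\mathfrak{D}_{n+1}$ acts on $\PP^n$ through $G=\langle\sigma,\iota\rangle$ and on $U$ through $G_U^\prime$, and in both cases it normalizes the torus and acts faithfully on it. So it suffices to show that the image of $G=\langle A,S\rangle$ in $\GL_n(\ZZ)$, acting on the standard lattice $\ZZ^n$, is conjugate to the image of $G_U^\prime$ in $\GL_n(\ZZ)$, acting on the lattice $\mathcal{N}$ from the proof of Lemma~\ref{lemma:U-terminal}. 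Once this conjugacy holds, Proposition~\ref{proposition:G-non-rigid-via-modules} yields a $\GG$-birational map $\PP^n\dasharrow U$ with $\GG\simeq\GG_U^\prime$.

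The first step is to write down generators of $G_U^\prime$ explicitly. The lattice $\mathcal{N}$ is spanned by $\ZZ^n$ and $w=\frac{1}{n+1}(1,\ldots,n)$, so it coincides with the lattice $\Lambda$ from the proof of Lemma~\ref{lemma:toric-final}. On $\mathcal{N}$, the element $\sigma$ acts by the matrix $B$ (with respect to the basis $w,v_2,\ldots,v_n$), and $\iota$ acts by $T$. The Cremona involution $\tau_U$ acts as $-\mathrm{Id}$, since it swaps $v_i$ and $u_i=-v_i$. Inside $\langle G_U,\tau_U\rangle\simeq\mathfrak{D}_{n+1}\times\mumu_2$, the other subgroup isomorphic to $\mathfrak{D}_{n+1}$ is $G_U^\prime=\langle B,-T\rangle$. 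Indeed, $n+1$ is odd, so this subgroup is generated by the rotation $B$ and the reflection composed with $-1$; here I use that $n+1$ is an odd prime by Lemma~\ref{lemma:prime}. This choice also matches the claim that $G_U^\prime$ acts transitively on all $2n+2$ rays: $-T$ sends $v$-rays to $u$-rays.

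The second step uses the matrix $C$ from Lemma~\ref{lemma:toric-final}, which already satisfies $CA=BC$. It then suffices to verify that
$$
CS=-TC,
$$
or, if this fails, to modify $C$ by an element of the centralizer of $B$ in $\GL_n(\ZZ)$ (for example, a power of $B$) so that both relations hold. Conceptually, $C$ should be the map $\ZZ^n\to\mathcal{N}$ that sends the ray generators $v_1,\ldots,v_{n+1}$ of $\PP^n$ to an alternating pattern of the rays of $U$, namely rays permuted cyclically by $B$. The conjugacy must send the reflection $S$, which fixes a ray $v_i$, to a map sending a ray to minus a ray. Using $-T$ instead of $T$ is exactly what repairs the obstruction found in Remark~\ref{remark:toric-final}. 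I would check $CS=-TC$ on the basis vectors $v_1,\ldots,v_n$, using the formulas $v_1=(n+1)w-2v_2-\ldots-nv_n$ and the explicit action of $\iota$ on $\mathcal{N}$.

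The main obstacle is this matrix identity. If the naive $C$ does not intertwine $S$ with $-T$, I would search among the matrices $B^kC$, and among conjugates of $C$ by elements normalizing $\langle B\rangle$, for one that does. Failing that, I would prove directly that both $\ZZ[\mathfrak{D}_{n+1}]$-lattices are isomorphic to the same lattice, namely the quotient of the permutation module on $n+1$ letters by the all-ones vector, with the reflection twisted by the sign. This would be done by exhibiting an explicit equivariant $\ZZ$-basis of $\mathcal{N}$ permuted, up to the sign coming from $-T$, like the $v_i$.
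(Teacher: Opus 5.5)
Your reduction is the same as the paper's. By Proposition~\ref{proposition:G-non-rigid-via-modules}, everything comes down to showing that $\langle A,S\rangle$ and $G_U'=\langle B,-T\rangle$ are conjugate in $\mathrm{GL}_n(\ZZ)$. Your identification of $G_U'$ is correct: the matrix called $T$ in the paper's proof of this lemma is exactly $-T$, for the $T$ of Remark~\ref{remark:toric-final}.

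However, you leave the one real computation undone, and your first candidate fails. With $C$ from Lemma~\ref{lemma:toric-final} one has $CS\neq -TC$. Here is what does hold. View $C$ as the map $\ZZ^n\to\mathcal{N}$ with $v_j\mapsto Ce_j=w-v_{j+1}-\ldots-v_n$. In standard coordinates this vector is $\frac{k}{n+1}$ for $k\leqslant j$ and $\frac{k-n-1}{n+1}$ for $k>j$. In the same coordinates the map $-T$ is $x\mapsto(x_1,\,x_1-x_n,\,x_1-x_{n-1},\ldots,x_1-x_2)$. A one-line check then gives $-T(Ce_j)=Ce_{n+1-j}$ for $1\leqslant j\leqslant n$.

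So $C^{-1}(-T)C=A^{-1}S$, hence $CSC^{-1}=-BT$ and $C\langle A,S\rangle C^{-1}=\langle B,-T\rangle$. That group-level conjugacy is already all the proposition needs.

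If you want the generators matched exactly, your fallback works with $C'=B^{n/2}C$ (note $n$ is even). Since $TB^{-1}=BT$, one gets $C'SC'^{-1}=-B^{n+1}T=-T$, and still $C'AC'^{-1}=B$.

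The paper instead writes down a different explicit matrix $C$ with $C^{-1}AC=B$ and $C^{-1}SC=T$ (its $T$), stated without derivation. Your route has the advantage of reusing Lemma~\ref{lemma:toric-final} and reducing the verification to the single identity above.

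One heuristic in your proposal should be discarded. No conjugating isomorphism $\ZZ^n\to\mathcal{N}$ can send $v_1,\ldots,v_n$ to rays of $U$. All the vectors $\pm v_i$ lie in $\ZZ^n$, which has index $n+1$ in $\mathcal{N}$; indeed $C$ sends $v_n$ to $w$. The equivariant basis of $\mathcal{N}$ you want in your last fallback is $\{B^{n/2}Ce_j\}$, not a set of rays of $U$.
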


\begin{proof}
The action of the group $\GG$ on $\mathbb{P}^n$, gives a natural monomorphism $G\hookrightarrow\mathrm{GL}_{n}(\mathbb{Z})\simeq\mathrm{Aut}(\mathbb{T})$. Similarly, the action of the group $\langle \GG_U,\tau_U\rangle$ on  $U$ gives a~monomorphism $\langle G_U,\tau_U\rangle\hookrightarrow\mathrm{GL}_{n}(\mathbb{Z})$ such that the image of the involution $\tau_U$ is the scalar matrix $-I_n$. It follows from
Proposition~\ref{proposition:G-non-rigid-via-modules} that the following two conditions are equivalent:
\begin{itemize}
\item there exists a $\GG$-birational map $\mathbb{P}^n\dasharrow U$, where $\GG$-action on $U$ is given by $\GG_U^\prime$;
\item the images of the groups $G$ and $G_U^\prime$ in $\mathrm{GL}_{n}(\mathbb{Z})$ are conjugate.
\end{itemize}
On the other hand, arguing as in the proof of Lemma~\ref{lemma:toric-final} and using Remark~\ref{remark:toric-final}, we see that the~image of the group $G$ in $\mathrm{GL}_{n}(\mathbb{Z})$ is generated by the following two $n\times n$ matrices:
$$
A=\left(
\begin{array}{ccccccc}
0 & 0 & 0 & \ldots & 0 & 0 & -1\\
1 & 0 & 0 & \ldots & 0 & 0 & -1\\
0 & 1 & 0 & \ldots & 0 & 0 & -1\\
\vdots&\vdots&\vdots& \ddots &\vdots&\vdots&\vdots\\
0 & 0 & 0 & \ldots & 1& 0 & -1\\
0 & 0 & 0 & \ldots & 0& 1 & -1
\end{array}
\right),\quad
S=\left(
\begin{array}{ccccccc}
-1 & 0 & 0 & \ldots & 0& 0 & 0 \\
-1 & 0 & 0 & \ldots & 0& 0 & 1 \\
-1 & 0 & 0 & \ldots & 0& 1 & 0 \\
\vdots&\vdots&\vdots& \ddots &\vdots&\vdots&\vdots\\
-1 & 0 & 1 & \ldots & 0& 0 & 0\\
-1 & 1 & 0 & \ldots & 0& 0 & 0
\end{array}
\right).
$$
Furthermore, the image of the group $G_U^\prime$ in $\mathrm{GL}_{n}(\mathbb{Z})$ is generated by the following two $n\times n$ matrices:
$$
B=\left(
\begin{array}{ccccccc}
-n & 0 & 0 & \ldots & 0& 0 & -n-1\\
1 & 0 & 0 & \ldots & 0& 0 & 1\\
2 & 1 & 0 & \ldots & 0& 0 & 2\\
3 & 0 & 1 & \ldots & 0& 0 & 3\\
\vdots&\vdots&\vdots& \ddots &\vdots&\vdots&\vdots\\
n-2 & 0 & 0 & \ldots & 1& 0 & n-2\\
n-1 & 0 & 0 & \ldots & 0& 1 & n-1
\end{array}
\right),\quad
T=\left(
\begin{array}{ccccccc}
1 & 0 & 0 & \ldots & 0& 0 & 0 \\
-1 & 0 & 0 & \ldots & 0& 0 & -1 \\
-1 & 0 & 0 & \ldots& 0 & -1 & 0 \\
-1 & 0 & 0 & \ldots& -1 & 0 & 0 \\
\vdots&\vdots&\vdots& \ddots &\vdots&\vdots&\vdots\\
-1 & 0 & -1 & \ldots & 0& 0 & 0\\
-1 & -1 & 0 & \ldots & 0& 0 & 0
\end{array}
\right).
$$
Consider the following $n\times n$ matrix:
$$
C=\left(
\begin{array}{cccccccc}
-1 & -1& 0 & 0 &\ldots& 0 & 0 & -1\\
-1 & 0 & -1 & 0 &\ldots& 0 & 0 & -1\\
-1 & 1 & 0 & -1 &\ldots& 0 & 0 & -1\\
-1 & 0 & 1 & 0 &\ldots& 0 & 0 & -1\\
\vdots&\vdots&\vdots& \ldots &\ddots&\vdots &\vdots&\vdots\\
-1 & 0 & 0 & 0 &\ldots& 0 & -1 & -1\\
-1 & 0 & 0 & 0 &\ldots& 1 & 0 & -2\\
0 & 0 & 0 & 0 &\ldots& 0 & 1 & -1
\end{array}
\right).
$$
Then $C\in\mathrm{GL}_{n}(\mathbb{Z})$. Moreover, we have $C^{-1}AC=B$ and $C^{-1}SC=T$.
\end{proof}

Since $U\not\simeq\mathbb{P}^n$, an application of Lemma~\ref{lemma:toric-final-final}
completes the proof
of Theorem~\ref{theorem:toric-2}.

\section{Four-dimensional projective space}
\label{section:P4}

The goal of this section is to discuss
primitive subgroups of $\mathrm{PGL}_5(\mathbb{C})$
and prove Theorem~C.

Two finite primitive subgroups of the group $\mathrm{PGL}_{5}(\mathbb{C})$ are conjugate if and only if they are isomorphic.
Moreover, up to conjugation, the group $\mathrm{PGL}_{5}(\mathbb{C})$ contains $11$ primitive finite subgroups, see~\mbox{\cite{Br67,Fe71}}. These subgroups can be described as follows.

Up to conjugation, $\mathrm{PGL}_{5}(\mathbb{C})$ contains a unique simple subgroup isomorphic to $\mathrm{PSp}_{4}(\mathbf{F}_3)$,
which can be described as the~automorphism group of the~quartic 3-fold in $\mathbb{P}^4$ given by
\begin{equation}
\label{equation:Burkhardt}
x_1\big(x_1^3+x_2^3+x_3^3+x_4^3+x_5^3\big)+3x_2x_3x_4x_5=0,
\end{equation}
and  $\mathrm{PGL}_{5}(\mathbb{C})$ contains a unique simple subgroup isomorphic to $\mathrm{PSL}_2(\mathbf{F}_{11})$,
which can be described as the~automorphism group of the~cubic 3-fold in $\mathbb{P}^4$ given by
\begin{equation}
\label{equation:Klein}
x_1x_2^2+x_2x_3^2+x_3x_4^2+x_4x_5^2+x_5x_1^2=0.
\end{equation}
These two simple subgroups of the group $\mathrm{PGL}_{5}(\mathbb{C})$ are primitive. Note that the quartic \eqref{equation:Burkhardt} is known as the Burkhardt quartic 3-fold,
and the cubic \eqref{equation:Klein} is known as the Klein cubic 3-fold.

Recall that $\mathfrak{S}_6$ faithfully acts on $\mathbb{P}^4$ via its standard five-dimensional irreducible representation.
This gives a primitive subgroup in $\mathrm{PGL}_{5}(\mathbb{C})$, which is isomorphic to $\mathfrak{S}_6$.
This subgroup contains a unique subgroup isomorphic to $\mathfrak{A}_6$, which is also primitive.
Up to conjugation, it also contains two subgroups isomorphic to $\mathfrak{S}_5$,
which are called standard and non-standard --- the non-standard subgroup is primitive, and the standard subgroup is intransitive.

\begin{lemma}[{cf. \cite[Corollary~2.5(ii)]{CheltsovShramov-S6} and \cite{CheltsovKuznetsovShramov}}]
\label{lemma:S6-A6-S5nst}
Suppose that $G\subset \mathrm{PGL}_5(\CC)$ is a primitive subgroup isomorphic to
$\mathfrak{S}_6$, $\mathfrak{A}_6$, or~$\mathfrak{S}_5$.
Then the linear system $|\mathcal{O}_{\mathbb{P}^4}(4)|$ contains a unique $G$-invariant pencil.
\end{lemma}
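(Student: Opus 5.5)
The plan is to reduce the statement to a character computation. Let $\widehat{G}\subset\mathrm{GL}_5(\mathbb{C})$ be a lift of $G$. For $\mathfrak{S}_6$ and $\mathfrak{A}_6$ we take the standard $5$-dimensional representation, that is, the permutation representation on $\mathbb{C}^6$ modulo the trivial summand. For the primitive (non-standard) $\mathfrak{S}_5$ we take the restriction of this representation along the exotic embedding $\mathfrak{S}_5\hookrightarrow\mathfrak{S}_6$, where $\mathfrak{S}_5$ acts transitively on six letters (on the Sylow $5$-subgroups). This restriction is the irreducible $5$-dimensional representation of $\mathfrak{S}_5$, i.e. the one of the form $\mathrm{Ind}$ from $\mathfrak{D}_5\rtimes\mumu_2=\mumu_5\rtimes\mumu_4$ minus the trivial representation.

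A $G$-invariant pencil in $|\mathcal{O}_{\mathbb{P}^4}(4)|$ corresponds to a two-dimensional $\widehat{G}$-subrepresentation of $S^4(V^\vee)$ with $V=\mathbb{C}^5$, on which $\widehat{G}$ may act through any representation. Since $\widehat{G}$ is the group itself and not a central extension, such a subrepresentation is either
\begin{itemize}
\item a sum of two one-dimensional characters, or
\item an irreducible two-dimensional representation.
\end{itemize}
The groups $\mathfrak{A}_6$ and $\mathfrak{S}_6$ have no irreducible $2$-dimensional representations, while $\mathfrak{S}_5$ has none either (its irreducible degrees are $1,1,4,4,5,5,6$). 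So the pencils correspond exactly to choosing a line in an isotypic component $S^4(V^\vee)_\chi\oplus S^4(V^\vee)_{\chi'}$, one for each of two characters $\chi,\chi'$. Equivalently, invariant pencils are the two-dimensional subspaces of $S^4(V^\vee)$ spanned by semi-invariants; for $\mathfrak{A}_6$ only the trivial character occurs. Hence uniqueness of the pencil amounts to the following: the total multiplicity of one-dimensional characters in $S^4(V^\vee)$ equals exactly $2$, and if both copies carry the same character the pencil is that isotypic component.

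For $\mathfrak{S}_6$ (and for $\mathfrak{A}_6$), I will compute these multiplicities via power sums. The invariants of $\mathfrak{S}_6$ on $\mathbb{C}^6$ restricted to the hyperplane $\sum x_i=0$ form the ring $\mathbb{C}[p_2,\dots,p_6]$. The degree-$4$ invariants are therefore spanned by $p_4$ and $p_2^2$, which gives dimension $2$. The sign character has no semi-invariant of degree $4$, since the Vandermonde has degree $15$. For $\mathfrak{A}_6$ the invariant ring is $\mathbb{C}[p_2,\dots,p_6]\oplus\Delta\cdot\mathbb{C}[p_2,\dots,p_6]$, so in degree $4$ it is again spanned by $p_4,p_2^2$. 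In both cases the unique invariant pencil is $\langle p_2^2,p_4\rangle$.

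For the non-standard $\mathfrak{S}_5$, I will use Molien's formula, or equivalently the averaged character
\[
\dim S^4(V)^{\chi}=\frac{1}{120}\sum_{g}\overline{\chi(g)}\,h_4(\lambda_1(g),\dots,\lambda_5(g)),
\]
with the eigenvalues read off from the cycle type of $g$ viewed as a permutation of six letters (the transpositions of $\mathfrak{S}_5$ act as triple transpositions, and so on). The required output is that the trivial character contributes $2$ (again $p_2^2,p_4$, which are automatically invariant) and the sign character contributes $0$. This gives uniqueness for $\mathfrak{S}_5$.

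The main obstacle is this last Molien computation for the exotic $\mathfrak{S}_5$. The invariant ring is larger than that of $\mathfrak{S}_6$, so one must exclude an extra degree-$4$ invariant and any sign semi-invariant of degree $4$. I would tabulate the seven conjugacy classes of $\mathfrak{S}_5$ with their $\mathfrak{S}_6$-cycle types:
\[
1^6,\quad 2^3,\quad 3^2,\quad 2\,4,\quad 1\,5,\quad 6,\quad 3^2,
\]
one for each class of $\mathfrak{S}_5$. From this table I would compute $h_4$ of the eigenvalues on $V$ class by class. As a cross-check, the answers can be compared with the results of \cite{CheltsovShramov-S6} and \cite{CheltsovKuznetsovShramov}.
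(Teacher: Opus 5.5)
Your route is the paper's: the paper just invokes symmetric functions and a direct character computation for $S^4$. Your treatment of $\mathfrak{S}_6$ and $\mathfrak{A}_6$ is correct: invariants $\mathbb{C}[p_2,\ldots,p_6]$, sign semi-invariants divisible by the degree-$15$ discriminant, hence the unique pencil $\langle p_2^2,p_4\rangle$. So is your reduction of uniqueness to the statement that one-dimensional characters occur in $S^4(V^\vee)$ with total multiplicity exactly $2$.

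The problem is the non-standard $\mathfrak{S}_5$, which is the only non-classical case. You do not carry out the computation, and the cycle-type table you would feed into it is wrong.

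\begin{itemize}
\item Your list contains $3^2$ twice and never $2^2 1^2$, so one class of involutions is given an order-$3$ type. View $\mathfrak{S}_5\simeq\mathrm{PGL}_2(\mathbf{F}_5)$ acting on $\mathbb{P}^1(\mathbf{F}_5)$. Transpositions are the non-split involutions outside $\mathrm{PSL}_2(\mathbf{F}_5)$, of type $2^3$. Double transpositions lie in $\mathrm{PSL}_2(\mathbf{F}_5)$, are split, and fix two points, so their type is $2^2 1^2$.
\item The $4$-cycles have type $4\,1^2$, not $2\,4$. For instance $t\mapsto 2t$ fixes $0$ and $\infty$. Parity must also be preserved, because the exotic embedding pulls back the sign of $\mathfrak{S}_6$ to the sign of $\mathfrak{S}_5$. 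This particular error happens not to change the value of $h_4$.
\end{itemize}

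With double transpositions assigned $3^2$ as in your list, the averages come out as $1$ for the trivial character and $-1$ for the sign, which is impossible. So the step as written would fail.

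With the correct types, take the classes of $(1)$, $(12)$, $(12)(34)$, $(123)$, $(123)(45)$, $(1234)$, $(12345)$, of sizes $1,10,15,20,20,30,24$. The character of $S^4(V)$ takes the values $70,6,6,-2,0,2,0$ on them. The trivial character then has multiplicity $(70+60+90-40+60)/120=2$ and the sign has multiplicity $(70-60+90-40-60)/120=0$, which is exactly the output you need.

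A table-free check is also available. Let $P=V\oplus\mathbf{1}$ be the permutation representation on the six points; then $S^4(P)\simeq S^4(V)\oplus S^3(P)$. Since $\mathrm{PGL}_2(\mathbf{F}_5)$ is sharply $3$-transitive, it has $5$ orbits on quartic monomials and $3$ on cubic ones, giving $5-3=2$ invariants. Among these orbits, only those of $x^2yz$ and $xyz$ have stabilizers contained in $\mathfrak{A}_5$; all the others contain elements of order $4$. Hence the sign occurs $1-1=0$ times.

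A minor point: $\mumu_5\rtimes\mumu_4$ is not $\mathfrak{D}_5\rtimes\mumu_2$, since every element outside $\mathfrak{D}_5$ has order $4$.
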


\begin{proof}
This follows from standard results on symmetric functions
(or from a direct computation of characters of the fourth symmetric power of the corresponding
representations).
\end{proof}

To describe the remaining $6$ primitive finite subgroups of the group $\mathrm{PGL}_{5}(\mathbb{C})$, let $\mathbb{H}_5$ be the~subgroup in $\mathrm{PGL}_{5}(\mathbb{C})$
generated by the following projective transformations:
\begin{align*}
[x_1:x_2:x_3:x_4:x_5]&\mapsto [x_2:x_3:x_4:x_5,x_1],\\
[x_1:x_2:x_3:x_4:x_5]&\mapsto [e^{\frac{2\pi \sqrt{-1}}{5}} x_1:e^{\frac{4\pi \sqrt{-1}}{5}} x_2:e^{\frac{6\pi \sqrt{-1}}{5}} x_3:e^{\frac{8\pi \sqrt{-1}}{5}} x_4:x_5],
\end{align*}
and let $\mathbb{N}_5$ be the~normalizer of the~subgroup $\mathbb{H}_5$ in $\mathrm{PGL}_5(\mathbb{C})$. Then $\mathbb{H}_5$ is transitive and imprimitive,
one has~\mbox{$\mathbb{H}_5\simeq\mumu_5^2$}, and it follows from \cite{HoMu73} that
$$
\mathbb{N}_5/\mathbb{H}_5\simeq\mathrm{SL}_2(\mathbf{F}_{5}).
$$
Moreover, we have $\mathbb{N}_5\simeq \mathbb{H}_5\rtimes\mathrm{SL}_2(\mathbf{F}_{5})$, the subgroup $\mathbb{N}_5$ is primitive~\cite{Br67},
and it contains $5$ more primitive subgroups $G_1$, $G_2$, $G_3$, $G_4$, $G_5$ such that all of them contain $\mathbb{H}_5$,
and
\begin{align*}
G_1/\mathbb{H}_5&\simeq \mumu_3,\\
G_2/\mathbb{H}_5&\simeq \mumu_6,\\
G_3/\mathbb{H}_5&\simeq \mathfrak{Q}_8,\\
G_4/\mathbb{H}_5&\simeq \mumu_3\rtimes\mumu_4,\\
G_5/\mathbb{H}_5&\simeq \mathrm{SL}_2(\mathbf{F}_3),
\end{align*}
where $\mathfrak{Q}_8$ denotes the quaternion group of order $8$. Explicit generators of $\mathbb{N}_5$ and its five primitive subgroups $G_1$, $G_2$, $G_3$, $G_4$, $G_5$ can be found in \cite{KangZhangSjiYuYau}.

The described $11$ primitive subgroups in $\mathrm{PGL}_{5}(\mathbb{C})$ are all primitive finite subgroups in $\mathrm{PGL}_{5}(\mathbb{C})$.
Note that the original classification in \cite{Br67} also listed a primitive subgroup isomorphic to $\mathfrak{A}_5$, but later it has been
determined that this subgroup is transitive and imprimitive.

\begin{proposition}
\label{proposition:8-groups}
If $\mathbb{P}^4$ is $G$-birationally rigid for some finite subgroup $G\subset\mathrm{PGL}_{5}(\mathbb{C})$, then $G$ is conjugate to one of the following subgroups:
$\mathrm{PSp}_{4}(\mathbf{F}_3)$, $\mathrm{PSL}_2(\mathbf{F}_{11})$, $\mathbb{N}_5$, $G_1$, $G_2$, $G_3$, $G_4$,~$G_5$.
\end{proposition}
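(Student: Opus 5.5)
By Theorem~A, if $\mathbb{P}^4$ is $G$-birationally rigid then $G$ is primitive. The paragraph preceding the statement lists all primitive finite subgroups of $\mathrm{PGL}_5(\mathbb{C})$ up to conjugation. There are $11$ of them, and primitive subgroups are conjugate exactly when they are isomorphic:
\begin{itemize}
\item $\mathrm{PSp}_4(\mathbf{F}_3)$ and $\mathrm{PSL}_2(\mathbf{F}_{11})$;
\item $\mathfrak{S}_6$, $\mathfrak{A}_6$, and the non-standard $\mathfrak{S}_5$;
\item $\mathbb{N}_5$ and its five primitive subgroups $G_1,\ldots,G_5$.
\end{itemize}
So the proposition reduces to excluding the three groups $\mathfrak{S}_6$, $\mathfrak{A}_6$ and the non-standard primitive $\mathfrak{S}_5$. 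Nothing is claimed about the remaining eight, since the statement only gives a necessary condition.

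For those three groups, Lemma~\ref{lemma:S6-A6-S5nst} provides a $G$-invariant pencil in $|\mathcal{O}_{\mathbb{P}^4}(4)|$. Because $4\leqslant n=4$, Lemma~\ref{lemma:Pn-pencil} applies directly: the pencil defines a $G$-equivariant rational map to $\mathbb{P}^1$ whose general fibers are uniruled quartic threefolds. Running the relative $G$-MMP then yields a $G$-Mori fiber space over a positive-dimensional base, so $\mathbb{P}^4$ is not $G$-birationally rigid.

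Combining the two steps, any $G$ for which $\mathbb{P}^4$ is $G$-birationally rigid must be conjugate to one of the eight listed subgroups. Both steps are formal once the earlier results are granted. The only substantive input is Lemma~\ref{lemma:S6-A6-S5nst}, the existence of the invariant quartic pencil. For $\mathfrak{S}_6$ and $\mathfrak{A}_6$ I would check it via the power sums $\sigma_2^2,\sigma_4$ in the coordinates of $\sum x_i=0$. For the non-standard $\mathfrak{S}_5$ I would compute the multiplicity of the trivial character in $\mathrm{Sym}^4$, expecting exactly $2$ invariant quartics (equivalently, the pullback of the $\mathfrak{S}_6$ pencil). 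This character computation is where care is needed, but it is already recorded in the paper.
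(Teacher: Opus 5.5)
Your proposal is correct and follows essentially the same route as the paper. Theorem~A reduces to the eleven primitive subgroups of $\mathrm{PGL}_5(\mathbb{C})$, and Lemma~\ref{lemma:S6-A6-S5nst} together with Lemma~\ref{lemma:Pn-pencil} excludes $\mathfrak{S}_6$, $\mathfrak{A}_6$ and the non-standard $\mathfrak{S}_5$ via their invariant quartic pencils.
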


\begin{proof}
We know from Theorem~A that $G$ is primitive. Thus,
it is conjugate to one of the~$11$ subgroups described above.
On the other hand, if $G$ is a primitive subgroup of $\mathrm{PGL}_{5}(\mathbb{C})$ isomorphic to $\mathfrak{S}_6$, $\mathfrak{A}_6$, or $\mathfrak{S}_5$,
then $\mathbb{P}^4$ is not $G$-birationally rigid by Lemma~\ref{lemma:Pn-pencil}, because $\mathbb{P}^4$ contains a \mbox{$G$-invariant} pencil of quartic hypersurfaces,
see Lemma~\ref{lemma:S6-A6-S5nst}.
\end{proof}

\begin{remark}
\label{remark:indus}
We do not know whether $\mathbb{P}^4$ is $G$-birationally rigid in the case when $G$ is one of the groups  $\mathrm{PSL}_2(\mathbf{F}_{11})$, $\mathbb{N}_5$, $G_1$, $G_2$, $G_3$, $G_4$, $G_5$. Observe that $\mathbb{P}^4$ is not $G$-birationally superrigid if~$G=G_1$ or $G=G_2$. Indeed, in these cases, $G$ leaves invariant a smooth quintic elliptic curve in~$\mathbb{P}^4$. Therefore, there exists a $G$-equivariant quadro-cubic Cremona transformation of $\mathbb{P}^4$, which was described in \cite{Semple}, cf.~\cite{Polishchuk,Rubtsov,Indus}.
\end{remark}

\begin{remark}
\label{remark:P4-no-quadrics}
If $G\subset\mathrm{PGL}_{5}(\mathbb{C})$ is one of the groups
$\mathrm{PSp}_{4}(\mathbf{F}_3)$, $\mathrm{PSL}_2(\mathbf{F}_{11})$, $\mathbb{N}_5$, $G_1$, $G_2$, $G_3$, $G_4$,~$G_5$, then there are no $G$-invariant quadrics in $\PP^4$.
Indeed, let $\widehat{\mathbb{H}}_5$ denote
the preimage of the group~$\mathbb{H}_5$ in $\mathrm{SL}_5(\CC)$. Then
$\widehat{\mathbb{H}}_5$ is a transitive group of order $125$ with center $Z(\widehat{\mathbb{H}}_5)\simeq \mumu_5$.
Moreover, every irreducible representation of $\widehat{\mathbb{H}}_5$ where $Z(\widehat{\mathbb{H}}_5)$ acts non-trivially has dimension $5$. Hence there are no one-dimensional subrepresentations in the space of quadratic polynomials on $\CC^5$. This implies that there are no quadric hypersurfaces in $\PP^4$ which are invariant with respect to $\mathbb{H}_5$ or with respect to any group containing $\mathbb{H}_5$. As for the groups $\mathrm{PSp}_{4}(\mathbf{F}_3)$ and  $\mathrm{PSL}_2(\mathbf{F}_{11})$, the absence of invariant quadrics can be deduced either from a direct computation of the character of the second symmetric power of the corresponding representation, or from the fact that these groups cannot be embedded into the birational automorphism group of $\PP^3$, see~\cite{ProkhorovSimple}.
\end{remark}

\medskip
In the remaining part of this section, we prove Theorem~C.
Let $X_4$ be the quartic 3-fold in~$\mathbb{P}^4$ that is given by
equation~\eqref{equation:Burkhardt}.
Then
$$
\mathrm{Aut}(X_4)\simeq\mathrm{PSp}_4\big(\mathbf{F}_3\big).
$$
Set $G=\mathrm{Aut}(X_4)$. We have to show that $\mathbb{P}^4$ is $G$-birationally superrigid. Before doing this, let us present some facts about the action of the group $G$ on $X_4$ and $\mathbb{P}^4$.

First, we recall that the singular locus $\mathrm{Sing}(X_4)$ consists of $45$ isolated ordinary double points (nodes), which form a $G$-orbit.
Recall also that
\begin{itemize}
\item a line in $\mathbb{P}^4$ can contain $1$, $2$, or $3$ nodes,

\item a plane in $\mathbb{P}^4$ can contain $1$, $2$, $3$, $4$, $6$, or $9$ nodes,

\item a hyperplane in $\mathbb{P}^4$ can contain $1$, $2$, $3$, $4$, $7$, $10$, $12$, or $18$ nodes.
\end{itemize}
The planes in $\mathbb{P}^4$ containing $9$ nodes of $X_4$ are called Jacobi planes. These planes are contained in~$X_4$; conversely, every plane contained in $X_4$ is a Jacobi plane.
In total, the quartic $X_4$ contains~$40$ Jacobi planes, and there are exactly $8$ Jacobi planes in $X_4$ that pass through a given node.
The union of all Jacobi planes in $X_4$ is a divisor in $|-10K_{X_4}|$, which we denote by~$\mathbf{J}$.
Two distinct Jacobi planes either intersect by a point or by a line.
If they intersect by a line, then this line contains three nodes of $X_4$,
and there are no more Jacobi planes passing through this line. Furthermore,
any line containing three nodes of $X_4$ is an intersection of two Jacobi planes.
The number of such lines is $240$, and they
form one $G$-irreducible curve.
We refer the reader to~\cite{Baker} and~\cite{Hunt} for more details on
configurations of nodes, lines, and planes on~$X_4$.

The following facts are well known to experts and are easy to check.

\begin{lemma}[{see e.g. \cite[p.~26]{Atlas}}]
\label{lemma:PSp43-subgroups}
The indices of proper maximal subgroups of $G$ equal $27$, $36$, $40$, and $45$.
The subgroups of indices $27$ and $36$ are isomorphic to $\mumu_2^4\rtimes\mathfrak{A}_5$
and $\mathfrak{S}_6$, respectively.
\end{lemma}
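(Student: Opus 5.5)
The statement is a purely group-theoretic fact about $G\simeq\mathrm{PSp}_4(\mathbf{F}_3)$, a simple group of order $25920=2^6\cdot 3^4\cdot 5$. I would check it against the ATLAS data and against the geometry of the Burkhardt quartic $X_4$ already recalled above.

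\textbf{Step 1: list the maximal subgroups.} Up to conjugacy, the maximal subgroups of $\mathrm{PSp}_4(\mathbf{F}_3)\simeq\mathrm{PSU}_4(\mathbf{F}_2)\simeq W(\mathbb{E}_6)^+$ are
$$
\mumu_2^4\rtimes\mathfrak{A}_5,\quad \mathfrak{S}_6,\quad 3^{1+2}_+\rtimes\mathrm{SL}_2(\mathbf{F}_3),\quad 3^3\rtimes\mathfrak{S}_4,\quad 2\cdot(\mathfrak{A}_4\times\mathfrak{A}_4).2 .
$$
Their orders are $960$, $720$, $648$, $648$ and $576$.

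\textbf{Step 2: compute the indices.} Dividing $25920$ by these orders gives indices $27$, $36$, $40$, $40$ and $45$. This yields exactly the list in the statement, and the isomorphism types of the subgroups of indices $27$ and $36$ are as claimed.

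\textbf{Step 3: match with the geometry of $X_4$ where possible.}
\begin{itemize}
\item The index $45$ subgroup is the stabilizer of a node of $X_4$, since the $45$ nodes form one $G$-orbit.
\item An index $40$ subgroup is the stabilizer of a Jacobi plane, since there are $40$ of them.
\item The index $27$ subgroup is the stabilizer of one of the $27$ lines on a cubic surface, via $G\simeq W(\mathbb{E}_6)^+$. The subgroup $W(\mathbb{D}_5)\cap W(\mathbb{E}_6)^+$ has order $1920/2=960$ and is isomorphic to $\mumu_2^4\rtimes\mathfrak{A}_5$.
\item The index $36$ subgroup is the stabilizer of one of the $36$ double-sixes. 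It is the even part of $W(\mathbb{A}_5)\times\mumu_2$, which is isomorphic to $\mathfrak{S}_6$.
\end{itemize}

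\textbf{Main obstacle.} The hardest part is completeness of the list, i.e. excluding any other maximal subgroup of small index. I would not reprove this and would instead cite the ATLAS \cite[p.~26]{Atlas}. As an alternative check, I would use that a subgroup of index $m$ gives a nontrivial transitive permutation representation of degree $m$. The primitive permutation representations of $\mathrm{PSp}_4(\mathbf{F}_3)$ have degrees $27$, $36$, $40$ and $45$ only.
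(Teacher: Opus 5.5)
Your proposal is correct and follows the paper's approach: the paper gives no argument beyond citing the ATLAS list of maximal subgroups of $\mathrm{PSp}_4(\mathbf{F}_3)$, which is exactly your Steps 1--2 (orders $960,720,648,648,576$, giving indices $27,36,40,40,45$). Your Step~3 identifications are correct but only consistency checks (stabilizers of a node, a Jacobi plane, a line and a double-six via $W(\mathbb{E}_6)^+$), and your ``alternative check'' via primitive permutation degrees is the same ATLAS fact restated, so it is not an independent proof of completeness.
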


\begin{lemma}
\label{lemma:Burkhardt-40}
There are no $G$-orbits of length less than $40$ in $\PP^4$.
\end{lemma}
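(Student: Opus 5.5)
The plan is to reduce the statement to the list of maximal subgroups in Lemma~\ref{lemma:PSp43-subgroups}. If $\Sigma\subset\mathbb{P}^4$ is a $G$-orbit of length $\ell$, then the stabilizer $G_P$ of a point $P\in\Sigma$ has index $\ell$ in $G$. If $\ell<40$ then either $\ell=1$, or $G_P$ is a proper subgroup of index less than $40$. By Lemma~\ref{lemma:PSp43-subgroups}, such a subgroup lies in a maximal subgroup of index $27$ or $36$, i.e. in a subgroup $H\simeq\mumu_2^4\rtimes\mathfrak{A}_5$ or $H\simeq\mathfrak{S}_6$.

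First, $\ell=1$ is impossible, because $G$ is primitive, hence transitive, so it fixes no point. The same conclusion follows from the fact that the $5$-dimensional representation of the central extension $\mumu_3\times\mathrm{PSp}_4(\mathbf{F}_3)$, or of $\mathrm{Sp}_4(\mathbf{F}_3)$, is irreducible.

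So it suffices to show that neither $\mathfrak{S}_6$ nor $\mumu_2^4\rtimes\mathfrak{A}_5$ fixes a point of $\mathbb{P}^4$. I also need that the index of $G_P$ cannot lie in $(36,40)$ unless $G_P$ is contained in one of these maximal subgroups. This holds automatically: every proper subgroup lies in some maximal subgroup, and its index is a multiple of that maximal subgroup's index. The maximal subgroups of index $40$ and $45$ therefore only give orbit lengths at least $40$.

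A point fixed by $H$ corresponds to a one-dimensional subrepresentation of the restriction to $\widehat{H}$ of the $5$-dimensional representation of $\widehat{G}$. Here $\widehat{G}\subset\mathrm{SL}_5(\mathbb{C})$ is the lift, which we may take to be $\mathrm{PSp}_4(\mathbf{F}_3)$ itself, since its Schur multiplier is $\mumu_2$ and $5$ is odd, so $G$ lifts isomorphically to $\mathrm{SL}_5(\mathbb{C})$. This representation is the Weil-type irreducible $5$-dimensional representation of $\mathrm{PSp}_4(\mathbf{F}_3)\simeq\mathrm{PSU}_4(\mathbf{F}_2)\simeq W(\mathrm{E}_6)^+$.

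For $H\simeq\mathfrak{S}_6$, I will restrict the character using the Atlas. The $5$-dimensional character restricted to $\mathfrak{S}_6$ should be an irreducible $5$-dimensional character of $\mathfrak{S}_6$. One way to check this is that $H$ would then preserve a $G$-orbit-like configuration; the cleaner check is to compute the inner product with the trivial and sign characters from the class values. Alternatively, the restriction to $\mathfrak{A}_6$ is irreducible: otherwise $\mathfrak{A}_6$ would fix a point or act on $\mathbb{P}^3$ or a line faithfully while fixing complementary data, and $\mathfrak{A}_6$ has no nontrivial representations of dimension less than $5$ except $1$.

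For $H\simeq\mumu_2^4\rtimes\mathfrak{A}_5$, the restriction to the normal subgroup $\mumu_2^4$ decomposes into characters permuted by $\mathfrak{A}_5$. If $\mumu_2^4$ acts trivially, the representation is not faithful, which is a contradiction. So the nontrivial characters occurring form $\mathfrak{A}_5$-orbits on $\mumu_2^4\setminus\{0\}$, whose orbit lengths are $5$ and $10$ in the natural action. This forces the representation to be induced from a character of $\mumu_2^4\rtimes\mathfrak{A}_4$ on an orbit of length $5$, so it is irreducible and has no fixed line.

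The main obstacle is pinning down the character restrictions precisely, especially ruling out the possibility that $\mumu_2^4$ acts with trivial summands. I would first try the orbit-counting argument for $\mumu_2^4\rtimes\mathfrak{A}_5$ above, and otherwise fall back on the Atlas character table of $\mathrm{U}_4(2)$ to read off that both restrictions are irreducible. As a geometric cross-check, the $45$ nodes of $X_4$ and the $40$ Jacobi planes give orbits of lengths $45$ and $40$, consistent with the bound.
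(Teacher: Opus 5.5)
Your proposal is correct and follows the paper's proof. The index of the stabilizer is a multiple of $27$ or $36$ and is less than $40$, so the stabilizer must equal (not merely lie in) a maximal subgroup isomorphic to $\mumu_2^4\rtimes\mathfrak{A}_5$ or $\mathfrak{S}_6$; one then checks that the $5$-dimensional representation restricted to each of these has no invariant line. The paper simply asserts that these restrictions are irreducible, whereas your Clifford-theory count on $\mumu_2^4$ (orbits of length $5$ and $10$) and your observation that $\mathfrak{A}_6$ has no nontrivial irreducible representations of dimension $2$, $3$ or $4$ actually justify this.
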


\begin{proof}
From Lemma~\ref{lemma:PSp43-subgroups}, we see that
a stabilizer of a point in an orbit of length less than $40$ must be isomorphic to
$\mumu_2^4\rtimes\mathfrak{A}_5$
and $\mathfrak{S}_6$. However, the restrictions of the corresponding representation
to these subgroups are irreducible.
\end{proof}

\begin{lemma}
\label{lemma:Burkhardt-computation}
Then there are no $G$-invariant hypersurfaces
of degree  of degree $1$, $2$, $3$, and $5$ in $\PP^4$.
There exists a unique $G$-invariant hypersurface of
degree $4$, and a unique $G$-invariant hypersurface of
degree $6$ in $\PP^4$.
\end{lemma}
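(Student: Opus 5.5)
The plan is to reduce the statement to computing dimensions of invariants in symmetric powers of a linear representation. Let $\widehat{G}\subset\mathrm{SL}_5(\mathbb{C})$ be the preimage of $G$. Since the Schur multiplier of $\mathrm{PSp}_4(\mathbf{F}_3)$ is $\mumu_2$ and the relevant five-dimensional representation is genuinely projective, $\widehat{G}$ can be taken to be $\mathrm{Sp}_4(\mathbf{F}_3)\times\mumu_5$, or equivalently we work with the $5$-dimensional representation $W$ of $G$ itself, which is linear: it is the Weil-type irreducible representation of degree $5$ of $\mathrm{PSp}_4(\mathbf{F}_3)\simeq W(\mathbb{E}_6)^+$. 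A $G$-invariant hypersurface of degree $d$ corresponds to a one-dimensional $\widehat{G}$-subrepresentation of $\mathrm{Sym}^d(W^\vee)$. Since $G$ is simple, every such character is trivial. The statement therefore reduces to showing that
$$
\dim \mathrm{Sym}^d(W^\vee)^{G}=0 \quad \text{for } d\in\{1,2,3,5\},\qquad \dim \mathrm{Sym}^d(W^\vee)^{G}=1 \quad \text{for } d\in\{4,6\}.
$$

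I would compute these dimensions through the Molien series
$$
M(t)=\frac{1}{|G|}\sum_{g\in G}\frac{1}{\det(1-tg)},
$$
summed over the $34$ conjugacy classes of $\mathrm{PSp}_4(\mathbf{F}_3)$. The eigenvalues of each class on $W$ can be read off from the character values of $\chi_W$ and its power maps, using the ATLAS data cited in Lemma~\ref{lemma:PSp43-subgroups}. A cleaner route is the classical fact that the invariant ring of this $5$-dimensional representation of $W(\mathbb{E}_6)^+$ is known: $W(\mathbb{E}_6)=\mathrm{PSp}_4(\mathbf{F}_3)\rtimes\mumu_2$ acts on $\mathbb{P}^4$ as well, up to sign twist, and the Burkhardt quartic $F_4$ together with its invariants of degrees $6,10,12,18,\ldots$ (Maschke, Burkhardt, Coble) generate the invariants. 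Expanding $M(t)$ up to $t^6$ gives coefficients $1,0,0,0,1,0,1$. Each of these coefficients is a sum of character values of symmetric powers, $\frac{1}{|G|}\sum_g \chi_{\mathrm{Sym}^d W}(g)$, and $\chi_{\mathrm{Sym}^d W}$ can be expressed via Newton-type formulas in $\chi_W(g^k)$ for $k\leqslant d$. This is a finite calculation, which I would carry out directly or verify in Magma or GAP.

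Several of the vanishing cases also have conceptual shortcuts. Degree $1$ vanishes because $W$ is irreducible. Degree $2$ is already covered by Remark~\ref{remark:P4-no-quadrics}. For degree $3$, an invariant cubic would be preserved by $G$, but the automorphism group of a cubic threefold cannot contain $\mathrm{PSp}_4(\mathbf{F}_3)$: the order $25920$ exceeds the known bounds, and $3^4$ divides $|G|$ while the largest automorphism group of a smooth cubic threefold is $\mumu_3^4\rtimes\mathfrak{S}_5$, which does not contain $G$. Singular cubics can be handled by the character computation anyway.

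For degree $4$, the existence of an invariant is given by equation~\eqref{equation:Burkhardt}, and uniqueness is the coefficient count. For degree $6$, I would exhibit the invariant explicitly, for instance the Hessian-type covariant or the classical degree-$6$ invariant of Coble, and again get uniqueness from the count. Since $F_4$ accounts for only the degree-$4$ part, no product of lower-degree invariants produces a degree-$6$ invariant, so the sextic is genuinely new.

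The main obstacle is carrying out the Molien or character computation reliably, in particular getting the eigenvalues on $W$ of elements of orders $9$ and $12$ right. I would check the result by independently confirming that $\dim\mathrm{Sym}^5(W)^G=0$, using restriction to the index-$27$ subgroup $\mumu_2^4\rtimes\mathfrak{A}_5$, where $\mumu_2^4$ acts diagonally up to sign and the invariants are easy to enumerate.
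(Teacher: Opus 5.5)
Your main line, reducing to $\dim\mathrm{Sym}^d(W^\vee)^G$ for the linear $5$-dimensional representation and computing it by the Molien series (equivalently, by characters of symmetric powers), is exactly the paper's proof, and the coefficients $1,0,0,0,1,0,1$ you state are correct. Several side remarks are wrong, however, and should be fixed or dropped:
\begin{itemize}
\item $W$ is a genuine linear representation of $G$. So the preimage of $G$ in $\mathrm{SL}_5(\mathbb{C})$ is $G\times\mumu_5$, not $\mathrm{Sp}_4(\mathbf{F}_3)\times\mumu_5$. Also, $G$ has $20$ conjugacy classes, not $34$.
\item $W(\mathbb{E}_6)$ does not act on $\mathbb{P}^4$, because its outer automorphism swaps the two complex conjugate $5$-dimensional characters. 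The correct version of your cleaner route is that $\langle G,-\mathrm{Id}\rangle$ is the Shephard--Todd reflection group $G_{33}$ with degrees $4,6,10,12,18$. Hence the $G$-invariants are $R\oplus JR$ with $R=\mathbb{C}[f_4,f_6,f_{10},f_{12},f_{18}]$ and $\deg J=45$, which gives the lemma at once.
\item The Hessian of the quartic has degree $10$, not $6$.
\item Restricting to $\mumu_2^4\rtimes\mathfrak{A}_5$ cannot confirm the vanishing in degree $5$. The product of the five $\mumu_2^4$-eigencoordinates is invariant under that subgroup, because its permutation part is even and $\det W=1$.
\end{itemize}
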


\begin{proof}
A direct computation of characters of the symmetric powers of the corresponding representation.
\end{proof}

The unique $G$-invariant hypersurface of
degree $4$ is our Burkhardt quartic $X_4$.
Let us denote by~$X_6$ the unique $G$-invariant hypersurface of
degree $6$. The next fact is probably well-known,
but we could not find an appropriate reference.

\begin{lemma}
\label{lemma:Burkhardt-surfaces}
Let $S$ be a $G$-invariant surface in $\mathbb{P}^4$ such that $\mathrm{deg}(S)\leqslant 24$. Then $\mathrm{deg}(S)=24$, the surface $S$ is irreducible, and~\mbox{$S=X_4\cap X_6$}.
\end{lemma}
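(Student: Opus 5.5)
The plan is to separate two cases, according to whether $S$ lies on the Burkhardt quartic $X_4$ or not. Throughout we use that $S$ is $G$-invariant, so $G$ permutes its irreducible components transitively. Each component $S_1$ has stabilizer $G_{S_1}$ of index $k$ equal to the number of components, and $\deg(S)=k\deg(S_1)$.

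\emph{Step 1: reduce to $S$ irreducible.} Suppose $S$ is reducible, so $k\geqslant 2$. Then $k$ is the index of a proper subgroup of $G$. By Lemma~\ref{lemma:PSp43-subgroups}, $k\geqslant 27$, which is impossible since $k\leqslant\deg(S)\leqslant 24$. Hence $S$ is irreducible and $G$ acts on it.

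\emph{Step 2: use quartics and sextics through $S$.} I will compare $\deg(S)\leqslant 24$ with dimensions of spaces of forms. For $d\leqslant 6$, the restriction map $H^0(\mathcal{O}_{\mathbb{P}^4}(d))\to H^0(\mathcal{O}_S(d))$ has image of dimension at most the Hilbert function of $S$. For an irreducible nondegenerate surface of degree $e$, this is bounded by Castelnuovo-type estimates, roughly $h_S(d)\leqslant e\binom{d+1}{2}+d+1$ for small $d$. Nondegeneracy holds because there are no $G$-invariant hyperplanes. I would then apply this to the $G$-representations $\mathrm{Sym}^d(\mathbb{C}^5)^\vee$ and argue that the kernel $H^0(\mathcal{I}_S(d))$ is a nonzero $G$-subrepresentation for suitable $d$.

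A cleaner route is to show first that $S$ lies on some invariant hypersurface of low degree. Here is the approach I would try first. Take a general hypersurface $H$ of degree $m$ containing $S$, with $m$ minimal, and consider the $G$-invariant product of its $G$-translates, or rather the span $W\subset H^0(\mathcal{I}_S(m))$, which is $G$-invariant. If $m\leqslant 3$, the base locus of $W$ contains $S$, and intersecting two members gives degree at most $9<24$ in codimension $2$. This forces either a common component or the surface to have degree at most $9$. In the latter case I check that no irreducible $G$-invariant surface of degree at most $9$ exists, using the lack of orbits of length below $40$ from Lemma~\ref{lemma:Burkhardt-40}. For example, a general hyperplane section is a curve of degree at most $9$, and its intersection with the Jacobi configuration or the nodes gives an orbit-counting contradiction.

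\emph{Step 3: show $S\subset X_4$ and $S\subset X_6$.} The expected main obstacle is proving that $S$ lies on $X_4$. The approach is as follows. If $S\not\subset X_4$, then $S\cap X_4$ is a $G$-invariant curve of degree $4\deg(S)\leqslant 96$. I would then use the $45$ nodes and the $40$ Jacobi planes. The Jacobi planes are the only planes on $X_4$, and their union $\mathbf{J}$ lies in $|-10K_{X_4}|$. Intersecting $S$ with the Jacobi planes gives $40$ plane curves, or points, of total degree $40\deg(S)$ counted on $\mathbf{J}$. A general $G$-invariant curve count, with orbits of length at least $40$ and, by Lemma~\ref{lemma:PSp43-subgroups}, $G$-irreducible curves having at least $27$ components or being irreducible with big automorphism group, should then give a contradiction.

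Alternatively, the representation-theoretic route is this: the space of quartics vanishing on $S$ has dimension at least $70-h_S(4)$. If this space is nonzero, it is a $G$-representation, and I would compare it with the decomposition of $\mathrm{Sym}^4$ to find the invariant $X_4$ in it. This step needs character computations and is where I anticipate the most work. Once $S\subset X_4$, the same argument in degree $6$ on $X_4$, using $H^0(\mathcal{O}_{X_4}(6))$ and the unique invariant sextic from Lemma~\ref{lemma:Burkhardt-computation}, gives $S\subset X_4\cap X_6$. Since $X_4\cap X_6$ has degree $24$ and $X_6\not\supset X_4$ (by irreducibility of $X_4$), we get $\deg(S)=24$ and $S=X_4\cap X_6$, with irreducibility of $X_4\cap X_6$ following from Step 1.
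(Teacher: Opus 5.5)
Your Step~1 is correct and is the paper's argument: every proper subgroup of $G$ has index at least $27$ (Lemma~\ref{lemma:PSp43-subgroups}). For a merely $G$-invariant $S$ this shows that every component is $G$-invariant, which is all you need. The first real gap is the case $S\subset X_4$, where the key input is missing. The paper uses $\mathrm{Cl}(X_4)^G=\mathbb{Z}[-K_{X_4}]$. Since $X_4$ has $45$ nodes and many non-Cartier Weil divisors (e.g.\ the Jacobi planes), nothing a priori makes a $G$-invariant surface on $X_4$ a hypersurface section. Your dimension counts cannot replace this. With $e=24$, your bound $e\binom{d+1}{2}+d+1$ exceeds $h^0(\mathcal{O}_{\mathbb{P}^4}(d))$ for every $d\leqslant 6$, so no quartic or sextic is forced through $S$. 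Even when $H^0(\mathcal{I}_S(d))\neq 0$, it is just some subrepresentation of $\mathrm{Sym}^d$, which need not contain the invariant line spanned by $F_4$ or $F_6$. With the class group statement the case is immediate. One has $S\sim kH$ on $X_4$ with $\deg S=4k\leqslant 24$, so $S$ is cut out by a $G$-invariant form of degree $k$ not divisible by $F_4$ ($G$ is perfect, and invariants of $H^0(\mathcal{O}_{X_4}(k))$ lift). Then Lemma~\ref{lemma:Burkhardt-computation} leaves only $k=6$. The same fact is what rules out a proper $G$-invariant subsurface of $X_4\cap X_6$. Your closing sentence only excludes non-trivial $G$-orbits of components, not several $G$-invariant components or non-reducedness.

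For $S\not\subset X_4$ you only say that a curve count ``should'' give a contradiction, and the inputs you name are far too weak. The number $\mathbf{J}\cdot C=10\deg C$ can be as large as $960$, while a $G$-orbit of length $40$ contributes only $40$. The paper needs a bound per Jacobi plane:
\begin{enumerate}
\item $C\not\subset\mathbf{J}$, because the Hessian group acting on a Jacobi plane $\Pi$ preserves no curve of degree at most $3$;
\item $C$ avoids the nodes, by blowing them up, using $\widetilde{\mathbf{J}}\sim f^*(-10K_{X_4})-4E$, and using the $\mathfrak{A}_4\times\mathfrak{A}_4$-action on the exceptional quadrics;
\item orbit lengths of $\Gamma_{648}$ on $\Pi$ bound $|C\cap\Pi|$ from below, giving $\mathbf{J}\cdot C\geqslant 960\geqslant 10\deg Z$;
\item equality is then excluded via cyclic stabilizers of smooth points and the absence of subgroups of index dividing $96$.
\end{enumerate}
If you follow this route, note that the Hessian group also has an orbit of length $12$ on $\Pi$: the vertices of the four triangles of the Hesse configuration, where at least three Jacobi planes meet. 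These points need a separate argument. A much shorter route is that of Remark~\ref{remark:Burkhardt-surfaces}. One has $\mathbb{P}^4/G\simeq\mathbb{P}(2,3,5,6,9)$ with $\mathcal{O}_{\mathbb{P}^4}(2)$ pulled back from $\mathcal{O}(1)$, and Lemma~\ref{lemma:weighted-degree} applied to $\pi(S)\not\subset\{y_0=0\}$ gives $4\deg S\geqslant |G|/108=240$, i.e.\ $\deg S\geqslant 60$. Finally, your Step~2 does not contribute anything. The minimal degree of a hypersurface through $S$ can be $4$ (as for $X_4\cap X_6$, which lies on no cubic), so the $m\leqslant 3$ analysis does not cover the relevant case. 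Also, a general hyperplane section of $S$ is not $G$-invariant, so orbit lengths say nothing about it.
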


\begin{proof}
One has $\mathrm{Cl}^G(X_4)=\mathbb{Z}[-K_{X_{4}}]$, see e.g. \cite[\S2]{CPS} or \cite[Theorem~2(1)]{CheltsovTschinkelZhang}.
Thus, if~\mbox{$S\subset X_4$}, then~\mbox{$d=4k$} and $S$ is cut out on $X_4$ by a $G$-invariant hypersurface of degree $k$.
On the other hand, by Lemma~\ref{lemma:Burkhardt-computation} there are no $G$-invariant hypersurfaces of degree $1$, $2$, $3$, and~$5$ in~$\PP^4$,
and the only $G$-invariant hypersurface of degree $4$ in $\mathbb{P}^4$ is the quartic $X_4$.
Hence, if~\mbox{$S\subset X_4$} and~\mbox{$\mathrm{deg}(S)\leqslant 24$}, then~\mbox{$\mathrm{deg}(S)=24$} and~\mbox{$S=X_4\cap X_6$}.
In particular, we see that $S$ is $G$-irreducible, because otherwise
there would exist a $G$-invariant surface of smaller degree in $X_4$.
Furthermore, by Lemma~\ref{lemma:PSp43-subgroups}
there are no subgroups of index at most $24$ in $G$,
and thus there are no non-trivial homomorphisms from~$G$ to~$\mathfrak{S}_m$ with $m\leqslant 24$.
Therefore, we see that $S$ is actually irreducible.

To complete the proof, we may assume that $S\not\subset X_4$. Moreover, we can assume that the surface~$S$ is $G$-irreducible. Let us seek for a contradiction.
Set $Z=S\cdot X_4$. Then $Z$ is a possibly $G$-reducible and possibly non-reduced $G$-invariant curve of degree at most $96$. Let $Z_1$ be one of its irreducible components, and let $C$
be a $G$-irreducible curve such that $Z_1$ is an irreducible component of $C$.
Note that $C$ is not contained in a hyperplane in $\mathbb{P}^4$, since otherwise the linear span of $C$ would be a $G$-invariant linear subspace of $\PP^4$.
Hence, if $C$ is irreducible, then $G$ acts faithfully on $C$.

We claim that $C\not\subset\mathbf{J}$.
Indeed, recall that two distinct Jacobi planes either intersect by a point or by a line,
and the lines contained in at least two Jacobi planes form one $G$-irreducible curve of degree~$240$.
Thus, if $C\subset\mathbf{J}$, then each irreducible component of $C$ is contained in exactly one Jacobi plane.
Now, we let $\Pi$ be a Jacobi plane, and let $\Gamma_{648}$ be its stabilizer in $G$. Then
$$
\Gamma_{648}\simeq\mathrm{SU}_{3}(\mathbf{F}_2).\mumu_3,
$$
its GAP ID is [648,533],
and its action on $\Pi\simeq\mathbb{P}^2$ gives a homomorphism
$$
\Gamma_{648}\to\mathrm{Aut}(\Pi)\simeq\mathrm{PGL}_{3}(\mathbb{C}).
$$
The image of this homomorphism
is the so-called the Hessian group of order $216$ isomorphic
to~\mbox{$\mathrm{PSU}_{3}(\mathbf{F}_2).\mumu_3$},
its GAP~ID is [216,153], and the kernel is isomorphic to $\mumu_3$.
It is straightforward to see that this group does not act on rational and elliptic curve,
and does not act on a union of three lines in $\PP^2$. Thus, the plane $\Pi$ does not contain $\Gamma_{648}$-invariant curves of degree at most~$3$. This shows that~\mbox{$C\not\subset\mathbf{J}$}, because the degree of $C$ is at most $96$.

We claim that $\mathrm{Sing}(X_4)\not\subset C$. Indeed, suppose that $\mathrm{Sing}(X_4)\subset C$.
Let $f\colon\widetilde{X}_4\to X_4$ be the blow up of all singular points of $X_4$,
let $\widetilde{\mathbf{J}}$ be the strict transform on $\widetilde{X}_4$ of the divisor~$\mathbf{J}$,
let $E$ be the union of all $f$-exceptional prime divisors,
and let $\widetilde{C}$ be the strict transform on $\widetilde{X}_4$ of the curve~$C$.
Then $f$ is $G$-equivariant and
$$
0\leqslant\widetilde{\mathbf{J}}\cdot\widetilde{C}=\Big(f^*\big(-10K_{X_4}\big)-4E\Big)\cdot\widetilde{C}=10\mathrm{deg}(C)-4E\cdot\widetilde{C}\leqslant 960-4E\cdot\widetilde{C}\leqslant 960-4|E\cap\widetilde{C}|,
$$
which gives $|E\cap\widetilde{C}|\leqslant 240$.
Let $E_1$ be an irreducible component of the divisor $E$, and let $\Gamma_{576}$ be the stabilizer in $G$ of the point $f(E_1)\in\mathrm{Sing}(X_4)$.
Then
$$
\Gamma_{576}\simeq\mumu_2.\mathfrak{A}_4\wr\mumu_2,
$$
the GAP ID of $\Gamma_{576}$ is [576,8277], and the kernel of the $\Gamma_{576}$-action on $E_1$ is the center of the group~$\Gamma_{576}$,
which is isomorphic to~$\mumu_2$. Hence, the image in $\mathrm{Aut}(E_1)$ of the group $\Gamma_{576}$ is isomorphic to $\mathfrak{A}_4\wr\mumu_2$, and its GAP ID is [288,1025]. The latter group contains a subgroup isomorphic to~\mbox{$\mathfrak{A}_4\times\mathfrak{A}_4$},
which acts on $E_1\simeq\mathbb{P}^1\times\mathbb{P}^1$ via the product action.
This implies that $E_1$ does not contain $\Gamma_{576}$-orbits of length at most~$15$, because the minimal length of a $\mathfrak{A}_4$-orbit in $\PP^1$ is $4$.
On the other hand, it follows from the inequality $|E\cap\widetilde{C}|\leqslant 240$
that~\mbox{$|E_1\cap\widetilde{C}|\leqslant 5$}, which is a contradiction.
Hence, we see that $\mathrm{Sing}(X_4)\not\subset C$. Since the action of
$G$ on $\mathrm{Sing}(X_4)$ is transitive, this means that every point of the intersection $\mathbf{J}\cap C$ is a smooth point of the quartic~$X_4$.

Now, let us revisit our Jacobi plane $\Pi$, and its stabilizer $\Gamma_{648}$.
Note that $\mathrm{Sing}(X_4)\cap\Pi$ is a $\Gamma_{648}$-orbit of length $9$, this is the only $\Gamma_{648}$-orbit in $\Pi$ of length less than~$24$,
the plane $\Pi$ contains a $\Gamma_{648}$-orbit of length~$24$, and the stabilizer of a point in this orbit is not cyclic.
Thus, since~\mbox{$\mathrm{Sing}(X_4)\cap C=\varnothing$}, we have
$$
\Pi\cdot C\geqslant |C\cap\Pi|\geqslant 24,
$$
which gives
$$
960\geqslant 10\mathrm{deg}(Z)\geqslant 10\mathrm{deg}(C)=-10K_{X_4}\cdot C=\mathbf{J}\cdot C\geqslant 40\cdot 24=960.
$$
This implies that $C=Z$, the equality $\mathrm{deg}(C)=96$ holds, the intersection $C\cap\Pi$ form one $\Gamma_{648}$-orbit of length $24$, and the curve $C$ is smooth at every point of the intersection $C\cap\Pi$.
If $C$ is irreducible, this immediately leads to a contradiction, because the stabilizer in $\mathrm{Aut}(C)$ of a smooth point is cyclic.
Hence, we conclude that $C$ is a reducible $G$-irreducible curve of degree $96$.
On the other hand, $G$ does not contain proper subgroups whose index divides $96$ by Lemma~\ref{lemma:PSp43-subgroups}.
The obtained contradiction completes the proof of the lemma.
\end{proof}

\begin{remark}
\label{remark:Burkhardt-surfaces}
Let $S\subset\mathbb{P}^4$ be a $G$-irreducible surface. If $S\not\subset X_4$, then $\mathrm{deg}(S)>24$ by Lemma~\ref{lemma:Burkhardt-surfaces}.
This also follows from Lemma~\ref{lemma:weighted-degree}.
Indeed, by Lemma~\ref{lemma:PSp43-subgroups}, we may assume that $S$ is irreducible.
Let $U=\mathbb{P}^4/G$,
and let $\pi\colon \mathbb{P}^4\to U$ be the quotient map. Then $U\simeq \mathbb{P}(2,3,5,6,9)$ and
$$
\mathcal{O}_{\mathbb{P}^4}(2)\sim_{\mathbb{Q}}\pi^*(\mathcal{O}_U(1)).
$$
On the other hand, as cycles in $U$, we have $\pi_*[S]=|G|[\pi(S)]$, so it follows from Lemma~\ref{lemma:weighted-degree} that
$$
4\deg(S)=\big(\mathcal{O}_{\mathbb{P}^4}(2)\big)^2\cdot Z=\big(\mathcal{O}_U(1)\big)^2\cdot\pi_*[S]= |G|\,\big(\mathcal{O}_U(1)\big)^2\cdot\pi(S)\geqslant\frac{|G|}{108}=240,
$$
so $\mathrm{deg}(S)\geqslant 60>24$.
\end{remark}

Now, we are ready to prove that $\mathbb{P}^4$ is $G$-birationally superrigid.
Suppose it is not. Then it follows from \cite[Corollary 3.3.3]{CheltsovShramov2015}
that there exists a non-empty $G$-invariant mobile linear
subsystem~\mbox{$\mathcal{M}\subset |\mathcal{O}_{\mathbb{P}^4}(n)|$}, for some positive integer $n$, such that
the singularities of the log pair $(\mathbb{P}^4,\frac{5}{n}\mathcal{M})$ are not canonical.
If this log pair is not canonical at a general point of a $G$-irreducible surface $S$, then
$$
\mathrm{mult}_{S}\big(\mathcal{M}\big)>\frac{n}{5},
$$
which implies that $\mathrm{deg}(S)<25$.
So, we have $S=X_4\cap X_6$ by Lemma~\ref{lemma:Burkhardt-surfaces}.
Therefore, the restriction $\mathcal{M}\vert_{X_4}$ is a linear subsystem
of $|nH|$, where $H$ is a hyperplane section of $X_4$, whose fixed part contains
a divisor
$$
\mu S\sim_{\mathbb{Q}} 6\mu H
$$
with $\mu>\frac{n}{5}$,
which gives a contradiction.

Hence, the singularities of the log pair $(\mathbb{P}^4,\frac{5}{n}\mathcal{M})$ are canonical away from a codimension $3$ subset in $\mathbb{P}^4$.
Then it follows from \cite[Remark 3.6]{CheltsovSarikyanZhuang2024} that the singularities of the log pair $(\mathbb{P}^4,\frac{15}{2n}\mathcal{M})$ are not log canonical.
If this log pair is not log canonical at a general point of a $G$-irreducible surface~$S$, then it follows from \cite[Theorem 3.1]{corti} that
$$
\mathrm{mult}_{S}\big(M_1\cdot M_2\big)>4\cdot\left(\frac{2n}{15}\right)^2
$$
for two general members $M_1$ and $M_2$ of the linear system $\mathcal{M}$. This
implies that
$$
4\cdot\left(\frac{2n}{15}\right)^2\cdot\mathrm{deg}(S)< n^2,
$$
so $\mathrm{deg}(S)<15$, which is impossible by Lemma~\ref{lemma:Burkhardt-surfaces}.
Thus, the singularities of $(\mathbb{P}^4,\frac{15}{2n}\mathcal{M})$ are log canonical away from a codimension $3$ subset in $\mathbb{P}^4$.

Now, let $\lambda<\frac{15}{2n}$ be the log canonical threshold of the log pair $(\mathbb{P}^4,\mathcal{M})$,
let $Z_1$ be a minimal center of log canonical singularities of the log pair $(\mathbb{P}^4,\lambda\mathcal{M})$,
let $Z$ be the $G$-irreducible subvariety in $\mathbb{P}^4$ such that $Z_1$ is its irreducible component,
and let $\epsilon$ be a very small positive rational number such that $(1+\epsilon)\lambda<\frac{15}{2n}$.
Then it follows from \cite[Lemma~2.4.10]{CheltsovShramov2015} that
there exists a mobile $G$-invariant linear system $\mathcal{D}\subset|\mathcal{O}_{\mathbb{P}^4}(k)|$ for some $k\gg 0$
and two very small positive rational numbers~$\epsilon_{1}$ and~$\epsilon_{2}$ such that
$$
(1-\epsilon_{1})\lambda\mathcal{M}+\epsilon_{2}\mathcal{D}\sim_{\mathbb{Q}} (1+\epsilon)\lambda\mathcal{M},%
$$
the~log pair $(\mathbb{P}^4,(1-\epsilon_{1})\lambda\mathcal{M}+\epsilon_{2}\mathcal{D})$ is log canonical,
and the irreducible components of $Z$ are the~only centers of log canonical singularities of this log pair. By \cite[Proposition 1.5]{Kawamata1997},
the irreducible components of $Z$ are disjoint, since $Z$ is a union of minimal centers of log canonical singularities of
the log pair~\mbox{$(\mathbb{P}^4,(1-\epsilon_{1})\lambda\mathcal{M}+\epsilon_{2}\mathcal{D})$}.

Observe that either $Z_1$ is a point and $Z$ is a $G$-orbit, or $Z_1$ is a curve and $Z$ is a $G$-irreducible curve.
In the latter case, the curve $Z_1$ is smooth by \cite[Theorem 1]{Kawamata1998}. Moreover, in both cases, it follow from the Nadel's vanishing theorem \cite[Theorem 9.4.8]{Lazarsfeld}  that
$$
h^1\big(\mathbb{P}^4,\mathcal{O}_{\mathbb{P}^4}(3)\otimes\mathcal{I}_Z\big)=0,
$$
where $\mathcal{I}_Z$ is the ideal sheaf of $Z$ on $\PP^4$.
Thus, we have the following exact sequence of $G$-representations:
\begin{equation}
\label{equation:long-exact-sequence}
0\longrightarrow H^0\big(\mathbb{P}^4,\mathcal{O}_{\mathbb{P}^4}(3)\otimes\mathcal{I}_Z\big)\longrightarrow
H^0\big(\mathbb{P}^4,\mathcal{O}_{\mathbb{P}^4}(3)\big)\longrightarrow
H^0\big(\mathcal{O}_Z(\mathcal{O}_{\mathbb{P}^4}(3)\big\vert_{Z})\big)\longrightarrow 0.
\end{equation}
Thus, if $Z$ is a $G$-orbit, then
$$
|Z|\leqslant h^0\big(\mathbb{P}^4,\mathcal{O}_{\mathbb{P}^4}(3)\big)=35.
$$
This is a contradiction, because $\mathbb{P}^4$ does not contain $G$-orbits of length less than~$40$ by Lemma~\ref{lemma:Burkhardt-40}.

Thus, we see that $Z$ is a smooth curve. Let $r$ be the number of irreducible components of the curve $Z$, let $d$ be the degree of the curve $Z_1$,
let $g$ be the genus of the curve $Z_1$, let $H$ be a general hyperplane in $\mathbb{P}^4$, and let $\varepsilon$ be sufficiently small positive rational number. Then it follows from Kawamata's subadjunction theorem \cite[Theorem~1]{Kawamata1998} that
$$
\big(K_{\mathbb{P}^4}+(1-\epsilon_{1})\lambda\mathcal{M}+\epsilon_{2}\mathcal{D}
+\varepsilon H\big)\vert_{Z_1}\sim_{\mathbb{Q}} K_{Z_1}+\Delta_{Z_1}
$$
for some effective $\mathbb{Q}$-divisor $\Delta_{Z_1}$ on the curve $Z_1$. Now, comparing the degrees of these divisors
on~$Z_1$, we get
\begin{equation}\label{eq:2g-2-52}
2g-2<\frac{5}{2}d.
\end{equation}
In particular, the divisor $3H\vert_{Z_1}$ of degree $3d$ is not special, so we have
$$
h^0\big(\mathcal{O}_Z(\mathcal{O}_{\mathbb{P}^4}(3)\big\vert_{Z})\big)=rh^0\big(\mathcal{O}_{Z_1}(3H\big\vert_{Z_1})\big)=r(3d-g+1),
$$
so \eqref{equation:long-exact-sequence} gives $35\geqslant r(3d-g+1)$. Hence, using inequality~\eqref{eq:2g-2-52}, we get $35>\frac{7rd}{4}$, so that $rd<20$. This gives $r=1$, because $G$ does not have proper subgroups of index less than~$20$ by Lemma~\ref{lemma:PSp43-subgroups}.
Therefore, again using~\eqref{eq:2g-2-52}, we obtain
$$
35\geqslant 3d-g+1>\frac{12(g-1)}{5}-g+1=\frac{7g-7}{5},
$$
which implies that $g<32$. On the other hand, since $Z=Z_1$ is irreducible, $G$ acts faithfully on it, because $Z$ is not contained in a hyperplane.
Thus, using the Hurwitz bound $\mathrm{Aut}(Z)\leqslant 84(g-1)$,
we get a contradiction in the case when $g\geqslant 2$.
Finally, observe that $G$ cannot act faithfully on a rational or elliptic curve.
The obtained contradiction completes the proof of Theorem~C.

\section{Five-dimensional projective space}
\label{section:P5}

The goal of this section is to discuss primitive subgroups $G\subset\mathrm{PGL}_6(\mathbb{C})$,
show that $\mathbb{P}^5$ is not $G$-birationally rigid for many of them, and prove
that $\mathbb{P}^5$ is $G$-birationally superrigid for one of them (Theorem~D). First, we recall the classification of these subgroups~\mbox{\cite[\S3]{Li71}}.
Namely, let $G$ be a~finite primitive subgroup in $\mathrm{PGL}_6(\mathbb{C})$. Then either
\begin{itemize}
\item[(I)] $G$ leaves invariant a Segre cubic scroll $\mathbb{P}^1\times\mathbb{P}^2\simeq Y\subset \mathbb{P}^5$,
\end{itemize}
or there~exists a~finite primitive subgroup $\widehat{G}\subset\mathrm{SL}_6(\mathbb{C})$ such that $\widehat{G}$ is mapped to $G$ via the natural projection $\mathrm{SL}_6(\mathbb{C})\to\mathrm{PGL}_6(\mathbb{C})$, and $\widehat{G}$ is isomorphic to one of the following groups:
\begin{itemize}
\item[(II)] $\mathrm{SL}_2(\mathbf{F}_5)$, %
\item[(III)] $2.\mathfrak{S}_5$, %
\item[(IV)]
\begin{itemize}
\item[(i)] $3.\mathfrak{A}_6$, %
\item[(ii)]$3.\mathfrak{A}_6\rtimes\mumu_2$,%
\end{itemize}
\item[(V)] $6.\mathfrak{A}_6$,%
\item[(VI)] $\mathfrak{A}_7$ or $\mathfrak{S}_7$,%
\item[(VII)] $3.\mathfrak{A}_7$,%
\item[(VIII)] $6.\mathfrak{A}_7$,%
\item[(IX)]%
\begin{itemize}
\item[(i)] $\mathrm{PSL}_2(\mathbf{F}_7)$, %
\item[(ii)] $\mathrm{PGL}_2(\mathbf{F}_7)$,%
\end{itemize}
\item[(X)]
\begin{itemize}
\item[(i)] $\mathrm{SL}_2(\mathbf{F}_7)$, %
\item[(ii)] $\mathrm{SL}_2(\mathbf{F}_7)\rtimes\mumu_2$,%
\end{itemize}
\item[(XI)] $\mathrm{SL}_2(\mathbf{F}_{11})$,%
\item[(XII)] $\mathrm{SL}_2(\mathbf{F}_{13})$,%
\item[(XIII)]
\begin{itemize}
\item[(i)] $\mathrm{PSp}_4(\mathbf{F}_3)$,%
\item[(ii)] $\mathrm{PSp}_4(\mathbf{F}_3)\rtimes\mumu_2$,%
\end{itemize}
\item[(XIV)]
\begin{itemize}
\item[(i)] $\mathrm{SU}_3(\mathbf{F}_3)$, %
\item[(ii)] $\mathrm{SU}_3(\mathbf{F}_3)\rtimes\mumu_2$,%
\end{itemize}
\item[(XV)]
\begin{itemize}
\item[(i)] $6.\mathrm{PSU}_4(\mathbf{F}_3)$, %
\item[(ii)] $6.\mathrm{PSU}_4(\mathbf{F}_3)\rtimes\mumu_2$,%
\end{itemize}
\item[(XVI)] $2.\mathrm{HaJ}$, where $\mathrm{HaJ}$ is the~Hall--Janko simple group,%
\item[(XVII)]
\begin{itemize}
\item[(i)] $6.\mathrm{PSL}_3(\mathbf{F}_4)$, %
\item[(ii)] $6.\mathrm{PSL}_3(\mathbf{F}_4)\rtimes\mumu_2$.%
\end{itemize}
\end{itemize}
In each of the cases (II)--(XVII), the subgroup $\widehat{G}$ is uniquely determined by its isomorphism class up to conjugation in $\mathrm{SL}_6(\mathbb{C})$;
note however that in some cases $\widehat{G}$ has a more than one six-dimensional representation
such that $\widehat{G}$ is a primitive subgroup of $\mathrm{GL}_6(\CC)$.

\begin{remark}
\label{remark:n-5-Segre}
Suppose that $\mathbb{P}^5$ contains a $G$-invariant Segre cubic scroll $Y\simeq\mathbb{P}^1\times\mathbb{P}^2$. Then the linear
system which consists of all quadrics in $\mathbb{P}^5$ passing through $Y$ provides a
rational map~\mbox{$\chi\colon \PP^5\dasharrow \PP^2$}. This map fits into
the following $G$-equivariant commutative diagram:
$$
\xymatrix{
&X\ar@{->}[dl]_{\pi}\ar@{->}[dr]^{\eta}\\%
\mathbb{P}^5\ar@{-->}[rr]^{\chi}&&\mathbb{P}^2}
$$
Here $\pi$ is the blow up of the scroll $Y$, and $\eta$ is a $\mathbb{P}^3$-bundle. In particular, $\mathbb{P}^5$ is not $G$-birationally rigid.
\end{remark}

\begin{lemma}
\label{lemma:n-5-SL25}
Suppose that $\widehat{G}$ is isomorphic to $\mathrm{SL}_2(\mathbf{F}_5)$. Then there are no $G$-invariant quadric hypersurfaces in $\PP^5$,
and the linear system $|\mathcal{O}_{\mathbb{P}^5}(3)|$ contains a $G$-invariant pencil.
\end{lemma}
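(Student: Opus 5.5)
This is a character computation. The plan is to identify the six-dimensional representation $V$ of $\widehat{G}\simeq\mathrm{SL}_2(\mathbf{F}_5)$ and decompose $\mathrm{Sym}^2V^\vee$ and $\mathrm{Sym}^3V^\vee$. Since $G$ is primitive, $V$ is irreducible. The faithful irreducible representations of $\mathrm{SL}_2(\mathbf{F}_5)$ have dimensions $2,2,4,6$. So $V$ must be the unique faithful (spin) representation of dimension $6$. We may take $V\simeq W_2\otimes W_3$, where $W_2$ is a two-dimensional faithful representation and $W_3$ is the three-dimensional representation of $\mathfrak{A}_5$ built from the other two-dimensional one, i.e. $W_3\simeq\mathrm{Sym}^2W_2'$. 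The center $-1$ acts by $-1$ on $V$.

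\textbf{Quadrics.} The center acts trivially on $\mathrm{Sym}^2V$, so $\mathrm{Sym}^2V$ is a $21$-dimensional representation of $\mathfrak{A}_5$. We write
$$\mathrm{Sym}^2(W_2\otimes W_3)=\mathrm{Sym}^2W_2\otimes\mathrm{Sym}^2W_3\oplus\Lambda^2W_2\otimes\Lambda^2W_3.$$
Here $\Lambda^2W_2$ is trivial and $\Lambda^2W_3\simeq W_3$, which has no invariants. The first summand is $W_3'\otimes(\mathbf{1}\oplus W_5)$, where $W_3'=\mathrm{Sym}^2W_2$ is the other three-dimensional representation. Neither $W_3'$ nor $W_3'\otimes W_5$ contains the trivial representation, because $W_5$ is not isomorphic to $W_3'$. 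Hence $\mathrm{Sym}^2V$ has no invariants. Since $\widehat{G}$ is perfect, every one-dimensional representation is trivial, so there are no semi-invariant quadrics either, and no $G$-invariant quadric exists. To be safe I will also confirm this directly with the character inner product $\langle\chi_{\mathrm{Sym}^2V},1\rangle=0$, using $\chi_{\mathrm{Sym}^2V}(g)=(\chi(g)^2+\chi(g^2))/2$ and the character values of $\chi$ on the nine conjugacy classes.

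\textbf{Cubics.} On $\mathrm{Sym}^3V$ the center acts by $-1$. Again every one-dimensional representation of $\widehat{G}$ is trivial, so an invariant pencil would have to consist of actual invariants, and the center acting by $-1$ forbids these. Hence, for this lift, there are no invariant cubic forms. A $G$-invariant cubic hypersurface only needs to be invariant up to scalar under some lift, but the lift to $\mathrm{SL}_6$ is fixed and any other lift differs by scalars, so semi-invariants are governed by characters of $\widehat{G}\times\mumu_6$, and the center still obstructs. So I expect the assertion about cubics to fail as stated for this $\widehat{G}$. The way out is that the paper's $\widehat{G}\subset\mathrm{SL}_6$ may include extra scalars. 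A pencil is an invariant two-dimensional subspace of $\mathrm{Sym}^3V^\vee$, not a space of semi-invariants: a $2$-dimensional subrepresentation such as $W_2$ or $W_2'$ gives a $G$-invariant pencil, on which $G$ acts through $\mathfrak{A}_5\subset\mathrm{PGL}_2$. This resolves the parity issue, since the center acting by $-1$ on a faithful $W_2$ is allowed.

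So the key step is to compute the multiplicity of $W_2$ or $W_2'$ in $\mathrm{Sym}^3V$ and show it is positive. I plan to do this by the character formula
$$\chi_{\mathrm{Sym}^3V}(g)=\frac{\chi(g)^3+3\chi(g)\chi(g^2)+2\chi(g^3)}{6}$$
on the nine classes of $\mathrm{SL}_2(\mathbf{F}_5)$, which have orders $1,2,3,4,5,5,6,10,10$. Alternatively, I can decompose $\mathrm{Sym}^3(W_2\otimes W_3)$ via Schur functors,
$$\mathrm{Sym}^3W_2\otimes\mathrm{Sym}^3W_3\ \oplus\ \mathbb{S}_{(2,1)}W_2\otimes\mathbb{S}_{(2,1)}W_3,$$
using $\mathrm{Sym}^3W_2\simeq W_4'$ and $\mathbb{S}_{(2,1)}W_2\simeq W_2$. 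The second summand is $W_2\otimes\mathbb{S}_{(2,1)}W_3$, where $\mathbb{S}_{(2,1)}W_3$ is $8$-dimensional and contains $W_3$ and $W_5$. Since $W_2\otimes W_3\simeq V$ and $W_2\otimes W_5$ decomposes as $W_4\oplus V$, this route may not directly produce a $W_2$-summand. That is exactly the step I expect to be the main obstacle: the bookkeeping to locate a two-dimensional summand, using the tensor products $W_3\otimes W_3'\simeq W_4\oplus W_5$, $W_2\otimes W_4'$, and so on. If neither route yields a $W_2$, I would then identify the invariant pencil explicitly, as in Lemma~\ref{lemma:S6-A6-S5nst}, using the invariant theory of the binary icosahedral group.
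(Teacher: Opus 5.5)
You handle quadrics correctly, and your hand decomposition $\mathrm{Sym}^2(W_2\otimes W_3)\simeq W_3'\oplus(W_3'\otimes W_5)\oplus W_3$ is a clean replacement for the paper's GAP computation. You also identify correctly what the cubic claim requires. Since $\widehat{G}$ is perfect and $-1$ acts by $-1$ on $\mathrm{Sym}^3V$, there are no $G$-invariant cubics at all. A $G$-invariant pencil must therefore be a two-dimensional subrepresentation of $\mathrm{Sym}^3V^\vee\simeq\mathrm{Sym}^3V$, i.e.\ a copy of $W_2$ or $W_2'$. Your remark about the lift possibly containing extra scalars is a red herring and is not needed.

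However, the proposal stops at the only non-trivial step. You never show that $W_2$ or $W_2'$ occurs in $\mathrm{Sym}^3V$, and you suspect it might not. Your Schur-functor bookkeeping inspects only the summand $\mathbb{S}_{(2,1)}W_2\otimes\mathbb{S}_{(2,1)}W_3\simeq W_2\otimes(W_3\oplus W_5)\simeq V\oplus V\oplus\mathrm{Sym}^3W_2$, which indeed has no two-dimensional piece. As written, the second assertion of the lemma is not proved.

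The missing step goes through by either of your routes. Order the classes by element orders $1,2,4,3,6,5,5,10,10$, with class sizes $1,1,30,20,20,12,12,12,12$; the two classes of order $10$ are the negatives of the two classes of order $5$, in the same order. Then $\chi_V=(6,-6,0,0,0,1,1,-1,-1)$, and your formula gives
$$
\chi_{\mathrm{Sym}^3V}=(56,-56,0,2,-2,1,1,-1,-1).
$$
Take $\chi_{W_2}=(2,-2,0,-1,1,\phi^{-1},-\phi,-\phi^{-1},\phi)$ with $\phi=\frac{1+\sqrt5}{2}$. Pairing gives $\frac{1}{120}\big(112+112-40-40+24(\phi^{-1}-\phi)\big)=1$, and the same holds for $W_2'$. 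In fact $\mathrm{Sym}^3V\simeq W_2\oplus W_2'\oplus(\mathrm{Sym}^3W_2)^{\oplus 4}\oplus V^{\oplus 6}$.

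In the Schur-functor route, the two-dimensional pieces live in the other summand $\mathrm{Sym}^3W_2\otimes\mathrm{Sym}^3W_3$:
\begin{itemize}
\item One has $\mathrm{Sym}^3W_3\simeq W_3\oplus H_7$. Here $H_7$, the harmonic cubics, is the restriction of the $7$-dimensional $\mathrm{SO}_3$-representation, and it splits as $W_3'\oplus W_4$.
\item Then $\mathrm{Sym}^3W_2\otimes W_3'=\mathrm{Sym}^3W_2\otimes\mathrm{Sym}^2W_2\supset W_2$ by Clebsch--Gordan.
\item Likewise $\mathrm{Sym}^3W_2\otimes W_3\simeq\mathrm{Sym}^3W_2'\otimes\mathrm{Sym}^2W_2'\supset W_2'$, since $\mathrm{Sym}^3W_2\simeq\mathrm{Sym}^3W_2'$ is the unique faithful $4$-dimensional irreducible representation.
\end{itemize}

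With this filled in, your argument is the same character computation that the paper's GAP script performs. Each of the two resulting pencils carries a faithful icosahedral action of $G\simeq\mathfrak{A}_5$ on its base $\mathbb{P}^1$, which is all that Lemma~\ref{lemma:Pn-pencil} needs.
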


\begin{proof}
The GAP ID of $\widehat{G}$ is [120,5], and $\widehat{G}$ has a unique irreducible $6$-dimensional representation.
A direct computation of characters of the symmetric powers shows that
$|\mathcal{O}_{\mathbb{P}^5}(2)|$ does not contain $G$-invariant hypersurfaces, and  $|\mathcal{O}_{\mathbb{P}^5}(3)|$ contains a $G$-invariant pencil.
For instance, we can use the following GAP script to do this:
\begin{verbatim}
    G:=SmallGroup(120,5);
    T:=CharacterTable(G);
    Ir:=Irr(T);
    U:=Ir[9];
    S:=SymmetricParts(T,[U],2);
    Print(MatScalarProducts(Ir,S));
    S:=SymmetricParts(T,[U],3);
    Print(MatScalarProducts(Ir,S));
\end{verbatim}
\end{proof}

\begin{lemma}
\label{lemma:n-5-2S5}
Suppose that $\widehat{G}$ is isomorphic to $2.\mathfrak{S}_5$. Then there are no $G$-invariant quadric hypersurfaces in $\PP^5$,
and the linear system $|\mathcal{O}_{\mathbb{P}^5}(4)|$ contains a unique $G$-invariant
pencil such that the hypersurfaces in this pencil are given by
$\widehat{G}$-invariant quartic polynomials.
\end{lemma}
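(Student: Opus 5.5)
The plan is to argue exactly as in the proof of Lemma~\ref{lemma:n-5-SL25}, by a character computation. Let $U$ be the six-dimensional representation of $\widehat{G}\simeq 2.\mathfrak{S}_5$ giving the embedding $\widehat{G}\subset\mathrm{SL}_6(\mathbb{C})$. A $G$-invariant hypersurface of degree $d$ corresponds to a one-dimensional subrepresentation of $\mathrm{Sym}^d(U^\vee)$, that is, to a character $\chi$ of $\widehat{G}$ occurring in $\mathrm{Sym}^d(U^\vee)$. Since $\widehat{G}/[\widehat{G},\widehat{G}]\simeq\mumu_2$, there are only two such characters: the trivial one and the sign character pulled back from $\mathfrak{S}_5$.

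So I would compute the multiplicities $\langle \mathrm{Sym}^d\chi_U,\mathbf{1}\rangle$ and $\langle \mathrm{Sym}^d\chi_U,\mathrm{sgn}\rangle$ for $d=2$ and $d=4$. I would do this either by hand from the character table of $2.\mathfrak{S}_5$, using the formula for symmetric powers via power maps, or with the same GAP script as before. In that script I would replace \texttt{SmallGroup(120,5)} by the appropriate Schur cover of $\mathfrak{S}_5$, and \texttt{Ir[9]} by the faithful six-dimensional character (or characters) for which $\widehat{G}$ is primitive.

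\textbf{Degree $2$.} The center $Z(\widehat{G})\simeq\mumu_2$ acts on $U$ by $-1$ because $U$ is faithful, so it acts trivially on $\mathrm{Sym}^2 U^\vee$. The center therefore gives no obstruction, and we genuinely need $\langle \mathrm{Sym}^2\chi_U,\mathbf{1}\rangle=\langle \mathrm{Sym}^2\chi_U,\mathrm{sgn}\rangle=0$. The first multiplicity is the multiplicity of the trivial character in $\mathrm{Sym}^2\chi_U$, which vanishes unless $U$ is orthogonal. I expect the faithful six-dimensional characters of $2.\mathfrak{S}_5$ to be of quaternionic or complex type (Frobenius--Schur indicator $\neq 1$), and the explicit computation should confirm this. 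The sign twist must also be checked explicitly.

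\textbf{Degree $4$.} We want the trivial character to appear with multiplicity exactly $2$ and the sign character with multiplicity $0$ (or $1$). Even if sign occurs once, a pencil of $\widehat{G}$-invariant quartics spanned by two trivial-isotypic vectors is the unique invariant pencil consisting of hypersurfaces given by invariant polynomials. For the uniqueness of the $G$-invariant pencil in $|\mathcal{O}_{\mathbb{P}^5}(4)|$, I would need the total dimension of $\mathrm{Sym}^4$ spanned by one-dimensional subrepresentations to be $2$, so I would check that sign does not occur, or else state that the uniqueness is among pencils of invariant polynomials, as in the statement.

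The main obstacle is bookkeeping rather than theory. The group $2.\mathfrak{S}_5$ has two isoclinic versions, and each has several faithful six-dimensional characters, possibly Galois conjugate or differing by a sign twist. I have to pick the right group and the right character, namely the one whose image is primitive, and verify that the multiplicities are the same for all admissible choices. Galois conjugate characters give equal multiplicities of rational characters, and tensoring by $\mathrm{sgn}$ just swaps the roles of $\mathbf{1}$ and $\mathrm{sgn}$ in even degree. So it suffices to run the computation once per isoclinism class and check that both the $\mathbf{1}$- and $\mathrm{sgn}$-multiplicities have the required values.
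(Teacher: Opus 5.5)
Your plan is the paper's proof: the paper also just computes, from the character table of $2.\mathfrak{S}_5$ (GAP ID [240,90]), the multiplicities of the one-dimensional characters in $\mathrm{Sym}^2$ and $\mathrm{Sym}^4$ of the faithful six-dimensional representation. One correction to your bookkeeping: since $\mathrm{Sym}^d(U\otimes\mathrm{sgn})\simeq\mathrm{Sym}^d(U)\otimes\mathrm{sgn}^{d}$, twisting by $\mathrm{sgn}$ changes nothing in even degree rather than swapping $\mathbf{1}$ and $\mathrm{sgn}$; moreover, $\mathrm{sgn}$ actually occurs twice in $\mathrm{Sym}^4$ alongside the two copies of $\mathbf{1}$, not $0$ or $1$ times as you expected, which is exactly why the lemma only asserts uniqueness among pencils of $\widehat{G}$-invariant quartics.
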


\begin{proof}
The GAP ID of $\widehat{G}$ is [240,90], and the action of the group $G$ on $\mathbb{P}^5$ is given by one of two (complex conjugate) faithful
irreducible $6$-dimensional representations of the group $\widehat{G}$.
A direct computation of characters of the symmetric powers shows that
$|\mathcal{O}_{\mathbb{P}^5}(2)|$ does not contain $G$-invariant hypersurfaces, and  $|\mathcal{O}_{\mathbb{P}^5}(4)|$ contains a unique $G$-invariant pencil
such that the hypersurfaces in this pencil are given by
$\widehat{G}$-invariant quartic polynomials.
\end{proof}

\begin{lemma}
\label{lemma:n-5-3A6}
Suppose that $\widehat{G}$ is isomorphic to $3.\mathfrak{A}_6$ or to $3.\mathfrak{A}_6\rtimes\mumu_2$.
Then there are no $G$-invariant quadric hypersurfaces in $\PP^5$,
and the linear system $|\mathcal{O}_{\mathbb{P}^5}(3)|$ contains a unique $G$-invariant
pencil.
\end{lemma}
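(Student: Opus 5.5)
The plan is to follow the proofs of Lemmas~\ref{lemma:n-5-SL25} and~\ref{lemma:n-5-2S5}: everything reduces to a character computation for the symmetric powers of the six-dimensional representation of $\widehat{G}$. The relevant representation is one of the faithful irreducible $6$-dimensional representations of $3.\mathfrak{A}_6$ (GAP ID [1080,260]) on which the centre $\mumu_3$ acts by a primitive character $\omega$. For $3.\mathfrak{A}_6\rtimes\mumu_2$ we use the corresponding $6$-dimensional extension.

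The first step is to translate the geometric statements into character statements. A $G$-invariant hypersurface of degree $d$ is given by a polynomial $f\in S^d(V^\vee)$ that spans a one-dimensional $\widehat{G}$-subrepresentation. The centre acts on $S^d(V^\vee)$ by $\omega^{-d}$. Any one-dimensional character of $\widehat{G}$ restricted to the perfect group $3.\mathfrak{A}_6$ is trivial, so it is trivial on the centre. Hence for $d=2$, where $\omega^{-2}\neq 1$, there are no invariant quadrics at all; this is automatic. For $d=3$ the centre acts trivially, and a $G$-invariant pencil of cubics corresponds to a two-dimensional subspace of $S^3(V^\vee)$ that is preserved by $\widehat{G}$.

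For $3.\mathfrak{A}_6$ I will compute $\langle\chi_{S^3V^\vee},\mathbf{1}\rangle$ via the standard formula
$$
\chi_{S^3V}(g)=\tfrac16\bigl(\chi(g)^3+3\chi(g^2)\chi(g)+2\chi(g^3)\bigr),
$$
using the Atlas character table or GAP's \texttt{SymmetricParts}, exactly as in the script of Lemma~\ref{lemma:n-5-SL25}. I expect the multiplicity of the trivial character to be $2$, with no other one-dimensional constituents because the group is perfect. Then the invariant cubics form exactly one pencil. Uniqueness needs an extra check: we must make sure there is no other $2$-dimensional irreducible constituent, but a perfect group has none, so uniqueness follows.

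For $3.\mathfrak{A}_6\rtimes\mumu_2$ the same space of $3.\mathfrak{A}_6$-invariant cubics is two-dimensional, and it is preserved by the extension, so it gives a $G$-invariant pencil. Uniqueness holds because any $G$-invariant pencil is in particular $3.\mathfrak{A}_6$-invariant. A $3.\mathfrak{A}_6$-invariant pencil consists of semi-invariants, hence of invariants, and so it equals this space. Invariant quadrics are again excluded by the centre argument.

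The main obstacle is the actual multiplicity computation in degree $3$. It depends on choosing the correct $6$-dimensional representation, since there are several, swapped by outer automorphisms and complex conjugation. Because these choices are related by automorphisms or Galois conjugation, the multiplicity is the same for all of them. I would verify the value $2$ directly by a GAP computation.
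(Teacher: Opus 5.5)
Your route is the paper's: a character computation on symmetric powers of the $6$-dimensional representation of $3.\mathfrak{A}_6$, with the case $3.\mathfrak{A}_6\rtimes\mumu_2$ deduced from $3.\mathfrak{A}_6$ by normality. Your central-character argument for quadrics is a valid shortcut that replaces the degree-$2$ computation. It uses that the centre acts on $V$ by a scalar, i.e.\ that $V$ stays irreducible on $3.\mathfrak{A}_6$, which you assume implicitly.

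One step, however, is justified incorrectly. It is not true that a perfect group has no $2$-dimensional irreducible representations: $\mathrm{SL}_2(\mathbf{F}_5)$ is perfect and has two of them, and it appears as case (II) of the same list. Two of your arguments rest on this false claim: your uniqueness argument for $3.\mathfrak{A}_6$, and your assertion that a $3.\mathfrak{A}_6$-invariant pencil ``consists of semi-invariants''.

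The fact you actually need is specific to $3.\mathfrak{A}_6$. Its irreducible degrees are $1,5,5,8,8,9,10$ and, for each of the two faithful central characters, $3,3,6,9,15$. Alternatively, a $2$-dimensional representation of a perfect group lands in $\mathrm{SL}_2(\mathbb{C})$. The only non-trivial perfect finite subgroup of $\mathrm{SL}_2(\mathbb{C})$ is $\mathrm{SL}_2(\mathbf{F}_5)$, which is not a quotient of $3.\mathfrak{A}_6$. With this fix, every $\widehat{G}$-invariant $2$-dimensional subspace of $S^3V^\vee$ lies in $(S^3V^\vee)^{3.\mathfrak{A}_6}$. Uniqueness then follows in both cases once this space is $2$-dimensional. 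Also, there are exactly two $6$-dimensional irreducible representations, complex conjugate to each other, so the choice of representation is harmless.

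You can confirm the value $2$ by hand instead of running GAP. Let $W$ be a Valentiner representation of $3.\mathfrak{A}_6\subset\mathrm{SL}_3(\mathbb{C})$. Then $S^2W$ is irreducible: its restriction to a lift of $\mathfrak{A}_5\subset\mathfrak{A}_6$ is the trivial plus the $5$-dimensional representation, which is not a sum of two $3$-dimensional ones. Hence $V^\vee\cong S^2W$ for a suitable $W$.

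The plethysm $S^3(S^2W)=S^6W\oplus S^{(4,2)}W\oplus S^{(2,2,2)}W$ then splits the count into three pieces:
\begin{itemize}
\item $S^{(2,2,2)}W=\det(W)^{\otimes 2}$ is trivial; it gives the discriminant cubic.
\item $S^6W$ has a one-dimensional invariant space, spanned by Wiman's sextic.
\item $S^{(4,2)}W$ has no invariants. As an $\mathrm{SL}(W)$-module it is the $27$-dimensional Cartan component of $S^2W\otimes S^2W^\vee\cong S^{(4,2)}W\oplus\mathfrak{sl}(W)\oplus\mathbb{C}$, and Schur's lemma applied to $S^2W$ and $W$ rules out invariants.
\end{itemize}
Hence $\dim(S^3V^\vee)^{3.\mathfrak{A}_6}=2$, and the invariant cubics are spanned by the discriminant and a lift of the Wiman sextic.
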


\begin{proof}
The GAP ID of $3.\mathfrak{A}_6$ is [1080,260], and this group has exactly two irreducible  $6$-dimensional representations, which are complex conjugate.
A direct computation of characters of the symmetric powers shows that
$|\mathcal{O}_{\mathbb{P}^5}(2)|$ does not contain $\mathfrak{A}_6$-invariant hypersurfaces, and  $|\mathcal{O}_{\mathbb{P}^5}(3)|$ contains a unique $\mathfrak{A}_6$-invariant pencil.
This proves the lemma in the case when $\widehat{G}\simeq 3.\mathfrak{A}_6$.
If $\widehat{G}\simeq 3.\mathfrak{A}_6\rtimes\mumu_2$,
then the required assertions follow from the case $\widehat{G}\simeq 3.\mathfrak{A}_6$.
\end{proof}

\begin{lemma}
\label{lemma:n-5-A7-S7}
Suppose that $\widehat{G}$ is isomorphic to $\mathfrak{A}_7$ or $\mathfrak{S}_7$.
Then the linear system $|\mathcal{O}_{\mathbb{P}^5}(4)|$ contains a unique $G$-invariant
pencil.
\end{lemma}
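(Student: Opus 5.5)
The plan is to argue exactly as in Lemmas~\ref{lemma:n-5-SL25}--\ref{lemma:n-5-3A6}, by a character computation on the symmetric powers of the six-dimensional representation. First I identify the representation. The group $\mathfrak{A}_7$ has only one faithful irreducible six-dimensional representation up to isomorphism: the standard one, namely the non-trivial summand of the permutation representation on $\mathbb{C}^7$. Likewise $\mathfrak{S}_7$ has exactly two such representations, the standard one $V$ and $V\otimes\mathrm{sgn}$. Both give the same image in $\mathrm{PGL}_6(\mathbb{C})$, so I may take $\widehat{G}$ to act on
$$
V=\{x_1+\ldots+x_7=0\}\subset\mathbb{C}^7
$$
by permutations of coordinates.

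For $\widehat{G}\simeq\mathfrak{S}_7$ the invariant ring is as explicit as possible. The $\mathfrak{S}_7$-invariant polynomials on $V$ are generated by the restrictions of the power sums $p_2,\ldots,p_7$, equivalently of the elementary symmetric functions $e_2,\ldots,e_7$. In degree $4$ the invariant monomials in these generators are $p_2^2$ and $p_4$, so the space of invariant quartics is two-dimensional and spanned by $p_2^2$ and $p_4$. These two are linearly independent on $V$; for instance one can check this on two explicit points of $V$, or note that Newton's identities express $p_4$ through $e_4$, which is algebraically independent from $e_2$.

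Next I have to show that the pencil $\langle p_2^2,p_4\rangle$ is the only $G$-invariant pencil, that is, that there is no other two-dimensional subrepresentation of $\mathrm{Sym}^4(V^*)$ spanned by semi-invariants. Any semi-invariant transforms by a one-dimensional character of $\widehat{G}$, so it is either invariant or transforms by $\mathrm{sgn}$. A $\mathrm{sgn}$-semi-invariant is divisible by the Vandermonde $\prod_{i<j}(x_i-x_j)$. This product has degree $21$ and does not vanish identically on $V$, so there are no non-zero $\mathrm{sgn}$-semi-invariants of degree $4$. Hence every $G$-invariant quartic hypersurface comes from $\langle p_2^2,p_4\rangle$, and this pencil is the unique one.

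For $\widehat{G}\simeq\mathfrak{A}_7$ the group is perfect, so every semi-invariant is invariant. The $\mathfrak{A}_7$-invariants are the $\mathfrak{S}_7$-invariants together with the Vandermonde times $\mathfrak{S}_7$-invariants, and the latter have degree at least $21$. So in degree $4$ the invariants are again exactly $\langle p_2^2,p_4\rangle$, and the pencil is unique.

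The only step needing care is that no further semi-invariant lines occur. This is settled by the Vandermonde degree argument above; alternatively, as a sanity check, one can run the same GAP computation used in Lemma~\ref{lemma:n-5-SL25} with $\mathrm{SymmetricParts}$ in degree $4$.
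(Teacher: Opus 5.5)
Your proposal is correct and follows the same route as the paper. You identify the action with the standard summand of the permutation representation, correctly noting that for $\mathfrak{S}_7$ the sign twist gives the same subgroup of $\mathrm{PGL}_6(\mathbb{C})$, and then use the invariant theory of symmetric functions. You make that part explicit via the degree-$4$ invariants $p_2^2,p_4$ and the degree-$21$ Vandermonde bound on sign semi-invariants, where the paper only cites it as well known.

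One step you leave implicit is that any $G$-invariant pencil is spanned by semi-invariants. This holds because neither $\mathfrak{S}_7$ nor $\mathfrak{A}_7$ has a two-dimensional irreducible representation.
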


\begin{proof}
Each of these groups has a unique irreducible $6$-dimensional representation,
which is a summand of the $7$-dimensional permutation representation.
Now the well-known results about symmetric functions imply that $|\mathcal{O}_{\mathbb{P}^5}(4)|$ contains a unique $G$-invariant pencil.
\end{proof}

\begin{lemma}
\label{lemma:n-5-PSL27}
Suppose that $\widehat{G}$ is isomorphic to $\mathrm{PSL}_2(\mathbf{F}_7)$ or $\mathrm{PGL}_2(\mathbf{F}_7)$. Then the linear system~\mbox{$|\mathcal{O}_{\mathbb{P}^5}(3)|$}  contains a unique $G$-invariant pencil.
\end{lemma}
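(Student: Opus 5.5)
The plan is to argue exactly as in the proofs of Lemmas~\ref{lemma:n-5-SL25}, \ref{lemma:n-5-3A6}, and~\ref{lemma:n-5-A7-S7}: compute the character of the cubic symmetric power of the relevant six-dimensional representation and count its one-dimensional constituents.

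First, note that $\mathrm{PSL}_2(\mathbf{F}_7)$ has a unique irreducible representation of dimension~$6$ (GAP ID [168,42]). It is real, being the summand of the $7$-dimensional permutation representation on $\mathbb{P}^1(\mathbf{F}_7)$. Since the group is perfect, $\widehat{G}$-invariant cubic forms are the same as semi-invariant ones. So it suffices to show that the trivial character occurs in $\mathrm{Sym}^3$ of this representation with multiplicity exactly $2$.

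To get this, I would evaluate $\chi_{\mathrm{Sym}^3V}(g)=\frac{1}{6}\bigl(\chi(g)^3+3\chi(g^2)\chi(g)+2\chi(g^3)\bigr)$ on the six conjugacy classes, which have orders $1,2,3,4,7,7$, and take the inner product with the trivial character. The class sizes are $1,21,56,42,24,24$, and $\chi=(6,2,0,0,-1,-1)$. The power maps are as follows: the squares of elements of orders $2$ and $4$ lie in the classes of orders $1$ and $2$, the square of an element of order $3$ has order $3$, and the cubes of elements of order $3$ are trivial. This gives
\[
\chi_{\mathrm{Sym}^3V}=(56,\ 8,\ 2,\ 0,\ 0,\ 0).
\]
The multiplicity of the trivial character is then
\[
\frac{56+21\cdot 8+56\cdot 2}{168}=2.
\]
Hence the invariant cubics form a two-dimensional space, i.e.\ a unique $G$-invariant pencil in $|\mathcal{O}_{\mathbb{P}^5}(3)|$. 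The same value can be checked with the GAP script of Lemma~\ref{lemma:n-5-SL25}, replacing the group by SmallGroup(168,42) and choosing the $6$-dimensional character.

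For $\widehat{G}\simeq\mathrm{PGL}_2(\mathbf{F}_7)$, the six-dimensional representation restricts irreducibly to the index-two subgroup $\mathrm{PSL}_2(\mathbf{F}_7)$. There are two such representations, differing by the sign character. The semi-invariant cubics of $\mathrm{PGL}_2(\mathbf{F}_7)$ are invariant under $\mathrm{PSL}_2(\mathbf{F}_7)$, so they lie in the two-dimensional space found above. This space is preserved by $\mathrm{PGL}_2(\mathbf{F}_7)$, so the corresponding pencil is $G$-invariant. It is unique, because any $G$-invariant pencil is also invariant under the image of $\mathrm{PSL}_2(\mathbf{F}_7)$. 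Since that image is perfect, it acts trivially on any invariant pencil, so the pencil consists of invariant cubics and must be the one above.

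The main obstacle is only bookkeeping: the power maps and class sizes must be right, and in the $\mathrm{PGL}$ case one must confirm that the pencil, although possibly acted on non-trivially by the outer involution, is still $G$-invariant. In practice, I would verify both computations by running the GAP script.
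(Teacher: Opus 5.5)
Your proposal is correct and uses the same argument as the paper. You compute the character of $\mathrm{Sym}^3$ of the six-dimensional representation of $\mathrm{PSL}_2(\mathbf{F}_7)$, where your values $(56,8,2,0,0,0)$ and the trivial multiplicity $2$ are right. You then reduce the $\mathrm{PGL}_2(\mathbf{F}_7)$ case to its index-two subgroup, exactly as the paper does.

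There are two minor slips. First, the six-dimensional representation is the non-trivial summand of the permutation representation on the $7$ points of $\mathbb{P}^2(\mathbf{F}_2)$; the $8$ points of $\mathbb{P}^1(\mathbf{F}_7)$ give the $7$-dimensional irreducible instead. Second, an invariant pencil consists of invariant cubics not merely because $\mathrm{PSL}_2(\mathbf{F}_7)$ is perfect, but because it has no non-trivial representations of dimension at most $2$, which your character table already shows.
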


\begin{proof}
The GAP ID of $\mathrm{PSL}_2(\mathbf{F}_7)$ is $[168,42]$, and this group
has a unique irreducible $6$-dimensional representation.
A direct computation of the character of the third symmetric power shows that~\mbox{$|\mathcal{O}_{\mathbb{P}^5}(3)|$} contains a unique
$\mathrm{PSL}_2(\mathbf{F}_7)$-invariant pencil.
If $\widehat{G}\simeq \mathrm{PGL}_2(\mathbf{F}_7)$,
then the required assertion follows from the case $\widehat{G}\simeq \mathrm{PSL}_2(\mathbf{F}_7)$.
\end{proof}

\begin{corollary}
\label{corollary:P5-pencil-cases}
Suppose that $\widehat{G}$ is isomorphic to one of the groups
$\mathrm{SL}_2(\mathbf{F}_5)$,
$2.\mathfrak{S}_5$,
$3.\mathfrak{A}_6$, $3.\mathfrak{A}_6\rtimes \mumu_2$,
$\mathfrak{A}_7$, $\mathfrak{S}_7$,
$\mathrm{PSL}_2(\mathbf{F}_7)$, or $\mathrm{PGL}_2(\mathbf{F}_7)$.
Then $\PP^5$ is not $G$-birationally rigid.
\end{corollary}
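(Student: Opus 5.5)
The plan is to reduce every case to Lemma~\ref{lemma:Pn-pencil}: it suffices to exhibit a $G$-invariant pencil in $|\mathcal{O}_{\mathbb{P}^5}(d)|$ with $d\leqslant 5$. The preceding lemmas provide exactly such pencils.
\begin{itemize}
\item For $\widehat{G}\simeq\mathrm{SL}_2(\mathbf{F}_5)$, Lemma~\ref{lemma:n-5-SL25} gives a $G$-invariant pencil of cubics.
\item For $\widehat{G}\simeq 2.\mathfrak{S}_5$, Lemma~\ref{lemma:n-5-2S5} gives a pencil of quartics.
\item For $\widehat{G}\simeq 3.\mathfrak{A}_6$ or $3.\mathfrak{A}_6\rtimes\mumu_2$, Lemma~\ref{lemma:n-5-3A6} gives a pencil of cubics.
\item For $\widehat{G}\simeq\mathfrak{A}_7$ or $\mathfrak{S}_7$, Lemma~\ref{lemma:n-5-A7-S7} gives a pencil of quartics.
\item For $\widehat{G}\simeq\mathrm{PSL}_2(\mathbf{F}_7)$ or $\mathrm{PGL}_2(\mathbf{F}_7)$, Lemma~\ref{lemma:n-5-PSL27} gives a pencil of cubics.
\end{itemize}
In all cases $d\in\{3,4\}\leqslant 5=n$, so Lemma~\ref{lemma:Pn-pencil} applies verbatim and shows that $\mathbb{P}^5$ is not $G$-birationally rigid.

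One point deserves a check: the meaning of ``$G$-invariant pencil.'' Lemma~\ref{lemma:Pn-pencil} needs a pencil $\mathcal{P}$ that is invariant as a linear system, i.e.\ $G$ maps $\mathcal{P}$ to itself. It does not require each member to be invariant, although in every case above each member is in fact invariant, since the pencil is spanned by semi-invariant polynomials with the same character.

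The step needing most care is that these pencils come from character computations with the group $\widehat{G}$ rather than $G$. A two-dimensional subrepresentation of $\mathrm{Sym}^d$ with trivial or scalar action still gives a pencil preserved by $G$. The proof of Lemma~\ref{lemma:Pn-pencil} then takes the mobile part, and this is nonempty: a fixed component would be a $G$-invariant hypersurface of degree less than $d$. Such a component cannot be a hyperplane, since $G$ is transitive. It could be a quadric only in the cubic cases, and there the lemmas exclude invariant quadrics for $\mathrm{SL}_2(\mathbf{F}_5)$ and $3.\mathfrak{A}_6$. For $\mathrm{PSL}_2(\mathbf{F}_7)$ and the quartic cases, a smaller mobile part is still a pencil of degree at most $d$, so Lemma~\ref{lemma:Pn-pencil} applies regardless. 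No further obstacle is expected; the corollary is a direct combination of these lemmas with Lemma~\ref{lemma:Pn-pencil}.
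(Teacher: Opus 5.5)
Your proposal is correct and is the paper's argument: the cited lemmas give a $G$-invariant pencil of cubics or quartics, and Lemma~\ref{lemma:Pn-pencil} then shows $\mathbb{P}^5$ is not $G$-birationally rigid. Two side remarks need correcting, though neither affects the proof. First, members of the pencil need not be individually invariant; for instance, for $\mathrm{SL}_2(\mathbf{F}_5)$ the center acts by $-1$ on cubics, so the pencil is a two-dimensional irreducible summand and no member is invariant. Second, the mobile part of a pencil is automatically a pencil of degree at most $d$, so your fixed-component discussion is unnecessary.
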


\begin{proof}
According to Lemmas~\ref{lemma:n-5-SL25}, \ref{lemma:n-5-2S5}, \ref{lemma:n-5-3A6},  \ref{lemma:n-5-A7-S7}, and~\ref{lemma:n-5-PSL27},
the linear system $|\mathcal{O}_{\mathbb{P}^5}(n)|$ contains a $G$-invariant pencil for $n=3$ or $n=4$. Therefore, $\PP^5$ is not $G$-birationally rigid
by Lemma~\ref{lemma:Pn-pencil}.
\end{proof}

\begin{lemma}
\label{lemma:n-5-6A6}
Suppose that $\widehat{G}\simeq 6.\mathfrak{A}_6$. Then there are no $G$-invariant quadric hypersurfaces in~$\PP^5$, and the linear system~$|\mathcal{O}_{\mathbb{P}^5}(2)|$
contains a two-dimensional $G$-invariant linear subsystem.
\end{lemma}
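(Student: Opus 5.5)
Following the pattern of Lemmas~\ref{lemma:n-5-SL25}--\ref{lemma:n-5-PSL27}, the plan is a direct character computation for the second symmetric power of the relevant six-dimensional representation of $\widehat{G}\simeq 6.\mathfrak{A}_6$. The center of $\widehat{G}$ is $\mumu_6$, and a faithful irreducible $6$-dimensional representation $U$ exists: there are several, permuted by Galois conjugation and by the outer automorphisms of $\mathfrak{A}_6$. The center acts on $U$ by a primitive sixth root of unity $\omega$, hence on $\mathrm{Sym}^2(U)$ by $\omega^2$, a primitive cube root of unity. Consequently no vector of $\mathrm{Sym}^2(U)$, or of its dual, spans a trivial $\widehat{G}$-representation.

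I must be careful, though: a $G$-invariant quadric only requires a one-dimensional subrepresentation, not a trivial one. The central character on such a line is forced to be $\omega^{\pm 2}$. On the other hand, the one-dimensional representations of $\widehat{G}$ factor through the abelianization, which is trivial because $6.\mathfrak{A}_6$ is perfect. A one-dimensional representation is therefore trivial on the center, which contradicts the central character $\omega^{\pm2}\neq1$. This proves the first assertion without any computation.

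For the second assertion, I will decompose the $21$-dimensional representation $\mathrm{Sym}^2(U^\vee)$, which is the space of quadratic forms, into irreducibles. Every constituent has central character $\omega^{\mp2}$, so they are faithful representations of the quotient $3.\mathfrak{A}_6$ on which the center acts nontrivially. The irreducible representations of $3.\mathfrak{A}_6$ with a nontrivial central character have dimensions $3,3,6,9,15$, together with their complex conjugates. So I expect $21=3+3+15$ or $21=6+15$ (possibly $3+9+9$), and I will determine which by computing $\mathrm{Sym}^2\chi(g)=\frac12(\chi(g)^2+\chi(g^2))$ and taking scalar products with the irreducible characters. This is done with GAP exactly as in Lemma~\ref{lemma:n-5-SL25}, using the $6$-dimensional faithful character of \texttt{SmallGroup} for $6.\mathfrak{A}_6$ (order $2160$), or the Atlas table \texttt{"2.A6"}-style via \texttt{CharacterTable("6.A6")}. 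A $3$-dimensional constituent gives a $G$-invariant three-dimensional subspace of quadratic forms, that is, a two-dimensional $G$-invariant linear subsystem of $|\mathcal{O}_{\mathbb{P}^5}(2)|$.

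The main obstacle is pinning down which constituents actually occur, which must be done for every admissible faithful $6$-dimensional representation $U$. Here I will use the fact that the possible choices are conjugate under $\mathrm{Aut}$ and Galois, so a single computation suffices. As a sanity check I will use the conceptual picture: the composite $6.\mathfrak{A}_6 \to 3.\mathfrak{A}_6\subset \mathrm{SL}_3(\mathbb{C})$ (the Valentiner group), where $U$ should appear inside $\mathrm{Sym}^2$ of a $3$-dimensional representation twisted by the $2.\mathfrak{A}_6$ part. In this picture the net of quadrics comes from the symmetric square map, which makes a $3$-dimensional summand $W$ with $\mathrm{Sym}^2(U^\vee)\supset W$ plausible. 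If the computation instead gives $6+15$, the statement would fail, so the GAP check is decisive.
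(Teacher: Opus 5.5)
Your proof is correct. For the existence of the net it is the paper's argument: the paper also just decomposes the character of the second symmetric power of a faithful $6$-dimensional representation.

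The difference is in the first assertion. The paper reads off the absence of invariant quadrics from that same character computation. You instead prove it structurally:
an invariant quadric spans a $\widehat{G}$-stable line in $\mathrm{Sym}^2(U^\vee)$;
its character is trivial because $6.\mathfrak{A}_6$, the Schur cover of a perfect group, is perfect;
but the centre acts on $\mathrm{Sym}^2(U^\vee)$ by a primitive cube root of unity.
This argument needs no computation and gives more: there are no invariant hypersurfaces of any degree not divisible by $6$. It also applies verbatim to other perfect lifts whose centre acts nontrivially on the relevant forms, such as $3.\mathfrak{A}_6$, $3.\mathfrak{A}_7$ and $6.\mathfrak{A}_7$. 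It does not help when the centre acts trivially on $\mathrm{Sym}^2$, as for $\mathrm{SL}_2(\mathbf{F}_5)$ or $2.\mathrm{HaJ}$. The character computation is still needed for the net, and it yields both assertions at once.

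The computation you call decisive does come out your way, and it can be done by hand.
\begin{itemize}
\item The character of $U$ vanishes outside the preimages of the classes $1A,4A,5A,5B$ of $\mathfrak{A}_6$. It equals $1$ on elements of order $5$ and $\pm\sqrt{2}$ on elements of order $8$.
\item Hence, on the lifts of order prime to $3$ of $1A,2A,4A,5A,5B$ in $3.\mathfrak{A}_6$, the character of $\mathrm{Sym}^2U$ is $(21,-3,1,1,1)$.
\item Pair this with the two $3$-dimensional characters having the matching central character, whose values there are $(3,-1,1,\frac{1\pm\sqrt5}{2},\frac{1\mp\sqrt5}{2})$. Each inner product is $\frac{1}{360}(63+135+90+72)=1$.
\end{itemize}
So $\mathrm{Sym}^2U\simeq 3\oplus 3'\oplus 15$, and there are exactly two invariant nets. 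The four faithful $6$-dimensional characters form one Galois orbit (via $\omega\mapsto\bar\omega$ and $\sqrt2\mapsto-\sqrt2$), which justifies doing a single computation.

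Two small corrections:
\begin{itemize}
\item Groups of order $2160$ are not covered by GAP's SmallGroups library, so use the library character table of $6.\mathfrak{A}_6$ instead.
\item Drop your ``conceptual picture'', even as a sanity check. $\mathrm{Sym}^2$ of a Valentiner representation is the $6$-dimensional irreducible representation of $3.\mathfrak{A}_6$, on which the central $\mumu_2$ of $6.\mathfrak{A}_6$ acts trivially. By perfectness there is no character to twist by, so $U$ does not arise that way. Since you do not rely on it, nothing in the proof is affected.
\end{itemize}
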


\begin{proof}
The group $6.\mathfrak{A}_6$ can be described as the Schur cover of $\mathfrak{A}_6$, and it has four faithful irreducible $6$-dimensional representations.
The required assertions follow from a direct computation
of the character of the second symmetric power.
\end{proof}

\begin{lemma}
\label{lemma:n-5-SL27}
Suppose that $\widehat{G}\simeq\mathrm{SL}_2(\mathbf{F}_7)$. Then there are no $G$-invariant hypersurfaces of degree $1$, $2$, $3$, and $5$ in $\PP^5$, there exist a unique
$G$-invariant hypersurface of degree~$4$, and
the linear system~$|\mathcal{O}_{\mathbb{P}^5}(2)|$
contains a two-dimensional $G$-invariant linear subsystem.
\end{lemma}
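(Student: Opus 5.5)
The plan is to argue as in Lemmas~\ref{lemma:n-5-SL25}--\ref{lemma:n-5-6A6}, by computing characters of symmetric powers of the relevant $6$-dimensional representation of $\widehat{G}\simeq\mathrm{SL}_2(\mathbf{F}_7)$. The GAP ID of $\mathrm{SL}_2(\mathbf{F}_7)$ is $[336,114]$. Its faithful irreducible representations (those where the center $\mumu_2$ acts by $-1$) have dimensions $4,4,6,6,8$. The two $6$-dimensional ones are primitive, and both give the same subgroup $G$ up to conjugation, since they are Galois conjugate or swapped by the outer automorphism. I will fix one of them, say $U$, and compute the multiplicities of all irreducible characters of $\widehat{G}$ in $\mathrm{Sym}^k(U^\vee)$ for $k=1,\dots,5$. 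This can be done with a GAP script of the same form as in the proof of Lemma~\ref{lemma:n-5-SL25}.

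The key point is the parity of $k$. For odd $k$, the central element $-1$ acts on $\mathrm{Sym}^k(U)$ by $-1$, so no one-dimensional representation occurs. Since $\widehat{G}$ is perfect, the only one-dimensional representation is the trivial one. Hence there are no semi-invariants, and therefore no $G$-invariant hypersurfaces, of degrees $1$, $3$ and $5$. So the actual computation is needed only for $k=2$ and $k=4$, where $\mathrm{Sym}^k(U)$ factors through $\mathrm{PSL}_2(\mathbf{F}_7)$, whose irreducible characters have degrees $1,3,3,6,7,8$.

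For $k=2$, I expect $\mathrm{Sym}^2(U)$, of dimension $21$, to decompose as $3\oplus\overline{3}\oplus 7\oplus 8$ or similar, with no trivial summand. This gives no invariant quadric. A $3$-dimensional summand gives a two-dimensional $G$-invariant linear subsystem of $|\mathcal{O}_{\mathbb{P}^5}(2)|$. For $k=4$, I need the trivial character to appear in $\mathrm{Sym}^4(U)$, of dimension $126$, with multiplicity exactly one. Every one-dimensional subrepresentation is trivial, so this gives a unique $G$-invariant quartic.

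The main obstacle is making sure these character computations are right, in particular the multiplicity of the trivial character in $\mathrm{Sym}^4$. This can be checked by hand with Molien's formula, using the eigenvalues of $U$ on each conjugacy class, but I would rely on the GAP computation. It also needs checking that the second $6$-dimensional representation gives the same answer, which is automatic if the two representations are interchanged by an automorphism of $\widehat{G}$.
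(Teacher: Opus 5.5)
Your plan is the paper's own proof: a direct computation of the characters of the symmetric powers of a faithful $6$-dimensional representation of $\mathrm{SL}_2(\mathbf{F}_7)$. Your parity shortcut for odd degrees is valid, and your predictions check out by hand: $\mathrm{Sym}^2U\simeq 3\oplus\overline{3}\oplus 7\oplus 8$, and the trivial character occurs exactly once in $\mathrm{Sym}^4U$.

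One correction: the two faithful $6$-dimensional representations are \emph{not} interchanged by any automorphism of $\widehat{G}$. The outer automorphism fixes both characters, and the characters differ by $\sqrt{2}\mapsto-\sqrt{2}$ on the classes of elements of order $8$, so they give non-conjugate subgroups of $\mathrm{PGL}_6(\mathbb{C})$. The conclusions still transfer from one to the other, because the two representations are Galois conjugate and the needed data (multiplicities of the trivial character, existence of a $3$-dimensional summand in $\mathrm{Sym}^2$) are Galois-invariant; alternatively, just run the computation for both, as the paper does.
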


\begin{proof}
The GAP ID of $\mathrm{SL}_2(\mathbf{F}_7)$ is~\mbox{$[336,114]$},
and this group has two faithful
irreducible $6$-dimensional representations.
The required assertions follow from a direct computation
of the characters of the symmetric powers.
\end{proof}

\begin{corollary}
\label{corollary:n-5-6A6-SL27}
Suppose that $\widehat{G}\simeq 6.\mathfrak{A}_6$ or $\widehat{G}\simeq\mathrm{SL}_2(\mathbf{F}_7)$. Then $\mathbb{P}^5$ is not $G$-birationally rigid.
\end{corollary}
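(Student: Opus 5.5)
The plan is to apply Lemma~\ref{lemma:P5-net} directly. Its hypotheses are: $n=5$; $G$ acts transitively on $\PP^5$; $|\mathcal{O}_{\mathbb{P}^5}(2)|$ contains a $G$-invariant net $\mathcal{M}$; $G$ acts faithfully and transitively on $\mathcal{M}\simeq\PP^2$; and $G$ cannot act faithfully on a rational curve. Transitivity on $\PP^5$ holds because $G$ is primitive. The net $\mathcal{M}$ exists by Lemma~\ref{lemma:n-5-6A6} for $\widehat{G}\simeq 6.\mathfrak{A}_6$, and by Lemma~\ref{lemma:n-5-SL27} for $\widehat{G}\simeq\mathrm{SL}_2(\mathbf{F}_7)$.

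First, the two-dimensional subsystem $\mathcal{M}$ corresponds to a $3$-dimensional $\widehat{G}$-subrepresentation $W\subset\mathrm{Sym}^2(\mathbb{C}^6)^\vee$. Since there are no $G$-invariant quadrics, $W$ has no one-dimensional summand. It also cannot split as $2+1$: a two-dimensional summand would force a one-dimensional complement inside $W$, hence an invariant quadric. So $W$ is an irreducible $3$-dimensional representation, and $G$ acts on $\mathcal{M}=\PP(W)$ without fixed points and without invariant lines. This gives transitivity on $\mathcal{M}$.

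Next comes faithfulness. The kernel of $G\to\mathrm{PGL}(W)$ is a normal subgroup of $G$. For $\widehat{G}\simeq 6.\mathfrak{A}_6$ we have $G\simeq\mathfrak{A}_6$, which is simple, so the kernel is trivial or all of $G$; the latter is impossible since $W$ is irreducible of dimension $3$. For $\widehat{G}\simeq\mathrm{SL}_2(\mathbf{F}_7)$ we have $G\simeq\mathrm{PSL}_2(\mathbf{F}_7)$, also simple, and the same argument applies. I expect this step to be the main technical point. One must check that the image of $\widehat{G}$ in $\mathrm{GL}(W)$ is a central extension of $G$, not merely a quotient. This holds because the center of $\widehat{G}$ acts by scalars on $\mathrm{Sym}^2$, so the action on $\PP(W)$ factors through $G$; simplicity then forces faithfulness.

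Finally, neither $\mathfrak{A}_6$ nor $\mathrm{PSL}_2(\mathbf{F}_7)$ embeds into $\mathrm{PGL}_2(\mathbb{C})$, whose finite subgroups are cyclic, dihedral, $\mathfrak{A}_4$, $\mathfrak{S}_4$, and $\mathfrak{A}_5$. Hence $G$ cannot act faithfully on a rational curve. Lemma~\ref{lemma:P5-net} then shows that $\PP^5$ is not $G$-birationally rigid in both cases.
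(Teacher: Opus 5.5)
Your proof is correct and follows the same route as the paper: you apply Lemma~\ref{lemma:P5-net} to the net supplied by Lemmas~\ref{lemma:n-5-6A6} and~\ref{lemma:n-5-SL27}, get transitivity on $\mathcal{M}$ from the absence of invariant quadrics, faithfulness from the simplicity of $G$, and the rational-curve condition from the classification of finite subgroups of $\mathrm{PGL}_2(\mathbb{C})$. You also spell out two details the paper leaves implicit: that the $3$-dimensional subrepresentation is irreducible, and that the action on $\mathcal{M}$ factors through $G$ because the center of $\widehat{G}$ acts on quadratic forms by scalars.
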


\begin{proof}
By Lemmas~\ref{lemma:n-5-6A6} and~\ref{lemma:n-5-SL27},
the linear system $|\mathcal{O}_{\mathbb{P}^5}(2)|$
contains a two-dimensional linear subsystem $\mathcal{M}$.
Moreover, there are no $G$-invariant quadric hypersurfaces in $\PP^5$, which means that
the action of $G$ on $\mathcal{M}\simeq\PP^2$ is transitive.
In particular, $G$ acts on $\mathcal{M}$ faithfully, because $G$ is a simple group.
Furthermore, by the classification of finite subgroups of $\mathrm{PGL}_2(\CC)$
the group $G$ does not admit a faithful action
on a rational curve. So, $\PP^5$ is not $G$-birationally rigid by
Lemma~\ref{lemma:P5-net}.
\end{proof}

\begin{lemma}
\label{lemma:n-5-PSp4}
Suppose that $\widehat{G}$ is isomorphic to $\mathrm{PSp}_4(\mathbf{F}_3)$ or $\mathrm{PSp}_4(\mathbf{F}_3)\rtimes\mumu_2$.
Then $\mathbb{P}^5$ contains a unique $G$-invariant quadric hypersurface $Q$ and a unique $G$-invariant quintic hypersurface~$X$.
Moreover, the complete intersection $Q\cap X$ is reduced and $G$-irreducible.
\end{lemma}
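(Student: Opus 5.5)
The plan is to split the statement into an invariant-theoretic part and a geometric part. The existence and uniqueness of $Q$ and $X$ should come from character computations, as in the preceding lemmas. The reducedness and $G$-irreducibility of $Q\cap X$ should come from degree bounds combined with the absence of small-index subgroups in $G$.

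\textbf{Existence and uniqueness.} The group $\mathrm{PSp}_4(\mathbf{F}_3)$, of order $25920$, has irreducible $6$-dimensional representations. These factor through $\mathrm{PSp}_4(\mathbf{F}_3)\simeq\mathrm{PSU}_4(\mathbf{F}_2)\simeq W(E_6)'$ and are real, coming from the reflection representation of $W(E_6)$.

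1. I will compute the decompositions of $\mathrm{Sym}^d$ of the $6$-dimensional character for $d\le 5$, using GAP exactly as in Lemma~\ref{lemma:n-5-SL25}.
2. The claimed outcome for $\widehat{G}=\mathrm{PSp}_4(\mathbf{F}_3)$ is: no invariants in degrees $1$ and $3$, a unique invariant in degree $2$, a $1$-dimensional invariant subspace in degree $4$ (namely $q^2$), and exactly a $2$-dimensional invariant subspace in degree $5$.
3. Conceptually, the degrees of basic invariants of $W(E_6)$ are $2,5,6,8,9,12$. Restricting to the index-$2$ subgroup only adds the products of the anti-invariant, which has degree $36$. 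So in degree $\le 5$ the invariants are spanned by $q, q^2, f_5$.
4. Each invariant polynomial gives a $G$-invariant hypersurface. Conversely, a $G$-invariant hypersurface corresponds to a $\widehat{G}$-semi-invariant. Since $\mathrm{PSp}_4(\mathbf{F}_3)$ is perfect, this is an honest invariant.
5. For the $\mumu_2$ extension, the characters of the extended group must be checked. An odd character could in principle produce a quintic, but the uniqueness statement refers to $G$, and $G$ is the same image in either case.
6. Hence the degree-$5$ invariants form $\mathbb{C}f_5$, where $f_5$ is not divisible by $q$ because there is no cubic invariant. So $X=\{f_5=0\}$ is the unique invariant quintic and $Q=\{q=0\}$ is the unique invariant quadric.

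\textbf{Reducedness and $G$-irreducibility of $Q\cap X$.} The intersection $Q\cap X$ is a threefold of degree $10$.

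1. First, $Q$ is smooth: otherwise its vertex would be a $G$-invariant linear subspace, contradicting primitivity. Hence $\mathrm{Cl}(Q)=\mathbb{Z}H$ since $\dim Q=4$, and $f_5|_Q\neq0$.
2. Suppose $Q\cap X$ is non-reduced or $G$-reducible. Then it contains a $G$-invariant effective divisor $D\subsetneq Q\cap X$ (as a divisor on $Q$) of class $kH$ with $1\le k\le 4$. Concretely, $D$ is either the reduced support of a non-reduced $G$-invariant structure, or a $G$-orbit of components.
3. By the Lefschetz property, $D$ is cut out on $Q$ by a hypersurface of degree $k$. The ideal of $Q$ in degree $k$ is $q\cdot H^0(\mathcal{O}(k-2))$, and the map $H^0(\mathcal{O}_{\PP^5}(k))\to H^0(\mathcal{O}_Q(k))$ is surjective.
4. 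Therefore $D$ gives a $G$-semi-invariant element of $H^0(\mathcal{O}_{\PP^5}(k))$ modulo $q$. Since $\widehat{G}$ is perfect and complete reducibility holds, $D$ lifts to an invariant of degree $k\le4$ not divisible by $q$. From the computation above, none exists except in degree $0$.
5. This gives the contradiction, including in the non-reduced case, where $D=(Q\cap X)_{\mathrm{red}}$ would have degree $k<5$. For the $\mumu_2$-extension, $G$-irreducibility for the smaller group implies it for the larger one.

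The main obstacle is the character computation itself, specifically confirming that the relevant $6$-dimensional representation has no degree-$3$ invariant and exactly one degree-$5$ invariant. One has to be careful that the representation actually used for $\widehat{G}$ is the $W(E_6)$-reflection one, and not a twist by the sign or a different $6$-dimensional character. I will settle this by an explicit GAP check of the symmetric powers, as in Lemma~\ref{lemma:n-5-SL25}.
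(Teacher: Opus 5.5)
Your proposal is correct and follows the paper's route. The character computation of the symmetric powers gives the unique invariant quadric and quintic and no other reduced invariant hypersurfaces of degree at most $5$. Your argument via $\mathrm{Pic}(Q)=\mathbb{Z}H$, the surjectivity of $H^0(\mathcal{O}_{\mathbb{P}^5}(k))\to H^0(\mathcal{O}_Q(k))$, and Maschke's theorem simply spells out the paper's one-line ``this implies'' for reducedness and $G$-irreducibility.

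There are two small slips to fix. First, the degree-$5$ invariant space must be one-dimensional, as your steps 3 and 6 say, not two-dimensional as stated in step 2. Second, in the $\mumu_2$ case $G$ is not ``the same image''; instead use that the unique quadric and quintic invariant under the normal subgroup $\mathrm{PSp}_4(\mathbf{F}_3)\subset G$ are automatically $G$-invariant.
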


\begin{proof}
The group $\mathrm{PSp}_4(\mathbf{F}_3)$ has a unique irreducible $6$-dimensional representation.
A direct computation of characters of the symmetric powers shows that
$\mathbb{P}^5$ contains a unique $\mathrm{PSp}_4(\mathbf{F}_3)$-invariant quadric hypersurface $Q$ and a unique $G$-invariant quintic hypersurface~$X$,
and $\mathbb{P}^5$ does not contain other reduced $G$-invariant hypersurfaces of degree at most $5$.
This implies that the complete intersection $Q\cap X$ is reduced and $G$-irreducible.
\end{proof}

\begin{corollary}
\label{corollary:n-5-PSp4}
Suppose that $\widehat{G}$ is isomorphic to $\mathrm{PSp}_4(\mathbf{F}_3)$ or $\mathrm{PSp}_4(\mathbf{F}_3)\rtimes\mumu_2$.
Then $\mathbb{P}^5$ is not $G$-birationally rigid.
\end{corollary}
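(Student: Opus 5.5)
The plan is to apply Lemma~\ref{lemma:Pn-ci} with $n=5$, $d_1=2$, $d_2=5\leqslant 5$, taking $F_{d_1}=Q$ and $F_{d_2}=X$ to be the unique $G$-invariant quadric and quintic from Lemma~\ref{lemma:n-5-PSp4}. That lemma already tells us that $Q\cap X$ is a reduced, $G$-irreducible complete intersection. It remains to check the last hypothesis of Lemma~\ref{lemma:Pn-ci}, namely that $Q\cap X$ has hypersurface singularities.

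\textbf{Step 1: the quadric $Q$ is smooth.} Its singular locus is a $G$-invariant linear subspace of $\PP^5$. Since $G$ is primitive, and in particular transitive, this subspace must be empty (a quadric singular along all of $\PP^5$ is not a hypersurface).

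\textbf{Step 2: the intersection is a Cartier divisor on a smooth fourfold.} Because $Q$ is smooth and $X$ does not contain $Q$ (otherwise $Q\cap X$ would not be a complete intersection of the expected dimension), the intersection $Q\cap X$ is an effective Cartier divisor on $Q$. Locally it is therefore cut out by a single equation in a smooth fourfold, so it has hypersurface singularities. Reducedness is supplied by Lemma~\ref{lemma:n-5-PSp4}.

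\textbf{Step 3: conclude.} All hypotheses of Lemma~\ref{lemma:Pn-ci} now hold, so $\PP^5$ is not $G$-birationally rigid. The construction is explicit: blow up $Q\cap X$ and contract the strict transform of $Q$ to a point. This produces a terminal Fano variety $Y\not\simeq\PP^5$ with $\mathrm{rk}\,\mathrm{Cl}(Y)^G=1$. The discrepancy coefficient in the proof of Lemma~\ref{lemma:Pn-ci} is $\frac{n+1-d_2}{d_2-d_1}=\frac{1}{3}>0$.

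\textbf{Main obstacle.} The only real point to watch is that Lemma~\ref{lemma:Pn-ci} implicitly needs the blow-up $V$ to have terminal singularities, via the claim that its $\mathrm{cA}$ points have codimension at least~$3$. This requires $\mathrm{Sing}(Q\cap X)$ to have codimension at least $2$ in the threefold $Q\cap X$, which is exactly what reducedness of $Q\cap X$ gives. If one insisted on more care here, I would use the absence of small $G$-orbits and of low-degree $G$-invariant curves to bound the singular locus, but reducedness should already suffice.
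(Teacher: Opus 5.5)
Your proof is correct and is the paper's argument: the paper's entire proof is to combine Lemma~\ref{lemma:n-5-PSp4} with Lemma~\ref{lemma:Pn-ci}. Your Steps~1--2 usefully supply a check the paper leaves implicit, namely that $Q\cap X$ has hypersurface singularities because the $G$-invariant quadric is smooth by transitivity.

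One correction to your closing remark. Reducedness of the Cohen--Macaulay threefold $Q\cap X$ only gives $\dim\mathrm{Sing}(Q\cap X)\leqslant 2$; codimension at least $2$ would be normality, which does not follow. However, $\dim\mathrm{Sing}(Q\cap X)\leqslant 2$ is all that is needed. With $Q$ smooth, the blow-up is locally $x_1y=\bar g(x_2,\ldots,x_5)$, and its singular locus maps bijectively onto $\mathrm{Sing}(Q\cap X)$, so it has codimension at least $3$ in the fivefold $V$. In any case, this point is internal to the proof of Lemma~\ref{lemma:Pn-ci}, whose hypotheses you have already verified.
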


\begin{proof}
Apply Lemma~\ref{lemma:n-5-PSp4} together with Lemma~\ref{lemma:Pn-ci}.
\end{proof}

\begin{remark}
As Zhijia Zhang informed us, using Magma one can show that the complete intersection in Lemma~\ref{lemma:n-5-PSp4} is irreducible, and its singularities consists of~$330$ isolated ordinary double points.
\end{remark}

\begin{remark}
If $\widehat{G}$ is isomorphic to $\mathrm{PSp}_4(\mathbf{F}_3)$ or $\mathrm{PSp}_4(\mathbf{F}_3)\rtimes\mumu_2$, then
the linear system $|\mathcal{O}_{\mathbb{P}^5}(n)|$ does not contain $G$-invariant pencils for $n\leqslant 5$. Thus, one cannot deduce Corollary~\ref{corollary:n-5-PSp4}
using Lemma~\ref{lemma:Pn-pencil} instead of Lemma~\ref{lemma:Pn-ci}.
\end{remark}

Summarizing, we obtain the following corollary.

\begin{corollary}
\label{corollary:n-5-rigid}
Let $G$ be a~finite subgroup in $\mathrm{PGL}_6(\mathbb{C})$ such that $\mathbb{P}^5$ is $G$-birationally rigid. Then there~exists
a~finite primitive subgroup $\widehat{G}\subset\mathrm{SL}_6(\mathbb{C})$ such that $\widehat{G}$ is mapped to $G$ via the natural projection $\mathrm{SL}_6(\mathbb{C})\to\mathrm{PGL}_6(\mathbb{C})$, and $\widehat{G}$ is isomorphic to one of the following groups:
\begin{itemize}
\item[$\mathrm{(VII)}$] $3.\mathfrak{A}_7$,%
\item[$\mathrm{(VIII)}$] $6.\mathfrak{A}_7$,%
\item[$\mathrm{(X)}$]
\begin{itemize}
\item[$\mathrm{(ii)}$] $\mathrm{SL}_2(\mathbf{F}_7)\rtimes\mumu_2$%
\end{itemize}
\item[$\mathrm{(XI)}$] $\mathrm{SL}_2(\mathbf{F}_{11})$,%
\item[$\mathrm{(XII)}$] $\mathrm{SL}_2(\mathbf{F}_{13})$,%
\item[$\mathrm{(XIV)}$]
\begin{itemize}
\item[$\mathrm{(i)}$] $\mathrm{SU}_3(\mathbf{F}_3)$, %
\item[$\mathrm{(ii)}$] $\mathrm{SU}_3(\mathbf{F}_3)\rtimes\mumu_2$,%
\end{itemize}
\item[$\mathrm{(XV)}$]
\begin{itemize}
\item[$\mathrm{(i)}$] $6.\mathrm{PSU}_4(\mathbf{F}_3)$, %
\item[$\mathrm{(ii)}$] $6.\mathrm{PSU}_4(\mathbf{F}_3)\rtimes\mumu_2$,%
\end{itemize}
\item[$\mathrm{(XVI)}$] $2.\mathrm{HaJ}$, where $\mathrm{HaJ}$ is the~Hall--Janko simple group,%
\item[$\mathrm{(XVII)}$]
\begin{itemize}
\item[$\mathrm{(i)}$] $6.\mathrm{PSL}_3(\mathbf{F}_4)$, %
\item[$\mathrm{(ii)}$] $6.\mathrm{PSL}_3(\mathbf{F}_4)\rtimes\mumu_2$.%
\end{itemize}
\end{itemize}
\end{corollary}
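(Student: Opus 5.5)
We will argue by elimination, going through the classification of finite primitive subgroups of $\mathrm{PGL}_6(\mathbb{C})$ recalled at the beginning of this section. Suppose $\mathbb{P}^5$ is $G$-birationally rigid. By Theorem~A, $G$ is primitive, so $G$ falls into one of the cases (I)--(XVII). We then discard every case that is not on the list in the statement.

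Case (I) is excluded by Remark~\ref{remark:n-5-Segre}: the $G$-invariant Segre cubic scroll gives a $G$-equivariant $\mathbb{P}^3$-bundle over $\mathbb{P}^2$, which is a $G$-Mori fiber space with positive-dimensional base.

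For cases (II)--(XVII), we fix the lift $\widehat{G}\subset\mathrm{SL}_6(\mathbb{C})$ provided by the classification and exclude the following:
\begin{itemize}
\item (II), (III), (IV)(i),(ii), (VI) and (IX)(i),(ii) are excluded by Corollary~\ref{corollary:P5-pencil-cases}, which relies on invariant pencils of cubics or quartics together with Lemma~\ref{lemma:Pn-pencil};
\item (V) and (X)(i) are excluded by Corollary~\ref{corollary:n-5-6A6-SL27}, which relies on the invariant net of quadrics and Lemma~\ref{lemma:P5-net};
\item (XIII)(i),(ii) are excluded by Corollary~\ref{corollary:n-5-PSp4}, which relies on the complete intersection of the invariant quadric and quintic and Lemma~\ref{lemma:Pn-ci}.
\end{itemize}
The remaining cases are exactly (VII), (VIII), (X)(ii), (XI), (XII), (XIV), (XV), (XVI) and (XVII), which is the list in the statement.

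The one point requiring care is case (X)(ii). Its subgroup $\mathrm{SL}_2(\mathbf{F}_7)$ has the invariant net of quadrics from Lemma~\ref{lemma:n-5-SL27}. However, this net need not be invariant under the extension by $\mumu_2$, and even if it is, the action of the larger group on the net may fail to be faithful. So Lemma~\ref{lemma:P5-net} cannot simply be inherited from case (X)(i), and we must keep (X)(ii) on the list. This matches the statement.

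By contrast, the pencils appearing in cases (IV)(ii) and (IX)(ii) are inherited from the index-two subgroups. This works because those pencils are unique: a unique invariant pencil for the subgroup is automatically invariant under the normalizing extension. Lemmas~\ref{lemma:n-5-3A6} and~\ref{lemma:n-5-PSL27} already handle this.

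Apart from this bookkeeping, no new computation is needed. The main obstacle is only to check that every case of the classification has been accounted for exactly once.
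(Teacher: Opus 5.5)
Your proposal is correct and follows the paper's own argument. Primitivity comes from Theorem~A, case (I) is eliminated by Remark~\ref{remark:n-5-Segre}, and cases (II)--(VI), (IX), (X)(i), (XIII) are eliminated by Corollaries~\ref{corollary:P5-pencil-cases}, \ref{corollary:n-5-6A6-SL27}, and~\ref{corollary:n-5-PSp4}. Your discussion of why (X)(ii) must stay on the list is unnecessary, because the statement only requires excluding cases, not justifying the ones that survive.
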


\begin{proof}
By Theorem~A, the subgroup $G$ is primitive. Therefore, it follows from Remark~\ref{remark:n-5-Segre} and Corollaries~\ref{corollary:P5-pencil-cases}, \ref{corollary:n-5-6A6-SL27}, and~\ref{corollary:n-5-PSp4} that we can choose the lift $\widehat{G}$ to be isomorphic to one of the groups in the required list.
\end{proof}

\begin{remark}[{cf. \cite[Theorem~3.3]{CheltsovShramov2011-2}}]
\label{remark:n-5-not-real}
If $\widehat{G}$ is a primitive subgroup in $\mathrm{SL}_6(\mathbb{C})$ that is isomorphic to one of the groups listed in Corollary~\ref{corollary:n-5-rigid}, then direct computations show that there are no $G$-invariant quadric hypersurfaces in~$\PP^5$. Moreover, if $\widehat{G}\simeq 3.\mathfrak{A}_7$, then $|\mathcal{O}_{\mathbb{P}^5}(n)|$ does not contain $G$-invariant hypersurfaces for~\mbox{$n\in\{1,2,4,5\}$} (but  there exists a unique $G$-invariant cubic hypersurface in $\mathbb{P}^5$).
If~\mbox{$\widehat{G}\simeq\mathrm{SL}_2(\mathbf{F}_{11})$} or $\widehat{G}\simeq\mathrm{SL}_2(\mathbf{F}_{13})$,
then $|\mathcal{O}_{\mathbb{P}^5}(n)|$ does
not contain $G$-invariant divisors for $n\in\{1,2,3,5\}$
(but there exists a unique $G$-invariant quartic hypersurface in $\mathbb{P}^5$).
If~\mbox{$G\simeq\mathrm{SL}_2(\mathbf{F}_7)\rtimes\mumu_2$}, the same assertion follows from Lemma~\ref{lemma:n-5-SL27}.
If $\widehat{G}$ is isomorphic to one of the groups $6.\mathfrak{A}_7$, $\mathrm{SU}_3(\mathbf{F}_3)$, $6.\mathrm{PSU}_4(\mathbf{F}_3)$, $2.\mathrm{HaJ}$, or $6.\mathrm{PSL}_3(\mathbf{F}_4)$,
then $|\mathcal{O}_{\mathbb{P}^5}(n)|$ does not contain $G$-invariant
divisors for $n\leqslant 5$. Hence, if~$G$ is isomorphic to
$\mathrm{SU}_3(\mathbf{F}_3)\rtimes\mumu_2$, $6.\mathrm{PSU}_4(\mathbf{F}_3)\rtimes\mumu_2$, or $6.\mathrm{PSL}_3(\mathbf{F}_4)\rtimes\mumu_2$, then~$|\mathcal{O}_{\mathbb{P}^5}(n)|$ also does not contain $G$-invariant divisors
for~\mbox{$n\leqslant 5$}.
\end{remark}

Now, we prove Theorem~D. Suppose  $G\simeq\mathrm{PSU}_4(3)\rtimes\mumu_2$.
We can change the lift $\widehat{G}\subset\GL_6(\mathbb{C})$ such that $\widehat{G}$ is the exceptional complex reflection group number $34$ in \cite{OuraSekiguchi,ShephardTodd}, so $\widehat{G}\not\subset\mathrm{SL}_6(\mathbb{C})$ as we assumed earlier.
Let $V$ be the corresponding $6$-dimensional representation of $\widehat{G}$. Then
$$
\mathbb{C}[x_1,\ldots,x_6]^{\widehat{G}}=\mathbb{C}[f_6,f_{12},f_{18},f_{24},f_{30},f_{42}],
$$
where  each $f_i$ is a homogeneous polynomial of degree $i$. Note that 
$$
|\widehat{G}|=6|G|=39191040,
$$
and $\widehat{G}\simeq 6.(\mathrm{PSU}_4(3)\rtimes\mumu_2)$; we point out that it is not isomorphic to the lift of $G$ to $\mathrm{SL}_6(\mathbb{C})$ discussed in Corollary~\ref{corollary:n-5-rigid}. Let $\widehat{G}_0=\widehat{G}\cap\mathrm{SL}_6(\mathbb{C})$, and let $G_0$ be its image  in $\mathrm{PGL}_6(\mathbb{C})$.
Then $G_0\simeq \mathrm{PSU}_4(3)$, this is the only subgroup in $G$ of index less than~$112$,
and the restriction of $V$ to this subgroup is irreducible.

Let $U=\mathbb{P}^5/G$, and let $\pi\colon \mathbb{P}^5\to U$ be the quotient map. Then $U\simeq \mathbb{P}(1,2,3,4,5,7)$ and
$$
\mathcal{O}_{\mathbb{P}^5}(6)\sim_{\mathbb{Q}}\pi^*(\mathcal{O}_U(1)),
$$
Set $(a_0,a_1,a_2,a_3)=(22,36,57,36)$.

\begin{lemma}
\label{lemma:P5-degree-bounds}
Let $Z\subset {\mathbb{P}^5}$ be a $G$-irreducible subvariety of dimension $n\leqslant 3$. Then $\deg(Z)\geqslant a_n$.
\end{lemma}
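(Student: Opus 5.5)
For a $G$-irreducible $Z$, I would fix an irreducible component $Z_1$ and push everything down to $U=\mathbb{P}^5/G\simeq\mathbb{P}(1,2,3,4,5,7)$. There I would apply Lemma~\ref{lemma:weighted-degree}, as in Remark~\ref{remark:Burkhardt-surfaces}. Since $\deg Z\geqslant\deg Z_1$, and since $\mathcal{O}_{\mathbb{P}^5}(6)\sim_{\mathbb{Q}}\pi^*\mathcal{O}_U(1)$, the projection formula gives
$$
6^n\deg(Z_1)=\big(\pi^*\mathcal{O}_U(1)\big)^n\cdot Z_1=\deg\big(\pi|_{Z_1}\big)\,\big(\mathcal{O}_U(1)\big)^n\cdot\pi(Z_1).
$$
Here $\deg(\pi|_{Z_1})=|\mathrm{Stab}_G(Z_1)|/|K|$, where $K\subset G$ is the subgroup fixing $Z_1$ pointwise. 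This also gives $\deg Z=r\deg Z_1$ with $r=|G|/|\mathrm{Stab}_G(Z_1)|$, so that in fact
$$
6^n\deg(Z)=\frac{|G|}{|K|}\,\big(\mathcal{O}_U(1)\big)^n\cdot\pi(Z_1).
$$
The weighted bound $\big(\mathcal{O}_U(1)\big)^n\cdot\pi(Z_1)\geqslant 1/(w_{5-n}\cdots w_5)$ then reduces everything to an upper bound for $|K|$.

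Case $n=0$: here I would argue group-theoretically instead. A $G$-orbit of length less than $112$ would have stabilizer of index less than $112$, hence containing $G_0\simeq\mathrm{PSU}_4(\mathbf{F}_3)$. Then $G_0$ would fix a point of $\mathbb{P}^5$, which contradicts the irreducibility of $V|_{G_0}$. So $\deg Z\geqslant 112\geqslant 22$.

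Cases $n=1,2,3$: with $|G|=6531840$ the weighted bound gives
$$
\deg Z\geqslant\frac{6531840}{|K|\cdot 6\cdot 35},\qquad \frac{6531840}{|K|\cdot 36\cdot 140},\qquad \frac{6531840}{|K|\cdot 216\cdot 420},
$$
that is, $31104/|K|$, $1296/|K|$ and $72/|K|$. Hence I need $|K|\leqslant 864$ for curves, $|K|\leqslant 22$ for surfaces and $|K|\leqslant 2$ for threefolds.

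The subgroup $K$ fixes pointwise the linear span $\Lambda$ of $Z_1$, which has dimension at least $n$. Lifting $K$ to $\widehat{G}$, every element of $K$ acts on the cone over $\Lambda$ by a scalar. So $K$ corresponds to a subgroup of $\widehat{G}$ having an eigenspace of dimension at least $n+1$. I would then use the structure of the reflection group $G_{34}$.
\begin{itemize}
\item For $n=3$, only elements with a $4$-dimensional eigenspace can occur. I would show that a $4$-dimensional common eigenspace is fixed modulo scalars by at most one nontrivial projective element, namely the image of a reflection or of a reflection times a scalar. This gives $|K|\leqslant 2$.
\item For $n=2$, I would bound pointwise stabilizers of planes: up to scalars they are parabolic-type subgroups fixing a $3$-space, and I expect them to be of order at most $6$, well below $22$.
\item For $n=1$, I would bound pointwise stabilizers of lines: they fix a $2$-space, so they are reflection subgroups of rank at most $4$ times scalars, and these should be far below $864$.
\end{itemize}
If the crude bounds fall short, I would use the second inequality of Lemma~\ref{lemma:weighted-degree}, with $y_0=f_6$ of weight $1$, when $Z\not\subset\{f_6=0\}$. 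This replaces $w_{5-n}$ by $1$ and gains a factor of $2$ to $4$. The remaining subcase $Z\subset\{f_6=0\}$ would be handled by working inside the hypersurface $\{f_6=0\}$.

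I expect the main obstacle to be the bound on $|K|$, that is, controlling the projective pointwise stabilizers of $3$-, $2$- and $1$-dimensional linear subspaces in $\widehat{G}=G_{34}$. For this I would use Steinberg's theorem: the pointwise stabilizer of a subspace in a reflection group is generated by reflections. Together with the known list of parabolic subgroups of $G_{34}$ and the classification of its eigenspaces, which may have to be checked computationally, this should give the required orders.
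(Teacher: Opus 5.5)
\textbf{There is a genuine gap: the bounds you need on $|K|$ are false.} Your reduction is correct. The identity
$$6^n\deg(Z)=\frac{|G|}{|K|}\big(\mathcal{O}_U(1)\big)^n\cdot\pi(Z_1)$$
holds, the numbers $31104/|K|$, $1296/|K|$, $72/|K|$ are right, and your treatment of $n=0$ is fine. The problem is the step that bounds $|K|$. The pointwise stabilizer of the span of $Z_1$ can be large whenever that span is a flat of the reflection arrangement of $G_{34}$.

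\emph{Threefolds.} Take two commuting reflections $s_1,s_2\in\widehat{G}$ with distinct hyperplanes $H_1,H_2$, and let $Z_1=\mathbb{P}(H_1\cap H_2)\simeq\mathbb{P}^3$. Then $s_1$, $s_2$, $s_1s_2$ are three distinct non-trivial elements of $G$ acting trivially on $Z_1$. In particular $s_1s_2$ has a $4$-dimensional eigenspace but is not a scalar multiple of a reflection, so your claim of ``at most one nontrivial projective element'' fails. Thus $|K|\geqslant 4$, and your estimate only gives $72/4=18<36$. In fact $G_{34}$ contains $G(3,3,6)$, generated by the reflections in $e_i-\omega^ke_j$ in the Eisenstein model of the Coxeter--Todd lattice. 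So for $\{x_1=x_2=0\}$ one even gets $|K|\geqslant|G(3,3,2)|=6$.

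\emph{Surfaces.} The plane $\{x_1=x_2=x_3=0\}$ is fixed pointwise by $G(3,3,3)$, of order $54$. This contradicts your expectation of order at most $6$, and gives only $1296/54=24<57$.

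So the weighted-degree inequality alone cannot give $a_n$ for such components. Your fallback does not repair this systematically: switching to the second inequality of Lemma~\ref{lemma:weighted-degree} still needs actual bounds on $|K|$ over all flats, and it leaves the subcase $Z\subset\{f_6=0\}$ unspecified.

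\textbf{The missing idea is the one you already used for $n=0$.} A component with a large pointwise stabilizer has a small setwise stabilizer, hence many translates. Let $r=[G:\mathrm{Stab}_G(Z_1)]$.
\begin{itemize}
\item If $r\geqslant 57$, then $\deg(Z)=r\deg(Z_1)\geqslant 57\geqslant a_n$ trivially.
\item Otherwise the index is less than $112$, so $\mathrm{Stab}_G(Z_1)\supseteq G_0$. Since $V|_{\widehat{G}_0}$ is irreducible, $Z_1$ spans $\mathbb{P}^5$, and therefore $K$ is trivial. Your own computation then gives $\deg(Z)\geqslant 31104,\ 1296,\ 72$ for $n=1,2,3$, which exceed $a_1,a_2,a_3$.
\end{itemize}
This is exactly the paper's proof, and it needs no Steinberg theorem or classification of parabolic subgroups. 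In the examples above $\mathrm{Stab}_G(Z_1)$ cannot contain $G_0$, so $r\geqslant 112$ and these cases are harmless.
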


\begin{proof}
Let $r$ be the number of irreducible components of $Z$, let $C$ be its irreducible component,
and let~$G_C$ be the stabilizer of $C$ in the group $G$. Then $r$ is the index of $G_C$ in $G$, so we may assume that $r\leqslant 56$,
since otherwise we would have $\mathrm{deg}(Z)=r\mathrm{deg}(C)\geqslant r\geqslant 57\geqslant a_{n}$.
Hence, either $r=1$ and $G_C=G$, or $r=2$ and $G_C=G_0$.
In particular, we have $n\ne 0$.

The action of $G_C$ on the subvariety $C$ induces an injective  homomorphism $G_C\to\mathrm{Aut}(C)$. Thus, as cycles in $U$, we have $\pi_*[Z]=|G|[\pi(Z)]$.
On the other hand, by Lemma~\ref{lemma:weighted-degree}, we get
$$
6^n\deg(Z)=\big(\mathcal{O}_{\mathbb{P}^5}(6)\big)^n\cdot Z=\big(\mathcal{O}_U(1)\big)^n\cdot\pi_*[Z]= |G|\,\big(\mathcal{O}_U(1)\big)^n\cdot\pi(Z)\geqslant\frac{|G|}{w_{5-n}w_{6-n}\ldots w_5},
$$
where $(w_0,w_1,w_2,w_3,w_4,w_5)=(1,2,3,4,5,7)$. This gives $\mathrm{deg}(Z)\geqslant a_n$.
\end{proof}

\begin{corollary}
\label{corollary:P5-lc-centers-away-from-Q}
Let $\mathcal{M}\subset|\mathcal{O}_{\mathbb{P}^5}(k)|$ be a mobile $G$-invariant linear system, and let $a=\frac{5-n}{4-n}$ for some non-negative integer $n\leqslant 3$.
Then $({\mathbb{P}^5},\frac{6a}{k}\mathcal{M})$ is log canonical in codimension $5-n$.
\end{corollary}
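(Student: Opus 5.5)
The plan is to argue by contradiction, separately for each possible dimension of a non-log-canonical center, using a multiplicity bound for self-intersections of the mobile system together with the degree bounds of Lemma~\ref{lemma:P5-degree-bounds}. Here "log canonical in codimension $5-n$" means: the pair is log canonical at the general point of every subvariety of dimension at least $n$. Write $\lambda_n=\frac{6a_n'}{k}$ with $a_n'=\frac{5-n}{4-n}$. These coefficients satisfy
$$
a_0'=\tfrac54<a_1'=\tfrac43<a_2'=\tfrac32<a_3'=2,
$$
so $\lambda_n$ increases with $n$. Consequently, log canonicity of $(\mathbb{P}^5,\lambda_m\mathcal{M})$ at the general point of an $m$-dimensional subvariety implies the same for $\lambda_n\mathcal{M}$ whenever $n\leqslant m$. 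It therefore suffices to prove, for each $m\leqslant 3$, that $(\mathbb{P}^5,\lambda_m\mathcal{M})$ is log canonical at the general point of every $m$-dimensional subvariety. Divisorial centers ($m=4$) cannot occur: $\mathcal{M}$ is mobile, so a general member has no fixed component, and $\lambda_m\leqslant\frac{12}{k}$ is small relative to $\mathcal{M}\sim k H$.

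So fix $m\in\{0,1,2,3\}$ and suppose that $(\mathbb{P}^5,\lambda_m\mathcal{M})$ is not log canonical at the general point of an irreducible $m$-dimensional subvariety $Z_1$. Let $Z$ be its $G$-orbit, which is a $G$-irreducible subvariety of dimension $m$, and set $c=5-m$.

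First, cut by $m$ general hyperplanes through a general point $P\in Z_1$ to reduce to a $c$-dimensional smooth germ where $P$ is an isolated non-log-canonical point. Apply the multiplicity inequality for non-log-canonical mobile systems: Corti's $4/\lambda^2$ bound \cite[Theorem 3.1]{corti} when $c=2$, and its higher-dimensional version by de~Fernex--Ein--Musta\c{t}\u{a} in general. This gives
$$
\mathrm{mult}_{Z_1}\big(M_1\cdots M_c\big)>\Big(\frac{c}{\lambda_m}\Big)^c
$$
for general members $M_1,\ldots,M_c\in\mathcal{M}$. Since $\mathcal{M}$ is mobile, this intersection is an effective $m$-cycle of degree $k^c$. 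By $G$-invariance it contains every component of $Z$ with the same coefficient, so
$$
k^c>\Big(\frac{ck}{6a_m'}\Big)^c\deg(Z),\qquad\text{that is,}\qquad \deg(Z)<\Big(\frac{6a_m'}{c}\Big)^c.
$$

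Next, evaluate this bound and compare with Lemma~\ref{lemma:P5-degree-bounds}. Since $\frac{6a_m'}{c}=\frac{6}{4-m}$, the bound is $\big(\frac{6}{4-m}\big)^{5-m}$:
\begin{itemize}
\item $m=3$: the bound is $36$, while $\deg(Z)\geqslant a_3=36$;
\item $m=2$: the bound is $27$, while $\deg(Z)\geqslant a_2=57$;
\item $m=1$: the bound is $16$, while $\deg(Z)\geqslant a_1=36$;
\item $m=0$: the bound is about $7.6$, while $|Z|\geqslant a_0=22$.
\end{itemize}
Here $a_0,\ldots,a_3$ are the constants $(22,36,57,36)$ of Lemma~\ref{lemma:P5-degree-bounds}, not the coefficients $a_m'$. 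In every case the two inequalities contradict each other. The case $m=3$ is exactly sharp, so the strict inequality in Corti's bound is essential there.

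The main obstacle is the multiplicity inequality itself. I need the sharp form $\mathrm{mult}_P(M^c)>c^c/\lambda^c$ for an isolated non-log-canonical point of a mobile system on a smooth $c$-fold. I also need to check that restricting to a general $c$-dimensional section through a general point of $Z_1$ preserves non-log-canonicity; this holds by inversion of adjunction, or simply by genericity. If only Corti's $c=2$ statement is available, the case $m=3$ still works, and for $m\leqslant 2$ there is enough slack that a weaker constant such as the one in \cite[Remark 3.6]{CheltsovSarikyanZhuang2024} should suffice.
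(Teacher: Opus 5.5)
Your reduction to one dimension $m$ at a time, each at its own coefficient $\lambda_m$, is fine. Your case $m=3$ is exactly the paper's base case via Corti's inequality.

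\textbf{The gap.} It lies in the cases $m\leqslant 2$, i.e.\ $c=5-m\geqslant 3$. You claim that, since $\mathcal{M}$ is mobile, $M_1\cdots M_c$ is an effective $m$-cycle of degree $k^c$; this is false for $c\geqslant 3$. Mobility only says that $\mathrm{Bs}(\mathcal{M})$ has codimension at least $2$. So $M_1\cdot M_2$ is a well-defined $3$-cycle, but $M_1\cap M_2\cap M_3$ may contain a $3$-dimensional component $W$ of the base locus.

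Nothing excludes $Z_1\subset W$: log canonicity at the general point of $W$ only bounds $\mathrm{mult}_W(\mathcal{M})$. In that case, after cutting by $m$ general hyperplanes, the base ideal of $\mathcal{M}\vert_Y$ at $P$ is still not $\mathfrak{m}$-primary. Then $\mathrm{mult}_P(M_1\vert_Y\cdots M_c\vert_Y)$ is not defined. The de~Fernex--Ein--Musta\c{t}\u{a} inequality $e(\mathfrak{a})\geqslant c^c/\mathrm{lct}_P(\mathfrak{a})^c$ needs $\mathfrak{a}$ to be $\mathfrak{m}$-primary, or the $c$ members to cut out a complete intersection of codimension $c$, so it does not apply.

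This is why higher-dimensional rigidity arguments use only $M_1\cdot M_2$. The $4/\lambda^2$ bound on its multiplicity along a center of dimension $m<3$ gives no bound on $\deg(Z)$.

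Your fallback via \cite[Remark~3.6]{CheltsovSarikyanZhuang2024} does not repair this. That remark converts non-canonical centers into non-log-canonical ones at a larger coefficient; it provides no multiplicity inequality. The obstruction here is non-properness of the intersection, not the size of the constants.

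\textbf{How the paper handles $m\leqslant 2$.} It avoids intersection theory and argues by descending induction.
\begin{itemize}
\item The induction hypothesis says the pair with coefficient $\frac{6a}{k}$ is already log canonical at the general points of all subvarieties of dimension greater than $n$.
\item Choose $\lambda<1$ so that $C$ is a log canonical center. Then $(\mathbb{P}^5,\frac{6a\lambda}{k}\mathcal{M})$ is klt at those general points.
\item Cut by $n$ general hyperplanes to get $Y\simeq\mathbb{P}^{5-n}$. The $\deg(Z)$ points of $Y\cap Z$ are then isolated components of the non-klt locus.
\item Nadel vanishing for the multiplier ideal gives $\deg(Z)\leqslant h^0(\mathcal{O}_{\mathbb{P}^{5-n}}(d))$ with $d=5,3,2$. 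That is, $\deg(Z)\leqslant 56,35,21$ for $n=2,1,0$, contradicting $a_2=57$, $a_1=36$, $a_0=22$.
\end{itemize}
This uses only the multiplier ideal, so the dimension of the base locus does not matter. The induction is exactly what makes the points of $Y\cap Z$ isolated in the non-klt locus; in your scheme it plays no role.
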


\begin{proof}
We prove the required assertion by induction on $n$ starting from $n=3$.
Suppose that the singularities of $({\mathbb{P}^5},\frac{6a}{k}\mathcal{M})$ are not log canonical in codimension $5-n$.
Then there are $\lambda<1$ and an irreducible subvariety $C\subset {\mathbb{P}^5}$ of dimension $n\leqslant 3$
such that $C$ is a center of log canonical singularities of $({\mathbb{P}^5},\frac{6a\lambda}{k}\mathcal{M})$.
Let $Z$ be the $G$-orbit of $C$. Then $\mathrm{deg}(Z)\geqslant a_n$ by Lemma~\ref{lemma:P5-degree-bounds}.

If $n=3$ (the base of induction), then it follows from Corti's inequality \cite{CheltsovShramov2015} that $\mathrm{deg}(Z)<36$,
which is impossible, since $\mathrm{deg}(Z)\geqslant a_3=36$.

Now, we suppose that $n<3$. By induction, the pair $({\mathbb{P}^5},\frac{6a}{k}\mathcal{M})$ has log canonical singularities in codimension $4-n$.
Let $Y$ be intersection of $n$ general hyperplanes in~${\mathbb{P}^5}$. Set $\mathcal{M}_Y=\mathcal{M}\vert_{Y}$.
Then $Y$ is a projective space of dimension $5-n$, the intersection $Y\cap Z$ is a finite subset consisting of $\mathrm{deg}(Z)$ points,
and every point of this intersection is an isolated component of the locus $\mathrm{Nklt}(Y,\frac{6a\lambda}{k}\mathcal{M}_Y)$.
Now, applying the Nadel's vanishing theorem \cite[Theorem 9.4.8]{Lazarsfeld} to $(Y,\frac{6a\lambda}{k}\mathcal{M}_Y)$, we get
$$
\deg(Z)=|Z\cap Y|\leqslant h^0\big(Y,\mathcal{O}_Y(m)\big)
$$
for
$$
m=\left\{\aligned
&5 \ \text{if $n=2$},\\
&3 \ \text{if $n=1$},\\
&2 \ \text{if $n=0$}.
\endaligned
\right.
$$
This gives a contradiction with $\mathrm{deg}(Z)\geqslant a_n$.
\end{proof}

Now, we are ready to prove Theorem~D.

\begin{proof}[Proof of Theorem~D]
Suppose ${\mathbb{P}^5}$ is not $G$-birationally superrigid.
Then, it follows from \cite{CheltsovShramov2015} that there exist an integer $k>0$ and a mobile $G$-invariant linear subsystem $\mathcal{M}\subset|\mathcal{O}_{\mathbb{P}^5}(k)|$ such that the singularities of the pair $({\mathbb{P}^5},\frac{6}{k}\mathcal{M})$ are not canonical.
Let $C$ be a center of non-canonical singularities of this  pair that has maximal dimension $n\leqslant 3$.
Then, by \cite[Remark~3.6]{CheltsovSarikyanZhuang2024}, $C$ is a center of non-log canonical singularities of the log pair $(\mathbb{P}^5,\frac{6}{k}\cdot\frac{5-n}{4-n}\mathcal{M})$,
which contradicts Corollary~\ref{corollary:P5-lc-centers-away-from-Q}.
\end{proof}

\section{Six-dimensional projective space}
\label{section:P6}

The goal of this section is to discuss primitive subgroups $G\subset \mathrm{PGL}_7(\mathbb{C})$
and show that~$\mathbb{P}^6$ is not $G$-birationally rigid for many of them.
First, we recall the classification of primitive finite subgroups of $\mathrm{PGL}_7(\mathbb{C})$ from \cite[Theorem~4.1]{Wa69} and \cite[Theorem~I]{Wa70}.
Let $\mathbb{H}_7$ be the~subgroup in~\mbox{$\mathrm{PGL}_{7}(\mathbb{C})$}
generated by the following projective transformations:
\begin{align*}
[x_1:x_2:x_3:x_4:x_5:x_6:x_7]&\mapsto [x_2:x_3:x_4:x_5:x_6:x_7:x_1],\\
[x_1:x_2:x_3:x_4:x_5:x_6:x_7]&\mapsto\\
\mapsto &[e^{\frac{2\pi \sqrt{-1}}{7}} x_1:e^{\frac{4\pi \sqrt{-1}}{7}} x_2:e^{\frac{6\pi \sqrt{-1}}{7}} x_3:e^{\frac{8\pi \sqrt{-1}}{7}} x_4:e^{\frac{10\pi \sqrt{-1}}{7}} x_5:e^{\frac{12\pi \sqrt{-1}}{7}} x_6:x_7],
\end{align*}
and let $\mathbb{N}_7$ be the~normalizer of $\mathbb{H}_7$ in $\mathrm{PGL}_7(\mathbb{C})$. Then \mbox{$\mathbb{H}_7\simeq\mumu_7^2$}, the subgroup $\mathbb{H}_7$ is transitive and imprimitive, and
$$
\mathbb{N}_7/\mathbb{H}_7\simeq\mathrm{SL}_2(\mathbf{F}_{7}).
$$
Moreover, we have $\mathbb{N}_7\simeq\mathbb{H}_7\rtimes\mathrm{SL}_2(\mathbf{F}_{7})$, and the subgroup $\mathbb{N}_7$ is primitive.

Now, let $G$ be a~finite primitive subgroup in $\mathrm{PGL}_7(\mathbb{C})$. Then either
\begin{itemize}
\item[(I)] $G$ is conjugate to a subgroup in $\mathbb{N}_7$ that contains $\mathbb{H}_7$,
\end{itemize}
or there~exists a~finite primitive subgroup $\widehat{G}\subset\mathrm{SL}_7(\mathbb{C})$ such that $\widehat{G}$ is mapped to $G$ via the natural projection $\mathrm{SL}_7(\mathbb{C})\to\mathrm{PGL}_7(\mathbb{C})$, and  $\widehat{G}$ is isomorphic to one of the following groups:
\begin{itemize}
\item[(II)] $\mathrm{PSL}_2(\mathbf{F}_{13})$, %
\item[(III)]
\begin{itemize}
\item[(i)] $\mathrm{SL}_2(\mathbf{F}_8)$,%
\item[(ii)] $\mathrm{SL}_2(\mathbf{F}_8)\rtimes \mumu_3$,%
\end{itemize}
\item[(IV)] $\mathfrak{A}_8$ or $\mathfrak{S}_8$,%
\item[(V)]
\begin{itemize}
\item[(i)] $\mathrm{PSL}_2(\mathbf{F}_7)$,%
\item[(ii)] $\mathrm{PGL}_2(\mathbf{F}_7)$,
\end{itemize}
\item[(VI)]
\begin{itemize}
\item[(i)] $\mathrm{SU}_3(\mathbf{F}_3)$, %
\item[(ii)] $\mathrm{SU}_3(\mathbf{F}_3)\rtimes \mumu_2$,%
\end{itemize}
\item[(VII)] $\mathrm{Sp}_6(\mathbf{F}_2)$.
\end{itemize}
Let us show that $\mathbb{P}^6$ can be $G$-birationally rigid only in case (I).

\begin{lemma}
\label{lemma:n-6-PSL-2-13}
Suppose that $\widehat{G}$ is isomorphic to $\mathrm{PSL}_2(\mathbf{F}_{13})$.
Then the linear system $|\mathcal{O}_{\mathbb{P}^6}(4)|$ contains a unique $G$-invariant
pencil.
\end{lemma}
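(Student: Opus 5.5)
The plan is a character computation in the style of Lemmas~\ref{lemma:n-5-SL25}--\ref{lemma:n-5-PSL27}. The group $\mathrm{PSL}_2(\mathbf{F}_{13})$ has order $1092$ and GAP ID $[1092,25]$. Its irreducible representations of dimension $7$ are exactly two, and they are complex conjugate (or Galois conjugate via $\sqrt{13}$). These are the only candidates for the action of $\widehat{G}$ on $\mathbb{C}^7$, and both give primitive subgroups of $\mathrm{SL}_7(\mathbb{C})$. Since $\widehat{G}$ is perfect, every $\widehat{G}$-semi-invariant polynomial is invariant. Moreover, $G$-invariant hypersurfaces of degree $d$ correspond exactly to one-dimensional subrepresentations of $\mathrm{Sym}^d(V^\vee)$, and all of these are trivial. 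So a $G$-invariant pencil in $|\mathcal{O}_{\mathbb{P}^6}(4)|$ is the same as a two-dimensional subspace of $\mathrm{Sym}^4(V^\vee)^{\widehat{G}}$. Thus it suffices to show
$$
\dim \mathrm{Sym}^4\big(V^\vee\big)^{\widehat{G}}=2 .
$$

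To compute this dimension, I would take the inner product of the character of $\mathrm{Sym}^4 V$ with the trivial character. The character is given by the standard formula in terms of power maps:
$$
\chi_{\mathrm{Sym}^4}(g)=\tfrac{1}{24}\big(\chi(g)^4+6\chi(g)^2\chi(g^2)+3\chi(g^2)^2+8\chi(g)\chi(g^3)+6\chi(g^4)\big).
$$
Concretely, in GAP this is the script from Lemma~\ref{lemma:n-5-SL25} with \texttt{SmallGroup(1092,25)}, picking a degree-$7$ irreducible character, and calling \texttt{SymmetricParts(T,[U],4)} followed by \texttt{MatScalarProducts}. As a sanity check, the same computation should show that $\mathrm{Sym}^d$ has no invariants for $d=1,2,3$. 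Degree $1$ is clear by irreducibility. Degree $2$ is not needed for the statement, but it is consistent with the representation being non-real-orthogonal or orthogonal depending on the Frobenius--Schur indicator.

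The only delicate point is the uniqueness claim: a two-dimensional space of invariants gives exactly one pencil, so uniqueness is automatic once the dimension is exactly $2$. Independence of the choice among the two $7$-dimensional representations follows because they are interchanged by an outer (Galois) automorphism, which preserves invariant dimensions. The main obstacle is therefore only the reliability of the character-table evaluation, in particular handling the irrational character values $\frac{-1\pm\sqrt{13}}{2}$ on the elements of order $13$ and their power maps correctly. I would trust the GAP computation for this, exactly as in the earlier lemmas.
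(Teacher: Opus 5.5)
Your approach is the same as the paper's, whose proof is just a direct character computation. Your key number is right: the character of $\mathrm{Sym}^4V$ takes the values $210,10,3,1,0,2$ on elements of orders $1,2,3,6,7,13$ respectively, which gives $\dim\mathrm{Sym}^4(V^\vee)^{\widehat{G}}=2$. Moreover, every $G$-invariant pencil lies in this invariant space, because $\widehat{G}$ is perfect and has no irreducible representations of dimension $2$; you should state this last point explicitly.

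Your side remarks, however, are inaccurate:
\begin{itemize}
\item The two $7$-dimensional characters are real-valued. They take the values $\frac{1\pm\sqrt{13}}{2}$, not $\frac{-1\pm\sqrt{13}}{2}$, on elements of order $13$. So they are Galois conjugate under $\sqrt{13}\mapsto-\sqrt{13}$, but not complex conjugate.
\item $V$ is orthogonal, so there \emph{is} an invariant quadric $q$, though no invariant cubic. Your proposed degree-$2$ sanity check would therefore fail.
\item Since $q$ is invariant, $q^2$ is one member of the unique invariant pencil of quartics.
\end{itemize}
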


\begin{proof}
The GAP ID of $\mathrm{PSL}_2(\mathbf{F}_{13})$ is [1092,25], and this group has two
irreducible $7$-dimensional representation.
A direct computation shows that
$|\mathcal{O}_{\mathbb{P}^6}(4)|$ contains a unique $G$-invariant pencil.
\end{proof}

\begin{lemma}
\label{lemma:n-6-PSL-2-8}
Suppose that $\widehat{G}$ is isomorphic to $\mathrm{SL}_2(\mathbf{F}_{8})$
or $\mathrm{SL}_2(\mathbf{F}_{8})\rtimes\mumu_3$.
Then either the linear system~\mbox{$|\mathcal{O}_{\mathbb{P}^6}(4)|$},
or the linear system~\mbox{$|\mathcal{O}_{\mathbb{P}^6}(6)|$} contains at least two $G$-invariant hypersurfaces. In particular, either the linear system~\mbox{$|\mathcal{O}_{\mathbb{P}^6}(4)|$},
or the linear system~\mbox{$|\mathcal{O}_{\mathbb{P}^6}(6)|$} contains a $G$-invariant pencil.
\end{lemma}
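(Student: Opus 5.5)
The plan is a character computation, in the same way as Lemmas~\ref{lemma:n-6-PSL-2-13} and~\ref{lemma:n-5-SL25}. The group $\mathrm{SL}_2(\mathbf{F}_8)$ (GAP ID [504,156]) has order $504$. Its irreducible $7$-dimensional representations are the four representations of dimension $q-1=7$ in its character table. Exactly one of them, the Steinberg-type or permutation-related one, is rational, and its character values lie in $\mathbb{Q}$. The other three are the $7$-dimensional discrete series representations, whose character values on elements of order $9$ involve $\zeta_9+\zeta_9^{-1}$. These three are permuted cyclically by the field automorphism of order $3$, and the outer group $\mumu_3$ in $\mathrm{SL}_2(\mathbf{F}_8)\rtimes\mumu_3\simeq \mathrm{P\Gamma L}_2(\mathbf{F}_8)$ acts through it. The seven-dimensional representations of $\mathrm{SL}_2(\mathbf{F}_8)\rtimes\mumu_3$ are therefore the extensions of the rational one. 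In each case I would compute, with a GAP script like the one in the proof of Lemma~\ref{lemma:n-5-SL25}, the multiplicity of the trivial character (and of the other linear characters) in $\mathrm{Sym}^4$ and $\mathrm{Sym}^6$ of the relevant $7$-dimensional representation $U$.

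For the \emph{rational} representation, which is the reduced permutation representation on the $9$ points of $\mathbb{P}^1(\mathbf{F}_8)$, I can avoid computation. Write $\mathbb{C}^9=\mathbf{1}\oplus U$ with coordinates $y_1,\ldots,y_9$ satisfying $\sum y_i=0$. The restrictions of the power sums $p_2,p_3,p_4$ and of $p_2^2$ give invariants, and $p_2^2$ and $p_4$ are two linearly independent invariant quartics on the hyperplane $\sum y_i=0$, since $9>4$. This holds for every subgroup of $\mathfrak{S}_9$, including $\mathrm{SL}_2(\mathbf{F}_8)\rtimes\mumu_3$, so $|\mathcal{O}_{\mathbb{P}^6}(4)|$ contains an invariant pencil. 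For the three non-rational representations of $\mathrm{SL}_2(\mathbf{F}_8)$, which give a single conjugacy class in $\mathrm{PGL}_7$ up to an outer automorphism, I expect no invariant quartic pencil. Here I would show that the invariant space of $\mathrm{Sym}^6 U$ has dimension at least $2$ by the averaging formula
$$
\dim (\mathrm{Sym}^6U)^{\widehat G}=\frac{1}{|\widehat G|}\sum_{g} h_6(\text{eigenvalues of } g),
$$
summed over the $9$ conjugacy classes. The same computation also rechecks degree $4$.

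Finally, an invariant polynomial of degree $d$ gives a $G$-invariant hypersurface. Two linearly independent invariant polynomials of the same degree span a $G$-invariant pencil, because $G$ acts trivially on the pencil they span; this gives the second sentence of the statement. Passing from $\widehat G$-invariants to semi-invariants only enlarges the count, so it suffices to exhibit two honest invariants.

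The main obstacle is the non-rational case. The character sum with values $\zeta_9^{k}+\zeta_9^{-k}$ on classes of order $9$ and $\zeta_7$-type values must be evaluated exactly, and I am not certain in advance whether degree $4$ or degree $6$ yields the second invariant. This is why the statement is phrased as an alternative, and I would simply let GAP decide for each representation.
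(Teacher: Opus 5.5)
The overall plan, deciding everything by a character computation in $\mathrm{Sym}^4$ and $\mathrm{Sym}^6$, matches the paper's. However, your computation-free shortcut for the rational representation is false, and your treatment of $\mathrm{SL}_2(\mathbf{F}_8)\rtimes\mumu_3$ rests entirely on it.

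The deleted permutation module of $\mathrm{SL}_2(\mathbf{F}_8)$ on the $9$ points of $\mathbb{P}^1(\mathbf{F}_8)$ is the $8$-dimensional Steinberg representation, not a $7$-dimensional one. Since $\mathrm{SL}_2(\mathbf{F}_8)$ has no subgroup of index $8$, no $7$-dimensional representation is a deleted permutation module at all. All four $7$-dimensional characters are of discrete series type. The rational one is attached to the order-$3$ character of the non-split torus $\mumu_9$. It takes the values $7,-1,0,-2,1$ on the classes 1A, 2A, 7A--7C, 3A, 9A--9C, whose sizes are $1,63,3\cdot 72,56,3\cdot 56$.

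So $p_2^2$ and $p_4$ are not available, and the conclusion you draw from them is false. Molien's formula for this $U$ gives $\dim(\mathrm{Sym}^4U)^{\widehat G}=(210+63\cdot 10-56\cdot 6)/504=1$. Thus the only invariant quartic is the square of the invariant quadric, and there is no quartic pencil. On the other hand, $\dim(\mathrm{Sym}^6U)^{\widehat G}=(924+63\cdot 20+56\cdot 6)/504=5$. For the three Galois-conjugate representations the situation is the reverse of what you expected: there $\dim(\mathrm{Sym}^4U)^{\widehat G}=(210+63\cdot 10+56\cdot 3)/504=2$, which gives the quartic pencil. This is exactly the paper's split: degree $6$ for the rational representation, degree $4$ for the other three.

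Consequently, your proposal does not handle the case $\widehat G\simeq\mathrm{SL}_2(\mathbf{F}_8)\rtimes\mumu_3$. You correctly note that its $7$-dimensional representations extend the rational one, but then there is no invariant quartic pencil at all. The missing step is the one the paper supplies. The $5$-dimensional space of $\mathrm{SL}_2(\mathbf{F}_8)$-invariant sextics is preserved by $\widehat G$, which acts on it through the abelian quotient $\mumu_3$, so it splits into eigenlines. Each eigenline is a $G$-invariant sextic hypersurface, so there are at least two of them, and any two span a $G$-invariant pencil. Your reduction to ``two honest invariants'' would still require checking that the trivial character of $\mumu_3$ occurs twice in this decomposition; working with semi-invariants, as the paper does, makes that check unnecessary.
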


\begin{proof}
The GAP ID of $\mathrm{SL}_2(\mathbf{F}_{8})$ is [504,156], and this group has four
irreducible $7$-dimensional representations.
(Note that three of them give rise to the same
embedding of $\mathrm{SL}_2(\mathbf{F}_{8})$ to $\mathrm{SL}_7(\CC)$, and both of the resulting subgroups
$\mathrm{SL}_2(\mathbf{F}_{8})\subset\mathrm{SL}_7(\CC)$ are primitive, because $\mathrm{SL}_2(\mathbf{F}_{8})$ does not have subgroups of index~$7$.)
A direct computation shows that for one of these representations
the vector space of polynomials of degree $6$ contains a five-dimensional
vector subspace which consists of invariant polynomials, while
for each of the remaining three representations
the vector space of polynomials of degree $4$ contains a two-dimensional
vector subspace which consists of invariant polynomials (i.e., there is a unique
$G$-invariant pencil in $|\mathcal{O}_{\PP^6}(4)|$ in this case).
The action of the group $\mathrm{SL}_2(\mathbf{F}_{8})\rtimes\mumu_3$ on this subspace
factors through the action of the abelian group $\mumu_3$, and thus splits into a sum of one-dimensional representations, which correspond to $G$-invariant hypersurfaces.
\end{proof}

\begin{lemma}
\label{lemma:n-6-A8-S8}
Suppose that $\widehat{G}$ is isomorphic to $\mathfrak{A}_8$ or $\mathfrak{S}_8$.
Then the linear system $|\mathcal{O}_{\mathbb{P}^6}(4)|$ contains a unique $G$-invariant  pencil.
\end{lemma}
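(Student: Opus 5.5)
The plan is to follow the pattern of Lemma~\ref{lemma:n-5-A7-S7}. Both $\mathfrak{A}_8$ and $\mathfrak{S}_8$ have a unique irreducible $7$-dimensional representation, up to the sign twist for $\mathfrak{S}_8$; that twist does not change the image in $\mathrm{PGL}_7(\mathbb{C})$. This representation is the standard one. It is the quotient of the permutation representation $\mathbb{C}^8$ by the trivial line, or equivalently its restriction to the hyperplane
$$
x_1+\ldots+x_8=0.
$$
So $\mathbb{P}^6$ is this hyperplane in $\mathbb{P}^7$, and homogeneous polynomials of degree $d$ on $\mathbb{P}^6$ are polynomials in $x_1,\ldots,x_8$ modulo the ideal generated by the power sum $p_1=\sum x_i$.

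First I treat $\widehat{G}\simeq\mathfrak{S}_8$. By the fundamental theorem on symmetric functions, the $\mathfrak{S}_8$-invariants of $\mathbb{C}[x_1,\ldots,x_8]$ form the polynomial ring $\mathbb{C}[p_1,\ldots,p_8]$. Because $\mathfrak{S}_8$ is finite and we are in characteristic zero, taking invariants commutes with the quotient by $(p_1)$, so the invariant ring on $\mathbb{P}^6$ is $\mathbb{C}[p_2,\ldots,p_8]$. Its degree $4$ part is spanned by $p_4$ and $p_2^2$, and these are linearly independent. Hence the $\widehat{G}$-invariant quartics form a two-dimensional space, and they give a $G$-invariant pencil.

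For uniqueness, I note that a $G$-invariant pencil corresponds to a two-dimensional subrepresentation of $\mathrm{Sym}^4$ on which $\widehat{G}$ acts by scalars times a character. The only nontrivial character of $\mathfrak{S}_8$ is the sign. Sign-semi-invariants are divisible by the Vandermonde polynomial $\prod_{i<j}(x_i-x_j)$, whose degree is $28>4$. Its restriction to the hyperplane $p_1=0$ is nonzero, so the divisibility argument survives passing to the hyperplane. Hence the semi-invariant quartics are exactly the invariant ones, and the pencil is unique.

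Next I treat $\widehat{G}\simeq\mathfrak{A}_8$. Here the $\mathfrak{A}_8$-invariants are $\mathbb{C}[p_1,\ldots,p_8]\oplus\Delta\cdot\mathbb{C}[p_1,\ldots,p_8]$ with $\Delta$ the Vandermonde polynomial. Since $\deg\Delta=28$, the degree $4$ invariants on $\mathbb{P}^6$ are again spanned by $p_2^2$ and $p_4$. The group $\mathfrak{A}_8$ is perfect, so it has no nontrivial characters, and every invariant hypersurface comes from an invariant polynomial. This gives the unique pencil.

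The only delicate point is the claim that invariants of the quotient equal the quotient of invariants, and that the semi-invariant argument holds on the hyperplane. The first is handled by Reynolds-operator exactness. The second I would cross-check by a character computation of $\mathrm{Sym}^4$ of the $7$-dimensional representation in GAP, as was done in Lemma~\ref{lemma:n-5-SL25}.
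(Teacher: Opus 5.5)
Your route is the same as the paper's, which simply cites the standard $7$-dimensional summand of the permutation representation and ``well-known results about symmetric functions''. Your existence argument is correct: exactness of invariants and $(p_1)^{\widehat{G}}=p_1\,\mathbb{C}[x_1,\ldots,x_8]^{\widehat{G}}$ show that the invariant quartics on $\{p_1=0\}$ are spanned by $p_2^2$ and $p_4$.

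The uniqueness step, however, starts from a false reduction. A $G$-invariant pencil is a $2$-dimensional subspace $W\subset H^0(\mathbb{P}^6,\mathcal{O}_{\mathbb{P}^6}(4))$ that is stable under $\widehat{G}$. Nothing forces $\widehat{G}$ to act on $W$ by a single character times the identity. That would mean $G$ acts trivially on the pencil, which is not part of the definition and is not needed in Lemma~\ref{lemma:Pn-pencil}. A priori $W$ could be an irreducible $2$-dimensional representation. It could also be a sum of two different characters, such as $\mathrm{triv}\oplus\mathrm{sgn}$ for $\mathfrak{S}_8$. For the same reason, your remark that $\mathfrak{A}_8$ is perfect only controls invariant lines in $H^0(\mathcal{O}_{\mathbb{P}^6}(4))$, not invariant planes.

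The repair is short. Neither $\mathfrak{S}_8$ nor $\mathfrak{A}_8$ has an irreducible representation of dimension $2$; their smallest non-linear irreducible representations have dimension $7$. Equivalently, $\mathfrak{A}_8$ is simple and does not embed in $\mathrm{GL}_2(\mathbb{C})$, so it acts trivially in any $2$-dimensional representation, and such a representation of $\mathfrak{S}_8$ factors through $\mumu_2$. Hence $W$ is a sum of two characters.
\begin{itemize}
\item For $\mathfrak{A}_8$, both characters are trivial.
\item For $\mathfrak{S}_8$, your observation that there are no sign-semi-invariant quartics on $\mathbb{P}^6$ forces both to be trivial.
\end{itemize}
So $W$ lies in, and hence equals, the span of $p_2^2$ and $p_4$.

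For the sign-semi-invariant claim, the right justification is the one you mention at the end, not the nonvanishing of the restricted Vandermonde. The restriction map $\mathbb{C}[x_1,\ldots,x_8]_4\to H^0(\mathbb{P}^6,\mathcal{O}_{\mathbb{P}^6}(4))$ is surjective and $\mathfrak{S}_8$-equivariant. It therefore maps the sign-isotypic component, which is zero in degree $4<28$, onto the sign-isotypic component of the target.
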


\begin{proof}
Each of these groups has a unique irreducible $7$-dimensional representation,
which is a summand of the $8$-dimensional permutation representation.
The well-known results about symmetric functions imply that $|\mathcal{O}_{\mathbb{P}^6}(4)|$ contains a unique $G$-invariant pencil.
\end{proof}

\begin{lemma}
\label{lemma:n-6-PSL27}
Suppose that $\widehat{G}$ is isomorphic to $\mathrm{PSL}_2(\mathbf{F}_7)$ or $\mathrm{PGL}_2(\mathbf{F}_7)$. Then the linear system~\mbox{$|\mathcal{O}_{\mathbb{P}^6}(5)|$}  contains a unique $G$-invariant pencil.
\end{lemma}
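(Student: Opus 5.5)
The plan is a character computation on symmetric powers, in the same way as Lemmas~\ref{lemma:n-6-PSL-2-13} and~\ref{lemma:n-6-A8-S8}.

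First consider the case $\widehat{G}\simeq\mathrm{PSL}_2(\mathbf{F}_7)$. Its GAP ID is $[168,42]$, and this group has a unique irreducible $7$-dimensional representation $W$. Since $\widehat{G}$ is simple and perfect, there are no nontrivial one-dimensional characters. Hence the $G$-invariant hypersurfaces of degree $d$ correspond exactly to $\widehat{G}$-invariant polynomials, that is, to the trivial isotypic component of $\mathrm{Sym}^d(W^\vee)$. I would therefore compute the multiplicity of the trivial character in $\mathrm{Sym}^5(W^\vee)$, using the standard formula for symmetric powers via power maps (GAP's \texttt{SymmetricParts} followed by \texttt{MatScalarProducts}, as in the script in the proof of Lemma~\ref{lemma:n-5-SL25}). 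The expected outcome is that this multiplicity equals $2$. That gives a two-dimensional space of invariant quintics, hence a unique $G$-invariant pencil in $|\mathcal{O}_{\mathbb{P}^6}(5)|$.

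If the computation is awkward to do by hand, a Molien series check serves as an alternative:
$$
\frac{1}{168}\sum_{g}\frac{1}{\det(1-tg)}.
$$
The coefficient of $t^5$ can be read off from the conjugacy classes. These are the classes of orders $1,2,3,4,7,7$, with eigenvalues on $W$ determined by the character values $7,-1,1,-1,0,0$. From these, the degree-$5$ coefficient is a routine finite sum.

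Next consider the case $\widehat{G}\simeq\mathrm{PGL}_2(\mathbf{F}_7)$. The restriction of the $7$-dimensional representation to the index-two subgroup $\mathrm{PSL}_2(\mathbf{F}_7)$ is irreducible, and so it is the representation $W$ above. The two-dimensional space of $\mathrm{PSL}_2(\mathbf{F}_7)$-invariant quintics is therefore preserved by $\mathrm{PGL}_2(\mathbf{F}_7)$, which acts on it through the quotient $\mumu_2$. This space splits into two one-dimensional characters of $\mumu_2$, but every $G$-invariant pencil of quintics must lie in this same two-dimensional space. The pencil in $\mathbb{P}(\cdot)$ is thus the whole projectivized space, which is $G$-invariant and unique. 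This is the same reduction used in Lemma~\ref{lemma:n-5-PSL27}.

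The main obstacle is the actual count of the trivial multiplicity in $\mathrm{Sym}^5 W$, which needs the eigenvalues of elements of orders $3$, $4$ and $7$. For the order-$7$ classes, the character value $0$ corresponds to the eigenvalues $\{1,\zeta,\dots,\zeta^6\}$. I would first try the machine computation and then cross-check it with the Molien coefficient.
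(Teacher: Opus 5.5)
Your proposal is correct and follows the paper's route: a direct character (equivalently Molien) computation showing that the $\mathrm{PSL}_2(\mathbf{F}_7)$-invariant quintics form a two-dimensional space, followed by reducing the $\mathrm{PGL}_2(\mathbf{F}_7)$ case to this index-two subgroup. With your eigenvalue data the degree-$5$ Molien coefficient is indeed $(462-210+168-84)/168=2$; uniqueness of the pencil in both cases needs one further fact, which you use without stating: $\mathrm{PSL}_2(\mathbf{F}_7)$ has no two-dimensional irreducible representation, so every invariant plane of quintics consists of invariant forms.
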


\begin{proof}
The GAP ID of $\mathrm{PSL}_2(\mathbf{F}_7)$ is $[168,42]$,
and this group has a unique irreducible $7$-dimensional representation.
A direct computation shows that $|\mathcal{O}_{\mathbb{P}^6}(5)|$ contains a unique $\mathrm{PSL}_2(\mathbf{F}_7)$-invariant pencil. If $\widehat{G}\simeq \mathrm{PGL}_2(\mathbf{F}_7)$, then the required assertion follows from the case $\widehat{G}\simeq \mathrm{PSL}_2(\mathbf{F}_7)$.
\end{proof}

\begin{lemma}
\label{lemma:n-6-PSU33}
Suppose that $\widehat{G}$ is isomorphic to $\mathrm{SU}_3(\mathbf{F}_3)$ or $\mathrm{SU}_3(\mathbf{F}_3)\rtimes \mumu_2$. Then the linear system~$|\mathcal{O}_{\mathbb{P}^6}(6)|$ contains a unique $G$-invariant pencil.
\end{lemma}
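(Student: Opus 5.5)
The plan is to follow the pattern of Lemmas~\ref{lemma:n-6-PSL-2-13}--\ref{lemma:n-6-PSL27}: a character computation with the symmetric powers of the relevant $7$-dimensional representation, followed by a short argument handling the extension by $\mumu_2$. The group $\mathrm{SU}_3(\mathbf{F}_3)$, which is isomorphic to $\mathrm{PSU}_3(\mathbf{F}_3)$ (GAP ID $[6048,\ldots]$, order $6048$), has irreducible $7$-dimensional representations. These are the ones giving the primitive embedding into $\mathrm{SL}_7(\mathbb{C})$; the group has no subgroups of index $7$, so primitivity is automatic. It has two complex conjugate $7$-dimensional representations, and possibly a real one as well. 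For each of them I would compute the multiplicity of the trivial character in $\mathrm{Sym}^d$ of the representation for $d\leqslant 6$. This can be done with a GAP script like the one in the proof of Lemma~\ref{lemma:n-5-SL25}, using \texttt{SymmetricParts} and \texttt{MatScalarProducts}. The expected outcome is:
\begin{itemize}
\item there are no invariants in degrees $1,\ldots,5$ (apart from possibly a unique invariant in some lower degree that does not affect degree $6$);
\item the space of invariant sextic polynomials is exactly two-dimensional, giving a unique $G$-invariant pencil in $|\mathcal{O}_{\mathbb{P}^6}(6)|$.
\end{itemize}
Since $G$ is perfect, a $G$-invariant hypersurface corresponds to a $\widehat{G}$-invariant polynomial. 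So the $G$-invariant pencils in degree $6$ are exactly the pencils in the two-dimensional trivial isotypic component. If that component is two-dimensional, the pencil is unique.

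For $\widehat{G}\simeq\mathrm{SU}_3(\mathbf{F}_3)\rtimes\mumu_2$, I would argue as in Lemma~\ref{lemma:n-6-PSL-2-8}. The restriction of the $7$-dimensional representation to the index-two subgroup is one of the representations treated above. A $G$-invariant pencil is in particular invariant under the index-two subgroup, so it must be the unique such pencil. Conversely, that pencil is preserved by the normalizing element, because uniqueness makes it canonical. Hence the pencil is $G$-invariant, and it is still unique.

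The main obstacle is purely computational. One must make sure the degree-$6$ invariant space really has dimension $2$ and not $1$ or $\geqslant 3$, and that this holds for every $7$-dimensional representation giving a primitive embedding, including both conjugates. If some representation instead gave a larger invariant space, I would fall back on the weaker statement of Lemma~\ref{lemma:n-6-PSL-2-8}: there are at least two invariant hypersurfaces, hence a pencil exists. That suffices for the application via Lemma~\ref{lemma:Pn-pencil}.
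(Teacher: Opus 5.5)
Your proposal is correct and is essentially the paper's proof. The paper also computes, for each of the three $7$-dimensional irreducible representations of $\mathrm{SU}_3(\mathbf{F}_3)$, that the invariant sextic forms span a two-dimensional space, and deduces the $\mathrm{SU}_3(\mathbf{F}_3)\rtimes\mumu_2$ case from uniqueness for the normal index-two subgroup. To pass from invariant hypersurfaces to invariant pencils you also need that this simple group has no nontrivial action on $\mathbb{P}^1$, so it fixes every member of an invariant pencil; perfectness alone does not give that.

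Your side prediction about lower degrees is off. The real representation has an invariant quadric $q$, so $q^3$ is one of the two sextic invariants, and by my own character computation (not something the paper states) the complex ones have an invariant cubic. Only the degree-$6$ count enters the argument, so this does not affect the proof.
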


\begin{proof}
The group $\mathrm{SU}_3(\mathbf{F}_3)$ has three
irreducible $7$-dimensional representations.
A direct computation shows that $|\mathcal{O}_{\mathbb{P}^6}(6)|$ contains a unique $\mathrm{SU}_3(\mathbf{F}_3)$-invariant pencil.
This also implies that~\mbox{$|\mathcal{O}_{\mathbb{P}^6}(6)|$} contains a unique $G$-invariant pencil in the case when $\widehat{G}\simeq\mathrm{SU}_3(\mathbf{F}_3)\rtimes\mumu_2$.
\end{proof}

\begin{lemma}
\label{lemma:n-6-Sp62}
Suppose that $\widehat{G}$ is isomorphic to $\mathrm{Sp}_6(\mathbf{F}_2)$.
Then the linear system $|\mathcal{O}_{\mathbb{P}^6}(6)|$ contains a unique $G$-invariant pencil.
\end{lemma}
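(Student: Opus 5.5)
The plan is the same character computation as in the preceding lemmas. The group $\mathrm{Sp}_6(\mathbf{F}_2)$ (order $1451520$) has a unique irreducible $7$-dimensional representation $V$. One realization is the reflection representation of $W(\mathbb{E}_7)\simeq\mumu_2\times\mathrm{Sp}_6(\mathbf{F}_2)$ restricted to the index-two subgroup $W(\mathbb{E}_7)^+\simeq\mathrm{Sp}_6(\mathbf{F}_2)$. Since $V$ is real, $\widehat{G}\subset\mathrm{SL}_7(\mathbb{C})$ acting via $V$ is the lift in question. The centre of $\mathrm{SL}_7(\mathbb{C})$ meets $\widehat{G}$ trivially, so $G$-invariant hypersurfaces of degree $d$ correspond exactly to one-dimensional subrepresentations of $\mathrm{Sym}^d(V^\vee)$. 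Because $\widehat{G}$ is perfect, every such one-dimensional character is trivial. It therefore suffices to show $\dim \mathrm{Sym}^6(V^\vee)^{\widehat{G}}=2$.

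The cleanest route is invariant theory of the reflection group. The $W(\mathbb{E}_7)$-invariants form a polynomial ring with generators of degrees $2,6,8,10,12,14,18$. The invariants of the index-two subgroup $W(\mathbb{E}_7)^+$ are
\[
\mathbb{C}[V]^{W^+}=\mathbb{C}[V]^{W}\oplus J\cdot\mathbb{C}[V]^{W},
\]
where $J$ is the product of the $63$ reflecting linear forms, of degree $63$. In degree $6$ the anti-invariant summand contributes nothing. Invariants of degree $6$ are therefore spanned by $f_2^3$ and $f_6$, giving a two-dimensional space, i.e.\ a unique $G$-invariant pencil in $|\mathcal{O}_{\mathbb{P}^6}(6)|$. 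This pencil consists of the hypersurfaces $\lambda f_2^3+\mu f_6=0$.

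Alternatively, one can verify this mechanically by a GAP computation analogous to the one in Lemma~\ref{lemma:n-5-SL25}: take the character table of $\mathrm{Sp}_6(\mathbf{F}_2)$, select the $7$-dimensional irreducible character, and compute the multiplicity of the trivial character in its sixth symmetric power via \texttt{SymmetricParts} and \texttt{MatScalarProducts}.

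The main obstacle is bookkeeping rather than mathematics: identifying the correct lift and confirming there are no further degree-$6$ invariants. The degree list of $W(\mathbb{E}_7)$ and the index-two description of the invariant ring settle both points. The GAP check serves as confirmation.

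Note that the invariant quadric $f_2=0$ exists here, so Lemma~\ref{lemma:Pn-pencil} must be applied with $d=6\leqslant n=6$. This requires the pencil to have mobile part of positive degree. This holds, since the mobile part of $\{\lambda f_2^3+\mu f_6\}$ is nontrivial ($f_6$ is not divisible by $f_2$, as $f_2,f_6$ are algebraically independent generators).
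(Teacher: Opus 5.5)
Your proof is correct, but it takes a different route from the paper.

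The paper's proof only says that a direct character computation on the sixth symmetric power of the unique $7$-dimensional irreducible representation, as in the GAP scripts elsewhere, gives a two-dimensional space of invariant sextics. You instead argue as follows:
\begin{itemize}
\item identify $\widehat{G}$ with the rotation subgroup $W(\mathbb{E}_7)^+\simeq W(\mathbb{E}_7)/\langle -1\rangle$;
\item use Chevalley's theorem, with basic degrees $2,6,8,10,12,14,18$;
\item use the fact that anti-invariants are divisible by the degree-$63$ Jacobian, so in degree $6$ the invariants are exactly the span of $f_2^3$ and $f_6$.
\end{itemize}
Your route needs no computer and describes the pencil explicitly. It is in the same spirit as the paper's treatment of $W(\mathbb{E}_8)$ in Section~\ref{section:P7}. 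The paper's route is mechanical and matches the other lemmas in that section.

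Some minor points:
\begin{itemize}
\item You only produce the pencil of invariant sextics. Uniqueness among all $G$-invariant pencils, including those whose members are merely permuted, follows because $\mathrm{Sp}_6(\mathbf{F}_2)$ has no nontrivial irreducible representation of dimension at most $2$. Hence every $\widehat{G}$-stable plane in $\mathrm{Sym}^6(V^\vee)$ consists of invariants.
\item That $\widehat{G}\subset\mathrm{SL}_7(\mathbb{C})$ follows from perfectness, or from $W^+\subset\mathrm{SO}_7$. It does not follow from $V$ being real.
\item Your closing paragraph on mobility is unnecessary, since the proof of Lemma~\ref{lemma:Pn-pencil} already passes to the mobile part. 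That mobile part automatically has positive degree for any pencil.
\item As written, that paragraph's justification that $f_2\nmid f_6$ also skips a step. If $f_6=f_2g$, then $g$ is a $W$-invariant of degree $4$, hence a multiple of $f_2^2$. This contradicts algebraic independence.
\end{itemize}
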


\begin{proof}
The group $\mathrm{Sp}_6(\mathbf{F}_2)$ has a unique
irreducible $7$-dimensional representation. A direct
computation shows that $|\mathcal{O}_{\mathbb{P}^6}(6)|$
contains a unique $G$-invariant pencil.
\end{proof}

Summarizing, we obtain the following corollary.

\begin{corollary}
\label{corollary:n-6}
Let $G$ be a~finite subgroup in $\mathrm{PGL}_7(\mathbb{C})$ such that $\mathbb{P}^6$ is $G$-birationally rigid.
Then~$G$ is conjugate to a subgroup in $\mathbb{N}_7$ that contains $\mathbb{H}_7$.
\end{corollary}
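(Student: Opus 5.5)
The argument will mirror the proofs of Corollaries~\ref{corollary:n-5-rigid} and~\ref{proposition:8-groups}: reduce to the primitive case with Theorem~A, then rule out every case other than (I) in the classification of primitive subgroups of $\mathrm{PGL}_7(\mathbb{C})$ using Lemma~\ref{lemma:Pn-pencil}.

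First, suppose $\mathbb{P}^6$ is $G$-birationally rigid. Since $n=6\geqslant 3$, Theorem~A shows that $G$ is primitive. By the classification recalled above (from \cite{Wa69,Wa70}), either $G$ is conjugate to a subgroup of $\mathbb{N}_7$ containing $\mathbb{H}_7$, which is case~(I) and the desired conclusion, or $G$ is the image of a primitive subgroup $\widehat{G}\subset\mathrm{SL}_7(\mathbb{C})$ isomorphic to one of the groups in cases (II)--(VII). It therefore suffices to show that $\mathbb{P}^6$ is not $G$-birationally rigid in each of cases (II)--(VII).

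In these cases, Lemmas~\ref{lemma:n-6-PSL-2-13}, \ref{lemma:n-6-PSL-2-8}, \ref{lemma:n-6-A8-S8}, \ref{lemma:n-6-PSL27}, \ref{lemma:n-6-PSU33} and~\ref{lemma:n-6-Sp62} give a $G$-invariant pencil in $|\mathcal{O}_{\mathbb{P}^6}(d)|$ for some $d\in\{4,5,6\}$:
\begin{itemize}
\item $d=4$ for $\mathrm{PSL}_2(\mathbf{F}_{13})$, $\mathfrak{A}_8$ and $\mathfrak{S}_8$;
\item $d=4$ or $d=6$ for $\mathrm{SL}_2(\mathbf{F}_8)$ and $\mathrm{SL}_2(\mathbf{F}_8)\rtimes\mumu_3$;
\item $d=5$ for $\mathrm{PSL}_2(\mathbf{F}_7)$ and $\mathrm{PGL}_2(\mathbf{F}_7)$;
\item $d=6$ for $\mathrm{SU}_3(\mathbf{F}_3)$, $\mathrm{SU}_3(\mathbf{F}_3)\rtimes\mumu_2$ and $\mathrm{Sp}_6(\mathbf{F}_2)$.
\end{itemize}
In every case $d\leqslant 6=n$, so Lemma~\ref{lemma:Pn-pencil} applies and $\mathbb{P}^6$ is not $G$-birationally rigid. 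This contradiction shows that only case~(I) survives.

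The step needing the most care is the one with the extensions by $\mumu_2$ or $\mumu_3$. There the invariant subspace found for the normal subgroup is only preserved by $G$, not fixed pointwise. Since the quotient group is abelian, its action on that subspace (of dimension $2$, or $5$ in the degree-$6$ case) decomposes into characters. Hence either two $G$-invariant hypersurfaces already span an invariant pencil, or we use the pencil supplied directly by the lemma; in either case, Lemma~\ref{lemma:Pn-pencil} only needs an invariant pencil, which the cited lemmas already provide.
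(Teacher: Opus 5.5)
Your proposal is correct and matches the paper's proof: Theorem~A gives primitivity, then the classification from \cite{Wa69,Wa70} is applied, and in cases (II)--(VII) the lemmas of Section~\ref{section:P6} supply a $G$-invariant pencil of degree $4$, $5$, or $6\leqslant n$, so Lemma~\ref{lemma:Pn-pencil} rules them out. Your closing remark on the $\mumu_2$ and $\mumu_3$ extensions is correct but not needed, because the cited lemmas already state that a $G$-invariant pencil exists for those groups.
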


\begin{proof}
By Theorem~A, the subgroup $G$ is primitive.
Suppose that $G$ is not conjugate to a subgroup in $\mathbb{N}_7$ that contains $\mathbb{H}_7$. Then it follows from
Lemmas~\ref{lemma:n-6-PSL-2-13}, \ref{lemma:n-6-PSL-2-8}, \ref{lemma:n-6-A8-S8},
\ref{lemma:n-6-PSL27}, \ref{lemma:n-6-PSU33}, and~\ref{lemma:n-6-Sp62}
that the linear system $|\mathcal{O}_{\mathbb{P}^6}(n)|$ contains a $G$-invariant pencil for some $n\in\{4,5,6\}$. Therefore, $\PP^6$ is not $G$-birationally rigid
by Lemma~\ref{lemma:Pn-pencil}.
\end{proof}

Unfortunately, we do not know whether $\mathbb{P}^6$ is $G$-birationally rigid in the case when $G$ is conjugate to a subgroup in $\mathbb{N}_7$ that contains $\mathbb{H}_7$.

\begin{remark}[{cf. Remark~\ref{remark:P4-no-quadrics}}]
\label{remark:P6-no-quadrics}
If $G\subset\mathrm{PGL}_{7}(\mathbb{C})$ is conjugate to a subgroup in
$\mathbb{N}_7$ that contains~$\mathbb{H}_7$, then there are no $G$-invariant quadrics in $\PP^6$.
Indeed, let $\widehat{\mathbb{H}}_7$ denote
the preimage of the group~$\mathbb{H}_7$ in $\mathrm{SL}_7(\CC)$. Then
$\widehat{\mathbb{H}}_7$ is a transitive group of order $343$ with center $Z(\widehat{\mathbb{H}}_7)\simeq \mumu_7$.
Every irreducible representation of $\widehat{\mathbb{H}}_7$ where $Z(\widehat{\mathbb{H}}_7)$ acts non-trivially has dimension $7$. Hence there are no one-dimensional subrepresentations in the space of quadratic polynomials on $\CC^7$.
\end{remark}

\section{Seven-dimensional projective space}
\label{section:P7}

The goal of this section is to prove Theorem~E.
If $G$ is a finite primitive subgroups of $\mathrm{PGL}_8(\mathbb{C})$, then one of the following two cases holds:
\begin{enumerate}
\item either $G$ is one of the few groups described in \cite{Fe75,HuffmanWales},
\item or $G$ leaves invariant $Y\subset\mathbb{P}^7$ such that $Y$ is the image of the Segre embedding $\mathbb{P}^1\times\mathbb{P}^3\hookrightarrow\mathbb{P}^7$ or $Y$ is the image of the Segre embedding $(\mathbb{P}^1)^3\hookrightarrow\mathbb{P}^7$.
\end{enumerate}
In the second case, blowing up $Y$, we obtain a $G$-birational map from $\mathbb{P}^7$ to a $G$-Mori fibre space with a positive dimensional base,
so $\mathbb{P}^7$ is not $G$-birationally solid and is not $G$-birationally rigid.

Among finite primitive subgroups of $\mathrm{PGL}_8(\mathbb{C})$ described in \cite{Fe75,HuffmanWales}, one has the largest order.
If~$G$ is this largest subgroup, then $G\simeq\mathrm{O}_8^+(\mathbf{F}_2)\rtimes\mumu_2$, and Theorem~E says that $\mathbb{P}^7$ is $G$-birationally superrigid.
The proof of this theorem is similar to the proof of Theorem~D presented in Section~\ref{section:P5}.
To start with, let us describe this subgroup and its action on $\mathbb{P}^7$.

Let $\widehat{G}\subset\mathrm{GL}_8(\mathbb{C})$ be a subgroup isomorphic to the Weyl group $W(\mathbb{E}_8)$,
let $V$ be the corresponding  $8$-dimensional representation of $\widehat{G}$, let $G\subset\mathrm{PGL}_8(\mathbb{C})$ be the image of $\widehat{G}$ via the natural projection, let $U=\mathbb{P}^7/G$, and let $\pi\colon \mathbb{P}^7\to U$ be the quotient map.
Then $U\simeq \mathbb{P}(1,4,6,7,9,10,12,15)$ and
$$
\mathcal{O}_{\mathbb{P}^7}(2)\sim_{\mathbb{Q}}\pi^*(\mathcal{O}_U(1)),
$$
since
$$
\mathbb{C}[x_1,\ldots,x_8]^{\widehat{G}}=\mathbb{C}[f_2,f_8,f_{12},f_{14},f_{18},f_{20},f_{24},f_{30}],
$$
where  $f_i$ is a $\widehat{G}$-invariant homogeneous polynomial of degree $i$. Note that
$$
|\widehat{G}|=2|G|=696729600.
$$
Let $\widehat{G}_0=\widehat{G}\cap\mathrm{SL}_8(\mathbb{C})$, let $\widehat{G}_1\subset\widehat{G}$ be a subgroup such that $\widehat{G}_1\simeq W(\mathbb{E}_7)$,
let $\widehat{G}_2\subset\widehat{G}$ be a subgroup such that $\widehat{G}_2\simeq W(\mathbb{D}_8)$, let $G_0$, $G_1$, $G_2$ be the images of these subgroups in $\mathrm{PGL}_8(\mathbb{C})$, respectively.
Then the indices in $G$ of  $G_0$, $G_1$, $G_2$ are $2$, $120$, $135$, respectively. Note that
\begin{align*}
G_0&\simeq\mathrm{O}_8^+(\mathbf{F}_2),\\
G_1&\simeq\mumu_2\times\mathrm{Sp}_6(\mathbf{F}_2),\\
G_2&\simeq\mumu_2^6\rtimes\mathfrak{S}_8.
\end{align*}
and $G_0$, $G_1$, $G_2$ are the only proper subgroups of the group $G$ of index at most~$210$, up to conjugation. Note that
$G_1\cap G_0\simeq\mathrm{Sp}_6(\mathbf{F}_2)$ and $G_2\cap G_0\simeq\mumu_2^6\rtimes\mathfrak{A}_8$,
and these are all proper subgroups of $G_0$ of index at most~$210$, up to conjugation.

\begin{lemma}
\label{lemma:small-index}
Let $\Gamma$ be one of the subgroups $G$, $G_0$, $G_1$, $G_2$, let $Y\subset {\mathbb{P}^7}$ be a $\Gamma$-invariant irreducible positive dimensional subvariety, let $\mathfrak{K}_Y$ be the kernel of the induced homomorphism $\Gamma\to\mathrm{Aut}(Y)$. Then $\mathfrak{K}_Y$ is trivial if $\Gamma\ne G_1$, and $|\mathfrak{K}_Y|\leqslant 2$ if $\Gamma=G_1$.
\end{lemma}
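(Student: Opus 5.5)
The plan is to exploit the fact that an element acting trivially on a positive-dimensional irreducible subvariety must act by a scalar on its linear span. This reduces the lemma to understanding how the representation $V$ restricts to each of the four subgroups.

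\emph{Step 1: kernel elements act by scalars on the linear span.} Let $g\in\mathfrak{K}_Y$ and let $\hat g\in\widehat{\Gamma}\subset\mathrm{GL}_8(\mathbb{C})$ be a lift, where $\widehat{\Gamma}$ is the preimage of $\Gamma$ in $\widehat{G}$. Since $g$ fixes $Y$ pointwise, the affine cone $\widehat{Y}\subset V$ is contained in the union of the eigenspaces of the semisimple operator $\hat g$. The cone $\widehat{Y}$ is irreducible, so it lies in a single eigenspace. Hence $\hat g$ acts by a scalar on the linear span $\widehat{L}\subset V$ of $\widehat{Y}$.

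Since $Y$ is $\Gamma$-invariant, $\widehat{L}$ is a $\widehat{\Gamma}$-subrepresentation of $V$, and $\dim\widehat{L}\geqslant 2$ because $Y$ is positive dimensional. Thus $\mathfrak{K}_Y$ is contained in the image of the subgroup of $\widehat{\Gamma}$ acting by scalars on a subrepresentation of dimension at least $2$.

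\emph{Step 2: the cases $\Gamma=G$, $G_0$, $G_2$.} Here I would check that $V$ restricted to $\widehat{\Gamma}$ is irreducible.
\begin{itemize}
\item For $W(\mathbb{E}_8)$ this holds because it is the reflection representation.
\item For $W(\mathbb{D}_8)$ the reflection representation is again irreducible.
\item For $\widehat{G}_0$, the rotation subgroup of index $2$, the restriction of an irreducible representation to an index-$2$ subgroup is either irreducible or a sum of two conjugate $4$-dimensional pieces. One rules out the splitting by a character check, or by noting that $G_0\simeq\mathrm{O}_8^+(\mathbf{F}_2)$ has no nontrivial $4$-dimensional projective representations. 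The latter follows from its minimal degree, or from the fact that no subgroup of index $\leqslant 210$ other than those listed occurs.
\end{itemize}
Then $\widehat{L}=V$, so every element of $\mathfrak{K}_Y$ has a scalar lift. It is therefore trivial in $\mathrm{PGL}_8(\mathbb{C})$, and $\mathfrak{K}_Y=1$.

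\emph{Step 3: the case $\Gamma=G_1$.} Here $\widehat{G}_1\simeq W(\mathbb{E}_7)$, and $V|_{\widehat{G}_1}=V_7\oplus V_1$, the sum of the $7$-dimensional reflection representation and a trivial line. The possible spans are therefore $\widehat{L}=V$ or $\widehat{L}=V_7$, since $V_1$ alone has dimension $1$. If $\widehat{L}=V$, then $\mathfrak{K}_Y$ is trivial as in Step 2. If $\widehat{L}=V_7$, then $\mathfrak{K}_Y$ is the image of the subgroup of $W(\mathbb{E}_7)$ acting by scalars on the irreducible faithful representation $V_7$. That subgroup is the center $\{\pm1\}$ of $W(\mathbb{E}_7)$, so $|\mathfrak{K}_Y|\leqslant 2$. (This matches $G_1\simeq\mumu_2\times\mathrm{Sp}_6(\mathbf{F}_2)$: the $\mumu_2$ factor fixes the hyperplane $\mathbb{P}(V_7)$ pointwise.)

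The main obstacle I expect is the irreducibility of $V|_{\widehat{G}_0}$ in Step 2. Everything else is formal, so if needed I would settle that point by a direct GAP character computation.
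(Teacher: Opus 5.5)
Your proof is correct. For $\Gamma\in\{G,G_0,G_2\}$ it follows the paper's argument: the restriction of $V$ to $\widehat{G}$, $\widehat{G}_0$, $\widehat{G}_2$ is irreducible, so $Y$ spans $\mathbb{P}^7$. Your Step~1 (an element fixing an irreducible $Y$ pointwise acts by a scalar on the linear span of its cone) supplies the implication ``$Y$ spans, hence the kernel is trivial'' that the paper leaves implicit.

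For $\Gamma=G_1$ the paper argues differently. It uses that $G_1\cap G_0\simeq\mathrm{Sp}_6(\mathbf{F}_2)$ is simple and of index $2$ in $G_1$. Its action on $Y$ must therefore be faithful, so $\mathfrak{K}_Y$ meets it trivially and $|\mathfrak{K}_Y|\leqslant 2$. Your route via the decomposition $V_7\oplus V_1$, Schur's lemma and $Z(W(\mathbb{E}_7))=\{\pm1\}$ uses the center instead of simplicity. It gives a sharper conclusion: $\mathfrak{K}_Y$ is trivial unless $Y\subset\mathbb{P}(V_7)$, in which case it is exactly the $\mumu_2$ factor of $G_1$. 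It also makes explicit why positive dimensionality matters, namely to exclude the fixed point $\mathbb{P}(V_1)$.

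One small point: your alternative justification of irreducibility of $V|_{\widehat{G}_0}$ via subgroups of index at most $210$ is a non sequitur. Small-index subgroups say nothing about $4$-dimensional projective representations. The character check or the minimal-degree citation works. A cleaner one-liner is that $\widehat{G}_0$ contains $W(\mathbb{A}_8)\cap\widehat{G}_0\simeq\mathfrak{A}_9$, which acts on $V$ through its irreducible $8$-dimensional standard representation.
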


\begin{proof}
The $\widehat{G}$-representation $V$ is irreducible, and its restrictions to $\widehat{G}_0$ and $\widehat{G}_2$ are irreducible.
Thus, if $\Gamma\ne G_1$, then $Y$ spans ${\mathbb{P}^7}$, so $\mathfrak{K}_Y$ is trivial.
On the other hand, the restriction of $V$ to $\widehat{G}_1$ splits as a sum of a $1$-dimensional representation and irreducible $7$-dimensional representations.
Thus, since  $G_1\cap G_0$ is simple, its action on $Y$ must be faithful, which gives $|\mathfrak{K}_Y|\leqslant 2$.
\end{proof}

Let $Q$ be the quadric defined by equation $f_2=0$. Set
$$
(a_0,a_1,a_2,a_3,a_4,a_5)=(36,78,106,92,50,16).
$$

\begin{lemma}
\label{lemma:Q-degree-bounds}
Let $Z\subset Q$ be a $G$-irreducible subvariety of dimension $n\leqslant 5$.
Then $\deg(Z)\geqslant a_n$.
\end{lemma}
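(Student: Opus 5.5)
The plan is to argue as in Lemma~\ref{lemma:P5-degree-bounds}: push $Z$ forward to the quotient $U=\mathbb{P}^7/G\simeq\mathbb{P}(1,4,6,7,9,10,12,15)$, but exploit that $Z\subset Q$. The coordinate $y_0$ on $U$ corresponds to the invariant $f_2$, so $\pi(Q)$ is the hyperplane $\{y_0=0\}\simeq\mathbb{P}(4,6,7,9,10,12,15)$. I will check directly that this weighted projective space is well formed: any six of these weights have $\gcd 1$. Then $\pi(Z)$ is an $n$-dimensional irreducible subvariety of it, and we have $\mathcal{O}_{\mathbb{P}^7}(2)\sim_{\mathbb{Q}}\pi^*\mathcal{O}_U(1)$.

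\textbf{Case $n=0$.} Here $Z$ is a $G$-orbit, and I argue through stabilizers instead of degrees. The stabilizer of a point of $Z$ is a proper subgroup of $G$, because $V$ is irreducible and so $G$ fixes no point. It cannot be $G_0$, since $V|_{\widehat{G}_0}$ is irreducible. By the list of subgroups of index at most $210$, its index is therefore at least $120$, so $|Z|\geqslant 120\geqslant a_0$.

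\textbf{Case $n\geqslant 1$, reduction to one component.} Let $r$ be the number of irreducible components of $Z$, let $C$ be one of them, and let $G_C$ be its stabilizer, so that $r=[G:G_C]$. If $r>210$, then $\deg(Z)\geqslant r>106\geqslant a_n$ and we are done. Otherwise $G_C$ is conjugate to one of $G,G_0,G_1,G_2$. By Lemma~\ref{lemma:small-index}, the kernel $\mathfrak{K}_C$ of $G_C\to\mathrm{Aut}(C)$ has order at most $2$. Hence $\pi|_C\colon C\to\pi(Z)$ has degree $|G_C|/|\mathfrak{K}_C|\geqslant|G_C|/2$, and as cycles
$$
\pi_*[Z]=r\,\frac{|G_C|}{|\mathfrak{K}_C|}[\pi(Z)],\quad\text{so that}\quad \pi_*[Z]\geqslant\frac{|G|}{2}[\pi(Z)].
$$

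\textbf{Case $n\geqslant 1$, degree estimate.} Apply the first inequality of Lemma~\ref{lemma:weighted-degree} inside $\mathbb{P}(4,6,7,9,10,12,15)$ (so $d=6$). This gives
$$
2^n\deg(Z)=\big(\mathcal{O}_{\mathbb{P}^7}(2)\big)^n\cdot Z=\big(\mathcal{O}_U(1)\big)^n\cdot\pi_*[Z]\geqslant\frac{|G|}{2\,w_{6-n}\cdots w_6},
$$
where $(w_0,\ldots,w_6)=(4,6,7,9,10,12,15)$ and $|G|=348364800$. Computing the right-hand side gives $\deg(Z)\geqslant 16$ for $n=5$. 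For $n\leqslant 4$ it gives bounds such as $96$ for $n=4$ and $2688$ for $n=3$, far above the corresponding $a_n$.

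\textbf{Main obstacle.} The delicate point is the bookkeeping of $\pi_*$: the factor $|G_C|/|\mathfrak{K}_C|$, together with the possibly nontrivial kernel for $G_1$, must be controlled exactly, since an error there costs the factor $2$ that is sharp for $n=5$. That is where I would be most careful.
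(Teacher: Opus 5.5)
\textbf{There is a genuine gap at $n=5$.} Your framework is the paper's: push forward to $U$ and apply Lemma~\ref{lemma:weighted-degree}. However, the uniform estimate $\pi_*[Z]\geqslant\frac{|G|}{2}[\pi(Z)]$ is too weak for $n=5$. With it you only get
\[
2^5\deg(Z)\geqslant\frac{|G|}{2\cdot 6\cdot 7\cdot 9\cdot 10\cdot 12\cdot 15}=\frac{348364800}{1360800}=256,
\]
that is, $\deg(Z)\geqslant 8<16=a_5$. The value $16$ you state is what comes out without the factor $\frac12$.

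You flagged this as the main obstacle but did not resolve it. The factor cannot be argued away, because a nontrivial kernel really occurs. The central involution of $G_1\simeq\mumu_2\times\mathrm{Sp}_6(\mathbf{F}_2)$ is the image of a reflection in $\widehat{G}$, so it acts trivially on its mirror $\mathbb{P}^6$. The $G$-orbit of the $5$-dimensional quadric $Q\cap\mathbb{P}^6$ is therefore a $G$-irreducible subvariety of $Q$ whose components all have $|\mathfrak{K}_C|=2$.

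\textbf{The fix.} Use Lemma~\ref{lemma:small-index} more precisely than ``$|\mathfrak{K}_C|\leqslant 2$''. The kernel $\mathfrak{K}_C$ can be nontrivial only when $G_C$ is conjugate to $G_1$. In that case $r=[G:G_1]=120$, so $\deg(Z)\geqslant 120\geqslant a_n$ directly. In every other case $\mathfrak{K}_C$ is trivial, so $\pi_*[Z]=|G|[\pi(Z)]$. Your estimate then gives $2^5\deg(Z)\geqslant 512$, i.e.\ $\deg(Z)\geqslant 16$, and the cases $n\leqslant 4$ have ample room. This is exactly how the paper closes the analogous case in Lemma~\ref{lemma:degree-bounds}.

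In the present lemma the paper argues slightly differently. It reduces to $r\leqslant 2$ with $C$ invariant under $G_0$, on which $G_0$ acts faithfully. Hence a general point of $Z$ has trivial stabilizer in $G$, and no factor $2$ ever appears.

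Your passage to $\{y_0=0\}\simeq\mathbb{P}(4,6,7,9,10,12,15)$ is correct (it is well formed) but gains nothing. Its $n+1$ largest weights coincide with those of $U$, so the first inequality of Lemma~\ref{lemma:weighted-degree} applied directly in $U$, as the paper does, gives the same bound.
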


\begin{proof}
Let $r$ be the number of irreducible components of $Z$, and let $C$ be one its irreducible component. If $r>1$ and $Z$ is $G_0$-irreducible, then $r\geqslant 120$,
so $\deg(Z)\geqslant r\geqslant 120>a_n$. Hence, we may assume that $n\geqslant 1$, $r\leqslant 2$, and $C$ is $G_0$-invariant.
Then, by Lemma~\ref{lemma:small-index}, the induced action of the group $G_0$ on $C$ is faithful, so a general point of $Z$ has trivial stabilizer in $G$.
Therefore, as cycles in $U$, we have $\pi_*[Z]=|G|[\pi(Z)]$, so by Lemma~\ref{lemma:weighted-degree}, we get
$$
2^n\deg(Z)=\big(\mathcal{O}_{\mathbb{P}^7}(2)\big)^n\cdot Z=\big(\mathcal{O}_U(1)\big)^n\cdot\pi_*[Z]=|G|\,\big(\mathcal{O}_U(1)\big)^n\cdot\pi(Z)\geqslant\frac{|G|}{w_{7-n}w_{8-n}\ldots w_7},
$$
where $(w_0,w_1,w_2,w_3,w_4,w_5,w_6,w_7)=(1,4,6,7,9,10,12,15)$. This gives $\deg(Z)\geqslant a_n$.
\end{proof}

Recall from \cite{CheltsovShramov2008} that
$$
\alpha_G(Q)=\inf\big\{\mathrm{lct}(Q;D)\ \vert\ \text{$D$ is an effective $G$-invariant $\mathbb{Q}$-divisor on $Q$ such that $D\sim_{\mathbb{Q}}-K_Q$}\big\},
$$
where $\mathrm{lct}(Q;D)$ is the log canonical threshold of $D$. Then Lemma~\ref{lemma:Q-degree-bounds} gives the following result.

\begin{corollary}
\label{corollary:alpha}
One has $\alpha_G(Q)=\frac{4}{3}$.
\end{corollary}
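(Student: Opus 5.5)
The plan is to prove the two inequalities $\alpha_G(Q)\leqslant\frac{4}{3}$ and $\alpha_G(Q)\geqslant\frac{4}{3}$ separately. Recall that $Q$ is a smooth six-dimensional quadric (smooth because $V$ is irreducible, so the form $f_2$ is nondegenerate). Let $H$ be its hyperplane class; then $-K_Q\sim 6H$, and $\deg Q=2$ in $\mathbb{P}^7$.

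For the upper bound, consider the $G$-invariant divisor $F=\{f_8=0\}\cap Q\in|8H|$. This is an honest divisor on $Q$, since $f_8$ is not divisible by $f_2$: both are among the basic invariants. Set $D=\frac{3}{4}F\sim_{\mathbb{Q}}6H\sim -K_Q$. Then $\frac{4}{3}D=F$ is an integral effective divisor, so $(Q,\lambda D)$ is not klt for $\lambda=\frac{4}{3}$. Hence $\mathrm{lct}(Q;D)\leqslant\frac{4}{3}$, which gives $\alpha_G(Q)\leqslant\frac{4}{3}$.

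For the lower bound, suppose that there exist a $G$-invariant effective $D\sim_{\mathbb{Q}}-K_Q$ and some $\lambda<\frac{4}{3}$ such that $(Q,\lambda D)$ is not log canonical. Decrease $\lambda$ to the log canonical threshold and take a minimal log canonical center $Z_1$; let $Z$ be its $G$-orbit. By the tie-breaking argument of \cite[Lemma~2.4.10]{CheltsovShramov2015}, which is the same perturbation as in the proof of Theorem~C, we may assume that the locus of log canonical centers is exactly $Z$, still with coefficient $<\frac43$ in front of a divisor $\mathbb{Q}$-equivalent to $-K_Q$. By \cite[Proposition~1.5]{Kawamata1997} and \cite[Theorem~1]{Kawamata1998}, the components of $Z$ are disjoint and normal. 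Let $n=\dim Z\leqslant 5$, so that $Z\subset Q$ is $G$-irreducible and Lemma~\ref{lemma:Q-degree-bounds} gives $\deg(Z)\geqslant a_n$.

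Now cut $Q$ by $n$ general hyperplanes. The result $Y$ is a smooth quadric of dimension $6-n$ in $\mathbb{P}^{7-n}$, and $Y\cap Z$ consists of $\deg(Z)$ reduced points. By inversion of adjunction for general hyperplane sections, each of these points is an isolated component of $\mathrm{Nklt}(Y,\lambda' D|_Y)$, where $\lambda'D|_Y\sim_{\mathbb{Q}}c H$ with $c<8$. Since $-K_Y\sim(6-n)H$, the divisor $mH-K_Y-\lambda' D|_Y$ is ample for $m=n+2$, because $n+2+6-n>c$. Nadel vanishing then gives the surjection $H^0(\mathcal{O}_Y(n+2))\to H^0(\mathcal{O}_{Y\cap Z})$, and therefore
$$
\deg(Z)\leqslant h^0\big(\mathcal{O}_Y(n+2)\big)=\binom{9}{n+2}-\binom{7}{n}.
$$
For $n=0,\ldots,5$ the right-hand side equals $35,77,105,91,49,15$, which is exactly $a_n-1$ in each case. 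This contradicts Lemma~\ref{lemma:Q-degree-bounds}, so $\alpha_G(Q)\geqslant\frac{4}{3}$.

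The main obstacle is the hyperplane-cutting step. One has to make sure that after the perturbation the points of $Y\cap Z$ really are isolated non-klt centers on $Y$ and that no other non-klt locus survives. It is the tie-breaking, which makes $Z$ the unique center, that guarantees this; with it in place, Nadel vanishing applies with multiplier ideal equal to the reduced ideal of the points. For $n=5$ one may instead argue directly: $D\geqslant\mu Z$ with $\mu>\frac34$, and comparing degrees gives $12\geqslant\mu\deg(Z)>\frac34\cdot16=12$, a contradiction.
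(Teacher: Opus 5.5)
Your proof is correct and is essentially the paper's argument. The upper bound comes from $\{f_8=0\}\cap Q$, and the lower bound comes from cutting the $G$-orbit $Z$ of a log canonical center by $n$ general hyperplanes and applying Nadel vanishing on the resulting quadric, which gives $\deg(Z)\leqslant\binom{9}{n+2}-\binom{7}{n}=a_n-1$ and contradicts Lemma~\ref{lemma:Q-degree-bounds}. The differences are cosmetic: the paper takes a center of maximal dimension, so the non-klt locus on $Y$ is automatically finite and no tie-breaking is needed, and it excludes $n=5$ because $|\mathcal{O}_Q(k)|$ has no $G$-invariant divisors for $k\leqslant 7$. You instead use a minimal center with tie-breaking and handle $n=5$ either by Nadel or by the degree bound $a_5=16$.
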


\begin{proof}
Let $R\in|\mathcal{O}_{Q}(8)|$ be the divisor $\{f_8=0\}\cap Q$. Then $R$ is $G$-invariant, which gives $\alpha_G(Q)\leqslant \frac{4}{3}$.

Suppose  $\alpha_G(Q)<4/3$. Then there is a $G$-invariant effective $\mathbb{Q}$-divisor $D\sim_{\mathbb{Q}}\mathcal{O}_Q(8)$ such that the log pair $(Q,D)$ is not log canonical.
Let $\lambda=\mathrm{lct}(Q;D)<1$, let $C$ be a center of log canonical singularities of $(Q,\lambda D)$ of maximal dimension $n\leqslant 5$,
and let $Z$ be its $G$-orbit.
Then $n\ne 5$, because $|\mathcal{O}_Q(k)|$ does not contain $G$-invariant divisors for $k\leqslant 7$, and $R$ is reduced.

Let $Y\subset {\mathbb{P}^7}$ be the intersection of $n$  general hyperplanes sections of $Q$. Then $Y$ is a smooth quadric of dimension $6-n$,
the intersection $Z\cap Y$ consists of $\deg(Z)$ distinct points, and each point of the intersection is an isolated component of the locus $\mathrm{Nklt}(Y,\lambda D\vert_{Y})$.
Applying the Nadel's vanishing theorem \cite[Theorem 9.4.8]{Lazarsfeld} to the multiplier ideal sheaf of the pair $(Y,\lambda D\vert_{Y})$, we get
$$
\deg(Z)=|Z\cap Y|\leqslant h^0\big(Y,\mathcal{O}_Y(n+2)\big)=\binom{9}{n+2}-\binom{7}{n}.
$$
But $\deg(Z)\geqslant a_n$ by Lemma~\ref{lemma:Q-degree-bounds}. This is a contradiction. Therefore, $\alpha_G(Q)=\frac{4}{3}$.
\end{proof}

Set $(b_0,b_1,b_2,b_3,b_4,b_5)=(37,85,127,211,166,64)$.

\begin{lemma}[{cf. the proof of Lemma~\ref{lemma:P5-degree-bounds}}]
\label{lemma:degree-bounds}
Let $Z\subset {\mathbb{P}^7}$ be a $G$-irreducible (possibly reducible) subvariety of dimension $n\leqslant 5$ such that $Z\not\subset Q$.
Then $\deg(Z)\geqslant b_n$.
\end{lemma}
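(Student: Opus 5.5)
We would follow the proof of Lemma~\ref{lemma:Q-degree-bounds}, replacing the first inequality of Lemma~\ref{lemma:weighted-degree} with its second (refined) inequality. The coordinate of weight $w_0=1$ on $U\simeq\mathbb{P}(1,4,6,7,9,10,12,15)$ is $f_2$. So the hypothesis $Z\not\subset Q$ says exactly that $\pi(Z)$ is not contained in the locus $\{y_0=0\}$, and this lets us replace the largest weight in the denominator by $w_0=1$.

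\textbf{Step 1 (reduction on the number of components).} Let $r$ be the number of irreducible components of $Z$, let $C$ be one of them, and let $G_C$ be its stabilizer. Then $r=[G:G_C]$. The proper subgroups of $G$ of index at most $210$ are, up to conjugation, $G_0$, $G_1$, $G_2$, of indices $2$, $120$, $135$. If $r>210$, or more generally if $r\geqslant b_n$, we are done, since $\deg(Z)\geqslant r$. This settles $n\in\{0,1,2,5\}$ whenever $r\geqslant 120$, because there $b_n\leqslant 127$. For $n=0$ the case $r\leqslant 2$ cannot occur: $G_0$ fixes no point, since the restriction of $V$ to $\widehat{G}_0$ is irreducible. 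The remaining cases are $r\in\{1,2\}$ with $n\geqslant 1$, and $r\in\{120,135\}$ with $n\in\{3,4\}$; for $n=2$ and $r=120$ one also needs $\deg(C)\geqslant 2$, which holds because $C$ cannot be a plane invariant under $G_1$.

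\textbf{Step 2 (main case $r\leqslant 2$).} Here $C$ is $G_0$-invariant, so $G_0$ acts faithfully on $C$ by Lemma~\ref{lemma:small-index}. Hence a general point of $Z$ has trivial stabilizer in $G$, and $\pi_*[Z]=|G|[\pi(Z)]$. Since $\pi(Z)\not\subset\{y_0=0\}$, the second inequality of Lemma~\ref{lemma:weighted-degree} gives
$$
2^n\deg(Z)\geqslant\frac{|G|}{w_0w_{9-n}\cdots w_7}=\frac{|G|}{w_{9-n}\cdots w_7},
$$
where the denominator has $n$ factors. This yields a bound comfortably larger than $b_n$ for each $n\leqslant 5$. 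It is a routine arithmetic check, which we expect to be generous rather than sharp.

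\textbf{Step 3 (the obstacle: $r\in\{120,135\}$ with $n\in\{3,4\}$).} Here $G_C$ is conjugate to $G_1$ or $G_2$, and we need $\deg(C)\geqslant 2$ with $r=120$, in order to exceed $b_3=211$ and $b_4=166$. First we would rule out that $C$ is a linear subspace. For $G_2$ this holds because its representation is irreducible. For $G_1$ the only invariant proper linear subspaces are the fixed point and the hyperplane $\mathbb{P}^6$ corresponding to the $7$-dimensional summand, and these have the wrong dimension. If that does not settle it, we would run the same weighted-degree argument for $G_C$ acting on $C$, using the invariant rings of $W(\mathbb{E}_7)$ and $W(\mathbb{D}_8)$, which are again weighted polynomial rings. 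We would account for the kernel of order at most $2$ allowed by Lemma~\ref{lemma:small-index} for $G_1$. This bounds $\deg(C)$ from below and hence $\deg(Z)=r\deg(C)$. We expect this case analysis, particularly for $G_1$ where the representation is reducible, to be the main obstacle, and the specific values of $b_3$ and $b_4$ to come out of it.
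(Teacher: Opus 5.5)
Your proof is correct, but for the large-index stabilizers it takes a different route from the paper's.

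\textbf{How the paper argues.} The paper never splits according to $r$. For every possible stabilizer $G_C\in\{G,G_0,G_1,G_2\}$, it notes that the stabilizer in $G$ of a general point of $C$ is the kernel $\mathfrak{K}_C$ of $G_C\to\mathrm{Aut}(C)$. Hence $\pi_*[Z]=\frac{|G|}{|\mathfrak{K}_C|}[\pi(Z)]$ whatever the number of components, and Lemma~\ref{lemma:small-index} gives $|\mathfrak{K}_C|\leqslant 2$. The second inequality of Lemma~\ref{lemma:weighted-degree} then yields $\deg(Z)|\mathfrak{K}_C|\geqslant 96, 1344, 24192,\ldots$ for $n=5,4,3,\ldots$. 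Only the case $n=5$, $|\mathfrak{K}_C|=2$ needs an extra remark: there $G_C=G_1$, so $r=120>64$.

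\textbf{How your argument differs.} You use the weighted-degree bound only for $r\leqslant 2$. You handle $r\in\{120,135\}$ by $\deg(Z)=r\deg(C)$ together with the fact that $C$ cannot be a linear subspace. For $G_2$ this holds because $V|_{\widehat{G}_2}$ is irreducible. For $G_1$ the only invariant proper subspaces are a point and a hyperplane, since $V|_{\widehat{G}_1}$ is a sum of a $1$-dimensional and an irreducible $7$-dimensional summand. This already gives $\deg(Z)\geqslant 240$, which exceeds $b_2$ (when $r=120$), $b_3$ and $b_4$.

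\textbf{What each route buys.} Your Step~3 is therefore not an obstacle: your own first argument closes it. The fallback through the invariant rings of $W(\mathbb{E}_7)$ and $W(\mathbb{D}_8)$ is unnecessary, because the quotient by $G$ itself works for every $r$ once you divide by $|\mathfrak{K}_C|$. Your route is more elementary for the large-index stabilizers, since it needs no kernel analysis there. The paper's route is a single uniform computation. Note also that $b_3$ and $b_4$ do not come out of this lemma at all. They are the Nadel-vanishing thresholds $h^0(\mathbb{P}^{7-n},\mathcal{O}(m))+1$ used in Corollary~\ref{corollary:lc-centers-away-from-Q}.

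\textbf{An indexing slip to fix in Step~2.} The bound is $2^n\deg(Z)\geqslant |G|/(w_0w_{8-n}\cdots w_7)$. So $w_0$ replaces $w_{7-n}$, the smallest of the $n+1$ largest weights, not the largest one. As written, with $w_{9-n}$, you drop a factor and claim more than the lemma gives. With the correct indexing, $n=5$ gives $\deg(Z)\geqslant 96>64$, which still suffices.
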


\begin{proof}
If $n=0$, then $|Z|\geqslant 120$, so we may assume that $n\geqslant 1$. As in the proof of Lemma~\ref{lemma:Q-degree-bounds},
let $r$ be the number of irreducible components of $Z$, let $C$ be one its irreducible component,
and let~$G_C$ be the stabilizer of $C$ in the group $G$. Then $r$ is the index of $G_C$ in $G$, and we may assume that $r\leqslant 210$, since otherwise $\mathrm{deg}(Z)=r\mathrm{deg}(C)\geqslant r\geqslant 211\geqslant b_{n}$.
Hence, $G_C$ is conjugated to one of the subgroups $G$, $G_0$, $G_1$, $G_2$, so we may assume that $G_C$ is one of these subgroups.

The $G_C$-action on $C$ induces a group homomorphism $G_C\to\mathrm{Aut}(C)$.
Let $\mathfrak{K}_C$ be its kernel. Then, as cycles in $U$, we have
$$
\pi_*[Z]=\frac{|G|}{|\mathfrak{K}_C|}[\pi(Z)].
$$
On the other hand, by Lemma~\ref{lemma:weighted-degree}, we get
$$
2^n\deg(Z)=\big(\mathcal{O}_{\mathbb{P}^7}(2)\big)^n\cdot Z=\big(\mathcal{O}_U(1)\big)^n\cdot\pi_*[Z]= \frac{|G|}{|\mathfrak{K}_C|}\,\big(\mathcal{O}_U(1)\big)^n\cdot\pi(Z)\geqslant\frac{|G|}{|\mathfrak{K}_C|w_0w_{8-n}w_{9-n}\ldots w_7},
$$
where $(w_0,w_1,w_2,w_3,w_4,w_5,w_6,w_7)=(1,4,6,7,9,10,12,15)$. By Lemma~\ref{lemma:small-index}, this gives
$$
2\mathrm{deg}(Z)\geqslant\mathrm{deg}(Z)|\mathfrak{K}_C|\geqslant\left\{\aligned
&96\ \text{if $n=5$},\\
&1344\ \text{if $n=4$},\\
&24192\ \text{if $n=3$},\\
&483840\ \text{if $n=2$},\\
&11612160\ \text{if $n=1$}.
\endaligned
\right.
$$
Hence, $\mathrm{deg}(Z)>b_n$ unless $n=5$ and $|\mathfrak{K}_C|=2$.
In the remaining case, it follows from Lemma~\ref{lemma:small-index} that $G_C=G_1$, so that $r=120$, which gives $\mathrm{deg}(Z)\geqslant 120>b_5=64$.
\end{proof}

\begin{corollary}
\label{corollary:lc-centers-away-from-Q}
Let $\mathcal{M}\subset|\mathcal{O}_{\mathbb{P}^7}(k)|$ be a mobile $G$-invariant linear system, and let $a=\frac{7-n}{6-n}$ for some non-negative integer $n\leqslant 5$. Then the log pair $({\mathbb{P}^7},\frac{8a}{k}\mathcal{M})$ has log canonical singularities in codimension $7-n$ away from the quadric $Q$.
\end{corollary}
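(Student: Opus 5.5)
We will argue exactly as in the proof of Corollary~\ref{corollary:P5-lc-centers-away-from-Q}, replacing Lemma~\ref{lemma:P5-degree-bounds} by Lemma~\ref{lemma:degree-bounds}. The proof is by descending induction on $n$, starting from $n=5$. Suppose the pair $(\mathbb{P}^7,\frac{8a}{k}\mathcal{M})$ is not log canonical in codimension $7-n$ at some point outside $Q$. Then there are a rational number $\lambda<1$ and an irreducible subvariety $C\subset\mathbb{P}^7$ of dimension $n$ with $C\not\subset Q$ such that $C$ is a center of log canonical singularities of $(\mathbb{P}^7,\frac{8a\lambda}{k}\mathcal{M})$. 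Let $Z$ be the $G$-orbit of $C$. It is $G$-irreducible and not contained in $Q$, since $Q$ is $G$-invariant. Hence Lemma~\ref{lemma:degree-bounds} gives $\deg(Z)\geqslant b_n$.

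\emph{Base case $n=5$.} Here $a=2$, so the pair is $(\mathbb{P}^7,\frac{16\lambda}{k}\mathcal{M})$ and $Z$ has codimension $2$. Take two general members $M_1,M_2$ of $\mathcal{M}$. Corti's inequality \cite[Theorem~3.1]{corti}, as used in Section~\ref{section:P4}, gives
$$
\mathrm{mult}_C(M_1\cdot M_2)>4\Big(\frac{k}{16}\Big)^2=\frac{k^2}{64}.
$$
Intersecting with a general $\mathbb{P}^2$ gives
$$
k^2\geqslant\deg(M_1\cdot M_2)>\frac{k^2}{64}\deg(Z),
$$
so $\deg(Z)<64=b_5$. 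This contradicts Lemma~\ref{lemma:degree-bounds}.

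\emph{Inductive step, $n\leqslant 4$.} First note that $\frac{7-n}{6-n}$ decreases as $n$ increases. By the inductive hypothesis, the pair with the smaller coefficient $\frac{8}{k}\cdot\frac{6-n}{5-n}$ is log canonical in codimension $6-n$ away from $Q$. I expect this to be the main obstacle: this statement alone does not show that $C$ is an isolated non-klt center after a general linear section near $C$. It may still be possible that higher-dimensional non-lc centers meet $C$, or that such centers lie inside $Q$.

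To handle this, I would shrink $\lambda$ to the log canonical threshold and pass to a minimal center near a general point of $C$, as in the proof of Theorem~C via \cite[Lemma~2.4.10]{CheltsovShramov2015}. Since $C\not\subset Q$, a general point of $C$ lies off $Q$, so only centers away from $Q$ matter locally. Now let $Y$ be the intersection of $n$ general hyperplanes, so $Y\simeq\mathbb{P}^{7-n}$. Then $Y\cap Z$ consists of $\deg(Z)$ points, each an isolated component of $\mathrm{Nklt}(Y,\frac{8a\lambda}{k}\mathcal{M}\vert_Y)$.

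Nadel vanishing on $Y$ then requires a twist $m$ with
$$
\mathcal{O}_Y(m)-K_Y-\frac{8a\lambda}{k}\mathcal{M}\vert_Y
$$
ample. Taking $m$ to be the smallest integer with $m>8a-(8-n)$, this gives
$$
\deg(Z)\leqslant h^0\big(\mathbb{P}^{7-n},\mathcal{O}(m)\big).
$$
For $n=4,3,2,1,0$ we have $8a=12,\frac{32}{3},10,\frac{48}{5},\frac{28}{3}$, which yields $m=9,\,6,\,5,\,3,\,2$ respectively. The corresponding values of $h^0$ are
$$
\binom{12}{3}=220,\quad \binom{10}{4}=210,\quad \binom{10}{5}=252,\quad \binom{9}{6}=84,\quad \binom{9}{7}=36.
$$
Each of these should fall below the corresponding $b_n$, namely $166,\,211,\,127,\,85,\,37$. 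This holds for $n=3,1,0$ (where the bound is tight). For $n=4$ and $n=2$ it fails with the values of $m$ above, so there I would refine the choice of $m$. The first thing to try is to check whether the strict inequality $\lambda<1$ combined with the integrality of $m$ is what the authors intended; for instance, $n=4$ with $m=8$ gives $165<166$. If that does not work, I would instead sharpen the degree bound of Lemma~\ref{lemma:degree-bounds} in these two dimensions, for example via the factor $2\deg(Z)$ appearing in its proof, which gives much more than $b_n$ there.
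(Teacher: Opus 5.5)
Your overall route is the paper's: descending induction on $n$, Corti's inequality for $n=5$, then Nadel vanishing on $n$ general hyperplane sections compared against Lemma~\ref{lemma:degree-bounds}. Your base case is fine. The inductive step as written has a gap, and it comes from a direction error. The number $a_n=\frac{7-n}{6-n}=1+\frac{1}{6-n}$ \emph{increases} with $n$, so the already established cases $n'>n$ concern the \emph{larger} coefficients $\frac{8a_{n'}}{k}>\frac{8a_n}{k}>\frac{8a_n\lambda}{k}$.

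Since $\mathbb{P}^7$ is smooth, strictly lowering the coefficient of a mobile system turns lc into klt. Hence the pair $(\mathbb{P}^7,\frac{8a\lambda}{k}\mathcal{M})$ is klt at the generic point of every subvariety of dimension $n+1,\ldots,5$ not contained in $Q$, and dimension $6$ is excluded by mobility. So $C$ is an irreducible component of the non-klt locus. Every other component either lies in $Q$ or has dimension at most $n$, so it meets $C$ in a proper closed subset. Therefore, for general $Y$, the points of $Y\cap Z$ are isolated in $\mathrm{Nklt}(Y,\frac{8a\lambda}{k}\mathcal{M}\vert_Y)$. This is exactly what the paper's ``by induction'' supplies.

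Your proposed workaround does not replace this step. If you pass to the lct and a minimal center near a general point of $C$, that minimal center may be a subvariety $W\supsetneq C$ lying off $Q$. Only the higher-dimensional cases of the statement rule such $W$ out.

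For the Nadel step, your first suggested fix is correct and is what the paper does. Because $\lambda<1$, you only need $m+8-n-8a\lambda>0$, so $m=\lceil 8a-(8-n)\rceil$ suffices. This gives $m=8,6,4,3,2$ for $n=4,3,2,1,0$, with $h^0=165,210,126,84,36$. Each value is below $b_n=166,211,127,85,37$. You checked only $n=4$; for $n=2$ one gets $\binom{9}{5}=126<127$. Your fallback would also work but is unnecessary: the proof of Lemma~\ref{lemma:degree-bounds} gives much stronger bounds for $n\leqslant 4$.
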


\begin{proof}
We prove the required assertion by ``induction'' on $n$ starting from $n=5$.
Suppose that the singularities of the pair $({\mathbb{P}^7},\frac{8a}{k}\mathcal{M})$ are not log canonical in codimension $7-n$ away from $Q$.
Then there exist $\lambda<1$ and an irreducible $n$-dimensional subvariety $C\subset {\mathbb{P}^7}$ such that $C\not\subset Q$,
and $C$ is a center of log canonical singularities of $({\mathbb{P}^7},\frac{8a\lambda}{k}\mathcal{M})$.
Let $Z$ be the $G$-orbit of $C$.
Then $\mathrm{deg}(Z)\geqslant b_n$ by Lemma~\ref{lemma:degree-bounds}. Let us show that this inequality leads to a contradiction.

We start with the ``base of induction'' $n=5$. In this case, $a=2$, and Corti's inequality \cite{CheltsovShramov2015} gives $\mathrm{deg}(Z)<64$,
which contradicts to $\mathrm{deg}(Z)\geqslant b_5=64$.

Now, we suppose that $n<5$. By induction, the pair $({\mathbb{P}^7},\frac{8a}{k}\mathcal{M})$ has log canonical singularities in codimension $6-n$ away from the quadric $Q$.
Let $Y$ be intersection of $n$ sufficiently general hyperplanes in~${\mathbb{P}^7}$, and let $\mathcal{M}_Y=\mathcal{M}\vert_{Y}$.
Then $Y$ is a projective space of dimension $7-n$, the intersection $Y\cap Z$ is a finite subset consisting of $\mathrm{deg}(Z)$ points,
and every point of this intersection is an isolated component of the locus $\mathrm{Nklt}(Y,\frac{8a\lambda}{k}\mathcal{M}_Y)$.
Now, as in the proof Corollary~\ref{corollary:alpha}, applying the Nadel's vanishing theorem \cite[Theorem 9.4.8]{Lazarsfeld} to $(Y,\frac{8a\lambda}{k}\mathcal{M}_Y)$, we get
$$
\deg(Z)=|Z\cap Y|\leqslant h^0\big(Y,\mathcal{O}_Y(m)\big)
$$
for
$$
m=\left\{\aligned
&8\ \text{if $n=4$},\\
&6 \ \text{if $n=3$},\\
&4 \ \text{if $n=2$},\\
&3 \ \text{if $n=1$},\\
&2 \ \text{if $n=0$}.
\endaligned
\right.
$$
This gives a contradiction with $\mathrm{deg}(Z)\geqslant b_n$.
\end{proof}

Now, we are ready to prove Theorem~E.

\begin{proof}[Proof of Theorem~E]
Suppose ${\mathbb{P}^7}$ is not $G$-birationally superrigid.
Then, it follows from \cite{CheltsovShramov2015} that there exist an integer $k>0$ and a mobile $G$-invariant linear subsystem $\mathcal{M}\subset|\mathcal{O}_{\mathbb{P}^7}(k)|$ such that the singularities of the pair $({\mathbb{P}^7},\frac{8}{k}\mathcal{M})$ are not canonical.
Let $C$ be a center of non-canonical singularities of this  pair that has maximal dimension $n\leqslant 5$.
If $C\subset Q$, the singularities of $(Q,\frac{8}{k}\mathcal{M}\vert_{Q})$ are not log canonical by the inversion of adjunction \cite[Theorem~5.50]{KollarMori},
which implies that $\alpha_G(Q)<\frac{4}{3}$, which contradicts Corollary~\ref{corollary:alpha}.
Hence, $C\not\subset Q$, and it follows from \cite[Remark~3.6]{CheltsovSarikyanZhuang2024} that
$C$ is a center of non-log canonical singularities of $({\mathbb{P}^7},\frac{8}{k}\frac{7-n}{6-n}\mathcal{M})$,
which contradicts Corollary~\ref{corollary:lc-centers-away-from-Q}.
\end{proof}

\section{Real projective spaces}
\label{section:real}

Now, we discuss finite groups acting on real projective spaces, and prove Theorems~F and G. We fix a finite subgroup $G\subset \mathrm{PGL}_{n+1}(\RR)$, and a finite subgroup~\mbox{$\widehat{G}\subset\mathrm{GL}_{n+1}(\RR)$} that is mapped surjectively onto $G$.

\begin{remark}
If $n$ is even, then the projection~\mbox{$\theta\colon \mathrm{SL}_{n+1}(\RR)\to  \mathrm{PGL}_{n+1}(\RR)$} is an isomorphism, so we can take $\widehat{G}\simeq G$.
On the other hand, if $n$ is odd, then it is possible that one cannot choose $\widehat{G}$ to be a subgroup of $\mathrm{SL}_{n+1}(\RR)$. For instance, if $n=1$ and
$$
g=
\left(
\begin{array}{cc}
1 & 0 \\
0 & -1
\end{array}
\right)\in \mathrm{PGL}_{2}(\RR),
$$
then $g$ is not contained in the image of $\theta$. In this case one can consider the group
$$
\mathrm{SL}_{n+1}^\pm(\RR)\subset \mathrm{GL}_{n+1}(\RR)
$$
of all matrices whose determinant equals either $1$ or $-1$, note that it maps surjectively
onto~$\mathrm{PGL}_{n+1}(\RR)$, and construct $\widehat{G}$ as the preimage of $G$
in $\mathrm{SL}_{n+1}^\pm(\RR)$.
\end{remark}

First, we present a very simple and well-known observation, which is nevertheless very useful.

\begin{lemma}\label{lemma:quadric}
The projective space $\PP^n_\RR$ contains a smooth pointless $G$-invariant quadric.
\end{lemma}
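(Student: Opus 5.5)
The plan is the standard averaging argument. Since $\widehat{G}\subset\mathrm{GL}_{n+1}(\RR)$ is finite, I would take any positive definite quadratic form $q_0$ on $\RR^{n+1}$ (for instance $q_0=x_1^2+\ldots+x_{n+1}^2$) and average it over the group:
$$
q(x)=\sum_{g\in\widehat{G}} q_0(gx).
$$
Each summand $x\mapsto q_0(gx)$ is positive definite because $g$ is invertible. A finite sum of positive definite forms is again positive definite, so $q$ is positive definite. By construction $q(hx)=q(x)$ for all $h\in\widehat{G}$, so $q$ is $\widehat{G}$-invariant.

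Next I would pass to projective space. Let $Q=\{q=0\}\subset\PP^n_\RR$. Since $q$ is invariant under $\widehat{G}$, the quadric $Q$ is $G$-invariant; here $G$ is the image of $\widehat{G}$, and scalars do not affect the zero locus. Because $q$ is positive definite, $q(x)>0$ for every nonzero real vector $x$, so $Q(\RR)=\varnothing$ and $Q$ is pointless. Finally, a positive definite form is nondegenerate, so its Gram matrix is invertible and $Q$ is a smooth quadric over $\RR$ (also after base change to $\CC$).

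There is no real obstacle. The one point to check is that the lift $\widehat{G}$ is a finite subgroup of $\mathrm{GL}_{n+1}(\RR)$, which is exactly how it was fixed above (via $\mathrm{SL}^\pm_{n+1}(\RR)$ when $n$ is odd). Averaging then only uses finiteness and that the group consists of real matrices.
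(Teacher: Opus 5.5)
Your proof is correct and is essentially the paper's argument: average a positive definite form over the finite real lift $\widehat{G}$ to get a $\widehat{G}$-invariant positive definite form $q$, whose zero locus is then smooth (since $q$ is non-degenerate) and has no real points (since $q$ is positive definite).
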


\begin{proof}
Starting from a positive definite quadratic form and averaging it over $\widehat{G}$, we produce a positive definite
$\widehat{G}$-invariant quadratic form $q$. Let $Q$ be the hypersurface defined by $q$, so that $Q$ is a $G$-invariant quadric.
Then $Q$ is smooth, because $q$ is non-degenerate, and it has no real points, because $q$ is positive definite.
\end{proof}

We will also need another nearly obvious fact.

\begin{lemma}
\label{lemma:R-pencil-from-hypersurfaces}
Suppose that for some positive integer $m$ the linear system $|\mathcal{O}_{\PP^n_{\CC}}(m)|$
contains at least two $G$-invariant hypersurfaces. Then the linear system
$|\mathcal{O}_{\PP^n_{\RR}}(m)|$ contains a $G$-invariant pencil.
\end{lemma}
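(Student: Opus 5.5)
The plan is to work with the complex vector space $W=H^0(\PP^n_{\CC},\mathcal{O}(m))$ of degree-$m$ forms, on which $\widehat{G}\subset\mathrm{GL}_{n+1}(\RR)$ acts. The key observation is that this action is defined over $\RR$: $W=W_\RR\otimes_\RR\CC$, where $W_\RR$ is the space of real forms, and $\widehat{G}$ preserves $W_\RR$. A $G$-invariant hypersurface of degree $m$ is the zero locus of a form $f$ that spans a one-dimensional $\widehat{G}$-subrepresentation of $W$. Such a line transforms by a character $\chi\colon\widehat{G}\to\CC^*$.

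First, take two distinct invariant hypersurfaces $\{f_1=0\}$ and $\{f_2=0\}$ with characters $\chi_1,\chi_2$.

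\emph{Case $\chi_1=\chi_2=\chi$.} The isotypic subspace $W_\chi=\{f : g\cdot f=\chi(g)f\}$ has dimension at least $2$, because $f_1$ and $f_2$ are non-proportional. Every line in $W_\chi$ gives an invariant hypersurface, so $\PP(W_\chi)$ contains $G$-invariant pencils. I need one defined over $\RR$, and I split on whether $\chi$ is real.

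If $\chi$ is real-valued, so $\chi(g)=\pm1$, then $W_\chi$ is stable under complex conjugation, since $\widehat{G}$ acts by real matrices. Hence $W_\chi=(W_\chi\cap W_\RR)\otimes\CC$, and any two independent real forms in $W_\chi\cap W_\RR$ span a real $G$-invariant pencil.

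If $\chi$ is not real, complex conjugation maps $W_\chi$ to $W_{\bar\chi}$ with $\bar\chi\ne\chi$. In that case I take $f\in W_\chi$ nonzero and set $f_{1}'=f+\bar f$ and $f_{2}'=\sqrt{-1}(f-\bar f)$. These are real forms spanning the space $\langle f,\bar f\rangle$, which is $\widehat{G}$-invariant, so its projectivization is a $G$-invariant pencil defined over $\RR$. Only the two-dimensionality of $\langle f,\bar f\rangle$ is needed here, and that holds because $f$ and $\bar f$ lie in different isotypic pieces.

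\emph{Case $\chi_1\ne\chi_2$.} If either character is non-real, the conjugation trick above applied to that $f_i$ already gives a real invariant pencil. If both are real, then $f_1$ and $f_2$ can each be chosen real, because each $W_{\chi_i}$ is conjugation-stable. The pencil $\langle f_1,f_2\rangle$ is then a real pencil, and it is $G$-invariant as a linear system: the span is preserved even though its individual members are not fixed.

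The only subtle point is the interpretation of ``$G$-invariant pencil'': it means the linear system is invariant, not that each member is. With that reading every case works, and in fact the argument collapses to the following. Pick real generators where possible, and otherwise use $f$ together with $\bar f$. The main obstacle is making sure the resulting two forms are linearly independent. The isotypic-component argument settles this: either $\chi\ne\bar\chi$, or the forms are real and distinct by assumption.
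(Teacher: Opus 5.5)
Your proof is correct and is essentially the paper's argument: the paper splits on whether some $G$-invariant hypersurface $F$ is moved by complex conjugation $\sigma$ or all of them are real. In the first case $F$ and $\sigma(F)$ span a real $G$-invariant pencil, and in the second any two span one. Your split by whether the character $\chi$ is real is just a finer bookkeeping of the same Galois-descent idea, and your reading of ``$G$-invariant pencil'' as an invariant linear system, whose members need not be invariant, matches the paper's usage.
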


\begin{proof}
If a $G$-invariant
hypersurface $F\in |\mathcal{O}_{\PP^n_{\CC}}(m)|$ is not invariant under the action
of the non-trivial element $\sigma$ of the Galois group $\mathrm{Gal}(\CC/\RR)$, then
$F$ and $\sigma(F)$ generate a $\mathrm{Gal}(\CC/\RR)$-invariant $G$-invariant pencil in
$|\mathcal{O}_{\PP^n_{\CC}}(m)|$, which gives
a $G$-invariant pencil in $|\mathcal{O}_{\PP^n_{\RR}}(m)|$. On the other hand, if each of
the $G$-invariant hypersurfaces in $|\mathcal{O}_{\PP^n_{\CC}}(m)|$ is also
$\mathrm{Gal}(\CC/\RR)$-invariant, then they are all defined over $\RR$, and so we can take any two of them to generate a $G$-invariant pencil in $|\mathcal{O}_{\PP^n_{\RR}}(m)|$.
\end{proof}

Now, we are going to prove Theorem~F. Suppose that $n\geqslant 2$. We aim to show that $G$ is a primitive subgroup in $\mathrm{PGL}_{n+1}(\mathbb{C})$ if $\mathbb{P}^n_\RR$ is $G$-birationally rigid. To do this, we need to prove several auxiliary lemmas, which are real counterparts of Lemmas~\ref{lemma:intransitive},  \ref{lemma:intransitive-2}, \ref{lemma:Pn-pencil}, \ref{lemma:Pn-ci} proved in Section~\ref{section:preliminaries}.

\begin{lemma}[cf. Lemmas~\ref{lemma:intransitive} and \ref{lemma:intransitive-2}]
\label{lemma:intransitive-auxiliary}
Suppose that there exists a decomposition
\begin{equation*}
\CC^{n+1}=V'\oplus V''
\end{equation*}
such that  the vector subspaces $V'$ and $V''$ are interchanged by the Galois group $\mathrm{Gal}(\CC/\RR)\simeq\mumu_2$, and at least one of the following two conditions holds:
\begin{itemize}
\item either both $V'$ and $V''$ are invariant with respect to the action of the group $\widehat{G}$,
\item or $V'$ and $V''$ are interchanged by $\widehat{G}$.
\end{itemize}
Then $\mathbb{P}^n_\RR$ is not $G$-birationally rigid.
\end{lemma}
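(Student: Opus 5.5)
Since $V'$ and $V''$ are swapped by complex conjugation, they have the same dimension, so $n+1=2m$ with $\dim V'=\dim V''=m$. Let $\Lambda',\Lambda''\subset\mathbb{P}^n_{\CC}$ be their projectivizations. These are disjoint linear subspaces of dimension $m-1$ that span $\mathbb{P}^n_{\CC}$ and are swapped by $\mathrm{Gal}(\CC/\RR)$. In both cases of the lemma, the pair $\{\Lambda',\Lambda''\}$ is invariant under $G\times\mathrm{Gal}(\CC/\RR)$. I would imitate the proof of Lemma~\ref{lemma:intransitive-2}, using the same construction for both alternatives.

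Consider the projections $\psi'\colon\mathbb{P}^n_{\CC}\dasharrow\Lambda'$ from $\Lambda''$ and $\psi''\colon\mathbb{P}^n_{\CC}\dasharrow\Lambda''$ from $\Lambda'$, and let
$$
\psi=\psi'\times\psi''\colon \mathbb{P}^n_{\CC}\dasharrow \Lambda'\times\Lambda''.
$$
This map is dominant. Its fibers are closures of fibers of the map $V'\oplus V''\setminus\{0\}\to\mathbb{P}(V')\times\mathbb{P}(V'')$, so they are lines: the line joining $[v']$ and $[v'']$. Conjugation interchanges $\psi'$ and $\psi''$ and swaps the factors, so the target $B=\Lambda'\times\Lambda''$ carries a real structure and $\psi$ descends to $\RR$. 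Explicitly, $B$ is the Weil restriction $R_{\CC/\RR}\mathbb{P}^{m-1}$, and $\psi$ becomes a rational map $\mathbb{P}^n_\RR\dasharrow B_\RR$. The $G$-action also descends. In the first case $G$ acts on each factor. In the second case the elements interchanging $V'$ and $V''$ act by swapping the factors. Either way $\psi$ is $G$-equivariant over $\RR$.

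Concretely, in the first case this is the real form of blowing up $\Lambda'\cup\Lambda''$. In the second case the same blow up works, since $\Lambda'\cup\Lambda''$ is still $G$-invariant. The blow up gives a $\mathbb{P}^1$-bundle over $B$, so $\mathbb{P}^n_\RR$ is $G$-birational to a $G$-Mori fiber space over a positive-dimensional base. Alternatively, I would equivariantly resolve $\psi$ and run a relative $G$-MMP over $B_\RR$, exactly as in Lemmas~\ref{lemma:intransitive-2} and~\ref{lemma:Pn-pencil}, since the general fibers are rational curves. Because $\dim B=n-1\geqslant 1$ when $n\geqslant 2$, this contradicts $G$-birational rigidity of $\mathbb{P}^n_\RR$.

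The main point to verify is the descent to $\RR$: that the blow-up centre $\Lambda'\cup\Lambda''$ and the resulting fibration are defined over $\RR$ and are $G$-equivariant. This holds because both $\mathrm{Gal}(\CC/\RR)$ and $G$ preserve the unordered pair $\{V',V''\}$, so the ideal of $\Lambda'\cup\Lambda''$ is Galois-stable and $G$-stable. Note that the real MMP also needs the relative Picard rank over $B$ to be computed $G\times\mathrm{Gal}$-invariantly. This causes no trouble, because one may pass to a $G$-Mori fiber space by running the $G$-equivariant MMP over $\RR$, and its output still has base of dimension at least~$1$.
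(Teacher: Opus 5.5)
Your proposal is correct and follows essentially the same route as the paper. Both arguments take the product of the two linear projections from the conjugate subspaces $\Lambda'$ and $\Lambda''$, identify the target $\Lambda'\times\Lambda''$ with the complexification of the Weil restriction $R_{\CC/\RR}\Lambda'$, and observe that the blow up of $\Lambda'\cup\Lambda''$ descends to a real $G$-equivariant $\PP^1$-bundle over a positive-dimensional base. The relative MMP alternative is unnecessary, because this blow up already has relative Picard rank one over $\CC$, hence also $G$-invariantly over $\RR$.
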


\begin{proof}
By assumption, there exist two disjoint linear subspaces $\Lambda',\Lambda''\subset\PP^n_\CC$
such that the union~\mbox{$\Lambda'\cup\Lambda''$} spans $\PP^n_\CC$, the group
$\mathrm{Gal}(\CC/\RR)$ interchanges $\Lambda'$ and $\Lambda''$, and
for any~\mbox{$g\in\widehat{G}$} one has either~\mbox{$g(\Lambda')=\Lambda'$}, or $g(\Lambda')=\Lambda''$. Denote $k=\dim\Lambda'=\dim\Lambda''$, so that $n=2k+1$.
Let
\begin{align*}
\psi'\colon &\PP^n_\CC\dasharrow \Lambda'', \\
\psi''\colon &\PP^n_\CC\dasharrow \Lambda'
\end{align*}
be the linear projections from $\Lambda'$ and $\Lambda''$, respectively.
Set
$$
\psi=\psi'\times \psi''\colon \PP^n\dasharrow \Lambda''\times\Lambda'.
$$
Then $\psi$ fits into a
$G$-equivariant $\mathrm{Gal}(\CC/\RR)$-equivariant commutative diagram
\begin{equation}\label{eq:intransitive-auxiliary}
\xymatrix{
& Y\ar@{->}[ld]_{\pi_{\CC}}\ar@{->}[rd]^{\phi_{\CC}} & \\
\mathbb{P}^n_\CC\ar@{-->}[rr]^\psi  && \Lambda''\times\Lambda'
}
\end{equation}
where $\pi_{\CC}$ is a blow up of $\Lambda'$ and $\Lambda''$, and $\phi_{\CC}$ is a $\PP^1_{\CC}$-bundle.
Since $\Lambda'$ and $\Lambda''$ are interchanged by the Galois group
$\mathrm{Gal}(\CC/\RR)$, we see that
$\Lambda''\times\Lambda'$ is obtained by extension of scalars
from the Weil restriction of scalars~\mbox{$R_{\CC/\RR}\Lambda'$}.
Furthermore, the diagram~\eqref{eq:intransitive-auxiliary}
is obtained by extension of scalars from the $G$-equivariant
commutative diagram
$$
\xymatrix{
& Y\ar@{->}[ld]_\pi\ar@{->}[rd]^{\phi} & \\
\mathbb{P}^n_\RR\ar@{-->}[rr]  && R_{\CC/\RR}\Lambda'}
$$
where $\pi$ is a blow up of the subscheme whose geometrically irreducible components are $\Lambda'$ and~$\Lambda''$,
and $\phi$ is a real $G$-Mori fiber space whose fibers are $\mathbb{P}^1_{\RR}$. Hence, $\PP^n_\RR$ is not $G$-birationally rigid.
\end{proof}

\begin{lemma}[cf. Lemma~\ref{lemma:intransitive}]
\label{lemma:intransitive-R}
Suppose that $\mathbb{P}^n_{\RR}$ is $G$-birationally rigid. Then $G$ is a transitive subgroup of $\mathrm{PGL}_{n+1}(\CC)$.
\end{lemma}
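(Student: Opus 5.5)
We argue by contradiction: suppose $G$ is not transitive as a subgroup of $\mathrm{PGL}_{n+1}(\CC)$, so that $\CC^{n+1}$ contains a proper non-zero $\widehat{G}$-invariant subspace. The aim is either to find a $G$-invariant linear subspace of $\PP^n_\RR$ defined over $\RR$, or to land in the situation of Lemma~\ref{lemma:intransitive-auxiliary}.

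First, choose a $\widehat{G}$-invariant subspace $W\subset\CC^{n+1}$ with $0<\dim W<n+1$ of \emph{minimal} dimension, i.e.\ an irreducible $\widehat{G}$-subrepresentation. Let $\sigma$ be complex conjugation. Since $\widehat{G}$ consists of real matrices, $\sigma(W)$ is again $\widehat{G}$-invariant and irreducible. By minimality, either $\sigma(W)=W$ or $W\cap\sigma(W)=0$.

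\textbf{Case 1: $\sigma(W)=W$.} Then $W=W_\RR\otimes\CC$ for a real $\widehat{G}$-invariant subspace $W_\RR$, which gives a $G$-invariant real linear subspace $\Lambda\subset\PP^n_\RR$ of dimension $k=\dim W-1\leqslant n-1$. If $k\leqslant n-2$, we argue exactly as in Lemma~\ref{lemma:intransitive}. Blowing up $\Lambda$ over $\RR$ gives a $G$-Sarkisov link to a $\PP^{k+1}$-bundle over $\PP^{n-k-1}_\RR$. This is a real $G$-Mori fiber space with positive-dimensional base: the relative Picard rank is $1$ since $\mathrm{rk}\,\mathrm{Pic}(Y)=2$. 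Hence $\PP^n_\RR$ is not $G$-birationally rigid. If $k=n-1$, we use complete reducibility: averaging a positive definite form, as in Lemma~\ref{lemma:quadric}, shows that $W_\RR$ has a $\widehat{G}$-invariant real orthogonal complement of dimension $1$. This complement is a $G$-fixed real point, and the projection from it gives a link to a $\PP^1$-bundle over $\PP^{n-1}_\RR$. (Alternatively, replace $W$ by that complement, which reduces to $k=0\leqslant n-2$ since $n\geqslant2$.)

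\textbf{Case 2: $W\cap\sigma(W)=0$.} Then $W\oplus\sigma(W)$ is a $\sigma$-stable, $\widehat{G}$-invariant subspace, so it equals $U_\RR\otimes\CC$ for a real $\widehat{G}$-invariant $U_\RR$. If $W\oplus\sigma(W)\neq\CC^{n+1}$, then $U_\RR$ yields a proper $G$-invariant real linear subspace, and we conclude as in Case~1. For this, again pass to an invariant complement if needed, which guarantees that some real invariant subspace has projective dimension $\leqslant n-2$ or is a point. If instead $W\oplus\sigma(W)=\CC^{n+1}$, we have a decomposition $V'=W$, $V''=\sigma(W)$ that is interchanged by $\mathrm{Gal}(\CC/\RR)$ and whose summands are both $\widehat{G}$-invariant. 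This is exactly the first alternative of Lemma~\ref{lemma:intransitive-auxiliary}, which shows that $\PP^n_\RR$ is not $G$-birationally rigid.

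The main point requiring care is Case~1 and the reduction in Case~2: we must ensure that the blow-up of the real invariant linear subspace really yields a real $G$-Mori fiber space. This amounts to checking $\mathrm{rk}\,\mathrm{Pic}^G$ of the blow-up over the base and terminality, both of which hold since the blow-up is a smooth projective bundle. We must also handle the dimension bookkeeping via invariant complements, which complete reducibility over $\RR$ provides.
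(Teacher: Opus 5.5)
Your proof is correct and takes essentially the same route as the paper. A real $G$-invariant proper linear subspace, or its invariant complement, gives a $G$-Mori fibre space over a positive-dimensional base; otherwise a pair of complex-conjugate $\widehat{G}$-invariant subspaces spans $\CC^{n+1}$, and Lemma~\ref{lemma:intransitive-auxiliary} applies. The only difference is bookkeeping: you take $W$ irreducible to get the clean dichotomy $\sigma(W)=W$ or $W\cap\sigma(W)=0$ and use invariant complements to control dimensions, whereas the paper takes $\Lambda$ of dimension at most $\lfloor\frac{n-1}{2}\rfloor$ and projects directly from $\Lambda\cap\Lambda^\sigma$ when it is non-empty.
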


\begin{proof}
Suppose that $G$ is not a transitive subgroup of $\mathrm{PGL}_{n+1}(\CC)$. Then there exists a $G$-invariant linear subspace $\Lambda\subset\mathbb{P}^n_{\CC}$ of dimension~\mbox{$k\leqslant \lfloor \frac{n-1}{2}\rfloor$}. Denote by $\sigma$ the non-trivial element of the Galois group $\mathrm{Gal}(\CC/\RR)\simeq\mumu_2$, and set~\mbox{$\Lambda^\sigma=\sigma(\Lambda)$}.
If the intersection $\Xi=\Lambda\cap\Lambda^\sigma$ is not empty, then
$\Xi$ is a $G$-invariant $\mathrm{Gal}(\CC/\RR)$-invariant linear subspace of $\PP^n_{\CC}$
of dimension at most $\dim\Lambda$. Thus, $\Xi$ is the complexification of a real
$G$-invariant linear subspace $\Xi_{\RR}\subset\PP^n_{\RR}$.
The linear projection from $\Xi_{\RR}$ provides a $G$-equivariant commutative diagram
$$
\xymatrix{
& Y\ar@{->}[ld]_\pi\ar@{->}[rd]^{\phi} & \\
\mathbb{P}^n_\RR\ar@{-->}[rr]^\psi  && \Xi_{\RR}^\perp}
$$
where $\phi$ is a $G$-Mori fiber space over the projective space $\Xi_{\RR}^\perp$
of dimension
$$
n-1-\dim\Xi\geqslant n-1-\left\lfloor\frac{n-1}{2}\right\rfloor\geqslant 1.
$$
This contradicts our assumption that $\mathbb{P}^n_{\RR}$ is $G$-birationally rigid.
Hence, one has~\mbox{$\Lambda\cap\Lambda^\sigma=\varnothing$}.
Moreover, if $2k+1=n$, then $\Lambda\cup\Lambda^\sigma$ spans $\PP^n_{\CC}$,
which contradicts Lemma~\ref{lemma:intransitive-auxiliary}.
Hence, we have~\mbox{$2k+1<n$}.

Let $\Theta\subset\PP^n_{\CC}$ be the linear span of the union $\Lambda\cup\Lambda^{\sigma}$.
Then $\Theta$ is a $G$-invariant $\mathrm{Gal}(\CC/\RR)$-invariant linear subspace of $\PP^n_{\CC}$ of dimension
$$
\dim\Lambda+\dim\Lambda^\sigma+1=2k+1.
$$
Thus, $\Theta$ is the complexification of a real
$G$-invariant subspace $\Theta_{\RR}\subset\PP^n_{\RR}$ of the same dimension.
Furthermore, there exists a $G$-invariant linear subspace $\Theta_{\RR}^\perp\subset\PP^n_{\RR}$
of dimension
$$
n-\dim\Theta-1=n-2k-2\geqslant 0
$$
such that the union $\Theta_{\RR}\cup\Theta_{\RR}^\perp$ spans $\PP^n_{\RR}$.
The linear projection from $\Theta_{\RR}^\perp$ provides a $G$-equivariant commutative diagram
$$
\xymatrix{
& Y\ar@{->}[ld]_\pi\ar@{->}[rd]^{\phi} & \\
\mathbb{P}^n_\RR\ar@{-->}[rr]  && \Theta_{\RR}}
$$
where $\phi$ is a $G$-Mori fiber space over the projective space  $\Theta_{\RR}$
of dimension $2k+1\geqslant 1$. This is impossible, since $\mathbb{P}^n_{\RR}$ is $G$-birationally rigid.
\end{proof}

Surprisingly, a naive analog of Lemma~\ref{lemma:intransitive-2} that would generalize it in the same way as Lemma~\ref{lemma:intransitive-R} generalizes Lemma~\ref{lemma:intransitive} does not hold.

\begin{example}
\label{example:funny}
Let $G=\mathfrak{A}_5$, and let $H\simeq \mathfrak{A}_4$ be its subgroup.
Denote by $I^\prime$ and $I^{\prime\prime}$ the two non-trivial complex conjugate one-dimensional
representations of the subgroup $H$. Set
\begin{equation}
\label{eq:Ind-omega}
V=\mathrm{Ind}_H^G(I^\prime).
\end{equation}
Then the restriction of $V$ to $H$ splits as $I^\prime\oplus I^{\prime\prime}\oplus W$, where $W$ is the unique three-dimensional (real) irreducible representation of the group $H$. Thus, $V$ is defined over~$\mathbb{R}$, and one has
\begin{equation}\label{eq:Ind-omega-2}
V\simeq\mathrm{Ind}_H^G(I^{\prime\prime}).
\end{equation}
In particular, \eqref{eq:Ind-omega} and~\eqref{eq:Ind-omega-2} define two systems of imprimitivity for the $G$-representation~$V$, which are swapped by the complex conjugation.
\end{example}

The next two lemmas are similar to Lemmas~\ref{lemma:Pn-pencil}
and~\ref{lemma:Pn-ci}.

\begin{lemma}[cf. Lemma~\ref{lemma:Pn-pencil}]
\label{lemma:Pn-pencil-R}
Suppose that $|\mathcal{O}_{\mathbb{P}^n_{\RR}}(d)|$ contains a $G$-invariant pencil $\mathcal{P}$ for some~\mbox{$d\leqslant n$}.
Then~$\mathbb{P}_{\RR}^n$ is not $G$-birationally rigid.
\end{lemma}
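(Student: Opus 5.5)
The plan is to copy the proof of Lemma~\ref{lemma:Pn-pencil} and carry it out over $\RR$, checking at each step that the construction is defined over the reals. The pencil $\mathcal{P}\subset|\mathcal{O}_{\PP^n_\RR}(d)|$ is a real $G$-invariant linear system. Removing its fixed part, which is a real $G$-invariant divisor, we may assume $\mathcal{P}$ is mobile. Then $\mathcal{P}$ defines a $G$-equivariant dominant rational map $\psi\colon\PP^n_\RR\dasharrow\PP^1_\RR$, and this map is defined over $\RR$.

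Next we resolve. Equivariant resolution of indeterminacy and of singularities works over any field of characteristic zero, so it gives a smooth projective real variety $Y$ with a $G$-equivariant birational morphism $\pi\colon Y\to\PP^n_\RR$ and a $G$-equivariant morphism $\phi\colon Y\to\PP^1_\RR$. We then take the Stein factorization $Y\to B\to\PP^1_\RR$. Here $B$ is a smooth real curve with a $G$-action and the map $Y\to B$ has geometrically irreducible general fibers. The curve $B$ may not be geometrically connected to $\PP^1$ in the naive way, but it is positive-dimensional, and that is all we need.

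Uniruledness is a geometric property, so we check it after extension of scalars to $\CC$. The general fiber of $Y_\CC\to B_\CC$ is birational to a component of a member of $\mathcal{P}_\CC$. That member is a hypersurface of degree $d\leqslant n$ in $\PP^n_\CC$, so by \cite[Theorem~1]{MiyaokaMori} every component is uniruled. Hence the general fiber of $Y\to B$ is uniruled, and $K_Y$ is not pseudo-effective over $B$.

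We then run the $G$-equivariant relative MMP for $Y$ over $B$, which is available over $\RR$ for $G\times\mathrm{Gal}(\CC/\RR)$-equivariant MMP or directly over the non-closed field. This terminates in a $G$-Mori fiber space $Y'\to B'$ with $\dim B'\geqslant\dim B\geqslant 1$. So $\PP^n_\RR$ is $G$-birational to a $G$-Mori fiber space with positive-dimensional base, and therefore $\PP^n_\RR$ is not $G$-birationally rigid.

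The only point that needs care is the validity of the MMP over $\RR$. I would handle it by passing to $\CC$ and running the MMP equivariantly for the group generated by $G$ and complex conjugation, which acts semi-linearly. This is standard in the real setting, so I expect no real obstacle here.
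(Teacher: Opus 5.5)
Your proposal is correct and follows the paper's approach: the paper's proof is just ``identical to the proof of Lemma~\ref{lemma:Pn-pencil}'', and you carry that argument out over $\RR$ (fixed part removed, equivariant resolution, Stein factorization, Miyaoka--Mori uniruledness checked after base change to $\CC$, then the relative $G$-equivariant MMP over the base curve). The only real-field input, running the MMP equivariantly for $G$ together with complex conjugation, is exactly what the paper implicitly uses as well.
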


\begin{proof}
Identical to the proof of Lemma~\ref{lemma:Pn-pencil}.
\end{proof}

\begin{lemma}[cf. Lemma~\ref{lemma:Pn-ci}]
\label{lemma:Pn-ci-R}
Suppose that~$\mathbb{P}^n_{\RR}$ contains a $G$-irreducible (over $\mathbb{R}$) reduced complete intersection $X=F_{d_1}\cap F_{d_2}$ such that $X_{\mathbb{C}}$ has hypersurface singularities and~\mbox{$d_1<d_2\leqslant n$}, where~$F_{d_1}$ and $F_{d_2}$ are hypersurfaces in $\mathbb{P}^n_{\RR}$ of degree $d_1$ and $d_2$, respectively. Then~$\mathbb{P}^n_{\RR}$ is not $G$-birationally rigid.
\end{lemma}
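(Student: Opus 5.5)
The natural approach is to repeat the proof of Lemma~\ref{lemma:Pn-ci} over $\RR$. Since all the objects involved are defined over $\RR$, one descends the resulting $G$-Sarkisov link. Concretely, let $\pi\colon V\to\PP^n_\RR$ be the blow up of the real subscheme $X$. Extension of scalars commutes with blowing up, so $V_\CC$ is the blow up of $\PP^n_\CC$ along $X_\CC=(F_{d_1})_\CC\cap(F_{d_2})_\CC$. The singularity analysis is geometric, so it can be done on $V_\CC$: since $X_\CC$ has hypersurface singularities, $V_\CC$ has only $\mathrm{cA}$-type singularities in codimension at least $3$. By \cite[1.42]{Kollar-Sing}, $V_\CC$ is terminal, and hence so is $V$.

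Next I would construct the contraction. The strict transform $\widetilde{F}_{d_1}$ is defined over $\RR$ and $G$-invariant. Over $\CC$ the proof of Lemma~\ref{lemma:Pn-ci} gives a birational morphism $\eta_\CC\colon V_\CC\to Y_\CC$ contracting $\widetilde{F}_{d_1}$ to a point. This morphism is given by a linear system, for instance sections of $\mathcal{O}(d_2)$ vanishing on $X$, or the anticanonical model. That system is defined over $\RR$, so $\eta_\CC$ descends to a $G$-equivariant morphism $\eta\colon V\to Y$ over $\RR$. Alternatively, I would argue that the $2$-ray game on $V$ over $\RR$ is the Galois-invariant part of the one over $\CC$, because $\mathrm{Gal}(\CC/\RR)$ commutes with $G$ and preserves the extremal rays. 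The discrepancy formula
$$
K_V\sim_{\mathbb{Q}}\eta^*K_Y+\frac{n+1-d_2}{d_2-d_1}\widetilde{F}_{d_1}
$$
holds over $\CC$, so $Y$ is terminal, and $Y$ is a Fano variety.

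The remaining points are the rank condition and non-isomorphism. One has $\mathrm{rk}\,\mathrm{Cl}(V)^{G\times\mathrm{Gal}}=2$. Here I would use that $X$ is $G$-irreducible over $\RR$, so the exceptional divisor of $\pi$ is irreducible over $\RR$ as a $G$-divisor. Contracting $\widetilde{F}_{d_1}$, whose components are likewise permuted transitively, leaves rank one, so $Y$ is a real $G$-Mori fibre space over a point. Finally, $Y\not\simeq\PP^n_\RR$ because $Y_\CC\not\simeq\PP^n_\CC$, as in the complex proof. The exception $(d_1,d_2)=(1,2)$ with $X$ smooth is handled the same way; there the link is not trivial either, since the composite $\PP^n\dasharrow Y$ is not biregular. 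Hence $\PP^n_\RR$ is not $G$-birationally rigid.

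The main obstacle is the descent of $\eta$ together with the correct real invariant Picard rank count. If $F_{d_1}$ is reducible over $\CC$, its components may be swapped by Galois. I expect this to be handled by working throughout with $G\times\mathrm{Gal}(\CC/\RR)$-invariant classes, which is exactly what the real rank condition requires.
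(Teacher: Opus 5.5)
Your proposal is correct and follows the paper, whose proof of this lemma simply says that the argument of Lemma~\ref{lemma:Pn-ci} goes through verbatim over $\RR$; your descent bookkeeping with $G\times\mathrm{Gal}(\CC/\RR)$-invariant classes is exactly what that requires. The only slip is your remark on the case $(d_1,d_2)=(1,2)$ with $X$ smooth: there $Y$ is a smooth $n$-dimensional quadric, so $Y\not\simeq\PP^n$ holds directly (its Fano index is $n$, not $n+1$), whereas non-biregularity of $\PP^n\dasharrow Y$ by itself would only refute superrigidity, not rigidity.
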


\begin{proof}
Identical to the proof of Lemma~\ref{lemma:Pn-ci}.
\end{proof}

Now we are ready to prove Theorem~F in all dimensions $n \geqslant 4$.

\begin{proposition}
\label{proposition:D}
Suppose that $n\geqslant 4$ and
$\mathbb{P}^n_{\mathbb{R}}$ is \mbox{$G$-birationally} rigid.
Then $G$ is a primitive subgroup in $\mathrm{PGL}_{n+1}(\mathbb{C})$.
\end{proposition}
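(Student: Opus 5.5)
By Lemma~\ref{lemma:intransitive-R}, $G$ is a transitive subgroup of $\mathrm{PGL}_{n+1}(\mathbb{C})$. So we argue by contradiction and assume that $G$ is imprimitive over $\mathbb{C}$. Then there is a non-trivial decomposition $\mathbb{C}^{n+1}=\bigoplus_{i=1}^r V_i$ with $r\geqslant 2$ whose summands are permuted by $\widehat{G}$. I do not plan to split into cases according to how complex conjugation acts on this system of imprimitivity; Example~\ref{example:funny} shows that this action can be awkward. Instead, I will use the invariant pointless quadric of Lemma~\ref{lemma:quadric} to produce two distinct $G$-invariant complex hypersurfaces of degree $2$ or $4$. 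Lemma~\ref{lemma:R-pencil-from-hypersurfaces} then turns them into a real $G$-invariant pencil of degree at most $4\leqslant n$, and Lemma~\ref{lemma:Pn-pencil-R} gives the contradiction. This is exactly where the hypothesis $n\geqslant 4$ is used.

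\textbf{Step 1.} Let $q$ be the $\widehat{G}$-invariant positive definite real quadratic form from Lemma~\ref{lemma:quadric}, regarded as a non-degenerate complex quadratic form on $\mathbb{C}^{n+1}$. Write
$$
S^2\big(\mathbb{C}^{n+1}\big)^*=\bigoplus_{i}S^2V_i^*\ \oplus\ \bigoplus_{i<j}V_i^*\otimes V_j^*,
$$
and let $q^{\mathrm{d}}$ be the component of $q$ in $\bigoplus_i S^2V_i^*$, which we call its block-diagonal part. Since $\widehat{G}$ permutes the $V_i$ and may rescale the corresponding maps, it preserves both $\bigoplus_i S^2V_i^*$ and the complementary summand. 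Hence the projection onto $\bigoplus_i S^2 V_i^*$ is $\widehat{G}$-equivariant, and $q^{\mathrm{d}}$ is again $\widehat{G}$-invariant. Checking this equivariance carefully is the only step requiring care.

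\textbf{Step 2.} Suppose $q^{\mathrm{d}}$ is not proportional to $q$. Then $\{q=0\}$ and $\{q^{\mathrm{d}}=0\}$ are two distinct $G$-invariant hypersurfaces in $|\mathcal{O}_{\mathbb{P}^n_{\mathbb{C}}}(2)|$. If $q^{\mathrm{d}}=0$, pick instead any other invariant quadric, or pass directly to Step 3 with $q$ itself; the case $q^{\mathrm{d}}=0$ does not really occur, since $q$ restricted to $V_1$ must then vanish, whereas $q$ is block-compatible up to Step 3's decomposition. In either case Lemma~\ref{lemma:R-pencil-from-hypersurfaces} gives a $G$-invariant pencil in $|\mathcal{O}_{\mathbb{P}^n_{\mathbb{R}}}(2)|$, and since $2\leqslant n$ we are done by Lemma~\ref{lemma:Pn-pencil-R}.

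\textbf{Step 3.} Otherwise $q=q^{\mathrm{d}}$, so $q=\sum_i q_i$ with $q_i\in S^2V_i^*$. Each $q_i$ is non-degenerate on $V_i$ because $q$ is non-degenerate. If $g\in\widehat{G}$ maps $V_i$ onto $V_j$, then invariance of $q$ forces $g^*q_j=q_i$. Consequently the quartic form $\sum_i q_i^2$ is $\widehat{G}$-invariant. This quartic is not proportional to $q^2=\sum_i q_i^2+2\sum_{i<j}q_iq_j$, because $r\geqslant 2$ and the $q_i$ are non-zero forms in disjoint sets of variables. Hence $|\mathcal{O}_{\mathbb{P}^n_{\mathbb{C}}}(4)|$ contains two distinct $G$-invariant hypersurfaces. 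Lemma~\ref{lemma:R-pencil-from-hypersurfaces} then yields a $G$-invariant pencil in $|\mathcal{O}_{\mathbb{P}^n_{\mathbb{R}}}(4)|$, and since $4\leqslant n$, Lemma~\ref{lemma:Pn-pencil-R} shows that $\mathbb{P}^n_{\mathbb{R}}$ is not $G$-birationally rigid. This contradiction proves that $G$ is primitive.

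The main obstacle I expect is Step 1: one must make sure that the block-diagonal projection is genuinely $\widehat{G}$-equivariant when $\widehat{G}$ acts by monomial-type block maps, including scalars and matrices within blocks. A secondary subtlety is that the two invariant hypersurfaces are only defined over $\mathbb{C}$. This is precisely why I pass through Lemma~\ref{lemma:R-pencil-from-hypersurfaces}, rather than trying to make the system of imprimitivity Galois-stable.
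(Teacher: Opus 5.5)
There is a genuine gap in Step~2, in the case $q^{\mathrm{d}}=0$, which you dismiss without a valid argument. Your Steps~1--3 are otherwise sound: if $0\neq q^{\mathrm{d}}\neq q$ you get two invariant quadrics, and if $q=q^{\mathrm{d}}$ the quartic $\sum_i q_i^2$ works exactly as in the paper's diagonal case.

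Positive definiteness of $q$ only concerns real vectors. Over $\mathbb{C}$, $q$ is merely non-degenerate, and it vanishes identically on non-real subspaces of dimension up to $\lfloor\frac{n+1}{2}\rfloor$; already $x^2+y^2$ vanishes on the line $x=\sqrt{-1}\,y$. The case $q^{\mathrm{d}}=0$ really occurs, precisely in the situation your plan hoped to sidestep. Suppose $r=2$ and complex conjugation $\sigma$ swaps $V_1$ and $V_2$, and let $B$ be the polar form of $q$. Then $h(v,w)=B(v,\sigma(w))$ is a positive definite $\widehat{G}$-invariant Hermitian form. Since $V$ is irreducible, Schur's lemma makes $h$ proportional to the invariant Hermitian form obtained by averaging over the stabilizer of $V_1$ and transporting to $V_2$, for which $V_1\perp V_2$. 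So for $u,w\in V_1$ one gets $B(u,w)=h(u,\sigma(w))=0$. Hence $q|_{V_1}=q|_{V_2}=0$, and $q=q_{12}$ is purely off-diagonal. A concrete instance is $\mathbb{R}^6=\mathbb{C}^3$, with $\widehat{G}$ generated by the Heisenberg group of order $27$ acting $\mathbb{C}$-linearly and by coordinatewise complex conjugation; the blocks are the $\pm\sqrt{-1}$-eigenspaces of the complex structure.

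In this situation your constructions give nothing. The form $q^{\mathrm{d}}=0$ defines no hypersurface, ``any other invariant quadric'' is assumed rather than produced, and the block-square quartic is $q_{12}^2=q^2$. Such a configuration has to be excluded through the Galois action, and your proposal never invokes Lemma~\ref{lemma:intransitive-auxiliary}, which is the tool for that.

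To repair the argument, apply the block decomposition to all of $q$, not only its diagonal part. Write $q=\sum_{i\leqslant j}q_{ij}$ with components in $S^2V_i^*$ and $V_i^*\otimes V_j^*$. The set $\Omega$ of pairs with $q_{ij}\neq 0$ must be a single $\widehat{G}$-orbit; otherwise the orbit sums give two linearly independent invariant quadrics. Comparing $\sum_{\Omega}q_{ij}^2$ with $q^2$ then forces $|\Omega|=1$; this step subsumes your Steps~2 and~3.

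Transitivity on the blocks then leaves only $r=2$ and $\Omega=\{\{1,2\}\}$, with $V_1$ and $V_2$ both $q$-isotropic. With $h$ as above, block-orthogonal by Schur's lemma, isotropy of $V_1$ gives $\sigma(V_1)\subseteq V_1^{\perp_h}=V_2$. So $\sigma$ interchanges the two blocks while $\widehat{G}$ permutes them, contradicting Lemma~\ref{lemma:intransitive-auxiliary}.

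This is exactly the paper's proof. The only difference is the starting point. The paper builds the block-orthogonal invariant Hermitian form first, and uses Schur's lemma to show that $B(v,w)=h(v,\sigma(w))$ is the complexification of a positive definite real form. You would instead start from the real form of Lemma~\ref{lemma:quadric} and use Schur's lemma to identify the associated Hermitian form.
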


\begin{proof}
Suppose that $G$ is not a primitive subgroup of $\mathrm{PGL}_{n+1}(\CC)$.
Then $V=\CC^{n+1}$ is an irreducible representation of the group $\widehat{G}$ by Lemma~\ref{lemma:intransitive-R}. By assumption,  there is a non-trivial decomposition
\begin{equation}\label{eq:proposition-D-blocks}
V=\bigoplus_{i=1}^{s}V_i
\end{equation}
such that $\widehat{G}$ transitively permutes the vector subspaces $V_1,\ldots,V_s$.

Denote by $\sigma$ the non-trivial element of the Galois group
$\mathrm{Gal}(\CC/\RR)$. We emphasize that we do not assume that $\sigma$ permutes
$V_1,\ldots,V_s$, cf. Example~\ref{example:funny}. Instead, we construct a $\widehat{G}$-invariant quadratic form that
measures the position of the decomposition~\eqref{eq:proposition-D-blocks} with respect
to its complex conjugate. To do this, let us start by constructing a $\widehat{G}$-invariant Hermitian form on~$V$ (we always use the convention that Hermitian forms are linear in the first argument).

Let $\widehat{G}_1$ be the stabilizer of $V_1$ in $\widehat{G}$. Choose a positive
definite Hermitian form on $V_1$ and average it over $\widehat{G}_1$. This gives us a positive definite $\widehat{G}_1$-invariant Hermitian form $h_1$ on $V_1$.
For every $i$, choose $g_i\in\widehat{G}$ such that $g_i(V_1)=V_i$, and
transport $h_1$ to a Hermitian form $h_i$ on $V_i$ by setting
$$
h_i(g_i(u),g_i(v))=h_1(u,v)
$$
for all $u,v\in V_1$. This definition does not depend on the choice of $g_i$, because
$h_1$ is invariant under the stabilizer of $V_1$. Declare the vector subspaces
$V_1,\ldots,V_s$ to be pairwise orthogonal. We obtain a positive definite Hermitian
form $h$ on $V$. Moreover, the form $h$ is $\widehat{G}$-invariant. Indeed, suppose that $g(V_i)=V_j$. Then
$$
g_j^{-1}\circ g\circ g_i\in\widehat{G}_1,
$$
and the $\widehat{G}_1$-invariance of $h_1$ shows that the restriction of $g$ gives an
isometry from $(V_i,h_i)$ to $(V_j,h_j)$. Since distinct vector subspaces $V_i$ are
$h$-orthogonal, this proves the $\widehat{G}$-invariance of~$h$.

Now, we define another positive definite Hermitian form on $V$ by
$$
h^\sigma(v,w)=\overline{h(\sigma(v),\sigma(w))}.
$$
Since the actions of $\widehat{G}$ and $\sigma$ commute, the Hermitian form $h^\sigma$ is also $\widehat{G}$-invariant. Further, there exists a unique complex linear operator~\mbox{$A\colon V\to V$} such that
$$
h^\sigma(v,w)=h(A(v),w)
$$
for all $v,w\in V$. The $\widehat{G}$-invariance of $h$ and $h^\sigma$ implies that $A$ commutes with the action of $\widehat{G}$. Moreover, since $V$ is irreducible, it follows from Schur's lemma that $A=c\mathrm{Id}_V$ for some $c\in\CC$. On the other hand, the positive definiteness of $h$ and $h^\sigma$ implies that $c$ is a positive real number. Thus, we have $h^\sigma=c h$. Now, applying the same operation once again, we get
$$
h=(h^\sigma)^\sigma=c^2h,
$$
so that $c=1$. Hence, we have
\begin{equation}
\label{eq:proposition-D-real-Hermitian}
h(\sigma(v),\sigma(w))=\overline{h(v,w)}
\end{equation}
for all $v,w\in V$.

Now, we set $B(v,w)=h(v,\sigma(w))$. Then the form $B$ is complex bilinear, since $h$ is conjugate linear in its second argument and $\sigma$ is conjugate linear. Moreover, it follows from \eqref{eq:proposition-D-real-Hermitian} that
$$
B(w,v)=h(w,\sigma(v))=\overline{h(\sigma(v),w)}=h(v,\sigma(w))=B(v,w),
$$
so $B$ is symmetric. Moreover, it is non-degenerate, because $h$ is non-degenerate and $\sigma$ is bijective. Furthermore, for every $g\in\widehat{G}$, we have
$$
B\big(g(v),g(w)\big)=h(g(v),\sigma(g(w)))=h\big(g(v),g(\sigma(w))\big)=B(v,w),
$$
so that $B$ is $\widehat{G}$-invariant. Next, using~\eqref{eq:proposition-D-real-Hermitian}, we get
$$
B(\sigma(v),\sigma(w))=h(\sigma(v),w)=\overline{h(v,\sigma(w))}=\overline{B(v,w)}.
$$
Thus, $B$ is the complexification of a real symmetric bilinear form. Finally, if $v\in\RR^{n+1}$ is non-zero, then $B(v,v)=h(v,v)>0$.
In particular, the quadratic form $q(v)=B(v,v)$  is non-zero, and it is a~complexification of a positive definite real quadratic form.

For a vector $v\in V$, write $v=v_1+\ldots+v_s$ with $v_i\in V_i$. For $1\leqslant i\leqslant s$, we set $q_{ii}(v)=B(v_i,v_i)$. Similarly, for $1\leqslant i<j\leqslant s$, we set $q_{ij}(v)=2B(v_i,v_j)$. Then
$$
q=\sum_{1\leqslant i\leqslant j\leqslant s}q_{ij}.
$$
Let $\Omega=\big\{\{i,j\}\ \big|\ 1\leqslant i\leqslant j\leqslant s,\ q_{ij}\neq 0\big\}$.
Then $\Omega$ is non-empty, because $q\neq 0$. By construction, the group $\widehat{G}$ acts on $\Omega$. Namely, if $g\in\widehat{G}$ induces a permutation $\tau_g$ of the vector subspaces~$V_i$, then
$$
q_{\tau_g(i)\tau_g(j)}(g(v))=q_{ij}(v).
$$
Thus, the group $\widehat{G}$ permutes the non-zero quadratic forms $q_{ij}$ in exactly the same way as it permutes the corresponding unordered pairs of indices.

We claim that $\Omega$ is a single $\widehat{G}$-orbit. Indeed, let $\Omega=\Omega_1\sqcup\ldots\sqcup\Omega_t$ be the orbit decomposition. For every $a\in\{1,\ldots,t\}$, set
$$
Q_a=\sum_{\{i,j\}\in\Omega_a}q_{ij}.
$$
Then each $Q_a$ is non-zero and $\widehat{G}$-invariant. Moreover, the quadratic forms
$Q_1,\ldots,Q_t$ are linearly independent.
Indeed, if we choose coordinates on $V$ which agree with the decomposition~\eqref{eq:proposition-D-blocks} and expand our quadratic forms in these coordinates,
then each monomial in $Q_a$ is a product of two variables which never appear simultaneously
in a monomial in any $Q_b$ with~\mbox{$b\neq a$}. Thus, if $t>1$, then $|\mathcal{O}_{\mathbb{P}^n_{\mathbb{R}}}(2)|$ contains a $G$-invariant pencil by Lemma~\ref{lemma:R-pencil-from-hypersurfaces}. Now, using Lemma~\ref{lemma:Pn-pencil-R}, we conclude that $t=1$, so that $\Omega$ is a single $\widehat{G}$-orbit.
In particular, we have the following dichotomy:
\begin{itemize}
\item either all elements of $\Omega$ are of the form $\{i,i\}$,
\item or all elements of  $\Omega$ are of the form $\{i,j\}$ with~\mbox{$i\neq j$}.
\end{itemize}
In both cases, we set
$$
F=\sum_{\{i,j\}\in\Omega}q_{ij}^2.
$$
Then $F$ is a $\widehat{G}$-invariant quartic form.
Recall that $q^2$ is also a $\widehat{G}$-invariant quartic form.
Thus, if $F$ and $q^2$ are linearly independent, then $|\mathcal{O}_{\mathbb{P}^n_{\mathbb{R}}}(4)|$ contains a $G$-invariant pencil by Lemma~\ref{lemma:R-pencil-from-hypersurfaces}. Hence, it follows from Lemma~\ref{lemma:Pn-pencil-R} that $F$ and $q^2$ are linearly dependent, because $n\geqslant 4$.
On the other hand, if $|\Omega|\geqslant 2$,  then choosing coordinates on $V$ as above, we~see that $q^2$ contains monomials depending on collections of variables which never appear in the~monomials of~the quartic form $F$. Hence, we conclude that $|\Omega|=1$.

Since the group $\widehat{G}$ acts transitively on the set of vector subspaces $V_1,\ldots,V_s$ and $s\ne 1$, the~unique element of the set $\Omega$ cannot be of the form $\{i,i\}$. Hence, possibly after renumbering the vector subspaces, we may assume
$$
\Omega=\big\{\{1,2\}\big\}.
$$
Now, using the transitivity of the $\widehat{G}$-action on the set of vector subspaces $V_1,\ldots,V_s$ again, we see that $s=2$, so we have $V=V_1\oplus V_2$. Further, $B(v,v)=0$ for every $v\in V_1$ and for every $v\in V_2$. Thus, since the bilinear form $B$ is symmetric, we conclude that both restrictions $B\vert_{V_1}$ and $B\vert_{V_2}$ vanish.

Let $w\in V_1$. Then for every $u\in V_1$ we have
$$
h(u,\sigma(w))=B(u,w)=0.
$$
Thus, since $V_1$ and $V_2$ are $h$-orthogonal, we have $\sigma(w)\in V_2$. This gives $\sigma(V_1)=V_2$ and $\sigma(V_2)=V_1$,
which contradicts Lemma~\ref{lemma:intransitive-auxiliary}.
\end{proof}

To finish the proof of Theorem F, let us describe all finite subgroups $G\subset\mathrm{PGL}_{n+1}(\RR)$ such that~$\PP^n_\RR$ is $G$-birationally rigid in the cases when $n=2$ and $n=3$.

\begin{lemma}
\label{lemma:R-dim-2}
Suppose that $n=2$. Then $\PP^2_\RR$ is $G$-birationally rigid if and only if $G\simeq\mathfrak{A}_5$.
\end{lemma}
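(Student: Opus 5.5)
Both implications are handled separately. For the ``only if'' direction, assume $\PP^2_\RR$ is $G$-birationally rigid. By Lemma~\ref{lemma:intransitive-R}, $G$ is transitive in $\mathrm{PGL}_3(\CC)$. I will then rule out imprimitive transitive $G$ directly. Since $3$ is prime, an imprimitive transitive $G$ permutes three lines in $\CC^3$, that is, it has an orbit of length $3$ in $\PP^2_\CC$. By the Galois argument from the proof of Proposition~\ref{proposition:D} (real $\widehat{G}$-invariant positive definite form and Schur's lemma), either this triangle is defined over $\RR$ as a set, or it is swapped with its conjugate triangle. In the first case, the three points form a real $G$-invariant zero-dimensional subscheme of degree $3$. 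Its points are not collinear, so blowing it up gives a real $G$-equivariant sextic del Pezzo link. Composing with the standard quadratic involution based at these points, or with the contraction to $\PP^1\times\PP^1$/conic bundle when some points are non-real, shows that $\PP^2_\RR$ is not $G$-birationally rigid. Here I rely on the real Sarkisov program for rational surfaces (Iskovskikh's classification of links). In the second case, the six points together give a real orbit of length $\leqslant 6$ in general enough position, and blowing it up again yields a non-trivial link. Hence $G$ is primitive. The primitive finite subgroups of $\mathrm{PGL}_3(\CC)$ are $\mathfrak{A}_5$, $\mathfrak{A}_6$, $\mathrm{PSL}_2(\mathbf{F}_7)$ and the Hessian groups of orders $36$, $72$ and $216$. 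The key observation is that $G\subset\mathrm{PGL}_3(\RR)=\mathrm{SL}_3(\RR)$ lifts to a finite subgroup of $\mathrm{SO}_3(\RR)$ after averaging, since it preserves the pointless conic of Lemma~\ref{lemma:quadric}. Finite subgroups of $\mathrm{SO}_3(\RR)$ are cyclic, dihedral, $\mathfrak{A}_4$, $\mathfrak{S}_4$ or $\mathfrak{A}_5$, and the only primitive one is $\mathfrak{A}_5$. This establishes $G\simeq\mathfrak{A}_5$.

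Note that this $\mathrm{SO}_3(\RR)$ observation in fact makes the imprimitivity discussion above unnecessary, apart from transitivity. The groups $\mathfrak{A}_4$ and $\mathfrak{S}_4$ are exactly the real imprimitive ones, and I will exclude them by the first step or by direct links. For example, for $\mathfrak{S}_4$ and $\mathfrak{A}_4$ the complex orbit of length $3$ is real, being the three coordinate points of the octahedral frame, so the quadratic involution centered there is a $G$-birational self-map and the blow-up gives a $G$-link to a real del Pezzo surface of degree $6$. I also need to check that this real link really produces a non-isomorphic Mori fibre space, or at least a non-regularizable self-map; this is sufficient for non-rigidity only if the output is not $G$-isomorphic to $\PP^2_\RR$. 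So I will instead aim for the $\PP^1\times\PP^1$-type output: for $\mathfrak{A}_4$ and $\mathfrak{S}_4$ the degree-$6$ link ends on a conic bundle over $\PP^1$ or on a quadric. I expect the precise bookkeeping of real points in these links to be the fiddly part.

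For the ``if'' direction, let $G\simeq\mathfrak{A}_5$ act on $\PP^2_\RR$. After base change to $\CC$, we have $G$-birational rigidity of $\PP^2_\CC$ by Theorem~\ref{theorem:Sakovich}, since $\mathfrak{A}_5$ fixes no point and is neither $\mathfrak{A}_4$ nor $\mathfrak{S}_4$. I would then transfer this to the real case: a real $G$-Mori fibre space $Y$ that is $G$-birational to $\PP^2_\RR$ complexifies to a $G\times\mathrm{Gal}$-Mori fibre space, not necessarily a $G$-one. So it is cleaner to argue directly with the Noether--Fano method. A mobile real $G$-invariant linear system with a non-canonical center would have a base orbit of multiplicity greater than $n/3$, and its length is bounded by $9$ via $M_1\cdot M_2$. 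However, the $\mathfrak{A}_5$-orbits in $\PP^2_\CC$ have lengths $6,10,12,15,20,30,60$, and the orbit of length $6$ lies on no real point set. Excluding the length-$6$ orbit via the Hirzebruch--Kawamata degree bound, exactly as in the complex proof, is the step I expect to be the main obstacle, and I will import it from Sakovics' argument verbatim, since that argument is Galois-equivariant.
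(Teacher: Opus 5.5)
Your reduction to $G\in\{\mathfrak{A}_4,\mathfrak{S}_4,\mathfrak{A}_5\}$ is fine and is what the paper does: transitivity via Lemma~\ref{lemma:intransitive-R}, then the embedding into $\mathrm{SO}_3(\RR)$ through the pointless invariant conic. The step meant to exclude $\mathfrak{A}_4$ and $\mathfrak{S}_4$, however, fails.

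These groups act by signed permutation matrices, and the standard quadratic involution $[x:y:z]\mapsto[yz:xz:xy]$ commutes with every such matrix. Let $S_6$ be the blow up of the three coordinate points. Then $\mathrm{rk}\,\mathrm{Pic}(S_6)^G=2$, and the two $G$-extremal rays are spanned by $E_1+E_2+E_3$ and by the sum of the strict transforms of the coordinate lines. Both rays contract $S_6$ onto $\PP^2_\RR$. The three conic bundle structures $|H-E_i|$ are permuted transitively by $G$, and no contraction $S_6\to\PP^1\times\PP^1$ is $G$-equivariant. So this link only produces a non-biregular element of $\mathrm{Bir}^G(\PP^2_\RR)$, i.e.\ non-superrigidity, never a new Mori fibre space. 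Your general claim that an invariant triangle forces non-rigidity is in fact false already over $\CC$: $\mumu_m^2\rtimes\mathfrak{S}_3$ with $m\geqslant 3$ has such a triangle, yet $\PP^2$ is rigid for it by Theorem~\ref{theorem:Sakovich}.

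The missing input is the real orbit of length four: $[1:1:1]$, $[1:-1:-1]$, $[-1:1:-1]$, $[-1:-1:1]$, the $3$-fold rotation axes. These points are in general position, so blowing them up gives a real quintic del Pezzo surface with $G$-invariant Picard rank $2$. Its second extremal contraction is the conic bundle over $\PP^1_\RR$ given by the pencil of conics through the four points. The normal Klein four subgroup swaps the two components of each degenerate member, so this is a $G$-Mori fibre space over a curve.

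In the converse direction, your plan to exclude the $\mathfrak{A}_5$-orbit $\Sigma_6$ is wrong on two counts. First, the six points are real: they are the $5$-fold axes of the icosahedron. Second, $\Sigma_6$ is a genuine maximal centre. The mobile $G$-invariant system $\mathcal{M}$ of quintics double along $\Sigma_6$ makes $(\PP^2_\RR,\frac{3}{5}\mathcal{M})$ non-canonical, since $\frac{3}{5}\cdot 2>1$. Blowing up $\Sigma_6$ gives a real Clebsch cubic, and contracting the other $G$-invariant sixer gives a non-biregular $G$-birational map $\PP^2_\RR\dasharrow\PP^2_\RR$. That sixer is defined over $\RR$, because complex conjugation fixes $H$ and each $E_i$, hence acts trivially on the $G$-invariant Picard group. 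So $\PP^2_\RR$ is not $\mathfrak{A}_5$-superrigid, as the remark after the lemma says, and no degree bound can rule this centre out.

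What must actually be shown, and what the paper's appeal to Sakovics' argument amounts to, is this. $\Sigma_6$ is the only $G$-invariant zero-dimensional real subscheme of degree at most $8$, because $\mathfrak{A}_5$-orbits in $\PP^2_\CC$ have length $6$ or at least $10$. Hence every $G$-Sarkisov link from $\PP^2_\RR$ is this Clebsch link and returns to $\PP^2_\RR$. In Noether--Fano terms, $\Sigma_6$ must be untwisted by this self-map, not excluded. With these two corrections your outline becomes the paper's proof.
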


\begin{proof}
We know from Lemma~\ref{lemma:quadric} that
$\mathbb{P}^2_{\mathbb{R}}$ contains a $G$-invariant smooth pointless conic~$C$. This gives a~monomorphism
$$
G\hookrightarrow\mathrm{Aut}(C)\simeq\mathrm{SO}_3(\mathbb{R}).
$$
On the other hand, we know that every finite subgroup in $\mathrm{SO}_3(\mathbb{R})$ is isomorphic to one of the following groups:
\begin{itemize}
\item the cyclic group $\mumu_m$,
\item the Klein four group $\mumu_2^2$,
\item the dihedral group $\mathfrak{D}_m$ of order $2m\geqslant 6$,
\item the alternating group $\mathfrak{A}_4$,
\item the symmetric group $\mathfrak{S}_4$,
\item the alternating group $\mathfrak{A}_5$.
\end{itemize}
Moreover, finite subgroups of the group $\mathrm{SO}_3(\mathbb{R})$ are conjugate if and only if they are isomorphic.
Hence, arguing as in the proof of Theorem~\ref{theorem:Sakovich} given in~\cite{Sakovics2019}, we see that $\mathbb{P}^2_{\mathbb{R}}$ is $G$-birationally rigid if and only if~\mbox{$G\simeq\mathfrak{A}_5$}.
\end{proof}

Up to conjugation, $\mathrm{PGL}_3(\RR)$ contains is unique subgroup isomorphic to $\mathfrak{A}_5$, and this subgroup is primitive.
Hence, Lemma~\ref{lemma:R-dim-2} implies Theorem~F in dimension $2$.

\begin{remark}
Note also that $\PP^2_\RR$ is not $\mathfrak{A}_5$-birationally superrigid, see e.g.~\mbox{\cite[Lemma B.15]{Cheltsov2014}}. Indeed, $\PP^2_\RR$ contains a unique $\mathfrak{A}_5$-orbit $\Sigma_6$ of length $6$. Blowing up this orbit, we obtain a real form of the famous Clebsch cubic. Then blowing down the strict transforms of six conics passing through quintuples of points of $\Sigma_6$, we obtain a $\mathfrak{A}_5$-equivariant birational map $\PP^2_\RR\dasharrow\PP^2_\RR$.
\end{remark}

In dimension $3$, Theorem F is implied by the following lemmas.

\begin{lemma}
\label{lemma:R-dim-3-aux-2-2}
Suppose that $n=3$, and  $\PP^3_{\mathbb{R}}$ contains a real $G$-invariant curve $Z$ of degree at most~$2$.
Then $\PP^3_\RR$ is not $G$-birationally rigid.
\end{lemma}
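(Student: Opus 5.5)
Our plan is a case analysis on the structure of $Z$ over $\mathbb{C}$. In each case we either find a $G$-invariant real linear subspace, which gives a Mori fibre space over a positive-dimensional base, or we blow up $Z$ and obtain a $G$-Sarkisov link. Since $\deg Z\leqslant 2$, the complexification $Z_{\mathbb{C}}$ is one of the following: a line, a pair of lines (meeting or skew), or a smooth conic. The Galois group $\mathrm{Gal}(\mathbb{C}/\mathbb{R})$ and $G$ both permute the components, and we may assume $Z$ is $G$-irreducible over $\mathbb{R}$.

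First, suppose $Z_{\mathbb{C}}$ spans a proper linear subspace of $\mathbb{P}^3_{\mathbb{C}}$, i.e. $Z$ is a line, a conic, or a pair of meeting lines. Then the span is a real $G$-invariant line or plane, and the singular point of a pair of meeting lines is a real $G$-fixed point. In these cases $G$ is not transitive over $\mathbb{C}$, and the argument of Lemma~\ref{lemma:intransitive-R} applies. Concretely, projection from a real invariant point or line (or from the invariant point or line complementary to an invariant plane, which exists by averaging a Hermitian, or Euclidean, form) gives a real $G$-Mori fibre space over $\mathbb{P}^1_{\mathbb{R}}$ or $\mathbb{P}^2_{\mathbb{R}}$. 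Hence $\mathbb{P}^3_{\mathbb{R}}$ is not $G$-birationally rigid.

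The remaining case is two skew lines $L'\cup L''$ that together span $\mathbb{P}^3_{\mathbb{C}}$. There are two subcases.
\begin{itemize}
\item If the lines are swapped by $\mathrm{Gal}(\mathbb{C}/\mathbb{R})$, then, since $G$ either preserves or swaps them, we are exactly in the setting of Lemma~\ref{lemma:intransitive-auxiliary} with $V'$ and $V''$ their affine cones. That lemma gives the $\mathbb{P}^1$-bundle over $R_{\mathbb{C}/\mathbb{R}}\mathbb{P}^1$.
\item If both lines are real, then $G$ permutes two real lines. Blowing up both lines and projecting from each onto the other gives a $\mathbb{P}^1$-bundle over $L'\times L''\simeq\mathbb{P}^1_{\mathbb{R}}\times\mathbb{P}^1_{\mathbb{R}}$, as in Lemma~\ref{lemma:intransitive-2}. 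This is $G$-equivariant, where $G$ possibly swaps the factors, and it is defined over $\mathbb{R}$. The relative $G$-MMP over this base, or directly the observation that $\mathrm{rk}\,\mathrm{Pic}^G$ of the blow-up over the base equals $1$, yields a $G$-Mori fibre space with positive-dimensional base.
\end{itemize}

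The main obstacle is bookkeeping rather than a deep step. We must make sure every configuration of degree at most $2$ is covered, including a non-reduced double line: its support is a real invariant line, so it falls under the first case. We must also check that the MMP output in the skew-lines case really has a positive-dimensional base. For the latter we will invoke the rational map to $L'\times L''$ (or to its Weil restriction), whose fibres are lines, and then run the relative $G$-MMP over that base exactly as in the proofs of Lemmas~\ref{lemma:intransitive-2} and~\ref{lemma:Pn-pencil}.
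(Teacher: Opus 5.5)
Your proposal is correct, and for most configurations it follows the paper's route. The paper's entire proof is a citation of Lemmas~\ref{lemma:intransitive-auxiliary} and~\ref{lemma:intransitive-R}, and you use exactly these for curves spanning a proper linear subspace and for a pair of Galois-conjugate skew lines.

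The genuine difference is your second bullet. Suppose $Z_{\mathbb{C}}$ is a pair of skew lines that are both real and are interchanged by $G$. Then neither cited lemma applies:
\begin{itemize}
\item $\mathrm{Gal}(\mathbb{C}/\mathbb{R})$ fixes each line, so Lemma~\ref{lemma:intransitive-auxiliary} is unavailable;
\item $G$ can act irreducibly on $\mathbb{C}^4$, so Lemma~\ref{lemma:intransitive-R} is unavailable as well. For example, take $\Gamma\wr\mumu_2$ acting on $\mathbb{R}^2\oplus\mathbb{R}^2$, where $\Gamma\subset\mathrm{GL}_2(\mathbb{R})$ is absolutely irreducible.
\end{itemize}
Your real version of Lemma~\ref{lemma:intransitive-2} handles this case correctly. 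The blow-up $\mathrm{Bl}_{L'\cup L''}\mathbb{P}^3_{\mathbb{R}}\to L'\times L''$ is a $\mathbb{P}^1$-bundle over $\mathbb{P}^1_{\mathbb{R}}\times\mathbb{P}^1_{\mathbb{R}}$. Its $G$-invariant relative Picard rank is one: $2-1$ if $G$ swaps the lines, $3-2$ otherwise.

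The paper actually needs this case. In Lemma~\ref{lemma:R-dim-3-short}, with $s=2$ and $\sigma(L_1)=L_1$, $\sigma(L_2)=L_2$, the group $G$ swaps two real skew lines while acting irreducibly. So your argument is, if anything, more complete than the paper's one-line proof.
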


\begin{proof}
Follows from Lemmas~\ref{lemma:intransitive-auxiliary} and \ref{lemma:intransitive-R}.
\end{proof}

\begin{lemma}
\label{lemma:R-dim-3-quadric-4-lines}
Suppose that $n=3$, and $\PP^3_{\mathbb{R}}$ contains a real $G$-invariant curve $Z$ of degree $4$ such that its geometric model consists of four disjoint lines, and $Z$ is not contained in a quadric surface. Then $\PP^3_\RR$ is not $G$-birationally rigid.
\end{lemma}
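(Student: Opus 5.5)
The plan is to construct an explicit $G$-equivariant Sarkisov-type link, defined over $\RR$, from $\PP^3_\RR$ to a $G$-Mori fibre space that is not $\PP^3_\RR$.

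Let $Z_\CC=L_1\cup L_2\cup L_3\cup L_4$ be the four disjoint lines. Since $Z$ lies in no quadric, $h^0(\mathcal{O}_{\PP^3}(2)\otimes\mathcal{I}_Z)=0$. This matches the count $10-4\cdot 3=-2$, so the lines are in general position: no three lie on a common quadric, since otherwise the fourth line would meet that quadric in only two points.

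The natural linear system is $\mathcal{L}=|\mathcal{I}_Z(3)|$ of cubics through the four lines. Its expected dimension is $20-16=4$, which gives a map to $\PP^3$. I would expect the actual dimension to be $3$ in the general configuration, because the lines carry transversal data. This should recover the classical cubo-cubic Cremona transformation with base curve four skew lines. Here I would first double-check the genus and degree: a cubo-cubic has base locus a curve of degree $6$ and genus $3$, so four skew lines do not fit directly.

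Instead, I would use the classical picture in which each triple $L_i,L_j,L_k$ spans a unique quadric $Q_{ijk}$ through it. Blowing up $\pi\colon Y\to\PP^3_\CC$ along the four lines, we have $-K_Y=4H-E$, where $E=E_1+\dots+E_4$. Here $-K_Y$ is nef and big: the only curves with $-K_Y\cdot C\le 0$ are the strict transforms of lines meeting all four $L_i$. There are exactly two such transversals, and $(4H-E)\cdot\widetilde{T}=4-4=0$. The plan is to flop these two curves and then run the two-ray game.

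On the other side, I expect the link to end in a divisorial contraction of the strict transforms of the four quadrics $Q_{ijk}$, since $\widetilde{Q}_{ijk}\sim 2H-E_i-E_j-E_k$. This should produce a Fano threefold $X'$, with $G$ acting, and $\mathrm{rk}\,\mathrm{Pic}(X')^G=1$ because the quadrics form one $G$-orbit (the lines are permuted transitively). Alternatively, the flopped variety may carry a fibration. In either case the outcome is a $G$-Mori fibre space different from $\PP^3$, since the exceptional divisors differ. Indeed $(2H-E_i-E_j-E_k)$ is contracted because its restriction to $\widetilde{Q}_{ijk}\simeq\PP^1\times\PP^1$ is checked to be negative on the ruling.

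The Galois group and $G$ preserve $Z$, so they also preserve the set of transversals, the quadrics, and the whole link. Every step therefore descends to $\RR$ as a $G$-equivariant link, exactly as in the proofs of Lemmas~\ref{lemma:intransitive-auxiliary} and~\ref{lemma:Pn-ci-R}, and so $\PP^3_\RR$ is not $G$-birationally rigid.

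The main obstacle is identifying the other end of the link precisely: verifying the flop of the two transversals, and checking whether the resulting contraction is divisorial (giving a terminal Fano threefold, for which I would use the discrepancy computation of Lemma~\ref{lemma:Pn-ci}) or of fibre type. Either outcome suffices. I would compute the Mori cone of the flopped $Y^+$ on the $G$-invariant part, which has rank $2$, generated by $H$ and $E$, to decide.
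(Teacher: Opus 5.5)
The gap is at the end of the link. You never show that the $G$-Mori fibre space on the far side differs from $\mathbb{P}^3_\mathbb{R}$, and in fact it does not.

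First, a correction: on $Y$ the surface $\widetilde{Q}_{ijk}$ is $\mathbb{P}^1\times\mathbb{P}^1$ blown up at the two points of $L_l\cap Q_{ijk}$, with $\widetilde{T}_1,\widetilde{T}_2$ as $(-1)$-curves. It becomes $\mathbb{P}^1\times\mathbb{P}^1$ only after the flop.

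Now take the two-transversal case and put $H'=3H-E$.
\begin{itemize}
\item Since $H'\cdot\widetilde{T}_m=-1$, each Atiyah flop raises $(H')^3$ by $1$. So on the flopped threefold $Y^+$ one gets $(H')^3=-1+2=1$.
\item On $Y^+$, $H'$ is nef and is trivial on the contracted ruling of the four surfaces $\widetilde{Q}^+_{ijk}$, which are now disjoint.
\item One has $-K_{Y^+}+\sum\widetilde{Q}^+_{ijk}\sim 12H-4E=4H'$.
\end{itemize}
Hence the contraction lands on a smooth Fano threefold $X'$ of Picard rank one with $-K_{X'}=4H'$ and $H'^3=1$, i.e.\ on $\mathbb{P}^3$, and the four quadrics go to four skew lines.

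Your link is just the cubo-cubic Cremona transformation given by $|\mathcal{I}_Z(3)|$. Every cubic through the four lines contains both transversals, and $Z\cup T_1\cup T_2$ is a sextic of arithmetic genus $3$. This is exactly the base curve you discarded. Over $\mathbb{R}$, $X'$ is a form of $\mathbb{P}^3$ birational to $\mathbb{P}^3_\mathbb{R}$, so it has real points and is $\mathbb{P}^3_\mathbb{R}$.

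So you only obtain a $G$-birational map $\mathbb{P}^3_\mathbb{R}\dashrightarrow\mathbb{P}^3_\mathbb{R}$. This can contradict superrigidity but not rigidity, unless you prove that the transported $G$-action on $X'$ is not conjugate to the original one. You do not address this, and ``the exceptional divisors differ'' proves nothing. Your claim that ``either outcome suffices'' misses the outcome that actually occurs.

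The missing idea is to use the transversals directly, as the paper does.
\begin{itemize}
\item Any three pairwise skew lines lie on a unique smooth quadric. So your assertion that no three of the $L_i$ lie on a quadric is false, and it contradicts your later use of $Q_{ijk}$.
\item Since $L_4\not\subset Q_{123}$, it meets $Q_{123}$ in one or two points. Each lies on a unique line of the other ruling, so the four lines have one or two common transversals.
\item As $Z$ is invariant under $G$ and under complex conjugation, so is the union of these transversals. It is therefore a real $G$-invariant curve of degree at most $2$, and Lemma~\ref{lemma:R-dim-3-aux-2-2} finishes the proof.
\end{itemize}
This short argument also covers the tangent case with a single transversal, where your flop would not be an Atiyah flop. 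It also does not need $G$ to permute the four lines transitively, an assumption you used that is not among the hypotheses.
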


\begin{proof}
Write $Z_\mathbb{C}=L_1+L_2+L_3+L_4$, where $L_1$, $L_2$, $L_3$, $L_4$ are disjoint lines in $\mathbb{P}^3_{\mathbb{C}}$.
Since $L_1$, $L_2$, $L_3$, $L_4$ are not contained in a quadric surface, they have either one or two common transversals. The union of these transversals is a real $G$-invariant curve of degree at most $2$, so the assertion follows from Lemma~\ref{lemma:R-dim-3-aux-2-2}.
\end{proof}

\begin{lemma}
\label{lemma:R-dim-3-aux-1-1-1-1}
Suppose that $n=3$, and  $\PP^3_{\mathbb{R}}$ contains a real zero-dimensional $G$-invariant reduced subscheme $Z$ of length $\leqslant 4$.
Then $\PP^3_\RR$ is not $G$-birationally rigid.
\end{lemma}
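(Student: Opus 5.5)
The plan is to split according to the configuration of the geometric points of $Z_{\mathbb{C}}$, which are permuted by both $G$ and $\mathrm{Gal}(\CC/\RR)$. Let $\Gamma$ be the group generated by $G$ and the complex conjugation $\sigma$, acting semilinearly on $\mathbb{P}^3_{\CC}$. First I reduce to the case where $\Gamma$ acts transitively on $Z_{\CC}$. Indeed, every $\Gamma$-orbit in $Z_{\CC}$ is itself a real $G$-invariant reduced subscheme of length at most $4$. So I will treat orbits of length $1,2,3,4$ in turn, always assuming that smaller lengths have already been excluded.

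\textbf{Degenerate configurations.} If $Z_{\CC}$ is a point, a line, or coplanar, consider the linear span of $Z_{\CC}$. It is $\Gamma$-invariant, so it is the complexification of a real $G$-invariant proper linear subspace. Hence $G$ is not a transitive subgroup of $\mathrm{PGL}_4(\CC)$, and Lemma~\ref{lemma:intransitive-R} gives the claim. This covers all lengths at most $3$, and length $4$ with coplanar points. So we may assume $Z_{\CC}=\{P_1,\ldots,P_4\}$ consists of four points in general position. Choosing coordinates with $P_i$ the coordinate points, $G$ normalizes the torus $\mathbb{T}$, and so does $\sigma$ up to composing with a real coordinate change. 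Let $\bar\Gamma\subset\mathfrak{S}_4$ be the image of $\Gamma$; it is transitive. Suppose $\bar\Gamma$ is imprimitive, i.e.\ it preserves a partition $\{\{P_1,P_2\},\{P_3,P_4\}\}$. Then the two lines $P_1P_2$ and $P_3P_4$ form a real $G$-invariant curve of degree $2$, and Lemma~\ref{lemma:R-dim-3-aux-2-2} applies. Hence $\bar\Gamma\simeq\mathfrak{A}_4$ or $\mathfrak{S}_4$. In either case $\bar\Gamma$ is transitive on the six unordered pairs $\{P_i,P_j\}$.

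\textbf{Main construction.} Now take the linear system $\mathcal{M}$ of sextics singular along the six lines $P_iP_j$, as in Example~\ref{example:X24}. It is determined by the configuration $Z_{\CC}$, which is Galois-invariant, so $\mathcal{M}$ is defined over $\RR$ and $G$-invariant. Consequently the map $\psi\colon\mathbb{P}^3\dasharrow\mathscr{X}\simeq(\mathbb{P}^1)^3/\mumu_2$ descends to a real $G$-birational map onto a real form $\mathscr{X}_{\RR}$. By Example~\ref{example:X24}, $\mathscr{X}$ has terminal singularities and is not isomorphic to $\mathbb{P}^3$. The torus-invariant divisors of $\mathscr{X}$ correspond to the six lines, and they generate $\mathrm{Cl}(\mathscr{X}_{\CC})\otimes\mathbb{Q}$. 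The group $\Gamma$ permutes these divisors transitively through $\bar\Gamma$. Hence
$$
\mathrm{rk}\,\mathrm{Cl}(\mathscr{X}_{\RR})^G=\mathrm{rk}\,\mathrm{Cl}(\mathscr{X}_{\CC})^{\Gamma}=1 .
$$
Therefore $\mathscr{X}_{\RR}$ is a real $G$-Mori fibre space over a point that is $G$-birational to $\mathbb{P}^3_{\RR}$ but not isomorphic to it, so $\mathbb{P}^3_{\RR}$ is not $G$-birationally rigid.

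\textbf{Main obstacle.} The delicate point is the descent. One must check that the $\mathbb{W}$-equivariant diagram of Example~\ref{example:X24} is compatible with the semilinear action of $\sigma$: the blow-ups of the points and lines, the flops and the contractions are all canonically attached to the configuration, so this should hold. One must also check that the invariant class group rank over $\RR$ equals the $\Gamma$-invariant rank over $\CC$, and that $\mathbb{T}$ acts trivially on $\mathrm{Cl}$, as in the proof of Theorem~A. I would verify these by noting that every step in the construction is $\mathrm{Aut}$-equivariant for the configuration of four points, and so is Galois-equivariant.
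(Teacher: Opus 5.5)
Your proposal is correct and follows the same route as the paper. You reduce via Lemma~\ref{lemma:intransitive-R} to four geometric points in general position, pass to a real form of the toric threefold $\mathscr{X}$ of Example~\ref{example:X24}, and use Lemma~\ref{lemma:R-dim-3-aux-2-2} to rule out the case where the group is not transitive on the six edges; you simply make that exclusion before constructing the map rather than after. The descent you flag as the main obstacle is in fact immediate: $\mathcal{M}$ is the Galois-stable system of sextics singular along the six edges, so $\psi$ is already defined over $\mathbb{R}$, and the remaining properties of $\mathscr{X}$ are geometric.
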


\begin{proof}
By Lemma~\ref{lemma:intransitive-R}, we may assume that the length of $Z$ is $4$, and the points of $Z_{\mathbb{C}}$ are in general linear position.
Now, blowing up $Z$ and arguing as in Example~\ref{example:X24}, we obtain a~$G$-birational map from $\PP^3_\RR$ to a real form of the terminal toric Fano threefold $\mathscr{X}$ that is described in Example~\ref{example:X24}. Using this and Lemma~\ref{lemma:R-dim-3-aux-2-2}, we see that $\PP^3_{\RR}$ is not $G$-birationally rigid.
\end{proof}

\begin{lemma}[Theorem F in dimension $3$]
\label{lemma:R-dim-3-short}
Suppose that $n=3$ and $\mathbb{P}^3_{\mathbb{R}}$ is \mbox{$G$-birationally} rigid.
Then $G$ is a primitive subgroup in $\mathrm{PGL}_{4}(\mathbb{C})$.
\end{lemma}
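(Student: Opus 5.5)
We argue by contradiction: assume $\mathbb{P}^3_{\mathbb{R}}$ is $G$-birationally rigid and $G$ is not primitive in $\mathrm{PGL}_4(\mathbb{C})$. By Lemma~\ref{lemma:intransitive-R}, $G$ is transitive, so $V=\mathbb{C}^4$ is an irreducible $\widehat{G}$-representation admitting a non-trivial system of imprimitivity $V=V_1\oplus\ldots\oplus V_s$ with $s\in\{2,4\}$. Geometrically, this gives either a $G$-invariant pair of skew lines $L_1,L_2\subset\mathbb{P}^3_{\mathbb{C}}$ (if $s=2$) or a $G$-orbit of four points in general linear position (if $s=4$). We do not assume that this configuration is $\mathrm{Gal}(\mathbb{C}/\mathbb{R})$-invariant (cf.\ Example~\ref{example:funny}). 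Instead, we consider its union with its complex conjugate and reduce to Lemmas~\ref{lemma:R-dim-3-aux-2-2}, \ref{lemma:R-dim-3-quadric-4-lines}, and~\ref{lemma:R-dim-3-aux-1-1-1-1}.

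\emph{Case $s=4$.} Let $\Sigma=\{P_1,\ldots,P_4\}$ be the orbit and $\Sigma^\sigma$ its conjugate; both are $G$-invariant. If $\Sigma=\Sigma^\sigma$, then $\Sigma$ is a real reduced $G$-invariant subscheme of length $4$, and Lemma~\ref{lemma:R-dim-3-aux-1-1-1-1} applies. Otherwise $\Sigma\cap\Sigma^\sigma$ is $G$-invariant, and since $\Sigma$ is a single orbit it is empty. In that case we use the coordinate tetrahedron of $\Sigma$ instead. Its six edges form a $G$-invariant curve, and the three pairs of opposite edges give $G$-permuted pairs of skew lines. Combining these with the corresponding structure of $\Sigma^\sigma$, the aim is to produce a real $G$-invariant curve of degree at most $2$ or four disjoint lines. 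A cleaner route is available, however. The invariant Hermitian/bilinear form argument in the proof of Proposition~\ref{proposition:D} yields a real $\widehat{G}$-invariant positive quadratic form $q=\sum q_{ij}$. If $\Omega$ has at least two orbits, this gives a $G$-invariant pencil of quadrics, and Lemma~\ref{lemma:Pn-pencil-R} applies since $2\leqslant 3$. So $\Omega$ is one orbit, either all diagonal pairs or all off-diagonal pairs.

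\emph{Case $s=2$.} The lines $L_1,L_2$ and $L_1^\sigma,L_2^\sigma$ are considered. If $\{L_1,L_2\}$ is $\sigma$-invariant, then $Z=L_1+L_2$ is a real $G$-invariant curve of degree $2$, and Lemma~\ref{lemma:R-dim-3-aux-2-2} applies. If $\{L_1^\sigma,L_2^\sigma\}\cap\{L_1,L_2\}\neq\varnothing$ but the pairs differ, then $G$ fixes one line and $\sigma$ its partner, so the curve $L_1+L_1^\sigma$ (of degree $2$) is real and $G$-invariant, and again Lemma~\ref{lemma:R-dim-3-aux-2-2} applies. Otherwise we get four lines $L_1,L_2,L_1^\sigma,L_2^\sigma$ forming a real $G$-invariant curve of degree $4$. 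If two of them meet, or if one of them meets a transversal, their intersections or spans produce invariant points, planes, or lines, leading to real invariant configurations of degree at most $2$, or to intransitivity. If the four lines are disjoint and not on a quadric, Lemma~\ref{lemma:R-dim-3-quadric-4-lines} applies. If they lie on a quadric $S$, then $S$ is a real $G$-invariant quadric. Together with the invariant pointless quadric from Lemma~\ref{lemma:quadric}, this either gives a pencil, to which Lemma~\ref{lemma:Pn-pencil-R} applies, or shows that $S$ is that pointless quadric $Q$. In the latter case the four lines lie in one ruling of $Q\simeq R_{\mathbb{C}/\mathbb{R}}\mathbb{P}^1$-type surface, and the projection to the other ruling yields a $G$-equivariant conic bundle structure after blowing up, contradicting rigidity.

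\emph{Main obstacle.} The hardest step is the last subcase, where all lines or points lie on the unique invariant pointless quadric $Q$, which is real with no real points. There I would first try to construct an explicit link: blow up the real curve $L_1\cup L_2\cup L_1^\sigma\cup L_2^\sigma$ (or, for $s=4$, the eight points) and analyze the resulting Mori fibre structure, as in Lemma~\ref{lemma:intransitive-auxiliary}. The fallback is the classification of transitive imprimitive finite subgroups of $\mathrm{PGL}_4(\mathbb{C})$ with real structure, to verify directly that a second invariant quadric exists.
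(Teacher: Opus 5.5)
Your treatment of $s=2$ follows the paper's outline for most of the way. However, the proposal leaves the decisive step open in both cases, so there are two genuine gaps.

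\emph{The case $s=4$ is not completed.} The tetrahedron route is only stated as an aim. The ``cleaner route'' stops exactly where it stops being useful. In Proposition~\ref{proposition:D}, the conclusion that $\Omega$ is a single orbit is upgraded to $|\Omega|=1$ by comparing the quartics $\sum q_{ij}^2$ and $q^2$. For $n=3$ a $G$-invariant pencil of quartics gives nothing, because Lemma~\ref{lemma:Pn-pencil-R} needs $d\leqslant n$; this is precisely why that proposition assumes $n\geqslant 4$. Concretely, if $\Omega$ is the $4$-cycle $\{12,23,34,41\}$ or all six pairs, the form only tells you that $\sigma(P_1)$ lies on the line $P_2P_4$, respectively in the plane $P_2P_3P_4$, and neither is a contradiction.

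The missing idea is to use the real lines $\ell_i$ joining $P_i$ and $\sigma(P_i)$. Their union is a real $G$-invariant curve of degree $1$, $2$ or $4$.
\begin{itemize}
\item Lemma~\ref{lemma:R-dim-3-aux-2-2} forces four distinct lines.
\item Transitivity forbids them from being coplanar, so the curve has at most four singular points.
\item Lemma~\ref{lemma:R-dim-3-aux-1-1-1-1} then makes the lines disjoint, and Lemma~\ref{lemma:R-dim-3-quadric-4-lines} puts them on a real $G$-invariant quadric $S$.
\item Since $S$ contains real lines, it differs from the pointless invariant quadric of Lemma~\ref{lemma:quadric}. The two quadrics span a $G$-invariant pencil, contradicting Lemma~\ref{lemma:Pn-pencil-R}.
\end{itemize}

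\emph{The last subcase of $s=2$ is not handled.} This is the subcase where the four disjoint lines lie on the pointless quadric $Q$. Your conic-bundle argument has no basis: you do not say which map you mean, and a ruling of an invariant quadric surface does not by itself give a $G$-equivariant fibration of $\mathbb{P}^3$. Also, $Q$ is not of Weil-restriction type, since that quadric has real points. Rather, $Q\simeq C\times C$ with $C$ a pointless conic, so both rulings are real.

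What works, and makes explicit the paper's ``analyzing the $G$-action on $Q$'', is the following. $G$ preserves the ruling containing the four lines and acts on its parameter conic by real automorphisms. This action has two distinct orbits of length $2$, namely $\{L_1,L_2\}$ and $\{L_1^\sigma,L_2^\sigma\}$. Every finite subgroup of $\mathrm{PGL}_2(\mathbb{C})$ other than $\mumu_2$ and $\mumu_2^2$ has at most one orbit of length $2$ on $\mathbb{P}^1$. If the image is $\mumu_2$, its fixed points give $G$-invariant lines, contradicting transitivity. If the image is $\mumu_2^2$, every orbit of length $2$ is the fixed-point pair of a real involution, hence $\sigma$-invariant, contradicting $\{L_1,L_2\}\neq\{L_1^\sigma,L_2^\sigma\}$.

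Two smaller points. The ``partial overlap'' subcase cannot occur, because $G$ swaps $L_1$ and $L_2$ and commutes with $\sigma$. When two of the four lines meet, the right tool is Lemma~\ref{lemma:R-dim-3-aux-1-1-1-1} applied to the at most four singular points of $L_1+L_2+L_1^\sigma+L_2^\sigma$, not a search for configurations of degree at most $2$.
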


\begin{proof}
Suppose that $G$ is not a primitive subgroup of $\mathrm{PGL}_{4}(\CC)$.
Then $V=\CC^{4}$ is an irreducible representation of the group $\widehat{G}$ by Lemma~\ref{lemma:intransitive-R}. By assumption,  there is a non-trivial decomposition
$$
V=\bigoplus_{i=1}^{s}V_i
$$
such that $\widehat{G}$ transitively permutes the vector subspaces $V_1,\ldots,V_s$. Then either $s=2$ or $s=4$. Let us seek for a contradiction.
As in the proof of Proposition~\ref{proposition:D}, we denote by $\sigma$ the non-trivial element of the Galois group $\mathrm{Gal}(\CC/\RR)$.

Suppose $s=2$. Set $L_1=\mathbb P(V_1)$, $L_2=\mathbb P(V_2)$, $L_1^\prime=\sigma(L_1)$, $L_2^\prime=\sigma(L_2)$.
If \mbox{$\{L_1,L_2\}=\{L_1^\prime,L_2^\prime\}$}, then $L_1+L_2$ is a real $G$-invariant curve, which contradicts Lemma~\ref{lemma:R-dim-3-aux-2-2}. Thus, the lines $L_1$, $L_2$, $L_1^\prime$, $L_2^\prime$ are distinct. Set $Z=L_1+L_2+L_1^\prime+L_2^\prime$. Then $Z$ is a real $G$-invariant curve.
If $Z$ is singular, its singular locus is a real $G$-invariant subscheme of length $\leqslant 4$, which contradicts Lemma~\ref{lemma:R-dim-3-aux-1-1-1-1}. Thus, $Z$ is smooth, so the lines $L_1$, $L_2$, $L_1^\prime$, $L_2^\prime$ are disjoint.
By Lemma~\ref{lemma:R-dim-3-quadric-4-lines}, they are contained in a smooth real $G$-invariant quadric surface $Q$.
Analyzing the $G$-action on $Q$, we see that $Q$ contains a real $G$-invariant curve $Z$ of degree $\leqslant 2$, which contradicts Lemma~\ref{lemma:R-dim-3-aux-2-2}.

Now, we deal with the case $s=4$. As above, set $P_1=\mathbb P(V_1)$, $P_2=\mathbb P(V_2)$, $P_3=\mathbb P(V_3)$, $P_4=\mathbb P(V_4)$. Then it follows from Lemma~\ref{lemma:R-dim-3-aux-1-1-1-1} that
$$
\big\{P_1,P_2,P_3,P_4\big\}\ne\big\{\sigma(P_1),\sigma(P_2),\sigma(P_3),\sigma(P_4)\big\},
$$
which implies that $\{P_1,P_2,P_3,P_4\}\cap\{\sigma(P_1),\sigma(P_2),\sigma(P_3),\sigma(P_4)\}=\varnothing$. For every $i$, let $\ell_i$ be the line passing through the points $P_i$ and $\sigma(P_i)$. Then each line $\ell_i$ is defined over $\mathbb{R}$. Set $C=\ell_1+\ell_2+\ell_3+\ell_4$.
Then $C$ is a $G$-invariant real curve, and
$$
\mathrm{deg}(C)\in\{1,2,4\}.
$$
By Lemma~\ref{lemma:R-dim-3-aux-2-2}, we have $\mathrm{deg}(C)=4$, so the lines $\ell_1$, $\ell_2$, $\ell_3$, $\ell_4$ are distinct. Note also that they are not contained in one plane by Lemma~\ref{lemma:intransitive-R}. This implies that $\mathrm{Sing}(C)$ consists of at most $4$ points, so the lines $\ell_1$, $\ell_2$, $\ell_3$, $\ell_4$ are disjoint by Lemma~\ref{lemma:R-dim-3-aux-1-1-1-1}. Then, by Lemma~\ref{lemma:R-dim-3-quadric-4-lines}, these four lines are contained in a real $G$-invariant quadric surface~$S$.
Since $\ell_1$, $\ell_2$, $\ell_3$, $\ell_4$ are real, $S\simeq\mathbb P^1_{\mathbb R}\times \mathbb P^1_{\mathbb R}$,
which contradicts Lemma~\ref{lemma:Pn-pencil-R}, because $\mathbb{P}^3_{\mathbb{R}}$ contains a $G$-invariant pointless quadric by
Lemma~\ref{lemma:quadric}.
\end{proof}

Actually, the assertion of Lemma~\ref{lemma:R-dim-3-short} can be made more precise.

\begin{proposition}\label{proposition:R-dim-3}
Suppose that $n=3$. Then the following conditions are equivalent:
\begin{enumerate}
\item[$\mathrm{(1)}$] $\mathbb{P}^3_{\mathbb{R}}$ is $G$-birationally rigid;
\item[$\mathrm{(2)}$] $\mathbb{P}^3_{\mathbb{R}}$ is $G$-birationally superrigid;
\item[$\mathrm{(3)}$] $G$ contains a subgroup isomorphic to $\mathfrak{A}_4\times \mathfrak{A}_4$.
\end{enumerate}
\end{proposition}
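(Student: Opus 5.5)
The implication $\mathrm{(2)}\Rightarrow\mathrm{(1)}$ is immediate from Definition~\ref{definition:rigidity}, so the work is in $\mathrm{(1)}\Rightarrow\mathrm{(3)}$ and $\mathrm{(3)}\Rightarrow\mathrm{(2)}$.

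For $\mathrm{(1)}\Rightarrow\mathrm{(3)}$, I would start from Lemma~\ref{lemma:R-dim-3-short}, which says that $G$ is primitive in $\mathrm{PGL}_4(\CC)$, and from Lemma~\ref{lemma:quadric}, which gives a smooth pointless $G$-invariant quadric $Q\subset\PP^3_\RR$. The automorphism group of $Q$ is $\mathrm{O}_4(\RR)/\pm1\simeq(\mathrm{SO}_3(\RR)\times\mathrm{SO}_3(\RR))\rtimes\mumu_2$. Over $\CC$ we have $Q_\CC\simeq\PP^1\times\PP^1$, the two rulings are swapped by $\mathrm{Gal}(\CC/\RR)$, and the factor $\mumu_2$ also swaps them. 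Set $G^0=G\cap(\mathrm{SO}_3(\RR)\times\mathrm{SO}_3(\RR))$, and let $H_1,H_2\subset\mathrm{SO}_3(\RR)$ be its two projections. The element of $G\setminus G^0$, if there is one, together with complex conjugation, identifies $H_1$ with $H_2$ up to conjugation.

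If $H_1$ is cyclic or dihedral, it has an invariant point or an invariant pair of points on the pointless conic. These give a $\mathrm{Gal}$-conjugate pair of $G$-invariant lines in one ruling of $Q_\CC$, together with their counterparts in the other ruling. The result is a real $G$-invariant curve of degree at most $2$, or a Galois-swapped pair of lines. Both cases are excluded by Lemmas~\ref{lemma:R-dim-3-aux-2-2} and~\ref{lemma:intransitive-auxiliary}. Hence $H_i\in\{\mathfrak{A}_4,\mathfrak{S}_4,\mathfrak{A}_5\}$.

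I would then apply Goursat's lemma to $G^0\subset H_1\times H_2$. If the fibre-product quotient is trivial, then $G^0=H_1\times H_2$ contains $\mathfrak{A}_4\times\mathfrak{A}_4$ and we are done. This also covers $\mathfrak{A}_5$ and $\mathfrak{S}_4$ with only trivial common quotients, after passing to $G^0\cap(\mathfrak{A}\times\mathfrak{A})$.

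The remaining diagonal-type cases have a nontrivial common quotient: the full diagonal $\mathfrak{A}_4$, $\mathfrak{S}_4$ or $\mathfrak{A}_5$; fibre products over $\mumu_2$ or $\mumu_3$; and $\mathfrak{S}_4\times_{\mathfrak{S}_3}\mathfrak{S}_4$. For a full diagonal, the graph of the isomorphism is a $G^0$-invariant conic, that is, a hyperplane section of $Q$, which contradicts Lemma~\ref{lemma:intransitive-R}. For the other fibre products, I would compute characters of $\mathrm{Sym}^2$ and $\mathrm{Sym}^3$ of the $4$-dimensional representation (the tensor product of the two spin representations). The aim is to find a second invariant quadric besides $Q$, or two invariant cubics, and then conclude with Lemmas~\ref{lemma:R-pencil-from-hypersurfaces} and~\ref{lemma:Pn-pencil-R}. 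I expect this case analysis to be the main obstacle. If some fibre product resists it, I would fall back on a small $G$-orbit (length at most $4$) or an invariant pair of skew lines, and use Lemma~\ref{lemma:R-dim-3-aux-1-1-1-1} or Lemma~\ref{lemma:R-dim-3-quadric-4-lines}.

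For $\mathrm{(3)}\Rightarrow\mathrm{(2)}$, a subgroup $\mathfrak{A}_4\times\mathfrak{A}_4\subset G$ lifts to $2.\mathfrak{A}_4\otimes2.\mathfrak{A}_4$, which is primitive in $\mathrm{PGL}_4(\CC)$. I would then invoke the complex result of \cite{CheltsovShramov2014,CheltsovShramov2019}: $\PP^3_\CC$ is $G$-birationally superrigid for every finite $G$ containing such a subgroup. These groups have no orbits of length at most $4$, no invariant pair of skew lines, and no invariant curves of small degree, and are not $\mathfrak{A}_5$ or $\mathfrak{S}_5$. Superrigidity over $\CC$ descends to $\RR$. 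Indeed, a real $G$-birational map to a $G$-Mori fibre space, or a real $G$-birational selfmap of $\PP^3_\RR$, complexifies to a map that is equivariant under $G\times\mathrm{Gal}(\CC/\RR)$, hence $G$-equivariant. If needed, the Noether--Fano argument can be run directly with the $G\times\mathrm{Gal}$-invariant mobile linear system, which then cannot have non-canonical centres.
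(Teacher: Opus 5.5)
\textbf{Gap in $\mathrm{(1)}\Rightarrow\mathrm{(3)}$: the twisted $\mathfrak{A}_5$.}
Your argument fails at the step ``for a full diagonal, the graph of the isomorphism is a $G^0$-invariant conic''.

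If $G^0=\{(h,\phi(h))\}$, an invariant $(1,1)$-curve on $C\times C$ exists only when $\phi$ is induced by an automorphism of the conic, i.e.\ $\phi(h)=\gamma h\gamma^{-1}$ for some $\gamma\in\mathrm{Aut}(C)$. For $\mathfrak{A}_4$ and $\mathfrak{S}_4$ this always holds, since all their automorphisms are realised by conjugation inside $\mathfrak{S}_4\subset\mathrm{SO}_3(\RR)$. The outer automorphism of $\mathfrak{A}_5$, however, is not realised by conjugation in $\mathrm{SO}_3(\RR)$, nor even in $\mathrm{PGL}_2(\CC)$.

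The corresponding twisted diagonal $G^0\simeq\mathfrak{A}_5$ lifts to $\rho\otimes\rho'$, where $\rho,\rho'$ are the two non-isomorphic $2$-dimensional representations of $2.\mathfrak{A}_5$. This is the irreducible standard $4$-dimensional representation of $\mathfrak{A}_5$, which is real and orthogonal. Adjoining an element that swaps the rulings gives $\mathfrak{S}_5$.

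So $G\simeq\mathfrak{A}_5$ and $G\simeq\mathfrak{S}_5$ are primitive real groups preserving the pointless quadric that survive all your reductions. They are exactly the exceptions in condition $(4)$ of Theorem~\ref{theorem:CheltsovShramov}, and your argument dismisses them by a false claim.

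None of your fallback tools reaches them either:
\begin{itemize}
\item the invariant quadric and the invariant cubic are unique;
\item the first invariant pencil appears in degree $4>3$, so Lemma~\ref{lemma:Pn-pencil-R} does not apply;
\item there are no orbits of length at most $4$;
\item there is no invariant pair of skew lines.
\end{itemize}
A genuinely different construction is needed. Because the invariant quadric and cubic are unique, they are defined over $\RR$, and they intersect in the smooth Bring curve. Lemma~\ref{lemma:Pn-ci-R} (cf.\ Example~\ref{example:Bring}) then shows that $\PP^3_\RR$ is not $G$-birationally rigid.

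\textbf{Further points that need repair.}
\begin{itemize}
\item The quadric $x_0^2+x_1^2+x_2^2+x_3^2=0$ has square discriminant, so both rulings are defined over $\RR$ and $Q\simeq C\times C$. Complex conjugation does not swap the rulings; only elements of $G\setminus G^0$ do. Hence $H_1\not\simeq H_2$ is possible when $G=G^0$, which is harmless for Goursat.
\item In the cyclic or dihedral case with $G\neq G^0$, you get four lines, two in each ruling, not a curve of degree at most $2$. Use their four intersection points instead: they span $\PP^3$ and form a real $G$-invariant set, so Lemma~\ref{lemma:R-dim-3-aux-1-1-1-1} applies (or argue directly via Lemma~\ref{lemma:R-dim-3-short}).
\item The fibre product $\mathfrak{S}_4\times_{\mumu_2}\mathfrak{S}_4$ contains $\mathfrak{A}_4\times\mathfrak{A}_4$, so it must not be excluded.
\item For $\mathfrak{A}_4\times_{\mumu_3}\mathfrak{A}_4$ and $\mathfrak{S}_4\times_{\mathfrak{S}_3}\mathfrak{S}_4$, your search for a second invariant quadric or two invariant cubics must fail. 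These groups contain $\mumu_2^4$, whose preimage in $\mathrm{GL}_4(\CC)$ is an extraspecial $2$-group containing $-1$. So $Q$ is the only invariant quadric and there are no invariant cubics. These groups are instead imprimitive, via the $G^0$-invariant maximal isotropic subgroups of $\mumu_2^4$. There are three, respectively one, of these, so $G$ fixes one, and the groups are excluded by Lemma~\ref{lemma:R-dim-3-short}.
\end{itemize}

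\textbf{On $\mathrm{(3)}\Rightarrow\mathrm{(2)}$.}
This direction agrees with the paper. However, only your Noether--Fano version of the descent is valid, because the complexification of a real $G$-Mori fibre space need not be a $G$-Mori fibre space.
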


\begin{proof}
We know from Lemma~\ref{lemma:quadric} that $\mathbb{P}^3_{\mathbb{R}}$ contains a $G$-invariant smooth pointless quadric surface~$Q$.
Observe that $Q\simeq C\times C$ for a pointless conic $C$, because all other real quadrics
have points. Thus, we have a~monomorphism
$$
G\hookrightarrow\mathrm{Aut}(C\times C)\simeq\big(\mathrm{SO}_3(\mathbb{R})\times\mathrm{SO}_3(\mathbb{R})\big)\rtimes\mumu_2.
$$

Suppose $\mathbb{P}^3_{\mathbb{R}}$ is $G$-birationally rigid. Then
$G$ is a primitive subgroup of $\mathrm{PGL}_4(\CC)$ by Lemma~\ref{lemma:R-dim-3-short}.
Now, arguing as in the proof of \cite[Theorem~1.3]{CheltsovShramov2019} (cf.~Theorem~\ref{theorem:CheltsovShramov}), we see that $G$ contains a~subgroup isomorphic to $\mathfrak{A}_4\times \mathfrak{A}_4$, and $\mathbb{P}^3_{\mathbb{R}}$ is $G$-birationally superrigid. This proves $\mathrm{(1)}\Rightarrow\mathrm{(3)}$.

The implication $\mathrm{(2)}\Rightarrow\mathrm{(1)}$ is obvious. Hence, to complete the proof, we must establish the implication~\mbox{$\mathrm{(3)}\Rightarrow\mathrm{(2)}$}.
First,  using the classification of finite subgroups of $\mathrm{SO}_3(\mathbb{R})$, we see that the group~\mbox{$(\mathrm{SO}_3(\mathbb{R})\times\mathrm{SO}_3(\mathbb{R}))\rtimes\mumu_2$}  contains a unique subgroup isomorphic to $\mathfrak{A}_4\times \mathfrak{A}_4$ up to conjugation.
Hence, if $G$ contains a subgroup isomorphic to $\mathfrak{A}_4\times \mathfrak{A}_4$, then it follows from the proof of~\mbox{\cite[Theorem~1.3]{CheltsovShramov2019}}
that $\mathbb{P}^3_{\mathbb{R}}$ is $G$-birationally superrigid.
This gives the implication~\mbox{$\mathrm{(3)}\Rightarrow\mathrm{(2)}$}.
\end{proof}

As we mentioned in the proof of Proposition~\ref{proposition:R-dim-3}, the group $\mathrm{PGL}_4(\RR)$  contains a unique subgroup isomorphic to $\mathfrak{A}_4\times \mathfrak{A}_4$ up to conjugation, and it is a primitive subgroup of $\mathrm{PGL}_4(\CC)$. Anyway, by means of Proposition~\ref{proposition:R-dim-3} or Lemma~\ref{lemma:R-dim-3-short}, we obtain Theorem~F in dimension $3$.
Therefore, Theorem~F is completely proved.

\begin{remark}
It is not difficult to list all finite subgroups in $\mathrm{PGL}_4(\RR)$ that contain a subgroup isomorphic to $\mathfrak{A}_4\times \mathfrak{A}_4$.
Implicitly, this is done in \cite{CheltsovShramov2019}.
\end{remark}

Observe that Theorem F follows from Proposition~\ref{proposition:D} and Lemmas~\ref{lemma:R-dim-2} and~\ref{lemma:R-dim-3-short}.

\medskip
We conclude this section by proving Theorem~G.

\begin{proof}[Proof of Theorem~G]
We suppose that $n\in\{4,5,6\}$. We have to prove that $\mathbb{P}^n_{\mathbb{R}}$ is not $G$-birationally rigid. By Theorem~F, we may assume that  $G$ is a primitive subgroup of $\mathrm{PGL}_{n+1}(\mathbb{C})$. Recall from Lemma~\ref{lemma:quadric} that $G$ leaves invariant a smooth pointless quadric~\mbox{$Q\subset\mathbb{P}^n_{\mathbb{R}}$}.
Thus, if~\mbox{$n=4$}, then, keeping in mind
Remark~\ref{remark:P4-no-quadrics} and using the description of primitive subgroups in $\mathrm{PGL}_{5}(\mathbb{C})$ presented in Section~\ref{section:P4}, we see that $G$ is isomorphic to one of the following $3$ groups:
\begin{center}
$\mathfrak{S}_6$, $\mathfrak{A}_6$, $\mathfrak{S}_5$.
\end{center}
By Lemma~\ref{lemma:S6-A6-S5nst} the linear system $|\mathcal{O}_{\mathbb{P}^4_{\mathbb{C}}}(4)|$ contains a unique $G$-invariant pencil. Since this pencil is unique,
it is preserved by the Galois group
$\mathrm{Gal}(\CC/\RR)$, and hence is defined over $\RR$. Thus, $\mathbb{P}^4_{\mathbb{R}}$ is not $G$-birationally rigid by~Lemma~\ref{lemma:Pn-pencil-R}.

Now, we consider the case $n=5$. Then it follows from the classification of primitive subgroups of~\mbox{$\mathrm{PGL}_6(\CC)$} presented in Section~\ref{section:P5}
together with Lemmas~\ref{lemma:n-5-SL25}, \ref{lemma:n-5-2S5}, \ref{lemma:n-5-3A6}, \ref{lemma:n-5-6A6}, and~\ref{lemma:n-5-SL27},
and Remark~\ref{remark:n-5-not-real}
that either
\begin{itemize}
\item[(I)] $G$ leaves invariant a (possibly complex) Segre cubic scroll in $\mathbb{P}^5_{\mathbb{C}}$,
\end{itemize}
or $G$ is isomorphic to one of the following groups:
\begin{itemize}
\item[(VI)] $\mathfrak{A}_7$ or $\mathfrak{S}_7$,%
\item[(IX)]%
\begin{itemize}
\item[(i)] $\mathrm{PSL}_2(\mathbf{F}_7)$, %
\item[(ii)] $\mathrm{PGL}_2(\mathbf{F}_7)$,%
\end{itemize}
\item[$\mathrm{(XIII)}$]
\begin{itemize}
\item[(i)] $\mathrm{PSp}_4(\mathbf{F}_3)$,%
\item[(ii)] $\mathrm{PSp}_4(\mathbf{F}_3)\rtimes\mumu_2$.
\end{itemize}
\end{itemize}

If $G$ leaves invariant a real Segre cubic scroll in $\mathbb{P}^5_\RR$, then it follows from Remark~\ref{remark:n-5-Segre} that $\mathbb{P}^5_{\mathbb{R}}$ is not $G$-birationally rigid.
If $G$ leaves invariant a non-real Segre cubic scroll $Y\subset \mathbb{P}^5_\CC$, then the complex conjugate Segre scroll $Y^\prime$ is also $G$-invariant. Thus, it follows from Remark~\ref{remark:n-5-Segre} that we have two $G$-equivariant (complex) commutative diagrams:
$$
\xymatrix{
&X\ar@{->}[dl]_{\pi}\ar@{->}[dr]^{\eta}\\%
\mathbb{P}^5_{\mathbb{C}}\ar@{-->}[rr]^{\chi}&&\mathbb{P}^2_{\mathbb{C}}}
$$
and
$$
\xymatrix{
&X^\prime\ar@{->}[dl]_{\pi^\prime}\ar@{->}[dr]^{\eta^\prime}\\%
\mathbb{P}^5_{\mathbb{C}}\ar@{-->}[rr]^{\chi^\prime}&&\mathbb{P}^2_{\mathbb{C}}}
$$
where $\pi$ and $\pi^\prime$ are blow ups of the scrolls $Y$ and $Y^\prime$, respectively,
both $\eta$ and $\eta^\prime$ are $\mathbb{P}^3_{\mathbb{C}}$-bundles, and the maps $\chi$ and $\chi^\prime$ are given by the linear systems of all quadrics passing through $Y$ and $Y^\prime$, respectively. Note that the fibers of $\chi$ and $\chi^\prime$ are three-dimensional linear subspaces in $\mathbb{P}^5_{\mathbb{C}}$.  Thus, taking the product of the maps $\chi$ and $\chi^\prime$, we obtain a real rational map $\rho\colon\mathbb{P}^5_{\mathbb{R}}\dasharrow W$, where~$W$ is a real form of $\mathbb{P}^2_{\mathbb{R}}\times \mathbb{P}^2_{\mathbb{R}}$
obtained as the Weil restriction of scalars~\mbox{$R_{\CC/\RR}(\mathbb{P}^2_\CC$)}. By construction, a general fiber of $\rho$ is an intersection of two (distinct) three-dimensional linear subspaces, so it is either a line or a plane. Thus, equivariantly resolving the indeterminacy of the map $\rho$ and applying relative real $G$-equivariant Minimal Model Program, we obtain a $G$-birational map from $\mathbb{P}^5_{\mathbb{R}}$ to a real $G$-Mori fibre space with a positive dimensional base, which implies that $\mathbb{P}^5_{\mathbb{R}}$ is not $G$-birationally rigid in this case.

If $G$ is isomorphic to $\mathfrak{A}_7$ or $\mathfrak{S}_7$, then by
Lemma~\ref{lemma:n-5-A7-S7} the linear system $|\mathcal{O}_{\mathbb{P}^5_{\mathbb{C}}}(4)|$ contains a unique $G$-invariant pencil. Since this pencil is unique, it is preserved by the Galois group
$\mathrm{Gal}(\CC/\RR)$, and hence is defined over $\RR$. Thus, $\mathbb{P}^5_{\mathbb{R}}$ is not $G$-birationally rigid by~Lemma~\ref{lemma:Pn-pencil-R}.
Similarly, if $G$ is isomorphic to  $\mathrm{PSL}_2(\mathbf{F}_7)$ or to  $\mathrm{PGL}_2(\mathbf{F}_7)$, then it follows from Lemma~\ref{lemma:n-5-PSL27} that $|\mathcal{O}_{\mathbb{P}^5_{\mathbb{R}}}(3)|$ contains a $G$-invariant pencil,  so $\mathbb{P}^5_{\mathbb{R}}$ is again not $G$-birationally rigid by~Lemma~\ref{lemma:Pn-pencil-R}.
Finally, if~$G$~is~isomorphic to~\mbox{$\mathrm{PSp}_4(\mathbf{F}_3)$} or $\mathrm{PSp}_4(\mathbf{F}_3)\rtimes\mumu_2$, then it follows from Lemma~\ref{lemma:n-5-PSp4} that $\mathbb{P}^5_{\mathbb{R}}$ contains a $G$-irreducible reduced complete intersection of a (smooth) quadric and a quintic, so $\mathbb{P}^5_{\mathbb{R}}$ is not $G$-birationally rigid by Lemma~\ref{lemma:Pn-ci-R}.

Thus, to complete the proof of Theorem~G, we may assume that $n=6$. Then it follows from the classification of primitive subgroups of $\mathrm{PGL}_7(\CC)$ presented in Section~\ref{section:P6} together with Remark~\ref{remark:P6-no-quadrics}
that $G$ is isomorphic to one of the following groups:
\begin{itemize}
\item[(II)] $\mathrm{PSL}_2(\mathbf{F}_{13})$, %
\item[(III)]
\begin{itemize}
\item[(i)] $\mathrm{SL}_2(\mathbf{F}_8)$,%
\item[(ii)] $\mathrm{SL}_2(\mathbf{F}_8)\rtimes\mumu_3$,%
\end{itemize}
\item[(IV)] $\mathfrak{A}_8$ or $\mathfrak{S}_8$,%
\item[(V)]
\begin{itemize}
\item[(i)] $\mathrm{PSL}_2(\mathbf{F}_7)$,%
\item[(ii)] $\mathrm{PGL}_2(\mathbf{F}_7)$,
\end{itemize}
\item[(VI)]
\begin{itemize}
\item[(i)] $\mathrm{SU}_3(\mathbf{F}_3)$,
\item[(ii)] $\mathrm{SU}_3(\mathbf{F}_3)\rtimes\mumu_2$,%
\end{itemize}
\item[(VII)] $\mathrm{Sp}_6(\mathbf{F}_2)$.
\end{itemize}
In each of these cases the linear system $|\mathcal{O}_{\mathbb{P}^6_\RR}(n)|$ contains a $G$-invariant pencil for some $n\in\{4,5,6\}$.
This follows in a straightforward way from
Lemmas~\ref{lemma:n-6-PSL-2-13}, \ref{lemma:n-6-A8-S8},
\ref{lemma:n-6-PSL27}, \ref{lemma:n-6-PSU33}, and~\ref{lemma:n-6-Sp62}
in all the cases except for
$G\simeq \mathrm{SL}_2(\mathbf{F}_{8})$
and $G\simeq\mathrm{SL}_2(\mathbf{F}_{8})\rtimes\mumu_3$, while for the latter two groups the same assertion is implied by Lemmas~\ref{lemma:n-6-PSL-2-8}
and~\ref{lemma:R-pencil-from-hypersurfaces}.
Now we can apply Lemma~\ref{lemma:Pn-pencil-R} to show that $\mathbb{P}^6_{\mathbb{R}}$ is not $G$-birationally rigid.
\end{proof}

\begin{remark}
In all the cases one comes across in the proof of Theorem~G, the conjugacy class of
the subgroup $G\subset\mathrm{PGL}_{n+1}(\mathbb{R})$
is uniquely determined by the conjugacy class of $G$
in~$\mathrm{PGL}_{n+1}(\mathbb{C})$. This can be deduced
from~\mbox{\cite[Proposition 2.2]{BeauvillePGL}}.
However, we do not use this fact in the proof.
\end{remark}

\appendix

\section{Combinatorics}
\label{section:combinatorics}

In the sequel, we will always interpret the symmetric group $\mathfrak{S}_r$ as the group of permutations
of the set $\Sigma_r=\{1,\ldots,r\}$.
Recall the following widely-used definition.

\begin{definition}[{cf. Definitions~\ref{definition:primitive-linear-group} and~\ref{definition:transitive-imprimitive}}]
\label{definition:primitive-transitive-permutation-group}
Let $G\subset\mathfrak{S}_r$ be a subgroup. One says that $G$ is \emph{transitive}
if it acts transitively on the set $\Sigma_r$, i.e. $\Sigma_r$ is a single $G$-orbit.
One says that $G$ is primitive if it is transitive, and for any partition
$$
\Sigma_r=\Sigma_r^1\sqcup \ldots\sqcup \Sigma_r^k
$$
into non-empty subsets such that $\Sigma_r^i$ are permuted by $G$,
one has either $k=1$ and $\Sigma_r^1=\Sigma_r$, or~\mbox{$k=r$} and $|\Sigma_r^i|=1$.
\end{definition}

A cyclic subgroup $\mumu_r\subset\mathfrak{S}_r$ is said to be \emph{regular}, if
it is conjugate to the group whose generator acts by the cyclic permutation $(12\ldots r)$.
It is easy to see that $\mumu_r$ is regular if and only if it is transitive.
Thus, for instance, the subgroup of $\mathfrak{S}_6$ generated by the permutation
$(12)(345)$ is a non-regular subgroup isomorphic to~$\mumu_6$.
Similarly, if~\mbox{$r\geqslant 3$}, we say that a dihedral subgroup~\mbox{$\mathfrak{D}_{r}\subset \mathfrak{S}_r$} of order $2r$ is regular,
if it is conjugate to the group generated by the cyclic permutation
$(12\ldots r)$ and the involution which swaps $1$ with $r$, $2$ with $r-1$, etc.
We will denote by $\Sigma_r^{(k)}$ the set of non-ordered $k$-tuples of
distinct elements of $\Sigma_r$.

\begin{lemma}\label{lemma:Sr-orbit-r}
Let $G$ be a primitive subgroup of $\mathfrak{S}_r$. The following assertions hold.

\begin{itemize}
\item[(i)]
Suppose that $G$ has an orbit of length $s$ in $\Sigma_r^{(2)}$.
Then $s\geqslant r$.

\item[(ii)] Suppose that $G$ has an orbit of length~$r$ in $\Sigma_r^{(2)}$.
Then $G$ is a regular subgroup of~$\mathfrak{S}_r$ isomorphic either to $\mumu_r$ or $\mathfrak{D}_{r}$.
\end{itemize}
\end{lemma}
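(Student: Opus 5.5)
Fix a $G$-orbit $O\subset\Sigma_r^{(2)}$ and view it as the edge set of a graph $\Gamma_O$ on the vertex set $\Sigma_r$. Since $G$ preserves $O$, it acts on $\Gamma_O$ by automorphisms. The plan is to use primitivity twice. First, the set of vertices covered by $\Gamma_O$ is $G$-invariant and non-empty, so by transitivity it is all of $\Sigma_r$. Second, the connected components of $\Gamma_O$ are permuted by $G$ and so form a block system. Since some component contains an edge, it is not a singleton, and primitivity then forces $\Gamma_O$ to be connected. Moreover, since $G$ is vertex-transitive, $\Gamma_O$ is regular of some degree $d\geqslant 1$, and $|O|=rd/2$.

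For (i), a connected graph on $r$ vertices has at least $r-1$ edges, so $rd/2\geqslant r-1$. If $d\geqslant 2$ this already gives $s=rd/2\geqslant r$. It remains to exclude $d=1$. In that case $\Gamma_O$ is a perfect matching, and it is connected only when $r=2$. For $r=2$ the claim fails as stated ($s=1<2$), so I assume $r\geqslant 3$, as is implicit in the application via Corollary~\ref{corollary:Sr-orbit-r} with $r=n+1\geqslant 4$. For $r\geqslant 3$, a perfect matching is disconnected, contradicting connectedness. Alternatively, one can argue directly that a matching gives a block system with blocks of size $2$, which is impossible for a primitive group when $r\geqslant 3$. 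Hence $d\geqslant 2$ and $s\geqslant r$.

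For (ii), $s=r$ forces $d=2$. A connected $2$-regular graph on $r$ vertices is a single $r$-cycle $C_r$. Then $G\subset\mathrm{Aut}(C_r)$, which is the regular dihedral group $\mathfrak{D}_r$ of order $2r$ in the labeling where the cycle is $1,2,\ldots,r$, so $G$ is conjugate into the regular $\mathfrak{D}_r$. The remaining step, which I expect to need the most care, is identifying the transitive subgroups of $\mathfrak{D}_r$. Let $C=G\cap\mu_r$ be the rotation part, which has index at most $2$ in $G$. If $G\subset\mu_r$, then transitivity forces $G=\mu_r$, and this is the regular cyclic group. Otherwise $G=C\rtimes\langle\text{reflection}\rangle$ with $|G|=2|C|$, and transitivity gives $2|C|\geqslant r$. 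If $C=\mu_r$, then $G=\mathfrak{D}_r$ is regular. If instead $C$ is a proper subgroup, then $|C|=r/2$ and $G$ is a dihedral group of order $r$. The orbits of $C$ (the even and odd positions) then form a block system of two blocks of size $r/2$; these are nontrivial blocks once $r\geqslant 4$, contradicting primitivity. The case $r=3$ can be checked directly.

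Finally, I need to confirm that the subgroups obtained are regular in the sense of the definitions. This follows because we conjugated so that the cycle is $(12\ldots r)$, and the reflections in $\mathrm{Aut}(C_r)$ include the involution $i\mapsto r+1-i$.
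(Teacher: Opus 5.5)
Your proposal is correct and follows essentially the same route as the paper. The paper's count $tr=2s$ is your degree $d$, and its equivalence relation generated by the pairs is your set of connected components; it likewise identifies the orbit with an $r$-cycle, embeds $G$ in the regular $\mathfrak{D}_r$, and analyses the index-$2$ subgroups.

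Your caveat that (i) needs $r\geqslant 3$ is well taken, since for $r=2$ the $2$-element partition is trivial. Your block-system treatment of the index-$2$ dihedral subgroup for even $r$ is also more accurate than the paper's claim that this subgroup is intransitive. The subgroup generated by the rotation by $2$ and a fixed-point-free reflection is transitive but imprimitive.
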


\begin{proof}
Let $\Xi=\{(a_1,b_1),\ldots, (a_s,b_s)\}$ be a $G$-orbit of length $s\leqslant r$ in~$\Sigma_r^{(2)}$.
Since a primitive group is also transitive, the $G$-orbit of $a_1$ coincides with the whole set $\Sigma_r$.
On the other hand, this orbit is contained in the set $\Theta=\{a_1,\ldots,a_s,b_1,\ldots,b_s\}$, so that
$\Theta=\Sigma_r$. Furthermore, it follows from the transitivity of $G$ that each element of $\Sigma_r$
appears among $a_1,\ldots,a_s,b_1,\ldots,b_s$ the same number~$t$ of times. This gives $tr=2s$.

Suppose that $s<r$. Then~$tr=2s$ gives $t=1$. In other words, the pairs $(a_1,b_1),\ldots, (a_s,b_s)$ provide a partition of $\Sigma_r$ into
a union of subsets of cardinality~$2$ permuted by $G$. This is impossible for a primitive group. The obtained contradiction proves assertion~(i).

Suppose that $s=r$. If $r=2$, there is nothing to prove. So we assume $r\geqslant 3$.
From~$tr=2s$, one gets $t=2$. In other words,
each $c\in\Sigma_r$ appears in exactly two pairs $(a_i,b_i)$ and $(a_j,b_j)$. Consider the minimal equivalence relation
under which $a_i$ is equivalent to $b_i$ for all $1\leqslant i\leqslant r$. The equivalence classes under this relation form a partition
of $\Sigma_r$ into subsets permuted by $G$. Since $G$ is primitive, we conclude that the whole $\Sigma_r$ is
the unique equivalence class. This means that after relabelling the elements of $\Sigma_r$ if necessary, one has
$$
\Xi=\{(1,2), (2,3), \ldots, (r,1)\}.
$$

Let $\Delta\subset\mathfrak{S}_r$ be the subgroup which consists of all permutations preserving the $G$-orbit~$\Xi$.
Then~$\Delta$ is a regular dihedral group $\mathfrak{D}_{r}$. Indeed, such a group is obviously contained in~$\Delta$. On the other hand,
one has
$$
|\Delta|=r\cdot |\Delta_{(1,2)}|,
$$
where $\Delta_{(1,2)}$ is the stabilizer of the non-ordered pair $(1,2)$ in $\Delta$.
On the other hand, the stabilizer~$\Delta_{(1,2)}'$ of the \emph{ordered} pair consisting of $1$ and $2$ in $\Delta$
fixes $1$ and $2$, hence fixes $3$ as the unique element different from $1$ which is contained together with $2$ in a pair from~$\Xi$, etc;
in other words, the stabilizer $\Delta_{(1,2)}'$ is trivial. This gives
$$
|\Delta_{(1,2)}|\leqslant 2|\Delta_{(1,2)}'|=2,
$$
and so $\Delta\simeq\mathfrak{D}_{r}$.

Thus, $G$ is a subgroup of a regular dihedral group $\Delta\simeq\mathfrak{D}_{r}$. Since $G$ has an orbit of length $r$,
its order cannot be smaller than $r$. In other words, $G$ either coincides with~$\Delta$, or is a subgroup of index $2$ therein.
However, any subgroup of index $2$ in $\mathfrak{D}_{r}$ is either the cyclic group of order $r$ (which is regular provided that
$\mathfrak{D}_{r}$ is regular), or a dihedral group
of order $r$ if $r$ is even. The latter subgroup of~$\mathfrak{S}_r$ is not transitive (and in particular not primitive), which means that $G$ is always a regular subgroup of $\mathfrak{S}_r$.
This proves assertion~(ii).
\end{proof}

\begin{remark}
\label{remark:prime}
One can strengthen the assertion of Lemma~\ref{lemma:Sr-orbit-r}(ii) by observing that
a regular  dihedral subgroup of $\mathfrak{S}_r$ is primitive if and only if $r$ is a prime number.
In other words, under the assumptions of Lemma~\ref{lemma:Sr-orbit-r}, the group $G$ is
a regular subgroup of $\mathfrak{S}_r$ isomorphic either to $\mumu_r$ or~$\mathfrak{D}_{r}$,
and $r$ is prime.
\end{remark}

Lemma~\ref{lemma:Sr-orbit-r} easily implies the following.

\begin{corollary}\label{corollary:Sr-orbit-r}
Let $G$ be a primitive subgroup of $\mathfrak{S}_r$. Suppose that $G$ is neither
a regular subgroup~$\mumu_r$ nor a regular subgroup $\mathfrak{D}_{r}$ of $\mathfrak{S}_r$.
Then the minimal length of the $G$-orbit in $\Sigma_r^{(r-2)}$ is greater than $r$.
\end{corollary}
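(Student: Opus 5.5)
The plan is to reduce the statement to Lemma~\ref{lemma:Sr-orbit-r} via complementation. The map sending an unordered $(r-2)$-tuple $A\subset\Sigma_r$ to its complement $\Sigma_r\setminus A$ is a bijection
$$
\Sigma_r^{(r-2)}\longrightarrow\Sigma_r^{(2)}.
$$
This bijection is $\mathfrak{S}_r$-equivariant, since $g(\Sigma_r\setminus A)=\Sigma_r\setminus g(A)$ for every permutation $g$. Hence it is $G$-equivariant, and it carries $G$-orbits in $\Sigma_r^{(r-2)}$ bijectively onto $G$-orbits in $\Sigma_r^{(2)}$, preserving their lengths. It therefore suffices to prove that every $G$-orbit in $\Sigma_r^{(2)}$ has length greater than $r$. (For $r\leqslant 3$ the sets $\Sigma_r^{(r-2)}$ and $\Sigma_r^{(2)}$ may be degenerate; the complement map still works, and the case $r=4$, where the two sets coincide, is consistent with it.)

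Next, let $\Xi\subset\Sigma_r^{(2)}$ be a $G$-orbit of length $s$. By Lemma~\ref{lemma:Sr-orbit-r}(i) we have $s\geqslant r$. Suppose that $s=r$. Then Lemma~\ref{lemma:Sr-orbit-r}(ii) says that $G$ is a regular subgroup of $\mathfrak{S}_r$ isomorphic to $\mumu_r$ or to $\mathfrak{D}_r$. This contradicts the hypothesis. Hence $s>r$, and by the bijection above the minimal length of a $G$-orbit in $\Sigma_r^{(r-2)}$ is greater than $r$.

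The only step requiring care is the tiny-$r$ edge cases, where $\Sigma_r^{(2)}$ could be empty or the primitive groups are forced to be regular cyclic groups. For $r\leqslant 3$ every primitive subgroup is either $\mumu_r$ regular, $\mathfrak{D}_3=\mathfrak{S}_3$, or trivially excluded, so the hypothesis is vacuous or the statement is immediate. I would dispose of these cases by a one-line remark and otherwise rely entirely on Lemma~\ref{lemma:Sr-orbit-r}.
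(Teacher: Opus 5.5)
Your proof is correct and is essentially the paper's own argument: the paper also notes that, via complementation, the $G$-orbit lengths in $\Sigma_r^{(r-2)}$ and $\Sigma_r^{(2)}$ coincide, and then applies Lemma~\ref{lemma:Sr-orbit-r}(i) and (ii). Your extra remark on $r\leqslant 3$, where every primitive subgroup is a regular $\mumu_r$ or $\mathfrak{D}_3$ and so the hypothesis is vacuous, is a harmless addition.
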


\begin{proof}
The lengths of the $G$-orbits in~$\Sigma_r^{(r-2)}$ are the same as
the lengths of the $G$-orbits in~$\Sigma_r^{(2)}$.
Now the assertion follows from
Lemma~\ref{lemma:Sr-orbit-r}.
\end{proof}

\section{Luminy toric Fano varieties}
\label{app:toric-fano}

Let us use notations and assumptions of Remark~\ref{remark:do-nor-understand-M}.
Suppose that $n\geqslant 3$. For $i<j$, put $\Lambda_{ij}=\{x_i=x_j=0\}$, and let $E_{ij}$ be the exceptional divisor of the blow up of $\mathbb{P}^n$ along $\Lambda_{ij}$.

\begin{theorem}
\label{thm:appendix-toric-fano}
The map $\psi$ is birational, and $\mathscr X$ is a normal toric Fano variety
with terminal singularities such that
$$
(-K_{\mathscr X})^n=\frac{n(n+1)^n}{2^{n-1}(n-1)}.
$$
Its torus invariant prime divisors are in bijection with the subspaces
$\Lambda_{ij}$. If $D_{ij}$ corresponds to $\Lambda_{ij}$, then
$\operatorname{ord}_{D_{ij}}=\operatorname{ord}_{E_{ij}}$ under the identification
of function fields induced by $\psi$.
\end{theorem}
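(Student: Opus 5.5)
Since $\mathcal M$ is spanned by torus-semi-invariant monomials, $\mathscr X$ is the (normalization of the) projective toric variety attached to the lattice polytope $P\subset M_{\mathbb R}$ given by the convex hull of the exponent set of $\mathcal M$, shifted by $-(1,\ldots,1)$ so that we work with characters of $\mathbb T$. I will first identify $P$ explicitly. The monomials $x_1\cdots x_{n+1}\cdot m$ with $\deg m=n-1$ and $(x_1\cdots x_{n+1})^2/x_i^2$ correspond, after dividing by $x_1\cdots x_{n+1}$ and passing to characters, to the points of degree $n-1$ in the simplex with vertices $(n-1)e_i$, together with the points $\sum_k e_k-2e_i$. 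I expect $P$ to be
$$
P=\Big\{u\in M_{\mathbb R}\ \Big|\ \langle u,v_i+v_j\rangle\geqslant -1 \ \text{for all } i<j\Big\},
$$
written in the $(n+1)$-coordinate model as $\{y\in\mathbb R^{n+1}\mid \sum y_k=n-1,\ y_i+y_j\geqslant 0\}$ after a suitable shift. Here $v_1,\ldots,v_{n+1}$ are the ray generators of the fan of $\mathbb P^n$ from the proof of Lemma~\ref{lemma:toric-final}. Then $\langle\cdot,v_i+v_j\rangle=-1$ cuts out the facet that corresponds to $\Lambda_{ij}$.

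The steps are as follows.

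\textbf{Step 1.} Show that the convex hull equals this $P$, and that every lattice point of $P$ is one of the given exponents. For the first claim I check that the vertices of $P$ are exactly the points $\sum e_k-2e_i$ and $(n-1)e_i$ (shifted). For the second, observe that a lattice point with $y_i+y_j\geqslant0$ has at most one negative coordinate, and that coordinate is $\geqslant -1$ only when it equals $-1$. Integrality then forces either all $y\geqslant 0$, which is the $f$-part, or $y=\sum e_k-2e_i$ up to the shift. This shows that $\mathcal M$ is the complete linear system of $P$.

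\textbf{Step 2.} Since $P$ is full-dimensional, $\psi$ is birational. $\mathscr X$ is normal because the polytope is normal; I will check that $P$ is IDP, for instance by noting that it is a hypersimplex-like slice, and otherwise pass to the normalization, which changes nothing below. The facet normals of $P$ are the primitive vectors $v_i+v_j$. These vectors are primitive, and they are exactly the rays of $\operatorname{Bl}_{\Lambda_{ij}}$. This gives the bijection $D_{ij}\leftrightarrow\Lambda_{ij}$ together with the equality $\operatorname{ord}_{D_{ij}}=\operatorname{ord}_{E_{ij}}$, since both valuations are the monomial valuation attached to the ray $v_i+v_j$.

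\textbf{Step 3 (Fano and terminal).} The anticanonical polytope has facets $\langle u,v_i+v_j\rangle\geqslant -1$, so $-K_{\mathscr X}$ corresponds exactly to $P$ in this lattice. In other words, $-K$ is ample, $\mathscr X$ is Gorenstein-like up to the lattice, and the dual polytope $P^\ast=\mathrm{conv}(v_i+v_j)$ is the fan polytope. Terminality is then equivalent to the condition that the only lattice points of $P^\ast$ are $0$ and its vertices $v_i+v_j$. I expect this to be the main obstacle. I would handle it by writing a lattice point as $\sum c_{ij}(v_i+v_j)$ with $c_{ij}\geqslant0$ and $\sum c_{ij}\leqslant1$. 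Modulo the relation $\sum v_k=0$, this becomes $y\in\mathbb Z^{n+1}/\mathbb Z(1,\ldots,1)$ with some representative $y=\sum c_{ij}(e_i+e_j)$. Such a representative has nonnegative coordinates and total sum $2\sum c_{ij}\leqslant 2$. Choosing the representative with minimal coordinate $0$ and comparing sums, I will argue that this representative is $0$, or $e_i+e_j$, or $2e_i$. The case $2e_i$ is excluded because $c$ would have to be supported on pairs containing $i$ with full weight, which forces the $j$-coordinates to be positive. The small cases $n=3,4$ may need a direct check of the representatives with sum $n+1$, equivalently the sum-zero shift.

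\textbf{Step 4.} Finally, I compute $(-K)^n=n!\operatorname{vol}(P)$. The polytope $P$ is the dilated simplex $\{y\geqslant0,\sum y=n-1\}$ with $n+1$ corner simplices of the form $\{y_i<0\}$ appended, each a small simplex. Computing these volumes, with a lattice-index factor for $M$ inside $\mathbb Z^{n+1}/\!\sim$ and care over the shift of $\sum y$, should yield $n(n+1)^n/(2^{n-1}(n-1))$. As sanity checks, for $n=3$ this gives $24$, matching Example~\ref{example:X24}, and for $n=4$ it gives $625/6$, matching Example~\ref{example:Petracci}.
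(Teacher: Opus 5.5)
Your overall route matches the paper's: identify the polytope of $\mathcal M$, read off the normal fan, count the lattice points of $\operatorname{conv}(v_i+v_j)$ for terminality, and compute a volume. However, Step~3 misidentifies the anticanonical class, and Step~4 inherits the error.

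\textbf{The gap: the anticanonical class is not the hyperplane class.} The problem starts in your first display. The polytope of $\mathcal M$, namely $\{y\in\mathbb R^{n+1}\mid \sum_s y_s=n-1,\ y_i+y_j\geqslant 0\}$, is \emph{not} a translate of $Q=\{u\in M_{\mathbb R}\mid \langle u,v_i+v_j\rangle\geqslant -1\}$. Translating by $-\frac{n-1}{n+1}(1,\ldots,1)$ into $M_{\mathbb R}$ turns the inequalities into $u_i+u_j\geqslant -\frac{2(n-1)}{n+1}$, so it is a translate of $\frac{2(n-1)}{n+1}Q$.

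Since $-K_{\mathscr X}=\sum_{i<j}D_{ij}$ has polytope $Q$, this gives $\mathcal O_{\mathscr X}(1)\sim_{\mathbb Q}\frac{2(n-1)}{n+1}(-K_{\mathscr X})$. The two classes agree (up to torsion) only when $n=3$. Nothing is ``Gorenstein-like'': $Q$ has vertices with coordinates $\frac n2,-\frac12$ and $-\frac n{n-1},\frac1{n-1}$, so $-K_{\mathscr X}$ is not even Cartier, and for $n=4$ there are points of type $\frac13(1,1,1,1)$.

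If you carry out Step~4 as planned, you get:
\begin{itemize}
\item the simplex contributes $(n-1)^n$;
\item each of the $n+1$ corner pyramids (apex $(1,\ldots,1)-2e_i$, at lattice height one over a facet of normalized volume $(n-1)^{n-1}$) contributes $(n-1)^{n-1}$;
\item the total is $2n(n-1)^{n-1}$.
\end{itemize}
That number is $\mathcal O_{\mathscr X}(1)^n$, not $(-K_{\mathscr X})^n$. For $n=4$ it is $216$, whereas $(-K_{\mathscr X})^4=\frac{625}{6}$. The latter is not an integer, so no normalized lattice volume, lattice-index factor or shift can produce it. Your sanity check at $n=3$ passes only because $\frac{2(n-1)}{n+1}=1$ there.

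The missing ingredient is exactly this homothety:
$(-K_{\mathscr X})^n=\bigl(\frac{n+1}{2(n-1)}\bigr)^n\cdot 2n(n-1)^{n-1}=\frac{n(n+1)^n}{2^{n-1}(n-1)}$.
This is how the paper argues, via Lemma~\ref{lem:appendix-toric-polytopes} and the Hilbert function in Lemma~\ref{lem:appendix-toric-degree}. Ampleness of $-K_{\mathscr X}$ and your terminality criterion both survive: a dilation does not change the normal fan, and the lattice-point criterion for $\operatorname{conv}(v_i+v_j)$ does not need $-K_{\mathscr X}$ to be Cartier.

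\textbf{Smaller points.}
\begin{enumerate}
\item Full-dimensionality alone does not give birationality of $\psi$ (compare $[x^2:y^2]$ on $\mathbb P^1$). You need the differences of exponents to generate $M$. This holds here because, for any monomial $h$ of degree $n-2$, both $x_1\cdots x_{n+1}hx_i$ and $x_1\cdots x_{n+1}hx_j$ lie in $\mathcal M$.
\item Normality of $\mathscr X$ is part of the statement, so ``otherwise pass to the normalization'' is not available. You need the paper's decomposition. Take $y\in\mathbb Z^{n+1}$ with $\sum_s y_s=\ell(n-1)$ and $y_i+y_j\geqslant 0$. At most one coordinate $y_i=-t$ is negative, and $t\leqslant\ell$. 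Subtracting $t$ copies of $(1,\ldots,1)-2e_i$ leaves a non-negative vector of coordinate sum $(\ell-t)(n-1)$, which splits into $\ell-t$ exponents of the $f$-part.
\item In Step~3 the integral representatives with minimum $0$ and sum at most $2$ are $0$, $e_i$, $2e_i$ and $e_i+e_j$; you never exclude $e_i$. It is cleaner to argue as follows. Every coordinate $y_k=\sum_{j\ne k}c_{kj}$ of your real representative lies in $[0,1]$, and all coordinates differ by integers. So subtracting the minimum gives a $0/1$-vector with at most two ones. A single one is impossible, because every pair containing $i$ contains a second index. No separate check for $n=3,4$ is needed.
\end{enumerate}
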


\begin{remark}\label{rem:appendix-toric-action}
Since $\mathcal{M}$ is $\mathbb W$-invariant, $\psi$ is
$\mathbb W$-equivariant, and the correspondence $D_{ij}\leftrightarrow\Lambda_{ij}$ in Theorem~\ref{thm:appendix-toric-fano}
is also equivariant. In particular, the action of $\mathfrak S_{n+1}$ on
the toric divisors of $\mathscr X$ is its natural action on the non-ordered
pairs of distinct indices, or, equivalently, on the codimension two toric
strata of~$\mathbb{P}^n$.
\end{remark}

Let us prove Theorem~\ref{thm:appendix-toric-fano}. Put $m=n+1$ and
$p=(1,\ldots,1)\in\mathbb Z^m$. Let $e_1,\ldots,e_m$ be the standard basis,
let $N=\mathbb Z^m/\mathbb Zp$, and let
$$
M=\{u\in\mathbb Z^m \mid \sum_{i=1}^{m} u_i=0\}
$$ 
be the dual lattice.
Write $v_i=[e_i]\in N$ and $v_{ij}=v_i+v_j$.
The exponent set of the monomials spanning $\mathcal M$ is
$$
\mathcal A=
\left\{p+b \mid b\in\mathbb Z_{\ge0}^m,\ \sum_{i=1}^m b_i=m-2\right\}
\cup\left\{2p-2e_i \mid 1\leqslant i\leqslant m\right\}.
$$
Set $A=\operatorname{conv}(\mathcal A)$ and
$P=\operatorname{conv}\{v_{ij} \mid i<j\}$. We use the inward normal fan convention
and the polar inequalities $\langle u,v\rangle\ge-1$.
The singularity criteria used below can be found in~\cite{CoxLittleSchenckToric}.

\begin{lemma}
\label{lem:appendix-toric-polytopes}
The origin is an interior point of $P$. If $Q\subset M_{\mathbb R}$ is its
polar, then
\begin{equation}\label{eq:appendix-toric-homothety}
A=\frac{2m-2}{m}p+\frac{2(m-2)}m Q
=\left\{a\in\mathbb R^m \mid \sum_{i=1}^m a_i=2m-2,\ a_i+a_j\ge2\text{ for }i<j\right\}.
\end{equation}
The primitive inward facet normals of $A$ are precisely the $v_{ij}$.
\end{lemma}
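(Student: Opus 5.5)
We will first compute the polar $Q$ explicitly, then identify $A$ with a polytope cut out by the inequalities $a_i+a_j\geqslant 2$, and finally read off the facet normals from the vertices of $P$. Throughout, $m=n+1\geqslant 4$, and we use the pairing $\langle u,[x]\rangle=\sum_i u_ix_i$ between $M$ and $N$. This is well defined because $\sum_i u_i=0$, and it gives $\langle u,v_{ij}\rangle=u_i+u_j$.

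\emph{Origin interior.} First, the $v_{ij}$ span $N_{\mathbb R}$: one has $v_i=\frac12(v_{ij}+v_{ik}-v_{jk})$, and the $v_i$ span $N_{\mathbb R}$. Hence $P$ is full-dimensional. Next, the barycenter of the vertex set is
$$
\frac{1}{\binom m2}\sum_{i<j}v_{ij}=\frac{m-1}{\binom m2}\sum_i v_i=0
$$
in $N$, since $\sum_i v_i=[p]=0$. The barycenter of the vertices of a full-dimensional polytope lies in its interior, so $0\in\operatorname{int}P$.

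\emph{Polar and vertices of $P$.} The polar of $P$ is
$$
Q=\{u\in M_{\mathbb R}\mid u_i+u_j\geqslant -1 \text{ for all } i<j\}.
$$
Each $v_{ij}$ is a genuine vertex of $P$. To see this, take $u\in M_{\mathbb R}$ with $u_i=u_j=1$ and $u_k=-\frac{2}{m-2}$ for $k\ne i,j$. Then $\langle u,v_{ij}\rangle=2$, while every other $v_{kl}$ pairs to something strictly smaller. By polar duality, the facets of $Q$ are exactly the sets $\{u\in Q\mid u_i+u_j=-1\}$. Moreover, $v_{ij}$ is primitive in $N$, because $e_i+e_j$ is not an integer multiple of another class modulo $\mathbb Zp$ when $m\geqslant 3$.

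\emph{The homothety.} Put
$$
t=\frac{2(m-2)}m,\qquad c=\frac{2m-2}m,\qquad \phi(u)=cp+tu .
$$
If $u\in Q$, then $\sum_i\phi(u)_i=mc=2m-2$, and
$$
\phi(u)_i+\phi(u)_j=2c+t(u_i+u_j)\geqslant 2c-t=2 .
$$
Since $\phi$ is an affine bijection from the hyperplane $M_{\mathbb R}$ onto the affine hyperplane $\{\sum_i a_i=2m-2\}$, we obtain $\phi(Q)=B$, where
$$
B=\Big\{a\in\mathbb R^m \;\Big|\; \sum_{i=1}^m a_i=2m-2,\ a_i+a_j\geqslant 2 \text{ for } i<j\Big\}.
$$
This gives the second equality in \eqref{eq:appendix-toric-homothety}.

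\emph{$A=B$.} The inclusion $A\subseteq B$ reduces to checking the points of $\mathcal A$, since $B$ is convex. A point $p+b$ has all entries $\geqslant 1$ and coordinate sum $m+(m-2)=2m-2$. A point $2p-2e_i$ has one entry $0$ and all others equal to $2$.

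For $B\subseteq A$, it suffices to show every vertex $a$ of $B$ lies in $A$. There are two cases.
\begin{enumerate}
\item If all $a_i\geqslant 1$, then $a\in p+(m-2)\Delta$, where $\Delta$ is the standard simplex. This set is the convex hull of the points $p+(m-2)e_k$, which belong to $\mathcal A$.
\item If some $a_i<1$, then $a_j\geqslant 2-a_i>1$ for every $j\ne i$. So $i$ is unique, and the only inequalities that can be tight at $a$ are $a_i+a_j\geqslant 2$ with $j\ne i$. A vertex of the $(m-1)$-dimensional polytope $B$ needs $m-1$ independent tight inequalities, so all of these are equalities: $a_j=2-a_i$ for all $j\ne i$. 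The sum condition then reads $a_i+(m-1)(2-a_i)=2m-2$, which forces $a_i=0$. Hence $a=2p-2e_i\in\mathcal A$.
\end{enumerate}

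\emph{Facet normals of $A$.} Since $A=\phi(Q)$, the facets of $A$ are the images of the facets of $Q$, namely $\{a\in A\mid a_i+a_j=2\}$. The primitive inward normal to such a facet is $v_{ij}$. The main care point is the vertex analysis in the case $a_i<1$ above; everything else is routine.
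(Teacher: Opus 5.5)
Your proof is correct, but you reach the central equality $A=\phi(Q)$ by a different route than the paper. The paper identifies $P$ with the hypersimplex $\{z\in[0,1]^m\mid\sum_i z_i=2\}$ and uses its facets $z_i=0$ and $z_i=1$. By polarity this gives the vertex description $Q=\operatorname{conv}\{\alpha_i,\beta_i\}$, with $\alpha_i=\frac m2\ell_i$ and $\beta_i=-\frac{m}{m-2}\ell_i$. The paper then observes that the homothety sends $\alpha_i$ and $\beta_i$ to $p+(m-2)e_i$ and $2p-2e_i$, which are the extreme points of $\mathcal A$. So $A=\phi(Q)$ becomes a matching of vertices, and the inequality description is just the definition of the polar.

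You go the other way round. The inequality description of $Q$ comes directly from the generators $v_{ij}$ of $P$, and $\phi(Q)=B$ is then a one-line computation. The real content is your enumeration of the vertices of $B$, in particular the tight-constraint count in the case $a_i<1$ that forces $a=2p-2e_i$. This avoids any appeal to the facet structure of the hypersimplex, at the price of a hands-on linear-programming argument. The paper's route, by contrast, produces the vertices of $Q$ explicitly; they reappear in the proof that $-K_{\mathscr X}$ is $\mathbb{Q}$-Cartier and ample. For the facets, your supporting functional at each $v_{ij}$ combined with polar duality is the dual form of the paper's check that a single pair inequality is tight at the point with $a_i=a_j=1$ and $a_k=2$ otherwise.

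Two small points to tidy up:
\begin{itemize}
\item The linear span of the $v_{ij}$ gives full-dimensionality of $P$ only once you know the affine hull contains $0$. Your barycenter computation supplies this, so state it before the full-dimensionality claim.
\item Reducing $B\subseteq A$ to vertices uses that $B$ is bounded. This holds because $B=\phi(Q)$ with $Q$ the polar of a polytope having $0$ in its interior; alternatively, summing the inequalities containing a fixed index gives $a_i\geqslant 0$ directly.
\end{itemize}
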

\begin{proof}
The projection $\mathbb R^m\to N_{\mathbb R}$ identifies the hyperplane
$\sum_i z_i=2$ with $N_{\mathbb R}$. Under this identification, $P$ is the image
of 
$$
\{z\in[0,1]^m \mid \sum_{i=1}^m z_i=2\}.
$$
The vertices of this polytope have zero and one coordinates, with exactly two ones. Its barycenter $(2/m)p$ maps
to the origin, which is therefore interior to $P$.

Put $\ell_i=e_i-p/m$. The facets of this polytope are $z_i=0$ and $z_i=1$;
both have dimension $m-2$. Their polar vertices are, respectively,
$\alpha_i=(m/2)\ell_i$ and $\beta_i=-m\ell_i/(m-2)$.
Thus, we have
$$
Q=\operatorname{conv}\{\alpha_i,\beta_i \mid 1\leqslant i\leqslant m\}=\{u\in M_{\mathbb R} \mid u_i+u_j\ge-1\text{ for }i<j\}.
$$
The affine transformation in~\eqref{eq:appendix-toric-homothety} sends
$\alpha_i$ to $p+(m-2)e_i$ and $\beta_i$ to $2p-2e_i$.
These are exactly the vertices of $\operatorname{conv}(\mathcal A)$, which
proves the equality and the displayed inequalities. Each pair inequality
defines a facet: take $a_i=a_j=1$ and $a_k=2$ for $k\notin\{i,j\}$ to make
just that inequality an equality. Its inward normal is $v_{ij}$, which is
primitive since $\langle e_i-e_k,v_{ij}\rangle=1$ for $k\notin\{i,j\}$.
\end{proof}

\begin{lemma}\label{lem:appendix-toric-normality}
The variety $\mathscr X$ is projectively normal, the map $\psi$ is birational,
and the fan of $\mathscr X$ consists of the cones over the proper faces of $P$.
\end{lemma}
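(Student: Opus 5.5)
The plan is to use the standard toric description of the image of a monomial map. By construction $\mathcal M$ is spanned by the monomials $x^a$ with $a\in\mathcal A$. Hence $\mathscr X$ is the closure of the image of the torus under the map given by the characters $\chi^{a}$, $a\in\mathcal A$. Equivalently, after translating by a fixed $a_0\in\mathcal A$, it is the projective toric variety $X_{\mathcal A}$ attached to the finite set $\mathcal A-a_0\subset M$. Three things need to be checked:
\begin{itemize}
\item the differences $\mathcal A-\mathcal A$ generate the whole lattice $M$, which gives birationality of $\psi$;
\item $\mathcal A=A\cap\mathbb Z^m$, i.e. $\mathcal A$ is exactly the set of lattice points of $A$ on the hyperplane $\sum a_i=2m-2$;
\item the polytope $A$ is normal, i.e. it has the integer decomposition property.
\end{itemize}
Given these, $X_{\mathcal A}$ is the projectively normal toric variety $X_A$ whose fan is the inward normal fan of $A$. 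By Lemma~\ref{lem:appendix-toric-polytopes}, $A$ is a homothetic translate of $Q$, the polar of $P$. So its normal fan is the face fan of $P$, namely the cones over the proper faces of $P$.

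For the lattice generation I would simply take differences of the points $p+b$. For example, $(p+(m-2)e_i)-(p+(m-3)e_i+e_j)=e_i-e_j$, and such differences span $M$. Hence the torus of $\mathscr X$ is the full torus and $\psi$ is birational.

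For $\mathcal A=A\cap\mathbb Z^m$, let $a$ be an integral point of $A$. The conditions $a_i+a_j\geqslant 2$ allow at most one coordinate to be below $1$.
\begin{itemize}
\item If all $a_i\geqslant1$, then $a=p+b$ with $b\geqslant0$ and $\sum b_i=m-2$.
\item If exactly one coordinate is below $1$, say $a_i\leqslant0$, then all other $a_j\geqslant 2-a_i\geqslant 2$. The sum condition then forces $a_i=0$ and $a_j=2$ for $j\neq i$, which is the point $2p-2e_i$.
\end{itemize}

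The main obstacle is normality. The first move is a change of coordinates: subtracting $p$ identifies $A$ with
\[
\{b\in\mathbb{R}^m\mid \textstyle\sum b_i=m-2,\ b_i+b_j\geqslant0\ \text{for } i<j\}.
\]
All its facet normals $e_i+e_j$ come from the type $D$ root system, not type $A$, so I do not expect unimodularity to come for free. Instead I would argue directly. Take an integral point $c\in kA$ and write $c=kp+b$, where $b_i+b_j\geqslant0$, $\sum b_i=k(m-2)$, and at most one $b_i$ is negative. The goal is to peel off one element of $\mathcal A$ so that $c-a'\in(k-1)A$, and then induct on $k$.
\begin{itemize}
\item If some $b_i<0$, subtract $2p-2e_i$. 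Then $b_i$ increases by $1$ and every other coordinate decreases by $1$. These stay non-negative because $b_j\geqslant -b_i\geqslant1$.
\item If $b\geqslant0$, subtract $p+e_{S}$ for a multiset $S$ of size $m-2$ chosen greedily from the largest coordinates, so that the pairwise sums stay non-negative.
\end{itemize}
Verifying that the greedy choice works when several coordinates are large is the delicate part.

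Terminality, the Fano property and the volume then follow from the polytope data.
\begin{itemize}
\item \emph{Fano.} Every $v_{ij}$ lies on the boundary of $P$, and $Q$ is integral up to scaling, so $\mathscr X$ is $\mathbb{Q}$-Fano with anticanonical polytope $Q$.
\item \emph{Terminal.} This means that the only lattice points of $P$ are $0$ and the vertices $v_{ij}$. I would check it by lifting a lattice point of $P$ to $z\in\mathbb{Z}^m$ modulo $p$ with $0\leqslant z_i\leqslant 1$ and $\sum z_i=2$, using the description of $P$ from the proof of Lemma~\ref{lem:appendix-toric-polytopes}.
\item \emph{Volume.} $(-K)^n=n!\operatorname{vol}(Q)$, and I would compute this as a sum over the cones over the facets of $P$.
\item \emph{Divisors.} The rays of the fan are exactly the $v_{ij}$. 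The valuation $\operatorname{ord}_{D_{ij}}$ is the monomial valuation with weight $v_{ij}=v_i+v_j$, and this is exactly $\operatorname{ord}_{E_{ij}}$.
\end{itemize}
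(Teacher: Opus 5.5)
Your proposal is correct and follows essentially the same route as the paper. The paper also proves $\mathcal A=A\cap\mathbb Z^m$ and the integer decomposition property of $A$ by removing copies of $2p-2e_i$ when some $b_i<0$, and uses ratios $x_i/x_j$ of monomials in $\mathcal M$ to show that $\mathcal A-\mathcal A$ generates $M$. It then identifies the fan of $\mathscr X$ with the normal fan of $A$, which is the face fan of $P$.

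The step you flag as delicate is actually trivial. If $b\geqslant 0$ and $\sum_i b_i=k(m-2)\geqslant m-2$, take any multiset $S$ of size $m-2$ with $e_S\leqslant b$ coordinatewise. Then $b-e_S\geqslant 0$, so all pairwise sums stay non-negative automatically and no greedy choice is needed. The paper does exactly this by splitting the $k(m-2)$ units among $k$ non-negative vectors of coordinate sum $m-2$.
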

\begin{proof}
Let $k\ge1$ and $a\in kA\cap\mathbb Z^m$, and put $b_i=a_i-k$.
Then 
$$
\sum_{i=1}^m b_i=k(m-2)
$$ and $b_i+b_j\geqslant 0$ for $i<j$. We also have $a_i\geqslant 0$, since summing the inequalities involving a fixed $i$ gives
$$
(m-2)a_i+\sum_{j=1}^m a_j\geqslant 2k(m-1).
$$
Consequently, at most one $b_i$ is negative.

If all $b_i\geqslant 0$, split the $k(m-2)$ integral units among $k$ nonnegative
integral vectors of coordinate sum $m-2$. Adding $p$ to each vector expresses
$a$ as a sum of $k$ elements of $\mathcal A$.
Otherwise, let $b_i=-t<0$. Then $1\leqslant t\leqslant k$ and $b_j\geqslant t$ for $j\ne i$.
Subtract $t$ copies of $2p-2e_i$ from $a$, and then subtract $(k-t)p$.
The remaining vector has coordinate zero at $i$, coordinates $b_j-t\geqslant 0$
elsewhere, and coordinate sum $(k-t)(m-2)$. Splitting these units into
$k-t$ vectors as before gives the required decomposition. This also covers
$t=k$, when the remaining vector is zero. Taking $k=1$ shows that
$A\cap\mathbb Z^m=\mathcal A$.

Choose a monomial $h$ of degree $m-3$, and set $s=x_1\cdots x_m$. Then both $shx_i$ and $shx_j$ belong to
$\mathcal{M}$, and their ratio is $x_i/x_j$. Hence the differences of elements
of $\mathcal A$ generate $M$, and $\psi$ induces an isomorphism of the dense
tori. Moreover, the group generated by the pairs $(a,1)$ with
$a\in\mathcal A$ is
$$
\{(a,k)\in\mathbb Z^m\times\mathbb Z \mid \sum_{i=1}^m a_i=(2m-2)k\}.
$$
The decomposition just proved says that their semigroup is the intersection
of this group with its real cone. It is saturated, so the homogeneous
coordinate ring $\mathbb C[x^a\,\mid\,a\in\mathcal A]$ is normal.

The affine chart at a vertex $a_0$ has the semigroup generated by
$a-a_0$, with $a\in\mathcal A$; its real cone is the tangent cone of $A$
at $a_0$. Normality identifies this chart with the affine toric variety
of the dual cone. Thus the fan of $\mathscr X$ is the normal fan of $A$.
By~\eqref{eq:appendix-toric-homothety}, it is the normal fan of $Q$, or,
equivalently, the fan over the proper faces of $P$.
\end{proof}

\begin{lemma}
\label{lem:appendix-toric-terminal}
The variety $\mathscr X$ is Fano variety that has terminal singularities. 
\end{lemma}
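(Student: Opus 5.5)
By Lemma~\ref{lem:appendix-toric-normality}, $\mathscr X$ is the complete toric variety whose fan consists of the cones over the proper faces of $P$. Its rays are generated by the vertices $v_{ij}$ of $P$. I would check both properties directly on this fan, using the standard criteria in~\cite{CoxLittleSchenckToric}.

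\textbf{Fano and $\mathbb{Q}$-Gorenstein.} Every vertex $v_{ij}$ is primitive in $N$, because $\langle e_i-e_k,v_{ij}\rangle=1$ for $k\notin\{i,j\}$, as in the proof of Lemma~\ref{lem:appendix-toric-polytopes}. So the anticanonical polytope $\{u\mid \langle u,v_{ij}\rangle\geqslant -1\}$ is precisely the polar polytope $Q$. A facet $F$ of $P$ corresponds to a vertex of $Q$, namely $\alpha_i$ or $\beta_i$ from the proof of Lemma~\ref{lem:appendix-toric-polytopes}. The linear form $\langle u_F,\cdot\rangle=-1$ on the vertices of $F$ therefore gives $-K_{\mathscr X}$ locally on the cone $\sigma_F$ as a $\mathbb{Q}$-Cartier divisor. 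Since $Q$ is full-dimensional and its normal fan is our fan, $-K_{\mathscr X}$ is ample. This is the usual statement that the toric variety given by the face fan of a polytope with primitive vertices is a $\mathbb{Q}$-Fano variety.

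\textbf{Terminality.} By the toric criterion, $\mathscr X$ is terminal if and only if, for every facet $F$ of $P$, the only lattice points of the pyramid $\operatorname{conv}(0,F)$ are $0$ and the vertices of $F$. Since every lattice point of $N$ lies in some cone $\sigma_F$, this is equivalent to the following: the only lattice points of $P$ are $0$ and the $v_{ij}$. To verify it, I would lift to $\mathbb{Z}^m$. A point of $N$ is the class of an integer vector $z$, and the representatives of that class are $z+cp$. It lies in $P$ if and only if $\langle \alpha_i,\cdot\rangle\geqslant -1$ and $\langle\beta_i,\cdot\rangle\geqslant -1$ for all $i$.

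Normalize $z$ so that $\sum_i z_i\in\{0,1,\ldots,m-1\}$ and write $s=\sum_i z_i$. Translating the facet inequalities $z_i\geqslant 0$ and $z_i\leqslant 1$ on the slice $\sum_i z_i=2$, they become
$$
\frac{m}{2}z_i-\frac{s}{2}\geqslant -1,\qquad -\frac{m}{m-2}z_i+\frac{s}{m-2}\geqslant -1,
$$
that is,
$$
\frac{s-2}{m}\leqslant z_i\leqslant \frac{s+m-2}{m}.
$$
With $0\leqslant s\leqslant m-1$, the integers $z_i$ are forced into $\{0,1\}$, and $z_i=0$ is allowed only when $s\leqslant 2$. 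Now I would run through the cases.
\begin{itemize}
\item If $s=0$, all $z_i=0$, which is the origin.
\item If $s=1$, we get a single $v_i$. I still need to check $z_i\leqslant (s+m-2)/m<1$, which rules it out.
\item If $s=2$, we get exactly the $v_{ij}$.
\item If $s\geqslant 3$, every $z_i$ must be $1$, so $s=m$, which contradicts $s\leqslant m-1$.
\end{itemize}
Hence the only lattice points of $P$ are $0$ and the $v_{ij}$, and $\mathscr X$ is terminal.

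I expect the main obstacle to be careful bookkeeping: choosing the representative modulo $p$ consistently, and checking the boundary cases $s=1,2$. A further point to make explicit is that being $\mathbb{Q}$-Gorenstein with this lattice-point condition on $P$ is indeed equivalent to terminality for simplicial or non-simplicial fans, since the cones here need not be simplicial. The anticanonical degree is not needed for this lemma.
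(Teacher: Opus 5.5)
Your proposal is correct and follows essentially the same route as the paper. As in the paper, you get $-K_{\mathscr X}$ $\mathbb{Q}$-Cartier and ample because its polytope is $Q$, whose normal fan is the fan of $\mathscr X$, and you reduce terminality to $P\cap N=\{0\}\cup\{v_{ij}\mid i<j\}$. The differences are only in bookkeeping: you find the lattice points through the explicit bounds $\frac{s-2}{m}\leqslant z_i\leqslant\frac{s+m-2}{m}$ with normalized $s$, where the paper instead notes that the coordinates of $z$ differ by at most one. You also cite the toric terminality criterion directly, while the paper phrases it through the discrepancies $\langle h_\sigma,w\rangle-1$ on a smooth toric resolution.
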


\begin{proof}
The toric divisors $D_{ij}$ correspond to the primitive rays $v_{ij}$, and
$-K_{\mathscr X}=\sum_{i<j}D_{ij}$. The associated rational divisor polytope
is $Q$. Since the fan of $\mathscr X$ is the normal fan of $Q$, the local
Cartier data are its rational vertices and its support function is strictly
convex. Hence $-K_{\mathscr X}$ is $\mathbb Q$-Cartier and ample.
The coordinates of $\alpha_i$ are $(m-1)/2$ at $i$ and $-1/2$ elsewhere;
those of $\beta_i$ are $-(m-1)/(m-2)$ at $i$ and $1/(m-2)$ elsewhere.

We claim that $P\cap N=\{0\}\cup\{v_{ij} \mid i<j\}$.
Represent a lattice class in $P$ by $z\in\mathbb Z^m$. Its representative
in $\sum_i z_i=2$ has the form $z+tp$ and belongs to $[0,1]^m$.
Thus the coordinates of $z$ differ by at most one. If they are all equal,
the class is zero. Otherwise their translated minimum and maximum are
zero and one, so all coordinates of $z+tp$ are zero or one. Exactly two
are one, giving a vertex $v_{ij}$.

To conclude, let $\sigma$ be the cone over a facet $F$ of $P$, and let
$h_\sigma$ be the rational linear function equal to one on $F$.
A new primitive ray $w\in\sigma$ on a smooth toric resolution has
discrepancy $\langle h_\sigma,w\rangle-1$. If this were nonpositive,
$w$ would be a nonzero lattice point of $\operatorname{conv}(0,F)\subset P$
other than a vertex, contrary to the claim. Thus every exceptional divisor
on a smooth toric resolution has positive discrepancy, and $\mathscr X$
is terminal.
\end{proof}

\begin{lemma}\label{lem:appendix-toric-degree}
Let $H$ be the hyperplane class of the given embedding of $\mathscr X$.
Then 
$$
H\sim_{\mathbb Q}\frac{2(n-1)}{n+1}(-K_{\mathscr X})
$$
and $H^n=2n(n-1)^{n-1}$.
\end{lemma}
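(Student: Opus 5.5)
The plan is to read both assertions off the polytope description in Lemma~\ref{lem:appendix-toric-polytopes}, using Lemma~\ref{lem:appendix-toric-normality} to identify $H$ with the toric divisor class attached to $A$. Throughout, $m=n+1$.

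For the first assertion, recall two facts. By Lemma~\ref{lem:appendix-toric-normality}, $\mathscr X$ is embedded by the lattice points $\mathcal A=A\cap\mathbb Z^m$ of the polytope $A$. Hence $H$ is the torus-invariant $\mathbb{Q}$-Cartier class whose polytope is $A$, viewed in the affine lattice $\{\sum_i a_i=2m-2\}$, a translate of $M$. By the proof of Lemma~\ref{lem:appendix-toric-terminal}, the polytope of $-K_{\mathscr X}=\sum_{i<j}D_{ij}$ is $Q$. Now~\eqref{eq:appendix-toric-homothety} writes $A$ as a translate of $\frac{2(m-2)}{m}Q=\frac{2(n-1)}{n+1}Q$. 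Translating a polytope only changes the divisor by a principal (character) divisor, and scaling the polytope scales the $\mathbb Q$-divisor. This gives $H\sim_{\mathbb Q}\frac{2(n-1)}{n+1}(-K_{\mathscr X})$.

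For the degree, I will use the standard toric fact that $H^n$ equals the normalized lattice volume $n!\,\mathrm{vol}(A)$, computed in the affine lattice above; this applies because $H$ is ample with polytope $A$. I will translate by $p$ and set $b=a-p$. By~\eqref{eq:appendix-toric-homothety}, $A-p=(n-1)\Delta'$, where
$$
\Delta'=\Big\{b \ \Big|\ \sum_i b_i=1,\ b_i+b_j\geqslant0\text{ for } i<j\Big\}.
$$
Hence $H^n=(n-1)^n\,\mathrm{Vol}(\Delta')$, where $\mathrm{Vol}$ denotes normalized volume. The key step is to decompose $\Delta'$, using the observation from the proof of Lemma~\ref{lem:appendix-toric-normality} that at most one coordinate of a point of $\Delta'$ is negative. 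This splits $\Delta'$ into the following pieces.
\begin{enumerate}
\item The standard simplex $\{b_i\geqslant0\}$, which is unimodular and has normalized volume $1$.
\item For each $i$, the region where $b_i\leqslant 0$. This region is the pyramid over the facet $\{b_i=0\}$ of the standard simplex, with apex $b_i=-\frac1{n-1}$, $b_j=\frac1{n-1}$ for $j\neq i$.
\end{enumerate}
The pieces in~(2) arise from the parametrization $b_i=-t$, $b_j=c_j+t$ with $c\geqslant0$ and $\sum_j c_j=1-(n-1)t$. The base facet is a unimodular $(n-1)$-simplex in its hyperplane, and the lattice distance from the apex to the hyperplane $b_i=0$ is $\frac1{n-1}$. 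So each pyramid has normalized volume $\frac1{n-1}$. Adding up the pieces gives
$$
\mathrm{Vol}(\Delta')=1+\frac{n+1}{n-1}=\frac{2n}{n-1},
\qquad\text{hence}\qquad H^n=2n(n-1)^{n-1}.
$$
As a consistency check, combining this with the first assertion reproduces the value of $(-K_{\mathscr X})^n$ stated in Theorem~\ref{thm:appendix-toric-fano}.

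I expect the main obstacle to be lattice bookkeeping: checking that the lattice distances and unimodularity claims hold in the correct lattice, namely the translate of $M$ inside $\{\sum_i a_i=2m-2\}$, rather than in $\mathbb Z^m$. I will verify this by projecting onto the coordinates $b_j$ with $j\neq i$. This projection is a lattice isomorphism of $M$ onto $\mathbb Z^{n}$, and in these coordinates the facet $\{b_i=0\}$ becomes the hyperplane $\sum_{j\ne i}b_j=1$. Its lattice distance to the apex is $|1-\frac{n}{n-1}|=\frac1{n-1}$, as required.
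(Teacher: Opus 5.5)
Your proof is correct. The first assertion is handled exactly as in the paper: you translate $A$ and compare it with $\frac{2(m-2)}{m}Q$, using that a translation by an element of $M_{\mathbb Q}$ changes the divisor only by a $\mathbb Q$-principal one.

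For $H^n$ you take a different, though closely parallel, route. The paper uses the projective normality from Lemma~\ref{lem:appendix-toric-normality} to write down the full Hilbert function
\[
\binom{(m-2)k+m-1}{m-1}+m\sum_{q=0}^{k-1}\binom{(m-2)q+m-2}{m-2}
\]
and reads off its leading coefficient. You instead invoke $H^n=n!\,\mathrm{vol}(A)$ and compute the normalized volume of $\Delta'$ directly, cutting it into a unimodular simplex and $n+1$ pyramids, each with unimodular base and lattice height $\frac{1}{n-1}$.

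The two computations use the same decomposition, once discretely and once continuously. The first binomial counts the lattice points in the $k$-th dilate of your simplex piece. Each of the $m$ copies of the sum over $q$ counts, slice by slice in $t=k-q$, the lattice points with $b_i<0$, that is, those in the $k$-th dilate of your $i$-th pyramid with its base removed.

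What each approach buys:
\begin{itemize}
\item The paper's version is self-contained once the normality lemma is available and gives the exact number of degree-$k$ sections, at the cost of an asymptotic evaluation of binomial sums.
\item Your version is shorter and geometrically more transparent. It relies instead on the toric volume formula in the correct lattice, which needs the differences of elements of $\mathcal A$ to generate $M$, as shown in the proof of Lemma~\ref{lem:appendix-toric-normality}. It also relies on the unimodularity and lattice-distance checks, which you carried out correctly via the projection to the coordinates $b_j$, $j\neq i$.
\end{itemize}
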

\begin{proof}
By Lemma~\ref{lem:appendix-toric-normality}, the degree-$k$ monomials in the
homogeneous coordinate ring are indexed by $kA\cap\mathbb Z^m$.
In the notation of its proof, the points with all $b_i\ge0$ number
$\binom{(m-2)k+m-1}{m-1}$. For a fixed index $i$ with $b_i=-t<0$,
the remaining $m-1$ nonnegative coordinates have sum $(k-t)(m-2)$.
Therefore, the Hilbert function is
$$
\binom{(m-2)k+m-1}{m-1}+m\sum_{q=0}^{k-1}\binom{(m-2)q+m-2}{m-2},
$$
which gives $H^n=2(m-1)(m-2)^{m-2}=2n(n-1)^{n-1}$.

To compare the divisor classes, choose $a_0\in\mathcal A$ and translate
$A$ by $-a_0$. Equation~\eqref{eq:appendix-toric-homothety} identifies this
polytope with $\frac{2(m-2)}m Q$ up to translation by an element of
$M_{\mathbb Q}$. Such a translation changes a toric divisor by a
$\mathbb Q$-principal divisor. This gives the asserted
$\mathbb Q$-linear equivalence.
\end{proof}

\begin{proof}[Proof of Theorem~\ref{thm:appendix-toric-fano}]
By Lemmas~\ref{lem:appendix-toric-normality} and~\ref{lem:appendix-toric-terminal},
$\mathscr{X}$ is a Fano variety with terminal singularity. 
The degree formula follows from Lemma~\ref{lem:appendix-toric-degree}.
Finally, $\Lambda_{ij}$ corresponds to the cone generated by $v_i,v_j$
in the fan of $\mathbb P^n$. Its ordinary blow up inserts the primitive
ray $v_i+v_j$. These are exactly the rays of $\mathscr X$, by
Lemma~\ref{lem:appendix-toric-normality}. Consequently, $D_{ij}$ and $E_{ij}$
define the same normalized divisorial valuation, as required.
\end{proof}

The variety $\mathscr{X}$ is isomorphic to the toric Fano variety $X$ of the first construction in Section~\ref{section:technical}. Indeed, its toric boundary has the same valuations, hence the same primitive rays in $N$. Its anticanonical polytope is therefore $Q$, and ampleness forces its fan to be the normal fan of $Q$.

\end{document}